\numberwithin{equation}{section}
\newfont{\msbm}{msbm10 at 11pt}
\newcommand {\Z} {\mbox{\msbm Z}}
\newcommand {\N} {\mbox{\msbm N}}
\newcommand {\1} {\mathds{1}}
\newfont{\msbmsm}{msbm10 at 8pt}
\newtheorem{Theo}{Theorem}[section]
\newtheorem{Lemma}[Theo]{Lemma}
\newtheorem{Cor}[Theo]{Corollary}
\newtheorem{Prop}[Theo]{Proposition}
\newtheorem{Rmk}[Theo]{Remark}
\def\eps{\varepsilon}
\def\Var{\textup{Var}}
\begin{document}
\title{Rigorous results for a population model with selection II: \\ genealogy of the population}
\author{Jason Schweinsberg\thanks{Supported in part by NSF Grant DMS-1206195} \\
University of California at San Diego}
\maketitle

\footnote{{\it AMS 2010 subject classifications}.  Primary 60J27;
Secondary 60J75, 60J80, 92D15, 92D25}

\footnote{{\it Key words and phrases}.  Population model, selection, Bolthausen-Sznitman coalescent}

\vspace{-.6in}
\begin{abstract}
We consider a model of a population of fixed size $N$ undergoing selection.  Each individual acquires beneficial mutations at rate $\mu_N$, and each beneficial mutation increases the individual's fitness by $s_N$.  Each individual dies at rate one, and when a death occurs, an individual is chosen with probability proportional to the individual's fitness to give birth.  Under certain conditions on the parameters $\mu_N$ and $s_N$, we show that the genealogy of the population can be described by the Bolthausen-Sznitman coalescent.  This result confirms predictions of Desai, Walczak, and Fisher (2013), and Neher and Hallatschek (2013).
\end{abstract}

\section{Introduction}

In population genetics, one is often interested in understanding the genealogical structure of a population.  That is, we take a sample of individuals from a population at some time and trace their ancestral lines backwards in time.  As we trace the ancestral lines backwards in time, the lineages will merge until eventually all sampled individuals are traced back to one common ancestor.  For many standard population models, including the classical Moran model \cite{moran}, the genealogy of the population is best described by a process known as Kingman's coalescent, which was introduced in \cite{king82}.  Kingman's coalescent is the coalescent process in which only two lineages ever merge at one time and each pair of lineages merges at rate one.  

For populations undergoing selection, Kingman's coalescent does not always provide an adequate description of the genealogy of the population.  If one individual acquires a beneficial mutation which then spreads rapidly to a large fraction of the population, many ancestral lines could merge nearly at once because they all get traced back to the individual that acquired the beneficial mutation.  As a result, the genealogy of the population is best described by a coalescent process that permits more than two lineages to merge at one time.  Such processes, known as coalescents with multiple mergers or $\Lambda$-coalescents, were introduced by Pitman \cite{pit99} and Sagitov \cite{sagitov} and have been studied extensively in the probability literature in recent years.  For previous work in which coalescents with multiple mergers were used to describe the genealogy of populations undergoing selection, see \cite{bbs, bdmm06, bdmm07, dwf13, ds05, nh13}.

In this paper, we will consider the following population model.  The population has fixed size $N$.  Each individual independently acquires mutations at times of a Poisson process with rate $\mu_N$.  All mutations are assumed to be beneficial, and the fitness of each individual depends on how many mutations the individual has acquired, relative to the mean of the population.  More precisely, let $X_j(t)$ be the number of individuals with $j$ mutations at time $t$, which we call type $j$ individuals, and let $$M(t) = \frac{1}{N} \sum_{j=0}^{\infty} j X_j(t)$$ be the average number of mutations carried by the individuals in the population at time $t$.  Then the fitness of an individual with $j$ mutations at time $t$ is defined to be $$\max\big\{0, 1 + s_N(j - M(t))\big\}.$$  Note that the parameter $s_N$ measures the selective advantage that an individual gets from each mutation.  As in the Moran model, each individual independently lives for an exponentially distributed time with mean one.  When an individual dies, it gets replaced by a new individual whose parent is chosen at random from the population.  The probability that a particular individual is chosen as the parent is proportional to that individual's fitness, and the new individual inherits all of its parent's mutations.

This model was studied in great detail using nonrigorous methods by Desai and Fisher \cite{df07}, who obtained results concerning the rate of adaptation, meaning the rate at which the mean fitness $M(t)$ grows as a function of time, as well as the distribution of the fitnesses of individuals in the population at a given time.  See also \cite{brw08, rbw08, yec10} for related results, and see \cite{yec10} for a good summary of the literature on this model and closely related models.  The genealogy of the population in this model has been studied only within the past few years.  Desai, Walczak, and Fisher \cite{dwf13} argued that the genealogy of the population can be described by a process called the Bolthausen-Sznitman coalescent, which we will define precisely in section \ref{mainres}.  Neher and Hallatschek \cite{nh13} arrived at the same conclusion for a slightly different model.

This model was also studied in detail in the paper \cite{schI}, which contains rigorous proofs of the results of Desai and Fisher \cite{df07} concerning the rate of adaptation and the distribution of fitnesses of individuals in the population.  In the present paper, which is a sequel to \cite{schI}, we build on the techniques developed in \cite{schI} to provide a mathematically rigorous description of the genealogy of the population.  We confirm nonrigorous predictions presented in \cite{dwf13, nh13} and show that the genealogy of the population is given by the Bolthausen-Sznitman coalescent, under suitable conditions on the parameters $s_N$ and $\mu_N$.

The rest of this paper is organized as follows.  In section \ref{mainres}, we state precisely our assumptions and the main result of the paper, which is Theorem \ref{boszthm} below.  In section \ref{heursec}, we give a heuristic argument that explains the ideas behind why Theorem \ref{boszthm} is true, and we make some connections with other results in the literature.  In section \ref{reviewsec}, we summarize the results from \cite{schI} that will be needed in the present paper.  The remaining sections are devoted to proving Theorem \ref{boszthm}.

\section{Assumptions and Main Result}\label{mainres}

We first define the following two quantities, which were also used in \cite{schI} and which are important for scaling the process correctly:
\begin{equation}\label{kNaN}
k_N = \frac{\log N}{\log(s_N/\mu_N)}, \hspace{.5in}a_N = \frac{\log(s_N/\mu_N)}{s_N}.
\end{equation}
As we will see below, $k_N$ is the natural scale for the number of mutations because the difference in the number of mutations carried by the fittest individual in the population and an individual of average fitness is typically within a constant multiple of $k_N$.  Also, we will see that $a_N$ is the natural time scale on which to study the process.

We will need the following assumptions on the parameters $s_N$ and $\mu_N$, which are identical to the three assumptions that appeared in \cite{schI}:

\bigskip
{\bf A1}:  We have ${\displaystyle \lim_{N \rightarrow \infty} \frac{k_N}{\log (1/s_N)} = \infty}$.

\bigskip
{\bf A2}:  We have ${\displaystyle \lim_{N \rightarrow \infty} \frac{k_N \log k_N}{\log(s_N/\mu_N)} = 0.}$

\bigskip
{\bf A3}:  We have ${\displaystyle \lim_{N \rightarrow \infty} s_N k_N = 0}$.

\bigskip
\noindent 
Dividing A3 by A1, we get
\begin{equation}\label{sto0}
\lim_{N \rightarrow \infty} s_N = 0.
\end{equation}
Also, as noted in \cite{schI}, these assumptions imply that for all $a > 0$, we have
\begin{equation}\label{musN}
\lim_{N \rightarrow \infty} \frac{\mu_N}{s_N^a} = \lim_{N \rightarrow \infty} \frac{1}{\mu_N N^a} = 0,
\end{equation}
which means the mutation rate $\mu_N$ tends to zero faster than any power of $s_N$ but more slowly than any power of $1/N$.  

In view of (\ref{sto0}), assumption A1 implies that $\lim_{N \rightarrow \infty} k_N = \infty$.  This means that the difference between the number of mutations carried by the fittest individual and the number carried by an individual of average fitness tends to infinity as $N \rightarrow \infty$.  Because each additional mutation adds $s_N$ to the fitness of an individual, assumption A3 implies that the difference in fitness between these two individuals tends to zero as $N \rightarrow \infty$.  As discussed in \cite{schI}, assumption A2 ensures that mutations do not happen too fast for the analysis in this paper and \cite{schI} to be valid.  Understanding how the population evolves under faster mutation rates is an important question for future work.

Although the parameters $\mu_N$ and $s_N$ depend on $N$, we will drop the subscripts and write $\mu$ and $s$ throughout the rest of the paper to lighten notation.

Before stating the main result, we need to define the Bolthausen-Sznitman coalescent, which was introduced in \cite{bosz98}.  The Bolthausen-Sznitman coalescent is a continuous-time Markov chain $(\Pi(t), t \geq 0)$ taking its values in the set of partitions of $\{1, \dots, n\}$.  It is defined by the property that $\Pi(0) = \{\{1\}, \dots, \{n\}\}$ is the partition of $1, \dots, n$ into singletons, and then whenever the partition has $b$ blocks, each possible transition that involves merging $k$ of the blocks into one, where $2 \leq k \leq b$, happens at rate
\begin{equation}\label{lambk}
\lambda_{b,k} = \int_0^1 y^{k-2} (1-y)^{b-k} \: dy,
\end{equation}
and these are the only possible transitions.  A more detailed construction of the Bolthausen-Sznitman coalescent will be given shortly in section \ref{boszsec}.

\begin{Theo}\label{boszthm}
Assume A1-A3 hold.  Fix positive real numbers $t$ and $T$ such that $t > 0$ and $T > t + 2$.  Fix a positive integer $n$, and sample $n$ individuals at random from the population at time $a_N T$.  For $0 \leq u \leq t+1$, let $\Pi_N(u)$ be the partition of $\{1, \dots, n\}$ such that $i$ and $j$ are in the same block of the partition if and only if the $i$th and $j$th sampled individuals have the same ancestor in the population at time $a_N(T - u)$.  Then
\begin{equation}\label{part1}
\lim_{N \rightarrow \infty} P(\Pi_N(1) = \{\{1\}, \dots, \{n\}\}) = 1.
\end{equation}
Also, the finite-dimensional distributions of $(\Pi_N(1 + u), 0 \leq u \leq t)$ converge as $N \rightarrow \infty$ to the finite-dimensional distributions of the Bolthausen-Sznitman coalescent.
\end{Theo}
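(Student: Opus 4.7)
The approach is to follow the $n$ sampled ancestral lineages backward in time on the rescaled time scale $a_N$, and to show that they undergo coalescences whose structure matches that of the Bolthausen-Sznitman coalescent. The engine of the dynamics is the ``traveling wave'' description of the fitness classes $\{X_j(t)\}$ supplied by \cite{schI}: the fittest class advances by one every interval of length roughly $a_N$, and each new fittest class is founded by a single mutation from the previous one. Viewed backward in time, a sampled lineage is carried backward through the wave until it reaches the founding of the class in which it sits; it must then descend from the single founder, so any two lineages that reach the same founding event are forced to coalesce. The burn-in statement \eqref{part1} then says that within a single rescaled time unit no sampled lineage has yet had time to reach such a founding event; I would prove this by combining the control on the $X_j$ and on the wave speed from \cite{schI} with a union bound over the $n$ lineages.

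For the finite-dimensional convergence, the key quantitative input is the distribution of the fraction $U$ of the present population that descends from a founder which arose $u$ rescaled time units in the past. Using the branching-process approximations of \cite{schI} for the early growth of a newly founded fittest class, the aim is to show that $U$ is asymptotically uniform on $[0,1]$, and that the sampled lineages are absorbed into that founder approximately as independent Bernoulli$(U)$ trials. Combined with an asymptotic intensity $x^{-2}\,dx$ for founding events that will absorb fraction $x$ of the future population---the standard Poisson picture behind a $\Lambda$-coalescent with $\Lambda(dx)=dx$---this yields an asymptotic rate
\begin{equation*}
\int_0^1 x^k (1-x)^{b-k}\cdot x^{-2}\,dx \;=\; \int_0^1 x^{k-2}(1-x)^{b-k}\,dx \;=\; \lambda_{b,k}
\end{equation*}
for $k$-fold mergers out of $b$ blocks, matching the rates in \eqref{lambk}. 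Iterating this identification at the instants when the surviving lineages return to typical fitness classes after a merger, via an approximate strong Markov property, should yield convergence of the finite-dimensional distributions of $(\Pi_N(1+u),\ 0\le u \le t)$ to those of the Bolthausen-Sznitman coalescent.

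I expect the main obstacle to be the joint analysis of several lineages at a single founding event: showing that, conditional on the founder growing to fraction $U$ of the future population, the sampled lineages are absorbed with probability $U$ asymptotically \emph{independently}. This requires uniform control over the stochastic, few-individual phase of a new class's growth, where branching noise dominates and is easily perturbed by the kind of fluctuations quantified in \cite{schI}. A secondary issue is that the intensity $x^{-2}\,dx$ has a non-integrable singularity at $x=0$, so founding events with small $U$ are very frequent; one must show that the cumulative effect of these near-trivial events does not disturb the limiting finite-dimensional marginals.
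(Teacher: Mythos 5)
Your second half---lineages absorbed into a macroscopic family as approximately independent Bernoulli$(U)$ trials, founding events of fraction $\geq x$ arriving at rate $x^{-1}$ per rescaled time unit, and the resulting recovery of the rates $\lambda_{b,k}$ in (\ref{lambk})---is exactly the heuristic (\ref{xmerger})--(\ref{earlymut}) that the paper makes rigorous. But the structural picture you hang it on is wrong, and the error is fatal to the proposed argument. In this model a new fittest class is \emph{not} founded by a single mutation: between $\tau_j$ and $\tau_{j+1}$, of order $s/(\mu q_j)$ type $j-1$ individuals acquire a $j$th mutation, and at time $\tau_{j+1}$ the $\lceil s/\mu \rceil$ type $j$ individuals typically belong to a huge number of microscopic families. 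So tracing a lineage back to the founding period of its class forces nothing: with high probability distinct lineages land on \emph{distinct} founding mutations, which is precisely the content of Lemmas \ref{noearlymut} and \ref{noearlycoal}. Indeed your forced-coalescence picture is incompatible with the theorem you are proving: every sampled lineage reaches the founding period of its own class within roughly one rescaled time unit ($\tau_L \approx a_N(T-1)$), so under a single-founder rule all same-type lineages would merge by time $a_N(T-1) - O(a_N/k_N)$, and $\Pi_N(1+u)$ would have at most $10$ blocks for every fixed $u>0$; the Bolthausen--Sznitman limit instead keeps all $n$ lineages distinct for a positive time with probability bounded away from zero. The actual merger mechanism is the rare event that one \emph{unusually early} mutation (before the time $\xi_j$ of (\ref{xijdef})) spawns a family of macroscopic fraction: per class transition this has probability $O(1/k_N)$, with tail $P(Y_j \geq y) \approx (1-y)/(q_j y)$ and $q_j \approx k_N q$ (Lemmas \ref{pmlem1}, \ref{pmlem2}, \ref{Ylem}), and the $x^{-2}$ intensity emerges only because $\approx k_N q$ transitions occur per rescaled time unit (a related slip: classes turn over every $\approx a_N/(k_N q)$, not every $\approx a_N$; it is the lag between a class's founding and its dominance that is $\approx a_N$). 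One must therefore prove two separate things: that ordinary founding mutations never cause mergers (Lemmas \ref{coalj1}, \ref{coalj2}), and that early ones cause mergers with exactly the stated tail, via a coupling of early families with a supercritical branching process with immigration.

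Second, your key quantitative claim---that the fraction $U$ descending from a founder arising $u$ rescaled units in the past is asymptotically uniform on $[0,1]$---is false, and it contradicts the $x^{-2}\,dx$ intensity you invoke in the next sentence. The correct statement, which is what the branching-process approximation yields, is that conditional on survival a family started at time $u$ has size $\approx (W/sq_j)\,e^{sq_j(t-u)}$ with $W$ exponential, and integrating over the random (exponentially tilted) mutation time produces the tail $P(U \geq x) \approx 1/(q_j x)$, i.e.\ density $\propto x^{-2}$, per class transition; no uniform law appears anywhere. Finally, the two obstacles you flag are genuine but are resolved by tools absent from your outline: joint absorption of several lineages is handled not by a delicate analysis of the few-individual phase but by exchangeability (conditional on the population, the lineages' ancestors form a uniform random subset of the $\lceil s/\mu\rceil$ type $j$ individuals, so absorption into the early family is hypergeometric, hence asymptotically i.i.d.\ Bernoulli$(Y_j)$, and whp only one early family survives); and the non-integrable singularity at $x=0$ is controlled by an $\eps$-cutoff together with the second-moment bound $E\big[ Y_j^2 \1_{\{Y_j \leq \eps\}} \big] \leq C\eps/k_N$ of Lemma \ref{smallco}, which shows the $O(k_N)$ small events per unit time contribute $O(\eps)$ to the probability of any visible merger. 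Without replacing the single-founder picture and the uniform law by these ingredients, the proposed iteration along ``founding events'' does not produce the Bolthausen--Sznitman rates.
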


Note that Theorem \ref{boszthm} stipulates that with probability tending to one as $N \rightarrow \infty$, the sampled individuals at time $a_N T$ will all be descended from different ancestors at time $a_N(T - 1)$.  However, as the ancestral lines are traced back further, the merging of these ancestral lines obeys the law of the Bolthausen-Sznitman coalescent.  This result also appears in \cite{dwf13}, where it was obtained by nonrigorous methods.

\section{Heuristics and Background}\label{heursec}

\subsection{The Bolthausen-Sznitman coalescent}\label{boszsec}

Recall that the Bolthausen-Sznitman coalescent is the coalescent process whose transition rates are given by (\ref{lambk}).  Pitman \cite{pit99} showed how to construct the Bolthausen-Sznitman coalescent from a Poisson process.  We give a variation of this construction here.  Consider a Poisson process on $[0, \infty) \times (0, 1] \times [0,1]^n$ with intensity $$dt \times y^{-2} \: dy \times dz_1 \times \dots \times dz_n.$$
Let $\Pi(0) = \{\{1\}, \dots, \{n\}\}$ be the partition of $1, \dots, n$ into singletons.  If $(t, y, z_1, \dots, z_n)$ is a point of the Poisson process, and if the blocks of the partition $\Pi(t-)$, ranked in order by their smallest elements, are $B_1, \dots, B_b$, then $\Pi(t)$ is the partition obtained from $\Pi(t-)$ by merging together all of the blocks $B_i$ for which $z_i \leq y$.

Informally, this means that if $(t, y)$ are the first two coordinates of a point of the Poisson process, then at time $t$ we have a so-called $y$-merger, in which each block independently participates in the merger with probability $y$.  If $\Pi(t-)$ has $b$ blocks, then for $2 \leq k \leq b$, the probability that a particular set of $k$ blocks merges into one is $y^k (1-y)^{b-k}$, which allows us to recover the formula (\ref{lambk}) for the transition rates.

To see that the construction above is well-defined, note that a point $(t, y, z_1, \dots, z_n)$ of the Poisson process can only produce a merger at time $t$ if at least two of $z_1, \dots, z_n$ are less than or equal to $y$.  The rate at which such points appear is bounded above by $$\int_0^1 y^{-2} \cdot \binom{n}{2} y^2 \: dy < \infty.$$  Therefore, only finitely many such points will appear in any bounded time interval, and the construction above can be carried out by considering these points in order by their time coordinate.

We now give a heuristic argument to explain when the Bolthausen-Sznitman coalescent should be expected to describe the genealogy of a population.  Note that if a population has size $S$ and then a new large family of size $Sx$ suddenly appears, then the fraction of the population belonging to the large family will be $x/(1+x)$.  Consequently, if we are tracing ancestral lines backwards in time, approximately a fraction $x/(1+x)$ of the lineages will coalesce around the time that this family appears.  That is, we will have a $y$-merger with $y = x/(1+x)$.  For the Bolthausen-Sznitman coalescent, we can see from the Poisson process construction above that $y$-mergers with $y \geq x/(1+x)$ occur at rate
\begin{equation}\label{xmerger}
\int_{x/(1+x)}^1 y^{-2} \: dy = x^{-1}.
\end{equation}
Therefore, the Bolthausen-Sznitman coalescent will describe the genealogy of a population when families of size $Sx$ or larger appear at a rate proportional to $x^{-1}$.

\subsection{A heuristic argument for Theorem \ref{boszthm}}

In this subsection, we give a short approximate calculation to suggest why Theorem \ref{boszthm} should be true.  For $j \in \N$, let
\begin{equation}\label{taujdef}
\tau_j = \inf\bigg\{t: X_{j-1}(t) \geq \frac{s}{\mu} \bigg\}
\end{equation}
be the first time that there are at least $s/\mu$ individuals in the population with $j-1$ mutations.  It was shown in \cite{schI} that typically no individual acquires a $j$th mutation until after time $\tau_j$.  We write for now $q_j = j - M(\tau_j),$
which is the difference between $j$ and the mean number of mutations carried by the individuals in the population at time $\tau_j$.  As argued in \cite{df07, schI}, shortly after time $\tau_j$, the number of type $j-1$ individuals in the population is growing approximately exponentially at the rate $s(q_j - 1)$, which means that when $t$ is slightly larger than $\tau_j$, we have
\begin{equation}\label{Xj-1approx}
X_{j-1}(t) \approx \frac{s}{\mu} e^{s(q_j - 1)(t - \tau_j)}.
\end{equation}
Because each type $j-1$ individual independently acquires mutations at rate $\mu$, at time $u$ we have type $j$ individuals appearing due to a mutation at rate $\mu X_{j-1}(u)$.  If such a mutation happens at time $u$, then because type $j$ individuals have a selective advantage of approximately $sq_j$ over the rest of the population, the expected number of descendants of this mutation alive at time $t$ is approximately $e^{sq_j(t - u)}$.  Therefore, using (\ref{Xj-1approx}),
\begin{equation}\label{Xjapprox}
X_j(t) \approx \int_{\tau_j}^t \mu \cdot \frac{s}{\mu} e^{s(q_j - 1)(u - \tau_j)} \cdot e^{sq_j(t - u)} \: du = s e^{sq_j(t - \tau_j)} \int_{\tau_j}^t e^{-s(u - \tau_j)} \: du \approx e^{sq_j(t - \tau_j)}.
\end{equation}

Usually, the type $j$ individuals will belong to many small families.  That is, many type $j-1$ individuals will acquire mutations, each of which will become the ancestor of only a small fraction of the type $j$ population.  
In that case, the approximation in (\ref{Xjapprox}) will be valid.  However, occasionally there can be an unusually early mutation, when a type $j-1$ individual acquires a $j$th mutation much sooner than expected.  When this occurs, the descendants of the new type $j$ individual can eventually constitute a significant fraction of the type $j$ individuals in the population.  These unusually large families can lead to multiple mergers of ancestral lines, as many lineages get traced back to the individual that got the early mutation.

To estimate the probability that this happens, we approximate $q_j - 1$ by $q_j$ in (\ref{Xj-1approx}) to see that at time $u$, mutations from type $j-1$ to type $j$ are occurring at rate approximately $s e^{s q_j (u - \tau_j)}.$  If such a mutation does occur, then the number of descendants of this mutation behaves like a supercritical branching process with deaths at rate $1$ and births at rate $1 + sq_j$.  Such a branching process survives with probability approximately $s q_j$ and, conditional on survival, the size of the population after it has evolved for time $t - u$ is approximately $$\frac{W}{s q_j} e^{s q_j (t - u)},$$ where $W$ has an exponential distribution with mean one.  In particular, a successful mutation that occurs at time $$u = \tau_j + \frac{1}{s q_j} \log \bigg( \frac{1}{sq_j} \bigg) + v$$ has approximately $$W e^{-sq_jv} e^{sq_j(t - \tau_j)}$$ descendants in the population at time $t$.  Write $S = e^{sq_j(t - \tau_j)}$, which from (\ref{Xjapprox}) is approximately the number of type $j$ individuals at time $t$ that do not come from unusually early mutations.  By integrating over the possible times when the mutation could occur, we see that the probability that there will be a mutation that is the ancestor of at least $Sx$ type $j$ individuals at time $t$ is approximately
\begin{equation}\label{earlymut}
\int_{-\infty}^{\infty} s e^{sq_j [\log(1/sq_j)/sq_j + v]} \cdot sq_j \cdot P(W e^{-sq_jv} > x) \: dv 
= \int_{-\infty}^{\infty} s e^{sq_jv} e^{-xe^{sq_j v}} \: dv = \frac{1}{q_jx}.
\end{equation}
Note that the factor of $x^{-1}$ on the right-hand side of (\ref{earlymut}) matches the right-hand side of (\ref{xmerger}).

Consider now what happens when we sample $n$ individuals from the population at time $a_N T$ and trace their ancestral lines backwards in time.  As noted in \cite{schI}, one type will dominate the population at a typical time, so with high probability, the sampled individuals will all have the same type, which we will call type $\ell$.  With high probability, the sampled individuals will be descended from distinct type $\ell$ ancestors at time $\tau_{\ell+1}$.  Because we will see that the time between when type $\ell$ individuals originate and when they become the dominant type in the population is approximately $a_N$, this means the ancestral lines will most likely not merge when they are traced back from time $a_N$ to time $a_N(T - 1)$, which leads to the result (\ref{part1}).

As we trace the lineages further back, with high probability they get traced back to type $\ell-1$ ancestors at time $\tau_{\ell}$, then to type $\ell-2$ ancestors at time $\tau_{\ell-1}$, and so on.  At each stage of this process, there is a small probability that a group of ancestral lines will merge together because they get traced back to an individual that acquired an unusually early mutation.  Because of the agreement between (\ref{xmerger}) and (\ref{earlymut}), these mergers follow the same dynamics, in the limit as $N \rightarrow \infty$, as the Bolthausen-Sznitman coalescent.

The explanation given here for the appearance of the Bolthausen-Sznitman coalescent is very similar to that given by Desai, Walczak, and Fisher \cite{dwf13} and by Neher and Hallatschek \cite{nh13}, though these authors did not work directly from the Poisson process construction of the Bolthausen-Sznitman coalescent.  
 
\subsection{Comparison with branching Brownian motion}
 
Theorem \ref{boszthm} resembles the main result of \cite{bbs}, in which the authors confirmed nonrigorous predictions of Brunet, Derrida, Mueller, and Munier \cite{bdmm06, bdmm07} and showed that the Bolthausen-Sznitman coalescent describes the genealogy in a different population model involving selection.  In \cite{bbs}, the population was modeled by branching Brownian motion with absorption, in which initially there is some configuration of particles in $(0, \infty)$, each particle independently moves according to Brownian motion with drift $-\nu_N$, each particle divides into two at rate one, and particles are killed upon reaching the origin.  In this model, the particles represent individuals in a population, the position of a particle corresponds to the fitness of the individual, branching events represent births, and killing at the origin models the deaths of individuals whose fitness is too low.  It was shown in \cite{bbs} that if the initial configuration of particles and the drift parameter $\nu_N$ are chosen so that the number of particles in the system stays comparable to $N$, then the genealogy of this population is given by the Bolthausen-Sznitman coalescent.

One difference between the model in \cite{bbs} and the model studied in this paper is that for branching Brownian motion with absorption, all individuals have the same birth rate, while individuals with low fitness are killed.  In the model considered here, all individuals have the same death rate, while individuals with higher fitness are more likely to give birth.  In part because of this difference, the two population models behave quite differently in many respects.  For example, in the branching Brownian motion model, the speed of evolution is measured by the drift $\nu_N$ required to maintain a stable population size, which is $$\nu_N = \sqrt{2 - \frac{2 \pi^2}{(\log N + 3 \log \log N)^2}}.$$  That is, as $N \rightarrow \infty$, the speed of evolution tends to the limiting value $\sqrt{2}$ at the rate of $(\log N)^{-2}$.  This kind of behavior was first observed by Brunet and Derrida \cite{bd97} and was verified rigorously for other probabilistic models in \cite{bg10, maillard, mmq11}.  However, as shown in \cite{df07, schI}, the population model studied in the present paper does not show this behavior.  Also, for branching Brownian motion with absorption, once the particles reach a sort of equilibrium, the density of particles near $y$ is roughly proportional to $$e^{-\nu_N y} \sin \bigg( \frac{\pi y}{L_N} \bigg),$$ where $L_N = (\log N + 3 \log \log N)/\sqrt{2}$.  This is again quite different from the results for the model studied in this paper, where the distribution of fitnesses has a Gaussian-like shape; see, for example, \cite{beer07, df07, rwc03, schI, yec10}.  Finally, for branching Brownian motion with absorption, if two particles are sampled at some time, then the time that one has to go back a common ancestor of these two particles is comparable to $(\log N)^3$, as compared with the time scaling by $a_N$ in Theorem \ref{boszthm}.  Yet, in spite of these differences, we find that the Bolthausen-Sznitman coalescent describes the genealogy in both models.

\subsection{Connection with multitype branching processes}

We mention here how the appearance of the Bolthausen-Sznitman coalescent in this model could have been predicted from known results about multitype branching processes.  Consider a two-type Yule process in which type 1 individuals give birth to type 1 individuals at rate $\lambda$ and to type 2 individuals at rate $\mu$, and type 2 individuals give birth to type 2 individuals at rate $\lambda + s$.  If we say that type 2 individuals belong to the same family when they are descended from the same mutation, then the sizes of type 2 families at some large time $t$ can be approximated by the points of a Poisson process on $(0, \infty)$ with intensity $C x^{-1-\alpha}$, where $C$ is a constant and $\alpha = \lambda/(\lambda + s)$; see Theorem 3 of \cite{dm10} and the following corollary.  This implies that the total number of type 2 individuals has approximately a stable law of index $\alpha$ and that the distribution of the family sizes, normalized to sum to one, is the Poisson-Dirichlet distribution with parameters $(\alpha, 0)$, which was introduced in \cite{py97}.

If $(\Pi(t), t \geq 0)$ is the Bolthausen-Sznitman coalescent, then the distribution of the block sizes of $\Pi(t)$, normalized to sum to one, converges as $n \rightarrow \infty$ to the Poisson-Dirichlet distribution with parameters $(e^{-t}, 0)$, as shown in \cite{pit99}.  Thus, the appearance of stable laws in the work Durrett and Moseley \cite{dm10} and Durrett, Foo, Leder, Mayberry, and Michor \cite{dflmm}, who studied a multitype branching process model for tumor progression, and the appearance of the Poisson-Dirichlet distribution in the work of Leviyang \cite{leviyang}, who studied the coalescence of HIV lineages in a similar model, strongly suggest that the Bolthausen-Sznitman coalescent should describe the genealogy in similar models when the selective advantage $s$ is tending to zero.  This conjecture is confirmed by Theorem \ref{boszthm} above.  Indeed, the work \cite{dflmm, dm10, leviyang}, which appeared before the work of Desai, Walczak, and Fisher \cite{dwf13} and Neher and Hallatschek \cite{nh13}, served as the original motivation for the present paper.

\section{Review of results from \cite{schI}}\label{reviewsec}

The population model considered in this paper was also studied extensively in \cite{schI}, and in the present paper, we will make heavy use of some of the results and techniques developed in \cite{schI}.  In this section, we will state the results from \cite{schI} that we will need.  

\subsection{Evolution of type $j$ individuals}

We first present some results summarizing how the type $j$ individuals evolve.  Let $\eps > 0$, $\delta > 0$, and $T > 1$.
Recall the definition of $k_N$ from (\ref{kNaN}), and let $$k^* = \max\bigg\{j \in \N: j < k_N + \frac{2k_N \log k_N}{\log(s/\mu)}\bigg\}.$$  Note from assumption A2 that $(2k_N \log k_N)/\log(s/\mu) \rightarrow 0$ as $N \rightarrow \infty$.  As discussed in \cite{schI}, for $j \leq k^*$, individuals of type $j$ appear in the population very quickly.  To understand the evolution of the type $j$ individuals for $j \geq k^* + 1$, define
\begin{equation}\label{bdef}
b = \log \bigg(\frac{24000 \,T}{\delta^2 \eps} \bigg).
\end{equation}
Also, define $\tau_j$ as in (\ref{taujdef}), and then set
\begin{displaymath}
q_j^* = \left\{
\begin{array}{ll} j - k_N & \mbox{ if } a_N - 2a_N/k_N \leq \tau_j \leq a_N + 2a_N/k_N \\
j - M(\tau_j) & \mbox{ otherwise }
\end{array} \right.
\end{displaymath}
and 
\begin{equation}\label{qjdef}
q_j = \max\{1, q_j^*\}.
\end{equation}
Next, let
\begin{equation}\label{xijdef}
\xi_j = \max \bigg\{\tau_j, \: \tau_j + \frac{1}{sq_j} \log \bigg( \frac{1}{sq_j} \bigg) + \frac{b}{sq_j} \bigg\},
\end{equation}
as in \cite{schI}.  Every type $j$ individual at time $t$ has an ancestor that acquired a $j$th mutation before time $t$.  If this $j$th mutation occurred at or before time $\xi_j$, we call the individual an early type $j$ individual.  When an individual gets its $j$th mutation, we call this a type $j$ mutation, and we call such a mutation an early type $j$ mutation if it occurs at or before time $\xi_j$.  Let $X_{j,1}(t)$ denote the number of early type $j$ individuals at time $t$, and let $X_{j,2}(t)$ denote the number of other type $j$ individuals at time $t$, which means $$X_j(t) = X_{j,1}(t) + X_{j,2}(t).$$  For $t \geq 0$, let $$G_j(t) = s(j - M(t)) - \mu,$$ which represents the growth rate of the type $j$ individuals in the population at time $t$.  For $j \geq k^*+1$, let
\begin{equation}\label{gammajdef}
\gamma_j = \tau_j + a_N
\end{equation}
and
$$\tau_j^* = \tau_j + \frac{a_N}{4Tk_N}.$$

Proposition \ref{mainjprop} collects several results related to how the type $j$ individuals evolve.  The first four parts of the proposition are identical to Proposition 3.3 of \cite{schI}, except for the last statement of part 1, which comes instead from Lemma 8.18 of \cite{schI}.  The first two parts of the proposition describe how the type $j$ individuals emerge before time $\tau_{j+1}$.  Part 3 describes the evolution of the type $j$ individuals after time $\tau_{j+1}$ but before the type $j$ individuals start to get close to extinction.  Part 4 bounds the extinction time for the type $j$ individuals, as well as the size of the type $j$ population as it nears extinction.  Part 5 of the proposition, which is Remark 6.9 in \cite{schI},  demonstrates that nearly all individuals in the population have type $j$ between times $\gamma_j$ and $\gamma_{j+1}$.  Finally, part 6, which is a combination of parts 1 and 3 of Proposition 3.6 in \cite{schI}, bounds the difference between $\tau_j$ and $\tau_{j+1}$.

\begin{Prop}\label{mainjprop}
There exist positive constants $C_1$ and $C_2$, depending on $\delta$, $\eps$, and $T$, such that if $N$ is sufficiently large, then the following statements all hold with probability at least $1 - \eps$:
\begin{enumerate}
\item For all $j \geq k^* + 1$ and all $t \in [\tau_j^*, \tau_{j+1}] \cap [0, a_N T]$, we have
\begin{equation}\label{prop21}
X_{j,1}(t) \leq C_1 \exp \bigg( \int_{\tau_j}^t G_j(v) \: dv \bigg).
\end{equation}
Also, $X_{j,1}(t) \leq s/2\mu$ for all $t \leq \tau_j^* \wedge a_N T$, and no early type $j$ individual acquires a type $j+1$ mutation until after time $\tau_{j+1} \wedge a_N T$.  Moreover, no individual that gets a $j$th mutation at or before time $\tau_j$ has a descendant alive in the population at time $\tau_j^*$.

\item For all $j \geq k^* + 1$ and all $t \in [\tau_j^*, \tau_{j+1}] \cap [0, a_N T]$, we have
\begin{equation}\label{prop22}
(1 - 4 \delta) \exp \bigg( \int_{\tau_j}^t G_j(v) \: dv \bigg) \leq X_{j,2}(t) \leq (1 + 4 \delta) \exp \bigg( \int_{\tau_j}^t G_j(v) \: dv \bigg).
\end{equation}
Moreover, the upper bound holds for all $t \in [\xi_j, \tau_{j+1}] \cap [0, a_N T]$.

\item For all $j \geq k^* + 1$ and all $t \in [\tau_{j+1}, \gamma_{j+K}] \cap [0, a_N T]$, we have
\begin{equation}\label{prop23}
\frac{(1 - \delta)s}{\mu} \exp \bigg( \int_{\tau_{j+1}}^t G_j(v) \: dv \bigg) \leq X_j(t) \leq \frac{(1 + \delta)s}{\mu} \exp \bigg( \int_{\tau_{j+1}}^t G_j(v) \: dv \bigg).
\end{equation}

\item Let $K = \lfloor k_N/4 \rfloor$.  For all $j \geq k^* + 1$, we have
\begin{equation}\label{prop24}
X_j(t) \leq \frac{k_N^2 s}{\mu} \exp \bigg( \int_{\tau_{j+1}}^t G_j(v) \: dv \bigg)
\end{equation}
for all $t \in [\gamma_{j + K}, a_N T]$.  Also, for all $j \geq k^* + 1$ such that $\gamma_{j+\lceil 17 k_N \rceil} < a_N T$, we have $X_j(t) = 0$ for all $t \geq \gamma_{j+\lceil 17 k_N \rceil}$.

\item For all $j \geq k^*+1$, we have $$\frac{1}{N} \sum_{i=j+1}^{\infty} X_i(t) \leq C_2 e^{-s(\gamma_{j+1} - t)} + \frac{s}{N \mu}$$ for all $t \in [(4/s) \log k_N, \gamma_{j+1}] \cap [0, a_N T]$ and $$\frac{1}{N} \sum_{i=0}^{j-1} X_i(t) \leq C_2 e^{-s(t - \gamma_j)}$$ for all $t \in [\gamma_j, \gamma_{j+K}] \cap [0, a_N T].$

\item We have $\tau_{k^*+1} \leq 2a_N/k_N$.  Also, for all $j \geq k^*+1$ such that either $\tau_j + 2a_N/k_N \leq a_N T$ or $\tau_{j+1} \leq a_N T$, we have
\begin{equation}\label{tauspacing}
\frac{a_N}{3k_N} \leq \tau_{j+1} - \tau_j \leq \frac{2a_N}{k_N}.
\end{equation}
More precisely,
$$\int_{\tau_j/a_N}^{\tau_{j+1}/a_N} q(t) \: dt \leq \frac{1 + 2 \delta}{k_N}$$ and
$$\int_{\tau_j/a_N}^{\tau_{j+1}/a_N} (q(t) + \1_{\{t \in [1, \gamma_{k^* + 1}/a_N)\}}) \: dt \geq \frac{1- 2\delta}{k_N},$$
where $q$ is the function defined later in (\ref{qdef}).
\end{enumerate}
\end{Prop}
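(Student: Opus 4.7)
The plan is to identify each of the six parts of Proposition \ref{mainjprop} with a specific result already proved in the companion paper \cite{schI} and then to verify that all six conclusions can be placed on a single ``good event'' of probability at least $1 - \eps$. This is not a proposition whose proof begins from scratch; the author has structured the statement precisely as a repackaging of prior results for convenient use in what follows, and the prose immediately preceding the statement explicitly flags the source of each part.

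First I would line up the citations as the author hints: parts 1--4, with the sole exception of the last sentence of part 1, come directly from Proposition 3.3 of \cite{schI}; the last sentence of part 1 --- that no individual acquiring a $j$th mutation at or before time $\tau_j$ has a descendant alive at $\tau_j^*$ --- is Lemma 8.18 of \cite{schI}; part 5 is Remark 6.9 of \cite{schI}; and part 6 is the combination of parts 1 and 3 of Proposition 3.6 of \cite{schI}. For each match one need only check that the present paper uses the same definitions for the auxiliary quantities $b$, $\xi_j$, $\tau_j^*$, $\gamma_j$, $G_j(t)$, $q_j$, $k^*$, $K$ and the same convention for the parameters $\delta$, $\eps$, $T$. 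Since the present paper reproduces these definitions from \cite{schI} verbatim, this verification is bookkeeping.

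The substantive step is ensuring simultaneous validity of all six conclusions, since the statement demands that they hold on a common event of probability at least $1 - \eps$. Each quoted result from \cite{schI} holds on its own good event of probability at least $1 - \eps$. If those good events coincide --- which is typical in \cite{schI}, where a single high-probability event is threaded through the entire analysis --- no further work is needed. Otherwise, one invokes each of the (finitely many) source results with $\eps$ replaced by $\eps/6$, takes the intersection of the corresponding events via the union bound, and absorbs any resulting changes into the constants $C_1$ and $C_2$, which are permitted to depend on $\delta$, $\eps$, and $T$.

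The main obstacle, such as it is, is not mathematical but curatorial: one must confirm for each of the six source results that its conclusion is a deterministic consequence of the corresponding good event, i.e.\ that once the good event holds the displayed inequality or structural statement follows pointwise in $\omega$, not merely in expectation or in probability. Provided this holds for all six sources, the intersection of their good events has probability at least $1 - \eps$ and supports all six conclusions of Proposition \ref{mainjprop} simultaneously, which completes the proof.
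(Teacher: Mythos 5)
Your proposal is correct and matches the paper's own treatment: the paper offers no separate proof of Proposition \ref{mainjprop}, presenting it exactly as you do — as a repackaging of Proposition 3.3, Lemma 8.18, Remark 6.9, and Proposition 3.6 (parts 1 and 3) of \cite{schI}, with the understanding that these hold on a common high-probability event. Your added care about placing all six conclusions on a single event of probability at least $1-\eps$ (via intersection/union bound if necessary) is precisely the implicit content of the paper's citation-based statement.
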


\begin{Rmk}\label{Jrem}
{\em Let
\begin{equation}\label{Jdef}
J = 3 k_N T + k^* + 1.
\end{equation}
As noted in Remark 3.7 of \cite{schI}, when (\ref{tauspacing}) holds, we have
$$\tau_J > \tau_J - \tau_{k^* + 1} \geq \frac{a_N}{3k_N} (J - (k^* + 1)) \wedge a_N T = a_N T,$$ and furthermore when the statement of part 1 of Proposition \ref{mainjprop} also holds, no individual of type $J+1$ or higher can appear until after time $a_N T$.}
\end{Rmk}

The next proposition contains some bounds related to the quantities $G_j(t)$ and $q_j$ that are important for the analysis that follows.  The first three parts of the proposition come from Lemma 8.8 of \cite{schI}.  The fourth part is part of Lemma 6.1 of \cite{schI}, and the fifth comes from Lemmas 8.25 and 8.26 in \cite{schI}.

\begin{Prop}\label{newGq}
There is a positive constant $C_3$, depending on $\eps$, $\delta$, and $T$, such that if $N$ is sufficiently large, then the following statements all hold for all $j$ such that $k^*+1 \leq j \leq J$ with probability at least $1 - \eps$:
\begin{enumerate}
\item If $\tau_j > a_N + 2a_N/k_N$ and $t \in [\tau_j, \tau_{j+1} \wedge a_N T]$, then $s(q_j - C_3) \leq G_j(t) \leq s (q_j + C_3)$.

\item If $t \in [\tau_j, \tau_{j+1} \wedge a_N T]$, then $(1 - 2 \delta) s k_N \leq G_j(t) \leq G_j(t) + \mu \leq (e + 2 \delta) s k_N$.

\item If $\tau_j \leq a_N T$, then $(1 - 2 \delta) k_N \leq q_j \leq (e + 2 \delta) k_N.$

\item If $\tau_{j+1} \leq a_N T$, then $\exp\big(\int_{\tau_j}^{\tau_{j+1}} G_j(v) \: dv\big) \leq 2s/\mu$.

\item If $j \geq k^* + 1 + K$, then
$$e^{-\int_{\tau_{j+1}}^u G_j(v) \: dv} \leq \left\{
\begin{array}{ll} e^{-sk_N(u - \tau_{j+1})/5} & \mbox{ if }u \in [\tau_{j+1}, \gamma_{j-K}] \cap [0, a_N T]  \\
(s/\mu)^{-k_N/241} & \mbox{ if }u \in [\gamma_{j-K}, \gamma_{j+K}] \cap [0, a_N T].
\end{array} \right.$$
\end{enumerate}
\end{Prop}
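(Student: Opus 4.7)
My plan is to read Proposition \ref{newGq} as a consolidation of five estimates already proved in \cite{schI} and to obtain each part by matching it to the source indicated in the statement (Lemma 8.8 for parts 1--3, Lemma 6.1 for part 4, and Lemmas 8.25--8.26 for part 5). Each source lemma already quantifies its conclusion uniformly over $k^*+1 \leq j \leq J$, so I only need to invoke each with tolerated failure probability $\eps/3$ and intersect the three good events to reach the claimed joint bound.

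The algebraic content of each part is short. For Part 1, I would write
\[
G_j(t) - s q_j = -s\bigl(M(t) - M(\tau_j)\bigr) - \mu
\]
and use that on $[\tau_j, \tau_{j+1} \wedge a_N T]$ the mean $M$ drifts by only a bounded amount: the interval has length at most $2a_N/k_N$ by (\ref{tauspacing}) and only a few waves adjacent to the dominant one contribute non-negligibly to $M$, which is controlled by part 5 of Proposition \ref{mainjprop}. Part 3 follows from the integral inequalities for $q$ in part 6 of Proposition \ref{mainjprop}, since they pin down $M(\tau_j)$ and hence $q_j = j - M(\tau_j)$. Part 2 is then an immediate corollary of Parts 1 and 3, absorbing the additive $\mu$ into the error since $\mu = o(sk_N)$ by assumption A1. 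For Part 4, I would evaluate the lower bound of (\ref{prop22}) at $t=\tau_{j+1}$: from the definition of $\tau_{j+1}$ one has $X_{j,2}(\tau_{j+1}) \leq X_j(\tau_{j+1}) \leq s/\mu + 1$, so
\[
\exp\!\Bigl(\int_{\tau_j}^{\tau_{j+1}} G_j(v)\,dv\Bigr) \leq \frac{X_{j,2}(\tau_{j+1})}{1-4\delta} \leq \frac{s/\mu + 1}{1-4\delta} \leq \frac{2s}{\mu}
\]
for $\delta$ and $\mu$ small.

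The main obstacle is Part 5. The first-regime exponent $sk_N/5$ is manageable: Part 2 of the present proposition shows that $G_j$ stays comparable to $sk_N$ while $j$ is still near the leading edge of the wave, and integration yields the stated rate. The second-regime exponent $k_N \log(s/\mu)/241$ is more delicate, because by $u = \gamma_{j+K}$ the dominant type has advanced past $j$ and $G_j$ has turned negative. One must accumulate the per-gap contributions $\int_{\tau_{j+i}}^{\tau_{j+i+1}} G_j(v)\,dv \approx (1 - 2i/k_N)\log(s/\mu)$ for $i = 1, \dots, K$ using Part 4, and then bound the tail after $G_j$ changes sign; the specific constant $1/241$ comes out of the case analysis in Lemmas 8.25 and 8.26 of \cite{schI}. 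Reproducing those constants from first principles would be the principal technical step, and in practice I would import the two lemmas directly rather than redo the book-keeping.
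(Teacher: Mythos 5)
The paper offers no independent proof of this proposition: it is presented purely as a consolidation of Lemma 8.8 of \cite{schI} (parts 1--3), Lemma 6.1 of \cite{schI} (part 4), and Lemmas 8.25--8.26 of \cite{schI} (part 5), combined over the range $k^*+1 \leq j \leq J$ with an adjustment of the failure probability, which is exactly your primary plan. Your supplementary sketches are reasonable but inessential given that fallback (and one of them is not quite right as stated: part 2 does not follow from parts 1 and 3 when $\tau_j \leq a_N + 2a_N/k_N$, since part 1 excludes that regime and there $q_j^*$ is defined as $j - k_N$ rather than $j - M(\tau_j)$ precisely because $M(\tau_j)$ is not well behaved near time $a_N$); since you ultimately import the cited lemmas verbatim, your treatment matches the paper's.
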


Let $\Lambda$ be the event that the six statements in Proposition \ref{mainjprop} and the five statements of Proposition \ref{newGq} all hold.  Note that the event $\Lambda$ depends $\eps$, $\delta$, $T$, and $N$.  Then Propositions \ref{mainjprop} and \ref{newGq} imply that
\begin{equation}\label{problam}
P(\Lambda) > 1 - 2 \eps
\end{equation}
if $N$ is sufficiently large.  We now define a random time $\zeta$, which we interpret as being the first time that one of the statements of Proposition \ref{mainjprop} or Proposition \ref{newGq} fails to hold.  Write ${\bf X}(t) = (X_0(t), X_1(t), \dots)$, and let $({\cal F}_t, t \geq 0)$ denote the natural filtration of the population process $({\bf X}(t), t \geq 0)$.  Then define $$\zeta = \inf\{t: P(\Lambda|{\cal F}_t) = 0\}.$$  Since Propositions \ref{mainjprop} and \ref{newGq} only describe the behavior of the process up to time $a_N T$, the event $\Lambda$ is equivalent to the event $\{\zeta > a_N T\}$, which in turn is equivalent to the event $\{\zeta = \infty\}$.  Note that the definition given here for $\zeta$ is not quite the same as the definition in \cite{schI} because in \cite{schI} some additional properties were listed that are not relevant for the present work, and some of the properties listed above were derived from others.  Nevertheless, the idea is the same in both papers.  Namely, if $t < \zeta$, then all of the properties specified in Propositions \ref{mainjprop} and \ref{newGq} hold through time $t$.

\subsection{Selective advantage of the fittest individuals}

The result below, which is Theorem 1.1 of \cite{schI}, gives an asymptotic result for the difference in fitness between the fittest individual in the population and an individual of average fitness.

\begin{Prop}\label{Qthm}
For $t \geq 0$, let
\begin{equation}\label{Qdef}
Q(t) = \max\{j: X_j(t) > 0\} - M(t).
\end{equation}  
Assume A1-A3 hold.  There is a unique bounded function $q: [0, \infty) \rightarrow [0, \infty)$ such that
\begin{equation}\label{qdef}
q(t) = \left\{
\begin{array}{ll} e^t & \mbox{ if }0 \leq t < 1  \\
\int_{t-1}^t q(u) \: du & \mbox{ if }t \geq 1.
\end{array} \right.
\end{equation}
If $S$ is a compact subset of $(0,1) \cup (1, \infty)$, then
\begin{equation}\label{mainQres}
\sup_{t \in S} \bigg| \frac{Q(a_N t)}{k_N} - q(t) \bigg| \rightarrow_p 0,
\end{equation}
where $\rightarrow_p$ denotes convergence in probability as $N \rightarrow \infty$.
\end{Prop}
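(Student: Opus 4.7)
The plan is to decompose $Q(a_N t) = J(a_N t) - M(a_N t)$, where $J(t) = \max\{j : X_j(t) > 0\}$, and estimate each piece separately on the event $\Lambda$ of (\ref{problam}). First I would verify that the function $q$ is well defined and unique: on $[0,1]$ it is prescribed to be $e^t$, and on each subsequent interval $[n, n+1]$ the delay equation $q(t) = \int_{t-1}^t q(u)\,du$ determines $q$ from its values on $[n-1, n]$ by the classical method of steps. Differentiating yields the linear delay equation $q'(t) = q(t) - q(t-1)$, from which uniform boundedness by $e$ follows by induction on $n$.

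Next I would convert the spacing estimates of Proposition \ref{mainjprop}(6) into a count of the $\tau_j$'s falling in $[0, a_N t]$. Summing the two-sided bounds on $\int_{\tau_j/a_N}^{\tau_{j+1}/a_N} q(u)\,du$ given there telescopically, together with $\tau_{k^*+1} \leq 2 a_N/k_N$, $k^* = k_N + o(k_N)$ (from A2), and the observation that the indicator $\1_{\{t \in [1, \gamma_{k^*+1}/a_N)\}}$ contributes only on an interval of length $o(1)$, yields on $\Lambda$, uniformly on compacts of $(0, \infty)$,
\begin{equation*}
\frac{J(a_N t)}{k_N} = 1 + \int_0^t q(u)\,du + o(1).
\end{equation*}

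To handle $M(a_N t)$ I would use parts 3--5 of Proposition \ref{mainjprop} together with part 5 of Proposition \ref{newGq} to localize the bulk of the population on a small window of types around the dominant type $j^*(t)$, defined by $\gamma_{j^*(t)} \leq a_N t < \gamma_{j^*(t)+1}$, with geometric tails away from $j^*(t)$. Weighting these bounds by $i$ and summing shows $M(a_N t)/k_N = j^*(t)/k_N + o(1)$. Since $\gamma_j = \tau_j + a_N$, for $t > 1$ one has $j^*(t) = J(a_N(t-1)) + o(k_N)$, while for $t \in (0,1)$ no type has yet become dominant and $M(a_N t) = o(k_N)$ directly. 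Combining with the previous display gives
\begin{equation*}
\frac{Q(a_N t)}{k_N} = \begin{cases} 1 + \int_0^t q(u)\,du + o(1) = e^t + o(1) & \text{if } t \in (0, 1), \\ \int_{t-1}^t q(u)\,du + o(1) = q(t) + o(1) & \text{if } t > 1. \end{cases}
\end{equation*}

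The main obstacle is the replacement of $M$ by the dominant-type position with $o(k_N)$ accuracy. Proposition \ref{newGq}(5) gives only geometric tail decay at rate $sk_N/5$, and each type $i$ contributes to $M$ with weight $i$ of order $k_N$; showing that the resulting weighted tail sums are $o(k_N)$ on both the lagging and the leading shoulder of the fitness profile requires a careful combination of parts 3--4 of Proposition \ref{mainjprop} with the two regimes in Proposition \ref{newGq}(5). Once this is in place, a Gronwall-type stability argument for the linear delay equation $q'(t) = q(t) - q(t-1)$ converts the two approximate identities above into uniform convergence in probability on any compact $S \subset (0,1) \cup (1, \infty)$; the exclusion of $t = 1$ is forced by the jump in $q'$ there, which marks the transition between the initial emergence regime and the quasi-equilibrium regime.
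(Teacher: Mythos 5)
First, a structural point: this paper does not prove Proposition \ref{Qthm} at all. It is imported verbatim as Theorem 1.1 of \cite{schI}, and the accompanying facts about the function $q$ (Proposition \ref{Qlem}) are likewise quoted from that paper. So your proposal is not a reconstruction of an argument appearing here, but an attempt to re-derive paper I's main theorem from the \emph{other} statements of paper I that are quoted in Section 4 (Propositions \ref{mainjprop} and \ref{newGq}). That is a legitimate thing to attempt, and nothing in it is circular within the present paper's framework, even though part 6 of Proposition \ref{mainjprop} is itself phrased in terms of $q$. Your overall scheme --- write $Q = J - M$ with $J(t) = \max\{j : X_j(t) > 0\}$, count the $\tau_j$'s by telescoping part 6 to get $J(a_N t)/k_N \approx 1 + \int_0^t q$, and identify $M(a_N t)$ with the dominant type via the tail bounds of part 5 --- is the right shape, and for $t > 1$ it can plausibly be pushed through, with the weighted-tail bookkeeping you flag plus a bootstrap (to make $G_i(v)$ very negative for deeply lagging types $i$ you need a lower bound on $M(v)$, which must itself be extracted from the dominant-type fraction bounds).

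The genuine gap is the case $t \in (0,1)$. You assert that ``no type has yet become dominant and $M(a_N t) = o(k_N)$ directly,'' but nothing quoted in this paper yields that. Every statement of Propositions \ref{mainjprop} and \ref{newGq} concerns types $j \geq k^*+1$, and the only control on the mass of \emph{low} types (the second display in part 5 of Proposition \ref{mainjprop}) applies only for $t \geq \gamma_j = \tau_j + a_N \geq a_N$, i.e.\ after time $a_N$. How the population is distributed among types $0, 1, \dots, k^*$ before time $a_N$ --- which is exactly where the bulk sits when $t < 1$ --- is not described by anything available here; it is genuine content of the early-phase analysis in \cite{schI}. The most you can extract is from part 3 of Proposition \ref{newGq}: $q_j \geq (1 - 2\delta) k_N$ gives $M(\tau_j) \leq j - (1-2\delta)k_N$, hence for $\tau_j \approx a_N t$ one gets $M(a_N t) \lesssim (e^t - 1 + 2\delta)k_N$, which for fixed $t \in (0,1)$ is of order $k_N$, not $o(k_N)$; combined with $J(a_N t)/k_N \to e^t$ this only traps $Q(a_N t)/k_N$ roughly in $[1 - 2\delta,\, e^t]$, far from the claimed convergence. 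A small related slip confirms the issue: the exclusion of $t = 1$ is forced by a jump of $q$ itself ($q(1-) = e$ while $q(1) = e - 1$), not merely of $q'$; probabilistically this jump is exactly the rapid rise of $M(a_N t)/k_N$ from $\approx 0$ to $\approx 1$ as $t$ crosses $1$, which is the phenomenon your $t < 1$ argument fails to engage.
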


The next proposition collects some properties of the function $q$.  All of these results are part of Lemma 7.2 of \cite{schI} except for (\ref{qlip}), which follows from (\ref{qbounds}) and the definition of $q$.

\begin{Prop}\label{Qlem}
The function $q$ defined in (\ref{qdef}) is continuous on $[0, 1) \cup (1, \infty)$, and $$\lim_{t \rightarrow \infty} q(t) = 2.$$
Also,
\begin{equation}\label{qbounds}
1 \leq q(t) \leq e \hspace{.2in}\mbox{ for all }t \geq 0
\end{equation}
and if $t < u$ with $1 \notin (t, u]$, then
\begin{equation}\label{qlip}
|q(u) - q(t)| \leq e(u-t).
\end{equation}
\end{Prop}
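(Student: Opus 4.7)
The first three assertions (continuity on $[0, 1) \cup (1, \infty)$, the limit $\lim_{t \to \infty} q(t) = 2$, and the two-sided bound (\ref{qbounds})) are already contained in Lemma 7.2 of \cite{schI}, so my plan is to simply invoke that result. For orientation: (\ref{qbounds}) is obtained there by induction on the intervals $[n, n+1]$ (the base case is $q(t) = e^t \in [1, e)$ on $[0, 1)$, and the inductive step uses the fact that an average of values in $[1, e]$ again lies in $[1, e]$); continuity off $t = 1$ follows from the closed-form expression on $[0, 1)$ and differentiation under the integral on $(1, \infty)$; and the limit is extracted from the delay differential equation $q'(t) = q(t) - q(t-1)$ satisfied on $(1, \infty)$, via a characteristic-exponent analysis.

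The only new ingredient is the Lipschitz-type estimate (\ref{qlip}), which I would prove by a short case analysis driven by the hypothesis $1 \notin (t, u]$; this hypothesis forces either $u < 1$ or $t \geq 1$. In the first case, $q(u) - q(t) = e^u - e^t$, and the mean value theorem gives $|q(u) - q(t)| \leq e^c (u - t) \leq e(u - t)$ for some $c \in (t, u) \subset [0, 1)$. In the second case, assume first that $t \geq 1$ and $u \leq t + 1$. Using the recursive definition (\ref{qdef}) and cancelling the common subinterval,
$$q(u) - q(t) = \int_{u-1}^u q(v) \, dv - \int_{t-1}^t q(v) \, dv = \int_t^u q(v) \, dv - \int_{t-1}^{u-1} q(v) \, dv,$$
so applying the two-sided bound $1 \leq q \leq e$ from (\ref{qbounds}) to each of the two remaining integrals separately yields
$$|q(u) - q(t)| \leq (e - 1)(u - t) \leq e(u - t).$$
In the remaining subcase $t \geq 1$ and $u > t + 1$, the bound (\ref{qbounds}) already gives $|q(u) - q(t)| \leq e - 1 \leq e(u - t)$ directly.

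I do not anticipate any serious obstacle. The only subtle point worth emphasizing is that the hypothesis $1 \notin (t, u]$ is essential and cannot be dropped: $q$ has a jump discontinuity at $t = 1$, since $\lim_{t \to 1^-} q(t) = e$ while $q(1) = \int_0^1 e^v \, dv = e - 1$, so no Lipschitz bound can possibly hold across that point. The case split above is precisely engineered to avoid straddling this jump.
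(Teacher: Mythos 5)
Your proposal is correct and follows the same route as the paper: the paper likewise disposes of the continuity, the limit, and (\ref{qbounds}) by citing Lemma 7.2 of \cite{schI}, and states that (\ref{qlip}) ``follows from (\ref{qbounds}) and the definition of $q$'' without further detail. Your case analysis (mean value theorem on $[0,1)$; cancellation of the common subinterval plus the bound $1 \leq q \leq e$ when $t \geq 1$ and $u \leq t+1$; the trivial bound when $u > t+1$) is exactly the argument that one-line remark is pointing to, and your observation about the jump of $q$ at $t=1$ correctly explains why the hypothesis $1 \notin (t,u]$ is needed.
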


\subsection{A useful martingale}

Here we review the construction of a martingale that was central to the analysis in \cite{schI} and will be important again in the present paper.  As in \cite{schI}, let $F_j(t)$ be the fitness of a type $j$ individual at time $t$, which is $\max\{0, 1 + s(j - M(t))\}$, divided by the sum of the fitnesses of all individuals in the population at time $t$, which is $N$ if every individual's fitness is strictly positive.  Remark \ref{Jrem} and assumption A3 imply that if $N$ is sufficiently large, then every individual's fitness is strictly positive at time $t$ for all $t < \zeta$, in which case
\begin{equation}\label{Fjeq}
F_j(t) = \frac{1 + s(j - M(t))}{N}.
\end{equation}
To define birth and death rates, we follow closely the discussion in \cite{schI} and observe that there are three ways that the number of type $j$ individuals could change at time $t$:
\begin{enumerate}
\item Each type $j-1$ individual acquires a $j$th mutation at rate $\mu$.  Therefore, at time $t$, the rate at which a type $j$ individual appears due to a mutation is $\mu X_{j-1}(t-)$, where we adopt the convention that $X_{-1}(t) = 0$ for all $t \geq 0$ so that our formulas are valid when $j = 0$.

\item The number of type $j$ individuals could increase by one at time $t$ due to a birth.  This happens if one of the $N - X_j(t-)$ other individuals dies at time $t$, which happens at rate $N - X_j(t-)$ because each individual dies at rate one, and if the new individual born has type $j$, which happens with probability $X_j(t-) F_j(t-)$.  Therefore, we define the birth rate
\begin{equation}\label{Bjeq}
B_j(t) = (N - X_j(t)) F_j(t).
\end{equation}

\item The number of type $j$ individuals could decrease at time $t$ due to a mutation or death.  The rate at which one of the type $j$ individuals becomes type $j+1$ due to a mutation is $\mu X_j(t-)$.  Death events that reduce the number of type $j$ individuals happen at rate $X_j(t-)(1 - X_j(t-)F_j(t-))$ because there are $X_j(t-)$ type $j$ individuals each dying at rate one, and when a death occurs, the probability that the new individual born does not have type $j$ is $1 - X_j(t-)F_j(t-)$.  Therefore, we define the death rate
\begin{equation}\label{Djeq}
D_j(t) = 1 + \mu - X_j(t)F_j(t).
\end{equation}
\end{enumerate}
For all $t \geq 0$ and $j \in \Z^+$, let $$G_j^*(t) = B_j(t) - D_j(t).$$  One can easily check that whenever (\ref{Fjeq}) holds, we have $G_j^*(t) = G_j(t)$.  Also, as shown in section 5.2 of \cite{schI}, whenever (\ref{Fjeq}) holds and $j \leq J$, we can see, using assumption A3, that for sufficiently large $N$,
\begin{equation}\label{BD3}
B_j(t) + D_j(t) = \frac{(N - 2 X_j(t))(1 + s(j - M(t)))}{N} + 1 + \mu \leq 2 + sJ + \mu \leq 3.
\end{equation}

The result below is Proposition 4.1 of \cite{schI}.  The martingale defined in this proposition is similar to the one obtained in section 4 of \cite{dm11}.

\begin{Prop}\label{Zmart}
For all $t \geq 0$ and $j \in \Z^+$, let
\begin{equation}\label{Zjdef}
Z_j(t) = e^{-\int_0^t G_j^*(v) \: dv} X_j(t) - \int_0^t \mu X_{j-1}(u) e^{-\int_0^u G_j^*(v) \: dv} \: du - X_j(0).
\end{equation}
Then $(Z_j(t), t \geq 0)$ is a mean zero martingale with
$$\Var(Z_j(t)) = E \bigg[ \int_0^t e^{-2 \int_0^u G_j^*(v) \: dv} (\mu X_{j-1}(u) + B_j(u) X_j(u) + D_j(u) X_j(u)) \: du \bigg].$$  
\end{Prop}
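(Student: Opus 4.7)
The natural approach is to start from the Doob--Meyer decomposition of $X_j(t)$ as a pure jump process and then apply the integration-by-parts formula with the continuous multiplier $e^{-\int_0^t G_j^*(v) \, dv}$.

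First I would identify the infinitesimal dynamics of $X_j(t)$ from the three mechanisms enumerated before the statement: up-jumps at rate $\mu X_{j-1}(t) + B_j(t) X_j(t)$ (mutations into type $j$ plus births into type $j$) and down-jumps at rate $D_j(t) X_j(t)$ (mutations out of type $j$ plus deaths of type-$j$ individuals without a type-$j$ replacement). Standard birth-and-death martingale theory then gives
\begin{equation*}
X_j(t) = X_j(0) + \int_0^t \bigl[\mu X_{j-1}(u) + G_j^*(u) X_j(u)\bigr] \, du + N_j(t),
\end{equation*}
where $N_j$ is a mean-zero locally square-integrable martingale whose predictable quadratic variation is $\langle N_j \rangle_t = \int_0^t [\mu X_{j-1}(u) + B_j(u) X_j(u) + D_j(u) X_j(u)] \, du$, since the sum of the up-rates and down-rates controls the variance of each jump of size $\pm 1$.

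Next I would set $A(t) = e^{-\int_0^t G_j^*(v)\, dv}$, which is continuous and of finite variation. Since $X_j$ is a pure jump process, the cross-variation $[A, X_j]$ vanishes and the integration-by-parts formula reads
\begin{equation*}
A(t) X_j(t) - X_j(0) = \int_0^t A(u) \, dX_j(u) + \int_0^t X_j(u) \, dA(u).
\end{equation*}
Substituting the Doob--Meyer decomposition into the first integral and using $dA(u) = -G_j^*(u) A(u) \, du$ in the second, the drift terms $\int_0^t A(u) G_j^*(u) X_j(u) \, du$ cancel, leaving
\begin{equation*}
A(t) X_j(t) - X_j(0) - \int_0^t A(u) \mu X_{j-1}(u) \, du = \int_0^t A(u) \, dN_j(u),
\end{equation*}
which is precisely $Z_j(t)$. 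Since $A$ is predictable and bounded on bounded time intervals, the stochastic integral on the right is a local martingale; then the variance formula follows at once from the It\^o isometry for jump martingales applied to $\langle N_j \rangle$.

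The main thing that requires care rather than being a deep obstacle is arguing that $Z_j$ is a true (not merely local) martingale with finite variance. For any finite $N$, however, we have $X_j(t) \le N$ pointwise, and $M(t)$, $G_j^*(t)$, $B_j(t)$, $D_j(t)$ remain bounded in terms of $N$, $s$, $\mu$, and the largest occupied type up to time $t$ (which is a.s.\ finite on bounded intervals by a comparison to a Yule process). A localization by the hitting times of this maximal type bound, followed by monotone/dominated convergence on the variance identity, promotes the local martingale to a square-integrable martingale and yields the stated $\Var(Z_j(t))$. No substantive new estimates are needed beyond those already available in \cite{schI}.
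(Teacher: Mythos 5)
Your proof is correct, and it is essentially the argument that underlies the statement: the present paper gives no proof at all, importing the result verbatim from Proposition 4.1 of \cite{schI}, and the derivation there (like that of the closely related martingale in section 4 of \cite{dm11}, which the paper explicitly points to) is exactly the computation you describe --- compensate the $\pm 1$ jump process $X_j$ to get the martingale $N_j$ with $\langle N_j \rangle_t = \int_0^t (\mu X_{j-1}(u) + B_j(u)X_j(u) + D_j(u)X_j(u))\,du$, multiply by the integrating factor $A(t) = e^{-\int_0^t G_j^*(v)\,dv}$, and use integration by parts (the bracket $[A,X_j]$ vanishing since $A$ is continuous and of finite variation) together with the It\^o isometry. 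The only place where your write-up is heavier than necessary is the final integrability paragraph: no Yule-process comparison or localization is needed. Since the truncation at zero can only increase the total fitness, the sum of all fitnesses satisfies $\sum_i X_i(t)\max\{0, 1+s(i-M(t))\} \geq \sum_i X_i(t)(1+s(i-M(t))) = N$, so $0 \leq N F_j(t) \leq 1+sj$ and hence $G_j^*(t) = NF_j(t) - 1 - \mu$, $B_j(t)$, and $D_j(t)$ are \emph{deterministically} bounded on $[0,t]$ (as are $X_{j-1}, X_j \leq N$); consequently $A(u)$ is bounded by a deterministic constant on $[0,t]$, $\langle N_j \rangle_t$ is bounded, and $\int_0^t A(u)\,dN_j(u)$ is at once a square-integrable mean-zero martingale, with the stated variance following directly from the isometry.
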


We will sometimes need to apply the result of Proposition \ref{Zmart} to only a subset of the type $j$ individuals in the population.  If $\kappa$ and $\gamma$ are stopping times with respect to $({\cal F}_t, t \geq 0)$ such that $0 \leq \kappa \leq \gamma$, then for $t \geq 0$ and $j \in \Z^+$, let $X_j^{\kappa, \gamma}(t)$ be the number of type $j$ individuals in the population at time $t$ that are descended from individuals that acquired a $j$th mutation during the time interval $(\kappa, \gamma]$.  Let $B_j^{\kappa, \gamma}(t)$ and $D_j^{\kappa, \gamma}(t)$ denote the expressions on the the right-hand sides of (\ref{Bjeq}) and (\ref{Djeq}) with $X_j^{\kappa, \gamma}(t)$ in place of $X_j(t)$.  The result below is Corollary 4.4 of \cite{schI}.

\begin{Cor}\label{ZmartCor4}
Let $\kappa$ and $\gamma$ be stopping times with $\kappa \leq \gamma$.  For $t \geq \kappa$, let $$Z_j^{\kappa, \gamma}(t) = e^{-\int_{\kappa}^t G_j^*(v) \: dv} X_j^{\kappa, \gamma}(t) - \int_{\kappa}^{t \wedge \gamma} \mu X_{j-1}(u) e^{-\int_{\kappa}^u G_j^*(v) \: dv} \: du.$$  Then $(Z_j^{\kappa, \gamma}(\kappa + t), t \geq 0)$ is a mean zero martingale and
\begin{align}
&\Var(Z_j^{\kappa, \gamma}(\kappa + t)|{\cal F}_{\kappa}) \nonumber \\
&\hspace{.1in}= E \bigg[ \int_{\kappa}^{\kappa + t} e^{-2 \int_{\kappa}^u G_j^*(v) \: dv} (\mu X_{j-1}(u) \1_{u \in (\kappa, \gamma]} + B_j^{\kappa, \gamma}(u)X_j^{\kappa, \gamma}(u) + D_j^{\kappa, \gamma}(u)X_j^{\kappa, \gamma}(u)) \: du \bigg| {\cal F}_{\kappa} \bigg].  \nonumber
\end{align}
Furthermore, if $\tau$ is a stopping time with $\kappa \leq \tau$, then $(Z_j^{\kappa, \gamma}((\kappa + t) \wedge \tau), t \geq 0)$ is a mean zero martingale, and $\Var(Z_j^{\kappa, \gamma}((\kappa + t) \wedge \tau)|{\cal F}_{\kappa})$ is obtained by replacing $\kappa + t$ with $(\kappa + t) \wedge \tau$ in the integral above.
\end{Cor}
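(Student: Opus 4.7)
My plan is to mirror the proof of Proposition \ref{Zmart}, adapting the compensator and integrating-factor argument from the full type $j$ population to the subpopulation $X_j^{\kappa,\gamma}$, and then to invoke optional stopping for the final assertion about $\tau$. The first step is to identify $(X_j^{\kappa,\gamma}(\kappa+t), t \geq 0)$ as a pure-jump ${\cal F}$-adapted process and compute its three jump rates. A subpopulation individual is added by a mutation from type $j-1$, which happens only when $t \in (\kappa,\gamma]$, contributing rate $\mu X_{j-1}(t) \1_{\{t \in (\kappa,\gamma]\}}$; or by a birth in which a non-subpopulation individual dies and one of the $X_j^{\kappa,\gamma}(t)$ subpopulation members is chosen as parent, at rate $(N - X_j^{\kappa,\gamma}(t)) F_j(t) \cdot X_j^{\kappa,\gamma}(t) = B_j^{\kappa,\gamma}(t) X_j^{\kappa,\gamma}(t)$; and it is removed by its own death or outward mutation at combined rate $D_j^{\kappa,\gamma}(t) X_j^{\kappa,\gamma}(t)$. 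Compensating, and using $B_j^{\kappa,\gamma}-D_j^{\kappa,\gamma} = NF_j - 1 - \mu = G_j^*$, I obtain that
\begin{equation*}
X_j^{\kappa,\gamma}(\kappa+t) - \int_{\kappa}^{\kappa+t} \bigl[ G_j^*(u) X_j^{\kappa,\gamma}(u) + \mu X_{j-1}(u) \1_{\{u \in (\kappa,\gamma]\}} \bigr] \, du
\end{equation*}
is an ${\cal F}$-martingale started from $X_j^{\kappa,\gamma}(\kappa) = 0$.

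The second step removes the $G_j^* X_j^{\kappa,\gamma}$ drift via the integrating factor $Y(t) = e^{-\int_\kappa^t G_j^*(v) \, dv}$. Since $Y$ is continuous of finite variation with $dY(t) = -G_j^*(t) Y(t) \, dt$, the semimartingale product rule gives $d(Y X_j^{\kappa,\gamma})_t = Y(t-) \, dX_j^{\kappa,\gamma}(t) + X_j^{\kappa,\gamma}(t) \, dY(t)$, and subtracting the compensated drift leaves $Z_j^{\kappa,\gamma}(\kappa+t)$ as a local martingale. Boundedness $X_j^{\kappa,\gamma} \leq N$, together with the uniform bound (\ref{BD3}) on $B_j + D_j$, upgrades this to a true martingale, and $Z_j^{\kappa,\gamma}(\kappa) = 0$ delivers the mean-zero statement. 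The variance formula then follows by computing the predictable quadratic variation: each of the three jump mechanisms produces a jump of $X_j^{\kappa,\gamma}$ of magnitude $1$, so the corresponding jump of $Z_j^{\kappa,\gamma}$ has size $e^{-\int_\kappa^u G_j^*(v) \, dv}$, and its square times the rate of the mechanism integrates to precisely the expression stated.

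Finally, for the stopped version I would apply the optional stopping theorem at the bounded stopping times $(\kappa + t) \wedge \tau$; the variance is obtained by the same replacement in the integral, using that the predictable quadratic variation of a stopped martingale is the stopped quadratic variation. The main obstacle I anticipate is purely bookkeeping with random times: one must check that $\1_{\{u \in (\kappa,\gamma]\}}$ is predictable and that all compensator and integrating-factor manipulations are legal when $\kappa$ and $\gamma$ are ${\cal F}$-stopping times, which is precisely why $Z_j^{\kappa,\gamma}$ is indexed from time $\kappa$ rather than from $0$. Beyond that care, the argument is a direct translation of the proof of Proposition \ref{Zmart}.
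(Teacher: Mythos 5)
Your proposal is correct and follows essentially the same route as the paper, which imports this statement as Corollary 4.4 of \cite{schI} and notes it is proved by "the same reasoning used to establish Proposition \ref{Zmart}": namely, identifying the jump rates of the subpopulation process $X_j^{\kappa,\gamma}$ (immigration by mutation only on $(\kappa,\gamma]$, birth rate $B_j^{\kappa,\gamma}X_j^{\kappa,\gamma}$, death rate $D_j^{\kappa,\gamma}X_j^{\kappa,\gamma}$, with $B_j^{\kappa,\gamma}-D_j^{\kappa,\gamma}=G_j^*$), compensating, applying the integrating factor $e^{-\int_\kappa^t G_j^*(v)\,dv}$, and reading off the variance from the predictable quadratic variation, with optional stopping handling the stopped version. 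Your observations that $X_j^{\kappa,\gamma}(\kappa)=0$ gives the mean-zero property and that $\1_{\{u\in(\kappa,\gamma]\}}$ is predictable are exactly the bookkeeping points needed to make the translation legitimate.
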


Finally, suppose $\kappa$ is a stopping time with respect to $({\cal F}_t, t \geq 0)$ and $S$ is a set of type $j$ individuals alive at time $\kappa$.  Then for $t \geq \kappa$, let $X^S_j(t)$ be the number of type $j$ individuals in the population at time $t$ that are descended from one of the individuals in the set $S$, and let $B^S_j(t)$ and $D^S_j(t)$ the expressions on the right-hand sides of (\ref{Bjeq}) and (\ref{Djeq}) with $X^S_j(t)$ in place of $X_j(t)$.  Then, the same reasoning used to establish Proposition \ref{Zmart} and Corollary \ref{ZmartCor4} yields the following corollary.

\begin{Cor}\label{ZmartCor3}
Let $\kappa$ be a stopping time, and let $S$ be a set of type $j$ individuals in the population at time $\kappa$.
For $t \geq \kappa$, let $$Z^S_j(t) = e^{-\int_{\kappa}^t G_j^*(v) \: dv} X^S_j(t) - X^S_j(\kappa).$$  Then $(Z^S_j(\kappa + t), t \geq 0)$ is a mean zero martingale and
 $$\Var(Z^S_j(\kappa + t)|{\cal F}_{\kappa}) = E \bigg[  \int_{\kappa}^{\kappa + t} e^{-2 \int_{\kappa}^u G_j^*(v) \: dv} (B^S_j(u) X^S_j(u) + D^S_j(u) X^S_j(u)) \: du \bigg| {\cal F}_{\kappa} \bigg].$$
Furthermore, if $\tau$ is a stopping time with $\kappa \leq \tau$, then $(Z_j^S((\kappa + t) \wedge \tau), t \geq 0)$ is a mean zero martingale, and $\Var(Z_j^S((\kappa + t) \wedge \tau)|{\cal F}_{\kappa})$ is obtained by replacing $\kappa + t$ with $(\kappa + t) \wedge \tau$ in the integral above.
\end{Cor}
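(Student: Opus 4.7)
The plan is to mirror the proof of Proposition~\ref{Zmart}, as adapted for an initial time $\kappa$ in Corollary~\ref{ZmartCor4}, with one key simplification: no new type-$j$ individuals can enter the tracked population via mutation from $X_{j-1}$, since $S$ is fixed at time $\kappa$ and $X^S_j(t)$ counts only type-$j$ descendants of $S$. A mutation at a type-$(j-1)$ individual that is not already a descendant of $S$ can never produce a descendant of $S$, so the ``mutation inflow'' term that is present in Proposition~\ref{Zmart} and Corollary~\ref{ZmartCor4} is simply absent here; this is the only structural change, and it is what removes the $\int \mu X_{j-1}(u) e^{-\int_{\kappa}^{u} G^*_j(v)\,dv}\,du$ term from the definition of $Z^S_j$ and the $\mu X_{j-1}$ term from the integrand of the variance.

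I would then identify the jump structure of $X^S_j(t)$ for $t \geq \kappa$. For $t < \zeta$ the identity (\ref{Fjeq}) holds, so the process increases by one precisely when some individual that is not a type-$j$ descendant of $S$ dies (rate $N - X^S_j(t)$) and the replacement is the child of a type-$j$ descendant of $S$ (probability $X^S_j(t)F_j(t)$); the combined upward rate is $B^S_j(t)X^S_j(t)$. The process decreases by one either when a type-$j$ descendant of $S$ dies (rate $X^S_j(t)$) and is replaced by a non-(type-$j$ descendant of $S$) (probability $1 - X^S_j(t)F_j(t)$), or when one of these descendants acquires a further mutation (rate $\mu X^S_j(t)$); the total downward rate is $D^S_j(t)X^S_j(t)$. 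Crucially, births of descendants of $S$ that replace other descendants of $S$ of type $j$ give a net change of zero and contribute nothing.

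Next, I apply the integration-by-parts formula to the product of the absolutely-continuous process $e^{-\int_{\kappa}^{t} G^*_j(v)\,dv}$ and the pure-jump process $X^S_j$. The predictable compensator of $X^S_j(t) - X^S_j(\kappa)$ is $\int_{\kappa}^{t} (B^S_j - D^S_j)(u) X^S_j(u)\,du = \int_{\kappa}^{t} G^*_j(u) X^S_j(u)\,du$, and the exponential weight exactly cancels this drift, so $Z^S_j(\kappa + \cdot)$ is a local martingale vanishing at $0$. Since $X^S_j(t) \leq N$ and $G^*_j$ is bounded on compact time intervals (uniformly on $\{\zeta > a_N T\}$ by Proposition~\ref{newGq}, with a standard localization otherwise using (\ref{BD3})), the local martingale is in fact a true mean-zero martingale.

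For the variance, I use the standard formula that the predictable quadratic variation of a pure-jump martingale is obtained by integrating the squared jump size against the compensator of the jump measure: each $\pm 1$ jump of $X^S_j$ at time $u$ produces a jump of size $\pm e^{-\int_{\kappa}^{u} G^*_j(v)\,dv}$ in $Z^S_j$, so summing $e^{-2\int_{\kappa}^{u} G^*_j(v)\,dv}$ against the up-rate $B^S_j(u)X^S_j(u)$ and the down-rate $D^S_j(u)X^S_j(u)$ yields the stated integrand. The stopped-version assertion is then immediate from optional stopping, with the variance formula obtained by replacing the upper limit with $(\kappa+t)\wedge\tau$. The ``hard'' part is really only the verification that no additional jump types have been overlooked, which is handled by the first-paragraph observation; everything else is a direct transcription of the bookkeeping already carried out in \cite{schI}.
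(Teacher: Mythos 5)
Your proposal is correct and follows exactly the route the paper intends: the paper gives no separate argument for Corollary \ref{ZmartCor3}, stating only that it follows by the same reasoning as Proposition \ref{Zmart} and Corollary \ref{ZmartCor4}, which is precisely the adaptation you carry out (drop the mutation-inflow term since descendants of $S$ all have at least $j$ mutations, identify the jump rates $B_j^S X_j^S$ and $D_j^S X_j^S$, cancel the compensator with the exponential weight, and read off the variance from the squared jumps). Your key structural observation --- that $B_j^S(t)-D_j^S(t)=NF_j(t)-1-\mu=G_j^*(t)$, so the drift cancellation works with the same $G_j^*$ despite the restricted population --- is exactly what makes the transcription from \cite{schI} go through.
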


\begin{Rmk}\label{randj}
{\em By the Strong Markov Property of the population process $({\bf X}(t), t \geq 0)$, the results of Corollaries \ref{ZmartCor4} and \ref{ZmartCor3} hold even when the type $j$ is random, as long as $j$ is ${\cal F}_{\kappa}$-measurable.}
\end{Rmk}

\section{Tracing the ancestral lines back to time $a_N(T-1)$}

The rest of the paper is devoted to the proof of Theorem \ref{boszthm}.  Throughout the proof, we will fix $\eps > 0$, $\delta > 0$, $t > 0$, and $T > t + 2$.  We will also assume that $\eps < 1$ and
\begin{equation}\label{deldef}
\delta < \max \bigg\{ \frac{1}{100}, \frac{T - (t + 2)}{40T}, \frac{1}{19T}, \eps^3 \bigg\}.
\end{equation}
The event $\Lambda$ is defined as in section 4 for these choices of $\eps$, $\delta$, and $T$, and for the constants $C_1$, $C_2$, and $C_3$ from Propositions \ref{mainjprop} and \ref{newGq}.

We sample $n$ individuals at random from the population at time $a_N T$ and randomly label these individuals with the integers $1, \dots, n$.  We then trace the ancestral lines of these individuals back to time $a_N(T - (t + 1))$.  Recall that if $0 \leq u \leq t-1$, then $\Pi_N(u)$ is the partition of $\{1, \dots, n\}$ such that $i$ and $j$ are in the same block of $\Pi_N(u)$ if and only if the individuals in the sample labelled $i$ and $j$ have the same ancestor at time $a_N(T - u)$. 

For $1 \leq i \leq n$ and $0 \leq u \leq a_N T$, let $U_i(u)$ be the number of mutations carried by the individual at time $u$ that is the ancestor of the individual labelled $i$ at time $a_N T$.  For $1 \leq i \leq n$ and $1 \leq j \leq U_i(a_N T)$, let
\begin{equation}\label{Vijdef}
V_{i,j} = \inf\{u: U_i(u) = j\}
\end{equation}
be the time when the $j$th mutation appears on the $i$th lineage.  For $i,j \in \{1, \dots, n\}$, let
\begin{equation}\label{Tijdef}
T_{i,j} = \sup\{u: \mbox{ the $i$th and $j$th sampled individuals have the same ancestor at time $u$}\}
\end{equation}
denote the coalescence time of $i$ and $j$.

Throughout the rest of the paper, we use $C$ to denote a positive constant that does not depend on $\delta$, $\eps$, or $T$ but whose value may change from line to line.  Recall that the numbered constants $C_1$, $C_2$, and $C_3$ do depend on $\delta$, $\eps$, and $T$.  We will say that a statement holds ``for sufficiently large $N$" if there is a positive integer $N_0$, possibly depending on $\eps$, $\delta$, and $T$, such that the statement holds for all $N \geq N_0$.

\subsection{The types of the individuals sampled at time $a_N T$}

Part 5 of Proposition \ref{mainjprop} implies that, between times $\gamma_j$ and $\gamma_{j+1}$, the fraction of individuals in the population having type $j$ is very close to one, except for times very close to the boundary of this interval.  Consequently, when we take a sample from the population at time $a_N T$, typically either all individuals will have the same type, or else all individuals will have one of two types.  The result below is a weaker form of this statement.

\begin{Lemma}\label{sameL}
Let
\begin{equation}\label{Ldef}
L = \inf\bigg\{j: \tau_j \geq a_N(T - 1) - \frac{3 a_N}{k_N} \bigg\}.
\end{equation}
Then
$$\lim_{N \rightarrow \infty} P \big( \Lambda \cap \big\{U_i(a_N T) \notin \{L, L+1, \dots, L+9\} \mbox{ for some }i \in \{1, \dots, n\} \big\} \big) = 0.$$
\end{Lemma}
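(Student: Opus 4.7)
The plan is to show deterministically on the event $\Lambda$ that, when $N$ is sufficiently large, the fraction of individuals in the population at time $a_N T$ whose type falls outside $\{L, L+1, \dots, L+9\}$ is $o(1)$ as $N \rightarrow \infty$, and then to conclude by a union bound over the $n$ sampled individuals.

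First I would locate $L$ relative to $k^*$ and pin down the positions of the $\tau_j$'s in a window near $a_N(T-1)$. Since $\tau_{k^*+1} \leq 2a_N/k_N$ from part 6 of Proposition \ref{mainjprop}, and $T > t + 2 > 2$, for large $N$ we have $L \geq k^*+2$, so the spacing bound (\ref{tauspacing}) is available throughout the relevant range. The minimality in the definition of $L$ gives $\tau_{L-1} < a_N(T-1) - 3a_N/k_N$, and combining with $\tau_L - \tau_{L-1} \leq 2a_N/k_N$ yields $\tau_L \leq a_N(T-1) - a_N/k_N$, so
$$\gamma_L \in [a_N T - 3a_N/k_N, \, a_N T - a_N/k_N].$$
Iterating the lower spacing bound $\tau_{j+1} - \tau_j \geq a_N/(3k_N)$ ten times yields $\tau_{L+10} \geq a_N(T-1) + a_N/(3k_N)$, hence $\gamma_{L+10} \geq a_N T + a_N/(3k_N)$. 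With $K = \lfloor k_N/4 \rfloor$, the same lower spacing iterated $K$ times gives $\gamma_{L+K} \geq \gamma_L + a_N/15$ for $N$ large. Thus $a_N T$ sits strictly between $\gamma_L$ and $\gamma_{L+10}$, with a buffer of at least $a_N/(3k_N)$ on each side, and also $a_N T \leq \gamma_{L+K}$.

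Next I would apply part 5 of Proposition \ref{mainjprop} twice. With $j = L$ the inclusion $a_N T \in [\gamma_L, \gamma_{L+K}]$ is guaranteed by the above, so
$$\frac{1}{N} \sum_{i=0}^{L-1} X_i(a_N T) \leq C_2 e^{-s(a_N T - \gamma_L)} \leq C_2 e^{-sa_N/k_N}.$$
With $j = L+9$ the inclusion $a_N T \in [(4/s)\log k_N, \gamma_{L+10}]$ reduces to $4\log k_N \leq T\log(s/\mu)$, which is a consequence of A2, so
$$\frac{1}{N} \sum_{i=L+10}^{\infty} X_i(a_N T) \leq C_2 e^{-s(\gamma_{L+10} - a_N T)} + \frac{s}{N\mu} \leq C_2 e^{-sa_N/(3k_N)} + \frac{s}{N\mu}.$$
Since $sa_N/k_N = \log(s/\mu)/k_N = \log N/k_N^2$ and A2 implies $k_N^2 \log k_N = o(\log N)$, both exponentials tend to zero; the residual $s/(N\mu)$ tends to zero by (\ref{musN}). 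Hence on $\Lambda$ the fraction of individuals whose type is not in $\{L, L+1, \dots, L+9\}$ is dominated by a deterministic $o(1)$.

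Finally, conditional on the population at time $a_N T$, each of the $n$ sampled individuals is uniform, so the probability that its type lies outside $\{L, \dots, L+9\}$ equals this fraction; a union bound over the $n$ samples, intersected with $\Lambda$, closes the argument. The only genuinely delicate point is the constant bookkeeping in the second paragraph: the window $3a_N/k_N$ in the definition of $L$, combined with the minimal spacing $a_N/(3k_N)$ in (\ref{tauspacing}), allows for exactly ten types to be potentially dominant near $a_N T$, which is precisely why the range $\{L, \dots, L+9\}$ appears in the statement. Everything else is a direct application of the estimates already recorded in Propositions \ref{mainjprop} and \ref{newGq}.
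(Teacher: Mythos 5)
Your proposal is correct and follows essentially the same route as the paper: use the spacing bounds (\ref{tauspacing}) and the definition (\ref{Ldef}) to place $\gamma_L \leq a_N T - a_N/k_N$ and $\gamma_{L+10} \geq a_N T + a_N/(3k_N)$, apply part 5 of Proposition \ref{mainjprop} on each side, note that both exponential bounds vanish by A2 (and $s/(N\mu) \to 0$ by (\ref{musN})), and finish with uniform sampling. Your additional bookkeeping (verifying $L \geq k^*+2$, $a_N T \leq \gamma_{L+K}$, and the lower endpoint $(4/s)\log k_N$) only makes explicit hypotheses that the paper's proof leaves implicit.
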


\begin{proof}
It follows from equation (\ref{tauspacing}) that on the event $\Lambda$, we have $\tau_L \leq a_N(T - 1) - a_N/k_N$ and $\tau_{L+10} \geq a_N(T-1) + a_N/3k_N$.  Therefore, using (\ref{gammajdef}), on $\Lambda$ we have $\gamma_L \leq a_N T - a_N/k_N$ and $\gamma_{L+10} \geq a_N T + a_N/3k_N$.  Therefore, by part 5 of Proposition \ref{mainjprop}, on $\Lambda$ we have
\begin{equation}\label{bigi}
\frac{1}{N} \sum_{\ell=L+10}^{\infty} X_{\ell}(a_N T) \leq C_2 e^{-s(\gamma_{L+10} - a_N T)} + \frac{s}{N \mu} \leq C_2 \bigg( \frac{s}{\mu} \bigg)^{-1/3k_N} + \frac{s}{N \mu},
\end{equation}
which tends to zero as $N \rightarrow \infty$ because $(1/3k_N) \log(s/\mu) \rightarrow \infty$ as $N \rightarrow \infty$ by assumption A2, and $s/(N \mu) \rightarrow 0$ as $N \rightarrow \infty$ by (\ref{musN}).  Likewise, by part 5 of Proposition \ref{mainjprop}, on $\Lambda$ we have
\begin{equation}\label{smalli}
\frac{1}{N} \sum_{\ell=0}^{L-1} X_{\ell}(a_N T) \leq C_2 e^{-s(a_N T - \gamma_L)} \leq C_2 e^{-s a_N/k_N} = C_2 \bigg( \frac{s}{\mu} \bigg)^{-1/k_N},
\end{equation}
which tends to zero as $N \rightarrow \infty$.  Because the expressions in (\ref{bigi}) and (\ref{smalli}) both tend to zero as $N \rightarrow \infty$, we conclude that on $\Lambda$, the fraction of individuals in the population at time $a_N T$ having between $L$ and $L + 9$ mutations tends to one as $N \rightarrow \infty$.  Because the $n$ individuals are sampled at random from the population, the result follows.
\end{proof}

\subsection{The types of the ancestors at time $a_N(T - 1)$}

Lemma \ref{sameL} implies that with high probability all individuals sampled at time $a_N T$ will have between $L$ and $L + 9$ mutations.  Lemma \ref{noearlymut} below shows that for $\ell \in \{L, L + 1, \dots, L + 9\}$, with high probability the type $\ell$ individuals in the sample will all be descended from type $\ell$ individuals at time $\tau_{\ell+1}$.

\begin{Lemma}\label{noearlymut}
We have $$\lim_{N \rightarrow \infty} P\big(\Lambda \cap \big\{ U_i(\tau_{U_i(a_N T) + 1}) \neq U_i(a_N T) \mbox{ for some }i \in \{1, \dots, n\} \big\} \big) = 0.$$
\end{Lemma}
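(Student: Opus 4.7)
My plan is to combine Lemma \ref{sameL} with a counting argument built on the martingale of Corollary \ref{ZmartCor4}. Lemma \ref{sameL} reduces the problem, on the event $\Lambda$, to the finitely many possible types $\ell \in \{L, L+1, \ldots, L+9\}$ for each sampled individual. Given $U_i(a_N T) = \ell$, the condition $U_i(\tau_{\ell+1}) \neq \ell$ says precisely that the $i$-th sample descends from a type $\ell$ mutation occurring in $(\tau_{\ell+1}, a_N T]$. Since the sample is uniform over $N$ individuals, the probability of this event intersected with $\Lambda$ equals $\frac{1}{N} E[X_\ell^{\tau_{\ell+1}, a_N T}(a_N T); \Lambda]$, and since $\ell$ ranges over at most ten values and $i$ over $n$ values, it suffices to show this tends to zero for each fixed $\ell$.

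To bound this expectation I apply Corollary \ref{ZmartCor4} with $\kappa = \tau_{\ell+1}$, $\gamma = a_N T$, stopped at $a_N T \wedge \zeta$, so that on $\Lambda$ the bounds of Propositions \ref{mainjprop} and \ref{newGq} are in force throughout the relevant time interval. The martingale identity expresses $X_\ell^{\tau_{\ell+1}, a_N T}(a_N T)$ as an exponential weight times the sum of a deterministic drift integral and a mean-zero martingale term. On $\Lambda$, the drift integral involves $\mu X_{\ell-1}(u)$, and Proposition \ref{mainjprop}(3) applied with $j = \ell-1$ gives $X_{\ell-1}(u) \leq (1+\delta)(s/\mu) e^{\int_{\tau_\ell}^u G_{\ell-1}(v)\,dv}$ throughout $[\tau_{\ell+1}, a_N T]$ (one verifies $a_N T \leq \gamma_{\ell-1+K}$ for the $\ell$ under consideration, which is easy using (\ref{tauspacing}) and the definition of $L$). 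Comparing with the matching lower bound $X_\ell(a_N T) \geq (1-\delta)(s/\mu) e^{\int_{\tau_{\ell+1}}^{a_N T} G_\ell(v)\,dv}$ from Proposition \ref{mainjprop}(3) with $j = \ell$, and using $G_\ell = G_{\ell-1} + s$ so that the $G_\ell$ contributions on $[\tau_{\ell+1}, a_N T]$ cancel against matching factors in the drift integral, I find that the drift integral divided by this lower bound on $X_\ell(a_N T)$ is at most a constant times
$$\frac{\mu}{s} \exp\!\Bigl(\int_{\tau_\ell}^{\tau_{\ell+1}} G_{\ell-1}(v)\,dv\Bigr).$$

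Proposition \ref{newGq}(4) gives $\exp(\int_{\tau_\ell}^{\tau_{\ell+1}} G_\ell(v)\,dv) \leq 2s/\mu$, and combined with $\tau_{\ell+1} - \tau_\ell \geq a_N/(3k_N)$ from (\ref{tauspacing}) together with $G_{\ell-1} = G_\ell - s$, this yields $\exp(\int_{\tau_\ell}^{\tau_{\ell+1}} G_{\ell-1}(v)\,dv) \leq 2(s/\mu)^{1 - 1/(3k_N)}$. The ratio is therefore $O((s/\mu)^{-1/(3k_N)})$, which tends to zero by assumption A2 since $(1/3k_N)\log(s/\mu) \to \infty$. Because $X_\ell(a_N T) \leq N$, the same order of decay transfers to the unconditional quantity $\frac{1}{N} E[X_\ell^{\tau_{\ell+1}, a_N T}(a_N T); \Lambda]$, so summing over the $n$ samples and the finitely many $\ell$ finishes the argument.

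The main obstacle is that Corollary \ref{ZmartCor4} directly controls $E[e^{-\int G_\ell^*}\, X_\ell^{\kappa, \gamma}]$ rather than $E[X_\ell^{\kappa, \gamma}]$ itself, so converting to the expectation I actually want requires using the deterministic bounds on $G_\ell$ available on $\Lambda$ (from Proposition \ref{newGq}(1)--(2)), together with a second-moment (or Cauchy--Schwarz) estimate based on the variance formula in Corollary \ref{ZmartCor4} to absorb fluctuations of the mean-zero martingale term. Verifying the interval conditions in Proposition \ref{mainjprop}(3) for both $j = \ell - 1$ and $j = \ell$ when $\ell$ ranges over $\{L, \ldots, L+9\}$, and keeping careful track of the stopping at $\zeta$, is where most of the bookkeeping is needed.
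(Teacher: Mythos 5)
Your proposal reproduces the paper's proof almost step for step: the reduction to $\ell \in \{L,\dots,L+9\}$ via Lemma \ref{sameL}, the sampling bound $P(\cdot \mid {\cal F}_{a_N T}) \leq n X_{\ell}^{\tau_{\ell+1},a_N T}(a_N T)\1_{\Lambda}/N$, the martingale representation from Corollary \ref{ZmartCor4} with $\kappa = \tau_{\ell+1}$, $\gamma = a_N T$, the drift bound via part 3 of Proposition \ref{mainjprop} with $j = \ell-1$ together with $G_{\ell-1} = G_{\ell} - s$, the lower bound on $X_{\ell}(a_N T)$ from part 3 with $j = \ell$, part 4 of Proposition \ref{newGq} plus (\ref{tauspacing}) giving the rate $(s/\mu)^{-1/3k_N}$, and assumption A2 to finish. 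The one place you diverge is the ``main obstacle'' flagged at the end: no second-moment or Cauchy--Schwarz estimate is needed, and none appears in the paper. The ratio you already formed does all the work: writing
$$\frac{X_{\ell}^{\tau_{\ell+1}, a_N T}(a_N T) \1_{\Lambda}}{N} \leq \frac{e^{-\int_{\tau_{\ell+1}}^{a_N T} G_{\ell}(v) \: dv} X_{\ell}^{\tau_{\ell+1}, a_N T}(a_N T) \1_{\Lambda}}{e^{-\int_{\tau_{\ell+1}}^{a_N T} G_{\ell}(v) \: dv} X_{\ell}(a_N T)},$$
the random exponential weights cancel between numerator and denominator; on $\Lambda$ the denominator is at least the deterministic constant $(1-\delta)s/\mu$, while the numerator equals the drift integral (bounded deterministically by $2(1+\delta)(s/\mu)^{1-1/3k_N}$) plus the mean-zero term $Z_{\ell}^{\tau_{\ell+1},a_N T}(a_N T)$. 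Taking plain expectations then annihilates the martingale term outright, since $E[Z_{\ell}^{\tau_{\ell+1},a_N T}(a_N T)] = 0$: only the first moment is used. Your second-moment route (with the stopping at $\zeta$ you mention, so the deterministic bounds remain valid inside the variance integral) would also close the argument, but it requires an extra computation of the kind carried out in Lemma \ref{maxXj}, which for this lemma is avoidable.
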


\begin{proof}
Choose $\ell \in \{L, L + 1, \dots, L + 9\}$.  Recall from Corollary \ref{ZmartCor4} that $X_{\ell}^{\tau_{\ell+1}, a_N T}(a_N T)$ denotes the number of type $\ell$ individuals at time $a_N T$ that are descended from an individual that got its $\ell$th mutation during the time interval $(\tau_{\ell+1}, a_N T]$.  Equivalently, this is the number of type $\ell$ individuals at time $a_N T$ whose ancestor in the population at time $\tau_{\ell+1}$ does not have type $\ell$.  Because each individual in the population at time $a_N T$ has probability $n/N$ of being in the sample, we therefore have
\begin{equation}\label{condmut}
P \big( \Lambda \cap \big\{ U_i(\tau_{\ell + 1}) \neq U_i(a_N T) = \ell \mbox{ for some }i \in \{1, \dots, n\} \big\} \big| {\cal F}_{a_N T} \big) \leq \frac{n X_{\ell}^{\tau_{\ell+1}, a_N T}(a_N T) \1_{\Lambda}}{N}.
\end{equation}
It suffices to show that the expected value of the right-hand side of (\ref{condmut}) tends to zero as $N \rightarrow \infty$.

By Corollary \ref{ZmartCor4} and Remark \ref{randj}, on $\Lambda$,
\begin{equation}\label{nem1}
e^{-\int_{\tau_{\ell+1}}^{a_N T} G_{\ell}(v) \: dv} X_{\ell}^{\tau_{\ell+1}, a_NT}(a_N T) = \int_{\tau_{\ell+1}}^{a_N T} \mu X_{\ell-1}(u) e^{-\int_{\tau_{\ell+1}}^u G_{\ell}(v) \: dv} \: du + Z_{\ell}^{\tau_{\ell+1}, a_N T}(a_N T),
\end{equation}
where $Z_{\ell}^{\tau_{\ell+1}, a_N T}(\tau_{\ell+1} + t, t \geq 0)$ is a mean zero martingale.  Note that (\ref{tauspacing}) implies that on $\Lambda$, we have $\gamma_{\ell-1+K} > a_N T$ if $N$ is sufficiently large, and therefore from (\ref{prop23}) and from part 4 of Proposition \ref{newGq}, we get for $u \in [\tau_{\ell+1}, a_N T]$,
\begin{align*}
\mu X_{\ell-1}(u) e^{-\int_{\tau_{\ell+1}}^u G_{\ell}(v) \: dv} &\leq (1 + \delta) s e^{\int_{\tau_{\ell}}^u G_{\ell-1}(v) \: dv} e^{-\int_{\tau_{\ell+1}}^u G_{\ell}(v) \: dv} \\
&= (1 + \delta) s e^{\int_{\tau_{\ell}}^{\tau_{\ell+1}} G_{\ell}(v) \: dv} e^{-s(u - \tau_{\ell})} \\
&\leq \frac{2(1 + \delta) s^2}{\mu} \: e^{-s(u - \tau_{\ell})}.
\end{align*}
It follows that on $\Lambda$, if $N$ is sufficiently large,
\begin{equation}\label{nem2}
\int_{\tau_{\ell+1}}^{a_N T} \mu X_{\ell-1}(u) e^{-\int_{\tau_{\ell+1}}^u G_{\ell}(v) \: dv} \: du \leq \frac{2(1 + \delta)s}{\mu} \: e^{-s(\tau_{\ell+1} - \tau_{\ell})}.
\end{equation}
Now on $\Lambda$, by (\ref{tauspacing}), we have
\begin{equation}\label{nem4}
e^{-s(\tau_{\ell+1} - \tau_{\ell})} \leq e^{-a_N s/3k_N} = \bigg( \frac{s}{\mu} \bigg)^{-1/3k_N}.
\end{equation}
Also, on $\Lambda$ we have $a_N T \in [\tau_{\ell+1}, \gamma_{\ell + K}]$ if $N$ is sufficiently large and therefore, by (\ref{prop23}),
\begin{equation}\label{nem3}
e^{-\int_{\tau_{\ell+1}}^{a_N T} G_{\ell}(v) \: dv} X_{\ell}(a_N T) \geq \frac{(1 - \delta) s}{\mu}.
\end{equation}
Combining (\ref{nem1}), (\ref{nem2}), (\ref{nem4}), and (\ref{nem3}), and using that $X_{\ell}(a_N T) \leq N$, we get that for sufficiently large $N$,
\begin{align}\label{nem5}
E \bigg[ \frac{X_{\ell}^{\tau_{\ell+1}, a_N T}(a_N T) \1_{\Lambda}}{N} \bigg] &\leq E \bigg[ \frac{e^{-\int_{\tau_{\ell+1}}^{a_N T} G_{\ell}(v) \: dv} X_{\ell}^{\tau_{\ell+1}, a_N T}(a_N T) \1_{\Lambda}}{e^{-\int_{\tau_{\ell+1}}^{a_N T} G_{\ell}(v) \: dv} X_{\ell}(a_N T)} \bigg] \nonumber \\
&\leq E \bigg[ \frac{2(1 + \delta)(s/\mu)^{1 - 1/3k_N} + Z_{\ell}^{\tau_{\ell + 1}, a_N T}(a_N T)}{(1 - \delta)(s/\mu)} \bigg] \nonumber \\
&= \frac{2(1 + \delta)}{1 - \delta} \bigg( \frac{s}{\mu} \bigg)^{-1/3k_N}.
\end{align}
Because $(1/3k_N) \log (s/\mu) \rightarrow \infty$ as $N \rightarrow \infty$ by assumption A2, the expression on the right-hand side of (\ref{nem5}) tends to zero as $N \rightarrow \infty$.  The lemma follows by taking expectations of both sides in (\ref{condmut}).
\end{proof}

\subsection{Coalescence between times $a_N(T-1)$ and $a_N T$}

Our next goal is to show that for $\ell \in \{L, L+1, \dots, L+9\}$, the type $\ell$ individuals in the sample at time $a_N T$ all come from distinct ancestors at time $\tau_{\ell+1}$.  That is, the lineages do not coalesce as they are traced back from time $a_N T$ to time $\tau_{\ell + 1}$.  The precise statement is given in Lemma \ref{noearlycoal} below.  Because $\gamma_{\ell + 1} - \tau_{\ell + 1} = a_N$, this observation is very close to the statement (\ref{part1}) that none of the lineages coalesce when they are traced back $a_N$ time units.  We first establish the following preliminary lemma, which is more general than what is needed for the proof of Lemma \ref{noearlycoal} but will also be used later to prove Lemma \ref{coalj2}.

\begin{Lemma}\label{maxXj}
Suppose $k^*+1+K \leq j \leq J$.  Randomly label the type $j$ individuals at time $\tau_{j+1}$ by the integers $1, 2, \dots, \lceil s/\mu \rceil$.  For $t \geq \tau_{j+1}$, let $X_j^i(t)$ denote the number of type $j$ individuals at time $t$ that are descended from the individual labelled $i$ at time $\tau_{j+1}$.  Let $\gamma = \gamma_{j + K} \wedge \zeta \wedge a_N T$, and let
\begin{equation}\label{Mijdef}
R_{i,j} = \sup_{t \in [\tau_{j+1}, \gamma)} \frac{X_j^i(t)}{X_j(t)}.
\end{equation}
Then
\begin{equation}\label{sumEM}
E \bigg[ \sum_{i=1}^{\lceil s/\mu \rceil} R_{i,j}^2 \bigg] \leq \frac{C \mu}{s^2 k_N}.
\end{equation}
\end{Lemma}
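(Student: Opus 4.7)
The plan is to apply Corollary~\ref{ZmartCor3} to each singleton $\{i\}$ at the stopping time $\kappa=\tau_{j+1}$. Since $X_j^i(\tau_{j+1})=1$, the process
$$Z_j^i(t)=e^{-\int_{\tau_{j+1}}^{t}G_j^*(v)\,dv}X_j^i(t)-1$$
is a mean-zero martingale, which I will stop at $\gamma$. On $[\tau_{j+1},\gamma)\subseteq[0,\zeta)$ equation (\ref{Fjeq}) holds and thus $G_j^*=G_j$; the lower bound in (\ref{prop23}) gives $X_j(t)\geq\frac{(1-\delta)s}{\mu}\exp(\int_{\tau_{j+1}}^{t}G_j(v)\,dv)$, so
$$\frac{X_j^i(t)}{X_j(t)}\leq\frac{\mu}{(1-\delta)s}\bigl(1+Z_j^i(t)\bigr).$$
Since $1+Z_j^i(t)\geq 0$ and $(1+x)^2\leq 2+2x^2$,
$$R_{i,j}^2\leq\frac{2\mu^2}{(1-\delta)^2 s^2}\bigg(1+\sup_{t\in[\tau_{j+1},\gamma)}Z_j^i(t)^2\bigg).$$

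Doob's $L^2$ maximal inequality applied to the stopped martingale yields $E[\sup_t Z_j^i(t)^2]\leq 4E[Z_j^i(\gamma)^2]$. Combining the variance formula in Corollary~\ref{ZmartCor3} with the uniform bound $B_j^i+D_j^i\leq 3$ (obtained just as in (\ref{BD3}), valid on $[0,\zeta)$ for $j\leq J$), and using $\sum_i X_j^i(u)\leq X_j(u)$ together with the upper bound in (\ref{prop23}), I get
$$\sum_{i=1}^{\lceil s/\mu\rceil}E[Z_j^i(\gamma)^2]\leq 3E\bigg[\int_{\tau_{j+1}}^{\gamma}e^{-2\int_{\tau_{j+1}}^{u}G_j(v)\,dv}X_j(u)\,du\bigg]\leq\frac{3(1+\delta)s}{\mu}E\bigg[\int_{\tau_{j+1}}^{\gamma}e^{-\int_{\tau_{j+1}}^{u}G_j(v)\,dv}\,du\bigg].$$

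I control the last integral pointwise using part~5 of Proposition~\ref{newGq}, available because $j\geq k^*+1+K$: on $[\tau_{j+1},\gamma_{j-K}]$ the integrand decays at rate at least $sk_N/5$, contributing at most $5/(sk_N)$; on $[\gamma_{j-K},\gamma_{j+K}]$ it is bounded by $(s/\mu)^{-k_N/241}=N^{-1/241}$, and since by (\ref{tauspacing}) and $K\leq k_N/4$ this interval has length at most $a_N=s^{-1}\log(s/\mu)$, the contribution there is $O(\log N\cdot N^{-1/241}/s)$, negligible compared to $1/(sk_N)$. Hence $\sum_i E[Z_j^i(\gamma)^2]\leq C/(\mu k_N)$, and combining with the contribution $\lceil s/\mu\rceil\leq 2s/\mu$ coming from the ``$1$'' inside the parentheses,
$$E\bigg[\sum_i R_{i,j}^2\bigg]\leq\frac{C\mu^2}{s^2}\bigg(\frac{s}{\mu}+\frac{1}{\mu k_N}\bigg)=\frac{C\mu}{s}+\frac{C\mu}{s^2 k_N}\leq\frac{C\mu}{s^2 k_N},$$
using $sk_N\to 0$ from assumption A3 in the last step.

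The main obstacle is bookkeeping: ensuring that the lower and upper bounds on $X_j$, the identity $G_j^*=G_j$, and the exponential decay estimate from part~5 of Proposition~\ref{newGq} all hold uniformly on the random interval $[\tau_{j+1},\gamma)$. The definition of $\zeta$ is designed precisely so that every property stated in Propositions~\ref{mainjprop} and \ref{newGq} holds on $[0,\zeta)$, which resolves this. The degenerate case $\gamma\leq\tau_{j+1}$ (possible off $\Lambda$) is handled by the convention $\sup\emptyset=0$, which makes $R_{i,j}=0$ there so the bound is trivial.
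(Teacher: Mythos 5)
Your proof is correct and takes essentially the same route as the paper's: Corollary \ref{ZmartCor3} applied to singletons at $\tau_{j+1}$, the lower bound in (\ref{prop23}) to convert the ratio into the martingale, Doob's $L^2$ maximal inequality combined with the variance formula and the bound $B_j^S(u)+D_j^S(u)\leq 3$ from the reasoning of (\ref{BD3}), and part 5 of Proposition \ref{newGq} with the split at $\gamma_{j-K}$ to control the resulting integral. The only (harmless) deviation is in handling $X_j^i(u)$ inside the variance integral: the paper re-substitutes the representation $X_j^i(u)=e^{\int_{\tau_{j+1}}^{u}G_j(v)\,dv}(1+Z_j^i(u))$ and uses Fubini plus the mean-zero property separately for each $i$, whereas you sum over $i$ first via $\sum_i X_j^i(u)\leq X_j(u)$ and invoke the upper bound in (\ref{prop23}) to obtain a deterministic bound on the integrand; both give the same aggregate estimate of order $1/(\mu k_N)$ and lead to the stated conclusion.
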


\begin{proof}
By Corollary \ref{ZmartCor3} applied when $S$ consists only of the individual labelled $i$ at time $\tau_{j+1}$, for $i = 1, 2, \dots, \lceil s/\mu \rceil$ and $t \geq \tau_{j+1}$, we have
\begin{equation}\label{Xieq}
X_j^i(t \wedge \gamma) = e^{\int_{\tau_{j+1}}^{t \wedge \gamma} G_j(v) \: dv}(1 + Z_{j}^i(t)),
\end{equation}
where $(Z_j^i(\tau_{j+1} + t), t \geq 0)$ is a mean zero martingale.  
Now suppose $t \in [\tau_{j+1}, \gamma)$.  Using (\ref{Xieq}) and (\ref{prop23}),
$$\bigg( \frac{X_j^i(t)}{X_j(t)} \bigg)^2 \leq \frac{\mu^2}{(1 - \delta)^2 s^2} (1 + Z_j^i(t))^2.$$
Taking the supremum of both sides over $t \in [\tau_{j+1}, \gamma)$, then taking expectations and using that $(a + b)^2 \leq 2a^2 + 2b^2$, we get
\begin{equation}\label{EMi}
E[R_{i,j}^2] \leq \frac{2 \mu^2}{(1 - \delta)^2 s^2} \bigg(1 + E \bigg[\sup_{t \in [\tau_{j+1}, \gamma)} (Z_j^i(t))^2 \bigg] \bigg).
\end{equation}
By the $L^2$ Maximum Inequality for martingales, Corollary \ref{ZmartCor3}, and the reasoning used to derive (\ref{BD3}),
$$E \bigg[\sup_{t \in [\tau_{j+1}, \gamma)} (Z_j^i(t))^2 \bigg| {\cal F}_{\tau_{j+1}} \bigg] \leq 4 E \bigg[ \int_{\tau_{j+1}}^{\gamma} e^{-2 \int_{\tau_{j+1}}^u G_{j}(v) \: dv} \cdot 3 X_{j}^i(u) \: du \bigg| {\cal F}_{\tau_{j+1}} \bigg].$$  Combining this result with (\ref{Xieq}) gives
\begin{equation}\label{varZi}
E \bigg[\sup_{t \in [\tau_{j+1}, \gamma)} (Z_j^i(t))^2 \bigg| {\cal F}_{\tau_{j+1}} \bigg] \leq 12 E \bigg[ \int_{\tau_{j+1}}^{\gamma} e^{-\int_{\tau_{j+1}}^u G_{j}(v) \: dv} (1 + Z_{j}^{i}(u)) \: du \bigg| {\cal F}_{\tau_{j+1}} \bigg].
\end{equation}
Note that $1 + Z_j^i(u) \geq 0$ for all $u \in [\tau_{j+1}, \gamma)$ by (\ref{Xieq}).  Therefore,
by part 5 of Proposition \ref{newGq} and the fact that $(Z_j^i(\tau_{j+1}+t), t \geq 0)$ is a mean zero martingale,
\begin{align}\label{ipt1}
&E \bigg[ \int_{\tau_{j+1}}^{\gamma_{j-K} \wedge \gamma} e^{-\int_{\tau_{j+1}}^u G_{j}(v) \: dv} (1 + Z_{j}^i(u)) \: du \bigg| {\cal F}_{\tau_{j+1}} \bigg] \nonumber \\
&\hspace{1.5in}\leq E \bigg[ \int_{\tau_{j+1}}^{\infty} e^{-sk_N(u - \tau_{j+1})/5} (1 + Z_{j}^i(u)) \: du \bigg| {\cal F}_{\tau_{j+1}} \bigg] \nonumber \\
&\hspace{1.5in}= \int_{\tau_{j+1}}^{\infty} e^{-sk_N(u - \tau_{j+1})/5} \: du \nonumber \\
&\hspace{1.5in}= \frac{5}{sk_N}.
\end{align}
Also, using part 5 of Proposition \ref{newGq} again and the fact that $\gamma - \gamma_{j - K} \wedge \gamma \leq (2a_N/k_N)(2K) \leq a_N$ for sufficiently large $N$ by (\ref{tauspacing}),
\begin{align}\label{ipt2}
&E \bigg[ \int_{\gamma_{j-K} \wedge \gamma}^{\gamma} e^{-\int_{\tau_{j+1}}^u G_{j}(v) \: dv} (1 + Z_{j}^i(u)) \: du \bigg| {\cal F}_{\tau_{j+1}} \bigg] \nonumber \\
&\hspace{1.5in} \leq E \bigg[ \int_{\gamma_{j - K} \wedge \gamma}^{\gamma} \bigg( \frac{s}{\mu} \bigg)^{-k_N/241} (1 + Z_{j}^{(i)}(u)) \: du \bigg| {\cal F}_{\tau_{j+1}} \bigg] \nonumber \\
&\hspace{1.5in} \leq a_N \bigg( \frac{s}{\mu} \bigg)^{-k_N/241}.
\end{align}
Because $s k_N \cdot a_N (s/\mu)^{-k_N/241} \rightarrow 0$ as $N \rightarrow \infty$, as can easily be seen by taking logarithms, equations (\ref{varZi}), (\ref{ipt1}), and (\ref{ipt2}) imply that
$$E \bigg[\sup_{t \in [\tau_{j+1}, \gamma)} (Z_j^i(t))^2 \bigg| {\cal F}_{\tau_{j+1}} \bigg] \leq 12 \bigg( \frac{5}{sk_N} + a_N \bigg( \frac{s}{\mu} \bigg)^{-k_N/241} \bigg) \leq \frac{C}{sk_N}$$ for sufficiently large $N$.  Therefore, using (\ref{EMi}), we get for sufficiently large $N$,
$$E \bigg[ \sum_{i=1}^{\lceil s/\mu \rceil} R_{i,j}^2 \bigg] \leq \bigg\lceil \frac{s}{\mu} \bigg\rceil \frac{2 \mu^2}{(1 - \delta)^2 s^2} \cdot \bigg(1 + \frac{C}{sk_N} \bigg).$$  The result follows because $sk_N \rightarrow 0$ as $N \rightarrow \infty$ by assumption A3.
\end{proof}

Note that in the statement of Lemma \ref{noearlycoal} below, we consider only the lineages labelled $1$ and $2$ to simplify notation.  This is sufficient because individuals are sampled uniformly at random.  To bound the probability that the event in question occurs for some pair of lineages, we may simply multiply the probability that the event occurs for the lineages $1$ and $2$ by $\binom{n}{2}$.

\begin{Lemma}\label{noearlycoal}
We have
$$\lim_{N \rightarrow \infty} P\big(\Lambda \cap \{ U_1(a_N T) = U_2(a_N T) = \ell \mbox{ and }T_{1,2} \geq \tau_{\ell+1} \mbox{ for some }\ell \} \big) = 0.$$
\end{Lemma}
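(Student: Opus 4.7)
The plan is to combine Lemmas \ref{sameL} and \ref{noearlymut} with the second-moment estimate of Lemma \ref{maxXj}. By Lemma \ref{sameL}, on $\Lambda$ we may restrict attention to $\ell \in \{L, L+1, \dots, L+9\}$ at the cost of an error that vanishes as $N \to \infty$. By Lemma \ref{noearlymut}, we may further assume that whenever $U_i(a_N T) = \ell$ we also have $U_i(\tau_{\ell+1}) = \ell$ for each $i \in \{1,2\}$. Under these two restrictions, the event $T_{1,2} \geq \tau_{\ell+1}$ forces the two sampled lineages to share the same type-$\ell$ ancestor at time $\tau_{\ell+1}$.

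To control this last event, I would label the $\lceil s/\mu \rceil$ type-$\ell$ individuals at time $\tau_{\ell+1}$ as in Lemma \ref{maxXj} and, conditioning on ${\cal F}_{a_N T}$, note that the conditional probability that both sampled individuals descend from a common label $i$ is at most $(X_\ell^i(a_N T)/N)^2$. Summing over $i$ bounds the conditional probability of the bad event by $\sum_i (X_\ell^i(a_N T)/N)^2$. A crucial check is that, on $\Lambda$ and for $\ell \in \{L,\dots,L+9\}$, the spacing bound (\ref{tauspacing}) together with the definition (\ref{Ldef}) of $L$ and the choice $K = \lfloor k_N/4 \rfloor$ yield $k^* + 1 + K \leq \ell \leq J$ and, more importantly, $a_N T \in [\tau_{\ell+1}, \gamma_{\ell+K})$, i.e.\ $a_N T < \gamma$ in the notation of Lemma \ref{maxXj}. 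Using $X_\ell(a_N T) \leq N$, we obtain
$$\sum_i \bigg( \frac{X_\ell^i(a_N T)}{N} \bigg)^2 \leq \sum_i \bigg( \frac{X_\ell^i(a_N T)}{X_\ell(a_N T)} \bigg)^2 \leq \sum_i R_{i,\ell}^2.$$

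Taking expectations, Lemma \ref{maxXj} bounds this by $C\mu/(s^2 k_N)$ for each such $\ell$. Summing over all $\ell$ with $k^* + 1 + K \leq \ell \leq J$ (a safe overestimate of the random range $\{L,\dots,L+9\}$) gives a total of order $J\mu/(s^2 k_N) \leq C T \mu / s^2$, which tends to zero by (\ref{musN}) with $a=2$. Combined with the vanishing contributions from Lemmas \ref{sameL} and \ref{noearlymut}, this establishes the lemma. The main obstacle is the geometric check that $a_N T$ lies strictly below $\gamma_{\ell + K}$ for every $\ell \in \{L,\dots,L+9\}$ on $\Lambda$, since this is what permits Lemma \ref{maxXj} to be applied and the bound via $R_{i,\ell}$ to be meaningful; this amounts to tracing through the precise constants in (\ref{Ldef}), (\ref{tauspacing}), and the definition of $K$.
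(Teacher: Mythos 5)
Your proposal is correct and follows essentially the same route as the paper's proof: reduce to $\ell \in \{L,\dots,L+9\}$ with type-$\ell$ ancestors at $\tau_{\ell+1}$ via Lemmas \ref{sameL} and \ref{noearlymut}, bound the conditional probability of a shared ancestor by $\sum_i R_{i,\ell}^2$ and apply Lemma \ref{maxXj} after checking $L \geq k^*+1+K$ and $\gamma_{L+9+K} \geq a_N T$ on $\Lambda$, then sum over a deterministic range of $\ell$ to get a bound of order $J\mu/(s^2 k_N) \leq CT\mu/s^2 \rightarrow 0$ by (\ref{musN}). The only slip is notational: since $\gamma = \gamma_{\ell+K} \wedge \zeta \wedge a_N T \leq a_N T$ by definition, the correct check is that $\gamma = a_N T$ on $\Lambda$ (not $a_N T < \gamma$), after which the supremum over $[\tau_{\ell+1}, \gamma)$ still controls the ratio at time $a_N T$ because almost surely no transition occurs at exactly that time, as the paper notes.
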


\begin{proof}
We know from Lemma \ref{noearlymut} that with probability tending to one as $N \rightarrow \infty$, on $\Lambda$ all type $\ell$ individuals sampled at time $a_N T$ have type $\ell$ ancestors at time $\tau_{\ell+1}$.
Therefore, it suffices to show that 
\begin{align}\label{probtobound}
&\lim_{N \rightarrow \infty} P\big(\Lambda \cap \{ U_1(a_N T) = U_2(a_N T) = U_1(\tau_{\ell+1}) = U_2(\tau_{\ell+1}) = \ell \nonumber \\
&\hspace{3in} \mbox{ and }T_{1,2} \geq \tau_{\ell+1} \mbox{ for some }\ell \} \big) = 0.
\end{align}
That is, we need to show it is unlikely that the first two individuals in the sample are both type $\ell$ individuals that are descended from the same type $\ell$ individual at time $\tau_{\ell+1}$. 

Randomly label the type $\ell$ individuals at time $\tau_{\ell+1}$ by the integers $1, 2, \dots, \lceil s/\mu \rceil$.  Let $X^i_{\ell}(t)$ denote the number of type $\ell$ individuals at time $t$ descended from the $i$th type $\ell$ individual in the population at time $\tau_{\ell+1}$.  Since each individual at time $a_N T$ is equally likely to be sampled,
\begin{align}\label{ptb1}
&P\big(\Lambda \cap \{ U_1(a_N T) = U_2(a_N T) = U_1(\tau_{j+1}) = U_2(\tau_{j+1}) = \ell\} \cap \{T_{1,2} \geq \tau_{\ell+1} \} \big| {\cal F}_{a_N T} \big) \nonumber \\
&\hspace{3.2in} =\sum_{i=1}^{\lceil s/\mu \rceil} \frac{X_{\ell}^i(a_N T)(X_{\ell}^i(a_N T) - 1) \1_{\Lambda}}{N (N-1)}.
\end{align}

By Lemma \ref{sameL}, it suffices to consider $\ell \in \{L, L+1, \dots, L+9\}$.
Part 6 of Proposition \ref{mainjprop} implies that on $\Lambda$, we have $\tau_{k^*+1+K} \leq 2a_N(K+1)/k_N$, and therefore $L \geq k^*+1+K$ for sufficiently large $N$.  Also, in view of (\ref{tauspacing}), on $\Lambda$ we have $\gamma_{L+9+K} \geq a_N T$.  Therefore, applying Lemma \ref{maxXj}, and noting that the probability of a change in the population at exactly time $a_N T$ is zero, for each fixed positive integer $\ell$ we have
\begin{equation}\label{ptb2}
E \bigg[ \sum_{i=1}^{\lceil s/\mu \rceil} \frac{X_{\ell}^i(a_N T)(X_{\ell}^i(a_N T) - 1)\1_{\{L \leq \ell \leq L+9\} \cap \Lambda}}{N (N-1)} \bigg] \leq E \bigg[ \sum_{i=1}^{\lceil s/\mu \rceil} R_{i,\ell}^2 \bigg] \leq \frac{C \mu}{s^2 k_N}.
\end{equation}
Taking expectations of both sides of (\ref{ptb1}) and then using (\ref{ptb2}) and the fact that $L + 9 \leq J$ on $\Lambda$ by Remark \ref{Jrem}, we get that the probability in (\ref{probtobound}) is bounded above by $C J \mu/(s^2 k_N)$, which tends to zero as $N \rightarrow \infty$ by (\ref{musN}).  Thus, (\ref{probtobound}) holds, which implies the result of the lemma.
\end{proof}

\begin{Rmk}\label{LL9Rmk}
{\em It follows from Lemmas \ref{sameL} and \ref{noearlymut} that with probability tending to one as $N \rightarrow \infty$, we have $U_i(\tau_{L+10}) = U_i(a_N T)$ for all $i \in \{1, \dots, n\}$.  Because individuals in the population model inherit all of their parents mutations, two lineages can only coalesce if they have the same type.  That is, we must have $U_i(T_{i,j}) = U_j(T_{i,j})$ for $i,j \in \{1, \dots, n\}$.  It therefore follows from Lemma \ref{noearlycoal} that with probability tending to one as $N \rightarrow \infty$, no lineages coalesce as they are traced back from time $a_N T$ to time $\tau_{L + 10}$.  The fact that the probability of coalescence between times $\tau_L$ and $\tau_{L+10}$ tends to zero as $N \rightarrow \infty$, which would imply (\ref{part1}), will be established later.}
\end{Rmk}

\section{Tracing the ancestral lines between times $\tau_j$ and $\tau_{j+1}$}

Lemmas \ref{noearlymut} and \ref{noearlycoal} show that the type $\ell$ individuals in the sample at time $a_N T$ are typically descended from distinct type $\ell$ ancestors at time $\tau_{\ell+1}$.  In this subsection, we consider tracing these ancestral lines back further in time.  In particular, we focus on what happens when lineages are traced back from time $\tau_{j+1}$ to $\tau_j$.  We establish that with high probability, type $j$ individuals at time $\tau_{j+1}$ are descended from type $j-1$ individuals at time $\tau_j$, and lineages will only coalesce when many type $j$ lineages are traced back to an individual that acquired its $j$th mutation before the time $\xi_j$ defined in (\ref{xijdef}).

\subsection{Approximating $\tau_j$ by the fixed time $\tau_j^*$}

We define here some fixed times $\tau_j^*$ that approximate the random times $\tau_j$.  Let $\tau_{k^*+1}^* = 0$.  For integers $j \geq k^* + 1$, let
\begin{equation}\label{tjstardef}
\tau_{j+1}^* = \tau_j^* + \frac{a_N}{k_N q(\tau_j^*/a_N)},
\end{equation}
where $q$ is the function defined in Proposition \ref{qdef}.  Because $1 \leq q(u) \leq e$ for all $u \geq 0$ by Proposition \ref{Qlem}, we have
\begin{equation}\label{tauspacing2}
\frac{a_N}{e k_N} \leq \tau_{j+1}^* - \tau_j^* \leq \frac{a_N}{k_N}.
\end{equation}
For $u \in (0, T]$, let $j^*(u) = \max\{j: \tau_j^* \leq a_N u\}$ and $j'(u) = \max\{j: \tau_j \leq a_N u\}$.  The lemma below shows that $\tau_j^*$ is a good approximation to $\tau_j$.

\begin{Lemma}\label{taustar}
Fix $u \in (0, T]$.  On the event $\{\zeta > a_N u\}$, we have
\begin{equation}\label{tslem1}
|j^*(u) - j'(u)| \leq 9 \delta T k_N.
\end{equation}
Likewise, let $j \in \{k^* + 1, \dots, J\}$.  On the event $\{\tau_j < \zeta \wedge a_N T\}$, we have
\begin{equation}\label{tslem2}
|\tau^*_j - \tau_j| \leq 10 \delta a_N T.
\end{equation}
\end{Lemma}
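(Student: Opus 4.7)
The core idea is to compare $\tau_j$ and $\tau_j^*$ by applying the strictly increasing function $F(u) = \int_0^u q(v)\,dv$ to both and then inverting. Because $q \geq 1$ by (\ref{qbounds}), $F^{-1}$ is Lipschitz with constant at most one, so any bound on $|F(\tau_j/a_N) - F(\tau_j^*/a_N)|$ transfers directly to a bound on $|\tau_j - \tau_j^*|/a_N$. The plan is to estimate each of $F(\tau_j/a_N)$ and $F(\tau_j^*/a_N)$ in terms of $(j-k^*-1)/k_N$ up to error $O(\delta T) + O((1+T)/k_N)$, and then subtract.

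For $\tau_j$, on $\{\tau_j < \zeta \wedge a_N T\}$ the integral bounds in part 6 of Proposition \ref{mainjprop} apply for every $i \in \{k^*+1, \dots, j-1\}$, since $\tau_{i+1} \leq \tau_j \leq a_N T$. Summing telescopes the left-hand sides to $F(\tau_j/a_N) - F(\tau_{k^*+1}/a_N)$. The extra indicator $\1_{\{t \in [1,\gamma_{k^*+1}/a_N)\}}$ on the lower bound integrates over the entire range to at most $\gamma_{k^*+1}/a_N - 1 = \tau_{k^*+1}/a_N \leq 2/k_N$ (using $\tau_{k^*+1} \leq 2a_N/k_N$). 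Combined with $j-k^*-1 \leq 3k_N T$ (Remark \ref{Jrem}) and $F(\tau_{k^*+1}/a_N) \leq 2e/k_N$ (from $q \leq e$), this yields
$$\bigl| F(\tau_j/a_N) - (j-k^*-1)/k_N \bigr| \leq 6\delta T + (2e + 2)/k_N.$$

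For $\tau_j^*$, the recurrence (\ref{tjstardef}) rewrites as $q(\tau_i^*/a_N)(\tau_{i+1}^* - \tau_i^*)/a_N = 1/k_N$, so the discrepancy $\int_{\tau_i^*/a_N}^{\tau_{i+1}^*/a_N} q(t)\,dt - 1/k_N$ measures how far $q$ departs from the constant $q(\tau_i^*/a_N)$ on that interval. By the Lipschitz bound (\ref{qlip}) together with the spacing (\ref{tauspacing2}), this has magnitude at most $e/(2k_N^2)$ whenever $1 \notin (\tau_i^*/a_N, \tau_{i+1}^*/a_N]$; at most one of the $j-k^*-1$ intervals straddles $t=1$, and on that one interval the trivial bound $q \leq e$ keeps the error under $e/k_N$. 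Telescoping and using $\tau_{k^*+1}^* = 0$ give $|F(\tau_j^*/a_N) - (j-k^*-1)/k_N| = O((1+T)/k_N)$.

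Subtracting the two displayed estimates and inverting $F$ produces (\ref{tslem2}) for all $N$ large. For (\ref{tslem1}), I would note that $F(\tau_{j'(u)}/a_N) \leq F(u) \leq F(\tau_{j'(u)+1}/a_N)$ with gap at most $(1+2\delta)/k_N$ by part 6 of Proposition \ref{mainjprop}, and analogously $|F(\tau_{j^*(u)}^*/a_N) - F(u)| \leq e/k_N$ by (\ref{tauspacing2}) and $q \leq e$. Substituting $j'(u)$ and $j^*(u)$ into the two estimates above and rearranging bounds $|j^*(u) - j'(u)|/k_N$ by $6\delta T + O((1+T)/k_N)$, which is below $9\delta T$ for $N$ large. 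The only delicate point in the calculation is keeping track of the two systematic $O(1/k_N)$ corrections --- the initial offset $F(\tau_{k^*+1}/a_N) \neq 0$ and the single interval that may straddle the discontinuity of $q$ at $t=1$ --- but both comfortably fit inside the slack between the $6\delta T$ produced by the summation and the $9\delta T$ (resp.\ $10\delta T$) appearing in the target inequalities.
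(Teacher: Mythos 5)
Your core computation coincides with the paper's: both arguments telescope the integral bounds of part 6 of Proposition \ref{mainjprop} over the random grid $\{\tau_i\}$, telescope the defining relation (\ref{tjstardef}) over the deterministic grid $\{\tau_i^*\}$ using (\ref{qlip}) and (\ref{tauspacing2}) (with the single interval straddling $t=1$ handled crudely), and use $j - k^* - 1 \leq 3k_N T$ to turn the accumulated $2\delta$-per-step error into an $O(\delta T)$ error, so that $\int_0^{\cdot} q$ is expressed two ways and compared. Where you genuinely depart from the paper is the passage to (\ref{tslem2}): the paper proves (\ref{tslem1}) first and then converts the index bound into a time bound by counting how many grid points $\tau_i^*$ fall between $\tau_j$ and $\tau_j^*$ (applying (\ref{tslem1}) at the random time $u = \tau_j/a_N$, which is legitimate because the inequality holds pathwise) and multiplying by the maximal spacing $a_N/k_N$; you instead obtain (\ref{tslem2}) directly from the observation that $q \geq 1$ makes your $F(u) = \int_0^u q(v)\,dv$ expand distances, so $F^{-1}$ is $1$-Lipschitz and the bound on $|F(\tau_j/a_N) - F(\tau_j^*/a_N)|$ transfers immediately. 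Your route decouples the two claims, avoids the random-$u$ application of (\ref{tslem1}), and yields slightly smaller constants ($6\delta T$ plus $O((1+T)/k_N)$ against the paper's $3e\delta T$ plus a constant), all comfortably inside the stated $9\delta T k_N$ and $10 \delta a_N T$.

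One small repair is needed in your argument for (\ref{tslem1}): you bound the gap $F(\tau_{j'(u)+1}/a_N) - F(\tau_{j'(u)}/a_N)$ by $(1+2\delta)/k_N$ via part 6, but $\tau_{j'(u)+1}$ lies beyond time $a_N u$, and on the event $\{\zeta > a_N u\}$ only ${\cal F}_{a_N u}$-measurable consequences of $\Lambda$ are guaranteed, so properties of the future increment $\tau_{j'(u)+1} - \tau_{j'(u)}$ are not available. The fix is to bound only the portion you actually need, $F(u) - F(\tau_{j'(u)}/a_N) \leq e\big(u - \tau_{j'(u)}/a_N\big) \leq 2e/k_N$, using that $u - \tau_{j'(u)}/a_N \leq 2/k_N$ is ${\cal F}_{a_N u}$-measurable and implied by $\Lambda$ (if $\tau_{j'(u)} + 2a_N/k_N \leq a_N T$ this follows from the spacing bound, and otherwise it is automatic since $u \leq T$); this is exactly what the paper does, and it leaves your error budget unchanged.
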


\begin{proof}
Suppose $j \in \{k^* + 1, \dots, J\}$ and $\tau_{j+1} < \zeta \wedge a_N T$.  By part 6 of Proposition \ref{mainjprop},
\begin{equation}\label{tsu0}
\frac{1 - 2 \delta}{k_N} - \int_{\tau_j/a_N}^{\tau_{j+1}/a_N} \1_{\{u \in [1, \gamma_{k^*+1}/a_N)\}} \: du \leq \int_{\tau_j/a_N}^{\tau_{j+1}/a_N} q(u) \: du \leq \frac{1 + 2 \delta}{k_N}.
\end{equation}
Therefore, if $u \in (0, T]$ and $\zeta > a_N u$, then, using that $\tau_{k^*+1}/a_N \leq 2/k_N$ and $u - \tau_{j'(u)}/a_N \leq 2/k_N$ by part 6 of Proposition \ref{mainjprop} and that $q(v) \leq e$ for all $v \in [0, u]$ by Proposition \ref{Qlem}, we have
\begin{align}\label{tsu1}
\int_0^u q(v) \: dv &\leq \int_0^{\tau_{k^*+1}/a_N} q(u) \: du + \frac{(1 + 2 \delta)(j'(u) - (k^* + 1))}{k_N} + \int_{\tau_{j'(u)}/a_N}^u q(v) \: dv \nonumber \\
&\leq \frac{(1 + 2 \delta)(j'(u) - (k^* + 1))}{k_N} + \frac{4e}{k_N}.
\end{align}
Likewise, using that $\gamma_{k^*+1}/a_N - 1 \leq 2/k_N$ by part 6 of Proposition \ref{mainjprop}, the lower bound in (\ref{tsu0}) implies that
\begin{equation}\label{tsl1}
\int_0^u q(v) \: dv \geq \frac{(1 - 2 \delta)(j'(u) - (k^* + 1))}{k_N} - \frac{2}{k_N}.
\end{equation}

By definition, 
\begin{equation}\label{ts1}
\int_{\tau_j^*/a_N}^{\tau_{j+1}^*/a_N} q \bigg( \frac{\tau_j^*}{a_N} \bigg) \: du = \frac{1}{k_N}.
\end{equation}
By (\ref{qlip}) and (\ref{tauspacing2}), if $u \in [\tau_j^*/a_N, \tau_{j+1}^*/a_N)$ and $\tau_{j+1}^*/a_N < \zeta$, then $$\bigg|q(u) - q\bigg(\frac{\tau_j^*}{a_N}\bigg) \bigg| \leq \frac{e(\tau^*_{j+1} - \tau^*_j)}{a_N} \leq \frac{e}{k_N}$$ unless $1 \in (\tau_j^*/a_N, u]$.  Combining this observation with (\ref{ts1}) and (\ref{tauspacing2}), we get
\begin{equation}\label{ts2}
\bigg( \frac{1}{k_N} - \frac{e}{k_N^2} \bigg) \1_{\{1 \notin (\tau_j^*/a_N, \tau_{j+1}^*/a_N]\}} \leq \int_{\tau_j^*/a_N}^{\tau_{j+1}^*/a_N} q(u) \: du \leq \frac{1}{k_N} + \frac{e}{k_N^2}.
\end{equation}
Now (\ref{tauspacing2}) and (\ref{ts2}) imply that
\begin{align}
\label{tsu2}
\int_0^u q(v) \: dv &= \int_{\tau_{k^*+1}/a_N}^{\tau_{j^*(u)}/a_N} q(v) \: dv + \int_{\tau_{j^*(u)}/a_N}^u q(v) \: dv \nonumber \\
&\leq \frac{(1 + e/k_N)(j^*(u) - (k^*+1))}{k_N} + \frac{e}{k_N}
\end{align}
and
\begin{equation}\label{tsl2}
\int_0^u q(v) \: dv \geq \frac{(1 - e/k_N)(j^*(u) - (k^* + 1) - 1)}{k_N}.
\end{equation}

Combining (\ref{tsl1}) and (\ref{tsu2}) gives, for sufficiently large $N$,
\begin{align*}
j'(u) - (k^*+1) &\leq \frac{(1 + e/k_N)(j^*(u) - (k^*+1))}{1 - 2 \delta} + \frac{e + 2}{1 - 2 \delta} \\
&\leq (1 + 3 \delta)(j^*(u) - (k^* + 1)) + 5.
\end{align*}
Rearranging this expression, and using that $j^*(u) - (k^* + 1) \leq (a_N u)(ek_N/a_N) \leq eT k_N$ by (\ref{tauspacing2}), we get for sufficiently large $N$,
\begin{equation}\label{ts3}
j'(u) - j^*(u) \leq 3 \delta (j^*(u) - (k^* + 1)) + 5 \leq 9 \delta T k_N.
\end{equation}
Likewise, combining (\ref{tsu1}) and (\ref{tsl2}), we get for sufficiently large $N$,
\begin{align*}
j'(u) - (k^* + 1) &\geq \frac{(1 - e/k_N)(j^*(u) - (k^*+1) - 1)}{1 + 2 \delta} - \frac{4e}{1 + 2 \delta} \\
&\geq (1 - 3 \delta)(j^*(u) - (k^* + 1)) - (4e + 1).
\end{align*}
Rearranging, and again using that $j^*(u) - (k^* + 1) \leq eT k_N$, we get
\begin{equation}\label{ts4}
j'(u) - j^*(u) \geq - 3 \delta (j^*(u) - (k^* + 1)) - (4e+1) \geq -9 \delta T k_N.
\end{equation}
The result (\ref{tslem1}) follows from (\ref{ts3}) and (\ref{ts4}).

Finally, to prove (\ref{tslem2}), note that on the event $\{\tau_j < \zeta \wedge a_N T\}$, we have $j^*(\tau_j^*/a_N) = j$ and $j'(\tau_j/a_N) = j$.  Therefore, using (\ref{tslem1}), we have
$$|j^*(\tau_j/a_N) - j^*(\tau_j^*/a_N)| = |j^*(\tau_j/a_N) - j'(\tau_j/a_N)| \leq 9 \delta T k_N.$$
Since $|j^*(\tau_j/a_N) - j^*(\tau_j^*/a_N)|$ is the number of points $\tau_i^*$ that land between $\tau_j$ and $\tau_j^*$,
it now follows from (\ref{tauspacing2}) that for sufficiently large $N$,
$$|\tau_j - \tau_j^*| \leq (9 \delta T k_N + 1) \cdot \frac{a_N}{k_N} \leq 10 \delta a_N T,$$ which matches (\ref{tslem2}).
\end{proof}

Define the fixed positive integers 
$$j_1 = j^*(T - (t+1)) - \lfloor 9 \delta T k_N \rfloor, \hspace{.4in}j_2 = j^*(T - 1 + 19/k_N) + \lfloor 9 \delta T k_N \rfloor,$$ and let $$I = \{j \in \N: j_1 \leq j \leq j_2\}.$$  The next result shows that, when tracing ancestral lines back from time $a_N(T-1)$ to time $a_N(T - (t+1))$, we only need to consider time intervals $[\tau_j, \tau_{j+1}]$ for $j \in I$. 

\begin{Lemma}\label{Ilem}
On the event $\Lambda$, for sufficiently large $N$, we have
\begin{equation}\label{jstarprime}
a_N(T - (t + 1)) - 10 \delta a_N T \leq \tau_{j_1}^* \leq \tau_{j_2}^* \leq a_N(T - 1) + 10 \delta a_N T
\end{equation}
and
\begin{equation}\label{jprime}
a_N + \frac{2a_N}{k_N} < \tau_{j_1} < a_N(T - (t + 1)).
\end{equation}
Also, $L+9 \leq j_2 \leq J$ and $\tau_{j_2+1} < a_N T$.   Furthermore, the cardinality of $I$ is at most $3T k_N$.
\end{Lemma}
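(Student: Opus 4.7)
The plan is to derive all four conclusions from the approximation results of Lemma \ref{taustar} together with the spacing bounds (\ref{tauspacing}) and (\ref{tauspacing2}). I would first establish (\ref{jstarprime}), which lives entirely on the deterministic side, and then pass to (\ref{jprime}) and to the remaining claims via the Lemma \ref{taustar} bound $|\tau_j - \tau_j^*| \leq 10 \delta a_N T$.

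For (\ref{jstarprime}), the upper bound on $\tau_{j_2}^*$ follows from $\tau_{j^*(T-1+19/k_N)}^* \leq a_N(T - 1 + 19/k_N)$ by definition of $j^*$, together with $\lfloor 9 \delta T k_N \rfloor$ additional spacings of length at most $a_N/k_N$ each from (\ref{tauspacing2}), contributing an extra $9 \delta T a_N + 19 a_N/k_N$; for large $N$ this total is at most $10 \delta a_N T$. The lower bound on $\tau_{j_1}^*$ is symmetric: use $\tau_{j^*(T-(t+1))+1}^* > a_N(T-(t+1))$ and subtract $\lfloor 9 \delta T k_N\rfloor + 1$ spacings of length at most $a_N/k_N$. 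The ordering $\tau_{j_1}^* \leq \tau_{j_2}^*$ holds once $N$ is large, since $T - (t+1) \leq T - 1 + 19/k_N$ eventually.

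For the upper bound in (\ref{jprime}), the key observation is that $j_1 \leq j'(T-(t+1))$ as integers: Lemma \ref{taustar} gives $j^*(T-(t+1)) - j'(T-(t+1)) \leq 9 \delta T k_N$, so $j_1 = j^*(T-(t+1)) - \lfloor 9 \delta T k_N \rfloor \leq j'(T-(t+1)) + (9 \delta T k_N - \lfloor 9 \delta T k_N \rfloor) < j'(T-(t+1)) + 1$, and integrality forces $j_1 \leq j'(T-(t+1))$, giving $\tau_{j_1} \leq a_N(T-(t+1))$; the strict inequality holds because the hitting times $\tau_j$ have no atoms. The lower bound $\tau_{j_1} > a_N + 2 a_N/k_N$ combines $\tau_{j_1} \geq \tau_{j_1}^* - 10 \delta a_N T \geq a_N(T - (t+1)) - 20 \delta a_N T$ with the constraint $T - (t + 2) > 20 \delta T + 2/k_N$ supplied by (\ref{deldef}) once $N$ is large.

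For the last three claims, the bound $\tau_{j_2+1} < a_N T$ follows from $\tau_{j_2+1} \leq \tau_{j_2}^* + 10 \delta a_N T + 2 a_N/k_N \leq a_N(T-1) + 20 \delta a_N T + 2 a_N/k_N$ combined with smallness of $\delta$ from (\ref{deldef}); this forces $j_2 + 1 < J$ via Remark \ref{Jrem}, so $j_2 \leq J$. For $j_2 \geq L + 9$, the crucial input is the $19/k_N$ offset baked into the definition of $j_2$: the definition of $L$ combined with the upper part of (\ref{tauspacing}) gives $\tau_{L+10} \leq a_N(T-1) + 19 a_N/k_N = a_N(T - 1 + 19/k_N)$, so $j'(T-1+19/k_N) \geq L + 10$, and Lemma \ref{taustar} together with $\lfloor 9 \delta T k_N \rfloor \geq 9 \delta T k_N - 1$ then yields $j_2 \geq L + 9$. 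Finally, $|I| = j_2 - j_1 + 1 = (j^*(T - 1 + 19/k_N) - j^*(T - (t + 1))) + 2 \lfloor 9 \delta T k_N \rfloor + 1$, and the lower spacing in (\ref{tauspacing2}) converts the first difference into at most $e(t + 19/k_N) k_N + O(1)$, giving $|I| \leq e(t + 1) k_N + 18 \delta T k_N + O(1) \leq 3 T k_N$ for large $N$ via $t + 1 < T - 1$. The main obstacle throughout is purely arithmetic bookkeeping: the error terms $9 \delta T k_N$, $10 \delta a_N T$, and $a_N/k_N$ must each be matched against the specific slack supplied by (\ref{deldef}) so that every inequality closes at the prescribed constant.
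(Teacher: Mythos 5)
Most of your argument coincides with the paper's: (\ref{jstarprime}) via the spacing bound (\ref{tauspacing2}), the upper bound in (\ref{jprime}) via $j_1 \leq j'(T-(t+1))$ from (\ref{tslem1}), the lower bound in (\ref{jprime}) closed against the slack $\delta < (T-(t+2))/(40T)$, and $j_2 \geq L+9$ via $\tau_{L+10} \leq a_N(T-1) + 19a_N/k_N$. Your cardinality count is a genuine (and valid) alternative: the paper instead deduces $|I| \leq 3Tk_N$ from $j_2 + 1 \leq J$ and $j_1 \geq k^*+1$, while you count $\tau_i^*$'s directly with the lower spacing in (\ref{tauspacing2}); your version closes because $\delta < 1/100$ makes $e(t+1) + 18\delta T < 3T$.

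The genuine gap is in your proof that $\tau_{j_2+1} < a_N T$, and it is twofold. First, circularity: the chain $\tau_{j_2+1} \leq \tau_{j_2} + 2a_N/k_N \leq \tau_{j_2}^* + 10\delta a_N T + 2a_N/k_N$ invokes (\ref{tslem2}) for $j = j_2$, whose hypothesis is $\tau_{j_2} < \zeta \wedge a_N T$ (on $\Lambda$, this means $\tau_{j_2} < a_N T$), and invokes (\ref{tauspacing}) for $j = j_2$, which by part 6 of Proposition \ref{mainjprop} requires $\tau_{j_2} + 2a_N/k_N \leq a_N T$ or $\tau_{j_2+1} \leq a_N T$. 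Neither hypothesis is available at this stage; both are essentially the statement being proved, and an attempt to bootstrap upward from $j_1$ runs into the same requirement at every step. Second, even granting those hypotheses, the arithmetic does not close: you need $a_N(T-1) + 20\delta a_N T + 2a_N/k_N < a_N T$, i.e.\ $20\delta T + 2/k_N < 1$, but (\ref{deldef}) only guarantees $\delta < 1/(19T)$ (the bound $\delta < 1/100$ is worse for large $T$), so $20\delta T$ can be as large as $20/19 > 1$. The constant $19$ in (\ref{deldef}) is calibrated exactly for the paper's route, which never converts back to the time scale with the lossy bound $10\delta a_N T$ (each such bound rounds $9\delta T k_N + 1$ up to $10\delta T k_N$, and you pay it twice).

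The paper's fix stays on the index scale, where the error is $9\delta T k_N$ per application rather than $10\delta T k_N$. On $\Lambda$ we have $\zeta > a_N T$, so (\ref{tslem1}) applies at $u = T$ with no hypothesis on any $\tau_j$: $j'(T) \geq j^*(T) - 9\delta T k_N$. Counting the fixed times $\tau_i^*$ in $[a_N(T-1+19/k_N), a_N T]$, of which there are at least $k_N - 20$ by (\ref{tauspacing2}), gives $j^*(T) \geq j^*(T-1+19/k_N) + k_N - 20$, whence $j'(T) \geq j_2 + k_N(1 - 18\delta T) - 20 > j_2 + 1$ for large $N$, precisely because $18\delta T < 18/19 < 1$. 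This yields $\tau_{j_2+1} < a_N T$, and then your derivation of $j_2 \leq J$ from Remark \ref{Jrem}, which currently rests on the flawed step, goes through as stated.
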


\begin{proof}
Throughout the proof, we will work on the event $\Lambda$.  Using (\ref{tauspacing2}), we get that for sufficiently large $N$, $$\tau_{j_1}^* \geq a_N(T - (t + 1)) - (9 \delta T k_N + 1) \cdot \frac{a_N}{k_N} \geq a_N(T - (t + 1)) - 10 \delta a_N T$$ and
$$\tau_{j_2}^* \leq a_N \bigg(T - 1 + \frac{19}{k_N} \bigg) + (9 \delta T k_N) \cdot \frac{a_N}{k_N} \leq a_N(T - 1) + 10 \delta a_N T.$$  We have now proved (\ref{jstarprime}).

By (\ref{tslem1}), we have $j_1 \leq j'(T - (t + 1))$, and thus $\tau_{j_1} \leq a_N(T - (t + 1))$, which is the upper bound in (\ref{jprime}).  To get the lower bound, note that (\ref{tslem2}) and (\ref{tauspacing2}) give $$\tau_{j_1} \geq \tau^*_{j_1} - 10 \delta a_N T \geq \tau^*_{j^*(T - (t+1))} - (9 \delta T k_N) \cdot \frac{a_N}{k_N} - 10 \delta a_N T.$$  Since (\ref{tauspacing2}) implies $\tau^*_{j^*(u)} \geq a_N u - a_N/k_N$ for $u \in (0, T]$, it follows, using (\ref{deldef}), that for sufficiently large $N$,
\begin{align}\label{jp2}
\tau_{j_1} &\geq a_N(T - (t+1)) - \frac{a_N}{k_N} - 19 \delta a_N T \nonumber \\
&> a_N + a_N(T - (t + 2) - 20 \delta T) \nonumber \\
&\geq a_N + a_N \bigg( \frac{T - (t+2)}{2} \bigg).
\end{align}
The lower bound in (\ref{jprime}) follows because $\lim_{N \rightarrow \infty} k_N = \infty$.

Next, note that by (\ref{tauspacing}) and (\ref{Ldef}), we have $\tau_{L+10} \leq a_N(T-1) + 19 a_N/k_N$.  By (\ref{tslem1}), we have $j_2 \geq j'(T - 1 + 19/k_N)$, which means $\tau_{j_2+1} > a_N(T - 1 + 19/k_N) \geq \tau_{L+10}$ and thus $j_2 \geq L + 9$.  Also, by (\ref{tauspacing2}), the number of times $\tau_i^*$ between $a_N(T - 1 + 19/k_N)$ and $a_N T$ is at least $(k_N/a_N)(a_N(1 - 19/k_N)) - 1 = k_N - 20$.  Therefore, using (\ref{tslem1}), $$j'(T) \geq j^*(T) - 9 \delta T k_N \geq j^*(T - 1 + 19/k_N) + k_N - 20 - 9 \delta T k_N \geq j_2 + k_N - 20 - 18 \delta T k_N,$$ which is greater than $j_2 + 1$ for sufficiently large $N$ because $\delta < 1/19T$ by (\ref{deldef}).  It follows that $\tau_{j_2 + 1} < a_N T$.

Finally, by Remark \ref{Jrem}, we have $j_2 + 1 \leq J$.  Also, we have and $j_1 \geq k^* + 1$ for sufficiently large $N$ by (\ref{jp2}), so $j_2 - j_1 + 1 \leq 3Tk_N$, which is equivalent to the last statement of the lemma.
\end{proof}

\subsection{The types of the ancestors at time $\tau_j$}

Lemma \ref{taujmut} below establishes that with high probability, the type $\ell$ individuals in the sample get traced back to type $\ell-1$ individuals at time $\tau_{\ell}$, then to type $\ell - 2$ individuals at time $\tau_{\ell - 1}$, and so on until we have traced the lineages back to time $a_N(T - (t + 1))$.  We begin with the following preliminary result.

\begin{Lemma}\label{Kjlem2}
Let $j \in I$.  Let $K_j$ be the number of type $j$ individuals in the population at time $\tau_{j+1}$ whose ancestor in the population at time $\tau_j$ does not have type $j-1$.  Then
$$E[K_j \1_{\{\tau_{j+1} < \zeta\}}] \leq 5 \bigg( \frac{s}{\mu} \bigg)^{1 - 1/3k_N}.$$
\end{Lemma}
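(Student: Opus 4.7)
The plan is to identify $K_j$ on $\{\tau_{j+1} < \zeta\}$ with the number of type $j$ individuals at $\tau_{j+1}$ whose $(j-1)$-th mutation occurred in $(\tau_j, \tau_{j+1}]$, and then bound this expected count via a nested application of the martingale machinery of Corollary \ref{ZmartCor4}.

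First, on $\{\tau_{j+1} < \zeta\}$, part 1 of Proposition \ref{mainjprop} shows that any individual whose $j$-th mutation occurred at or before $\tau_j$ has no descendant alive at $\tau_j^* < \tau_{j+1}$, which rules out ancestors at $\tau_j$ of type $\geq j$; so the contributions to $K_j$ come from ancestors of type at most $j-2$. Since the type along a lineage is non-decreasing in time, ``ancestor at $\tau_j$ has type $\leq j-2$'' is equivalent to ``the $(j-1)$-th mutation of the lineage occurred in $(\tau_j, \tau_{j+1}]$''. Now apply Corollary \ref{ZmartCor4} with $\kappa = \tau_j$, $\gamma = \tau_{j+1}$, and index $j-1$ to control the ``bad'' type $j-1$ count $X_{j-1}^{\tau_j, \tau_{j+1}}(u)$. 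A second martingale argument, derived along the lines of Proposition \ref{Zmart} but restricted to the type $j$ individuals whose $(j-1)$-th mutation was in $(\tau_j, \tau_{j+1}]$, identifies $\mu X_{j-1}^{\tau_j, \tau_{j+1}}(u)$ as the influx rate into this bad type $j$ sub-population. Stopping both martingales at $\tau_{j+1} \wedge \zeta$, using the bound $e^{\int_{\tau_j}^{\tau_{j+1}} G_j^*(v) dv} \leq 2s/\mu$ from part 4 of Proposition \ref{newGq} (which holds on $\{\tau_{j+1} < \zeta\}$) to pass from the martingale identities to bounds on $E[K_j \1_{\{\tau_{j+1} < \zeta\}}]$, and applying Fubini, I would obtain
\[
E[K_j \1_{\{\tau_{j+1} < \zeta\}}] \leq \mu^2 E\bigg[\int_{\tau_j}^{\tau_{j+1}} \int_{\tau_j}^u X_{j-2}(u') e^{\int_{u'}^u G_{j-1}^*(v) dv + \int_u^{\tau_{j+1}} G_j^*(v) dv} du' \, du \, \1_{\{\tau_{j+1} < \zeta\}}\bigg].
\]

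Finally, substitute the bound $X_{j-2}(u') \leq (1+\delta)(s/\mu) e^{\int_{\tau_{j-1}}^{u'} G_{j-2}(v) dv}$ from part 3 of Proposition \ref{mainjprop}. Using $G_{j-1} = G_j - s$ and $G_{j-2} = G_j - 2s$, the total exponent telescopes to $\int_{\tau_{j-1}}^{\tau_{j+1}} G_j(v) dv - s(u - \tau_{j-1}) - s(u' - \tau_{j-1})$. Part 4 of Proposition \ref{newGq} applied at indices $j-1$ and $j$ gives $e^{\int_{\tau_{j-1}}^{\tau_{j+1}} G_j} \leq 4(s/\mu)^2 e^{s(\tau_j - \tau_{j-1})}$, which together with the factor $e^{-2s(\tau_j - \tau_{j-1})}$ pulled out of the exponent, and the bound $e^{-s(\tau_j - \tau_{j-1})} \leq (s/\mu)^{-1/(3k_N)}$ from \eqref{tauspacing}, contributes $4(s/\mu)^{2 - 1/(3k_N)}$. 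The remaining iterated integral $\int_{\tau_j}^{\tau_{j+1}} \int_{\tau_j}^u e^{-s(u - \tau_j) - s(u' - \tau_j)} du' \, du$ is at most $1/s^2$. Multiplying all factors yields $4(1+\delta)(s/\mu)^{1 - 1/(3k_N)}$, which is at most $5(s/\mu)^{1 - 1/(3k_N)}$ since $\delta < 1/100$ by \eqref{deldef}.

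The main technical hurdle is the rigorous setup of the second martingale, which is not literally an instance of Corollary \ref{ZmartCor4} because its influx comes from a distinguished subset of the type $j-1$ population rather than all of $X_{j-1}$; it requires a direct derivation paralleling Proposition \ref{Zmart}, together with careful stopping at $\tau_{j+1} \wedge \zeta$ so that the deterministic bounds of Propositions \ref{mainjprop} and \ref{newGq} apply on the relevant event.
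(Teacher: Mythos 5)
Your proposal is correct and follows essentially the same route as the paper's proof: the paper likewise reduces $K_j$ (via parts 1 and 6 of Proposition \ref{mainjprop}) to the type $j$ individuals whose $(j-1)$st mutation occurred after $\tau_j$, runs the same two-level martingale argument --- Corollary \ref{ZmartCor4} for $X_{j-1}^{\tau_j,\tau_{j+1}}$ and a variant of it for the bad type $j$ subpopulation --- and uses the same three inputs, namely (\ref{prop23}) for $X_{j-2}$, part 4 of Proposition \ref{newGq} at indices $j-1$ and $j$, and (\ref{tauspacing}), arriving at the same constant $4(1+\delta)\leq 5$. The only difference is organizational: rather than composing everything into a single double integral, the paper first bounds the inner conditional expectation uniformly in time (its equation (\ref{Xstar5})) and then applies Fubini to the outer integral, which is exactly how one rigorously disposes of the inner martingale term multiplied by the forward-looking indicator $\1_{\{\tau_{j+1}<\zeta\}}$ --- the technical hurdle you correctly flagged.
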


\begin{proof}
By parts 1 and 6 of Proposition \ref{mainjprop}, on $\{\tau_{j+1} < \zeta\}$, no individual of type $j$ or higher in the population at time $\tau_j$ has a descendant alive in the population at time $\tau_{j+1}$.  Therefore, $K_j$ is the number of type $j$ individuals at time $\tau_{j+1}$ whose ancestor at time $\tau_j$ has type less than $j-1$.  Such an individual must be descended from an individual that gets its $(j-1)$st mutation after time $\tau_j$.  We will therefore consider the number of type $j-1$ individuals at times $t \geq \tau_j$ that are descended from individuals that acquired their $(j-1)$st mutation between times $\tau_j$ and $\tau_{j+1}$.  Following Corollary \ref{ZmartCor4}, we denote the number of such individuals by $X_{j-1}^{\tau_j, \tau_{j+1}}(t)$.  Then, writing $\zeta_j = \zeta \wedge \tau_{j+1}$,
\begin{align}\label{Xstar1}
&e^{-\int_{\tau_j}^{(\tau_j + u) \wedge \zeta_j} G_{j-1}(v) \: dv} X_{j-1}^{\tau_j, \tau_{j+1}}((\tau_j + u) \wedge \zeta_j) \nonumber \\
&\hspace{0.6in} = \int_{\tau_j}^{(\tau_j + u) \wedge \zeta_j} \mu X_{j-2}(w) e^{-\int_{\tau_j}^w G_{j-1}(v) \: dv} \: dw + Z_{j-1}^{\tau_j, \tau_{j+1}}((\tau_j + u) \wedge \zeta_j),
\end{align}
where $(Z_j^{\tau_j, \tau_{j+1}}(\tau_j + u), u \geq 0)$ is a mean zero martingale.  By (\ref{prop23}), on the event $\{\zeta_j > \tau_j\}$, we have for $t \geq 0$,
\begin{align}\label{Xstar2}
\int_{\tau_j}^{(\tau_j + u) \wedge \zeta_j} \mu X_{j-2}(w) e^{-\int_{\tau_j}^w G_{j-1}(v) \: dv} \: du &\leq \int_{\tau_j}^{(\tau_j + u) \wedge \zeta_j} (1 + \delta) s e^{\int_{\tau_{j-1}}^w G_{j-2}(v) \: dv} e^{-\int_{\tau_j}^w G_{j-1}(v) \: dv} \: dw \nonumber \\
&= (1 + \delta) s e^{\int_{\tau_{j-1}}^{\tau_j} G_{j-2}(v) \: dv} \int_{\tau_j}^{(\tau_j + u) \wedge \zeta_j} e^{-s(w - \tau_j)} \: dw \nonumber \\
&\leq (1 + \delta) e^{\int_{\tau_{j-1}}^{\tau_j} G_{j-2}(v) \: dv}.
\end{align}
By (\ref{tauspacing}) and part 4 of Proposition \ref{newGq}, on $\{\zeta_j > \tau_j\}$ we have
\begin{equation}\label{Xstar3}
e^{\int_{\tau_{j-1}}^{\tau_j} G_{j-2}(v) \: dv} = e^{-s(\tau_j - \tau_{j-1})} e^{\int_{\tau_{j-1}}^{\tau_j} G_{j-1}(v) \: dv} \leq e^{-s(a_N/3k_N)} \bigg( \frac{2s}{\mu} \bigg) \leq 2 \bigg( \frac{s}{\mu} \bigg)^{1-1/3k_N}.
\end{equation}
Taking conditional expectations on both sides of (\ref{Xstar1}) and then using (\ref{Xstar2}) and (\ref{Xstar3}) gives 
\begin{equation}\label{Xstar5}
E \big[ e^{-\int_{\tau_j}^{(\tau_j + u) \wedge \zeta_j} G_{j-1}(v) \: dv} X_{j-1}^{\tau_j, \tau_{j+1}}((\tau_j + u) \wedge \zeta) \big| {\cal F}_{\tau_j} \big] \leq 2 (1 + \delta) \bigg( \frac{s}{\mu} \bigg)^{1-1/3k_N}.
\end{equation}

For $u \geq \tau_j$, let $X_j^*(u)$ denote the the number of type $j$ individuals in the population at time $u$ that got their $(j-1)$st mutation after time $\tau_j$.  Note that $K_j = X_j^*(\tau_{j+1})$ on $\{\tau_{j+1} < \zeta\}$.  By the reasoning that leads to Corollary \ref{ZmartCor4}, we get
\begin{equation}\label{Xstar4}
e^{-\int_{\tau_j}^{u \wedge \zeta_j} G_j(v) \: dv} X_j^*(u \wedge \zeta_j) = \int_{\tau_j}^{u \wedge \zeta_j} \mu X_{j-1}^{\tau_j, \tau_{j+1}}(w) e^{-\int_{\tau_j}^w G_j(v) \: dv} \: dw + Z_j^*(u \wedge \zeta_j),
\end{equation}
where $(Z_j^*(\tau_{j+1} + u), u \geq 0)$ is a mean zero martingale.  By part 4 of Proposition \ref{newGq}, on $\{\tau_{j+1} < \zeta\}$, we have $$e^{\int_{\tau_j}^{\tau_{j+1}} G_j(v) \: dv} \leq \frac{2s}{\mu}.$$
Therefore, using that the expression in (\ref{Xstar4}) is nonnegative,
$$X_j^*(\tau_{j+1}) \1_{\{\tau_{j+1} < \zeta\}} \leq \frac{2s}{\mu} \bigg( \int_{\tau_j}^{\infty} \mu X_{j-1}^{\tau_j, \tau_{j+1}}(w) e^{-\int_{\tau_j}^w G_{j-1}(v) \: dv} e^{-s(w - \tau_j)} \1_{\{w \leq \zeta_j\}} \: dw + Z_j^*(\zeta_j) \bigg).$$
Taking conditional expectations of both sides and using Fubini's Theorem and (\ref{Xstar5}),
\begin{align*}
E[ X_j^*(\tau_{j+1}) \1_{\{\tau_{j+1} < \zeta\}} | {\cal F}_{\tau_j}] &\leq \frac{2s}{\mu} \bigg( \int_{\tau_j}^{\infty} 2(1 + \delta) \mu \bigg( \frac{s}{\mu} \bigg)^{1 - 1/3k_N} e^{-s(w - \tau_j)} \: dw \bigg) \\
&= 4(1 + \delta) \bigg( \frac{s}{\mu} \bigg)^{1 - 1/3k_N}.
\end{align*} 
Taking expectations of both sides gives the result of the lemma.
\end{proof}

\begin{Lemma}\label{taujmut}
We have
$$\lim_{N \rightarrow \infty} P \big( \Lambda \cap \big\{ U_i(\tau_{j}) \neq j - 1 \mbox{ for some }i \in \{1, \dots, n\} \mbox{ and }j \in I \mbox{ with }j \leq U_i(a_N T) \big\} \big) = 0.$$
\end{Lemma}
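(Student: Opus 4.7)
The plan is to proceed by downward induction on $j \in I$, combined with a union bound over $j$ (of cardinality at most $3Tk_N$ by Lemma \ref{Ilem}). The base of the induction is supplied by Lemmas \ref{sameL} and \ref{noearlymut}: with probability tending to $1$, every sampled individual $i$ has $U_i(a_N T) = \ell \in \{L, \ldots, L+9\}$ and its ancestor at time $\tau_{\ell+1}$ is a type-$\ell$ individual. Let $G_j$ denote the event that some sampled individual has an ancestor at $\tau_{j+1}$ that is a type-$j$ individual whose ancestor at $\tau_j$ is not of type $j-1$. A downward induction on $j$ (starting at $j = U_i(a_N T)$ for each sampled $i$) then shows that, on the complement of $\bigcup_{j \in I} G_j$, $U_i(\tau_j) = j-1$ holds for every $j \in I$ with $j \leq U_i(a_N T)$: at each step the inductive hypothesis (or Lemma \ref{noearlymut} at the base) guarantees that the ancestor of $i$ at $\tau_{j+1}$ is a type-$j$ individual, and $G_j^c$ then forces its ancestor at $\tau_j$ to have type $j-1$.

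The key estimate is a bound on $P(G_j \cap \Lambda)$. Lemma \ref{Kjlem2} gives $E[K_j \1_{\{\tau_{j+1} < \zeta\}}] \leq 5(s/\mu)^{1-1/3k_N}$. The type-$j$ individuals at time $\tau_{j+1}$ are exchangeable in the future Moran dynamics because they share the same fitness and the evolution of $({\bf X}(t))_{t \geq \tau_{j+1}}$ depends only on the current configuration; hence, conditional on ${\cal F}_{\tau_{j+1}}$, each such individual has the same expected number of descendants at time $a_N T$, bounded above by $N/X_j(\tau_{j+1}) \leq N\mu/s$ on $\Lambda$. Since the set of bad type-$j$ individuals is ${\cal F}_{\tau_{j+1}}$-measurable of cardinality $K_j$, the expected total number of their descendants at $a_N T$ on $\Lambda \subseteq \{\tau_{j+1} < \zeta\}$ is at most $(N\mu/s) E[K_j \1_{\{\tau_{j+1} < \zeta\}}] \leq 5N(s/\mu)^{-1/3k_N}$. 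Because the $n$ sampled individuals are drawn uniformly from the population at $a_N T$, this yields $P(G_j \cap \Lambda) \leq 5n(s/\mu)^{-1/3k_N}$.

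Summing over $j \in I$ gives a total failure probability at most $15nTk_N(s/\mu)^{-1/3k_N}$, which tends to $0$ by assumption A2: A2 gives $\log(s/\mu)/(k_N \log k_N) \rightarrow \infty$, hence $\log(s/\mu)/(3k_N) - \log k_N \rightarrow \infty$, so $k_N(s/\mu)^{-1/3k_N} \rightarrow 0$. The main obstacle I anticipate is rigorously justifying the exchangeability argument: one must verify that, conditional on ${\cal F}_{\tau_{j+1}}$ and on the list of type-$j$ individuals at $\tau_{j+1}$, the joint distribution of their descendant counts at $a_N T$ is invariant under relabeling. This follows from the Moran dynamics' symmetry over individuals of equal fitness, but deserves an explicit argument, for instance via a coupling that permutes the labels of the type-$j$ individuals at $\tau_{j+1}$.
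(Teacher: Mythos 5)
Your proposal is correct and follows essentially the same route as the paper: the same reduction via Lemma \ref{noearlymut}, the same key input from Lemma \ref{Kjlem2}, and the same union bound over $I$ (cardinality at most $3Tk_N$) closed by assumption A2. The only cosmetic difference is that the paper packages the symmetry step directly as the statement that, conditional on ${\cal F}_{\tau_{j+1}}$, the ancestor of a sampled individual of type $j$ is uniformly distributed among the $\lceil s/\mu \rceil$ type-$j$ individuals at time $\tau_{j+1}$, giving $P(\{U_i(\tau_j)\neq j-1\}\cap\{U_i(\tau_{j+1})=j\}\mid{\cal F}_{\tau_{j+1}})\leq \mu K_j/s$ in one line, whereas you route the same exchangeability fact through expected descendant counts.
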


\begin{proof}
Fix $i \in \{1, \dots, n\}$.  Suppose $\Lambda$ occurs and $U_i(\tau_j) \neq j-1$ for some $j \in I \mbox{ with }j \leq U_i(a_N T)$.  Then either $U_i(\tau_{U_i(a_N T) + 1}) \neq U_i(a_N T)$, an event whose probability tends to zero as $N \rightarrow \infty$ by Lemma \ref{noearlymut}, or else there is an integer $j \in I$ with $j < U_i(a_N T)$ such that $U_i(\tau_j) \neq j-1$ and $U_i(\tau_{j+1}) = j$.  Therefore, to prove the lemma, it suffices to show that
\begin{equation}\label{stsmut}
\lim_{N \rightarrow \infty} \sum_{j \in I} P\big(\Lambda \cap \{U_i(\tau_j) \neq j-1\} \cap \{U_i(\tau_{j+1}) = j\} \big) = 0.
\end{equation}

Fix $j \in I$.  Recall from Lemma \ref{Kjlem2} that $K_j$ is the number of type $j$ individuals in the population at time $\tau_{j+1}$ whose ancestor in the population at time $\tau_j$ does not have type $j-1$.  Note that the probability, conditional on ${\cal F}_{\tau_{j+1}}$, that a randomly chosen type $j$ individual at time $\tau_{j+1}$ is not descended from a type $j-1$ individual at time $\tau_j$ is $K_j/\lceil s/\mu \rceil$.
Also, conditional on ${\cal F}_{\tau_{j+1}}$, the $\lceil s/\mu \rceil$ type $j$ individuals at time $\tau_{j+1}$ are equally likely to be the ancestor of the $i$th individual in the sample taken at time $a_N T$.  Therefore, since $K_j$ is ${\cal F}_{\tau_{j+1}}$-measurable, on the event $\tau_{j+1} < \zeta$ we have
$$P\big(\{U_i(\tau_j) \neq j-1\} \cap \{U_i(\tau_{j+1}) = j\}|{\cal F}_{\tau_{j+1}}\big) = P(U_i(\tau_{j+1}) = j|{\cal F}_{\tau_{j+1}}) \cdot \frac{K_j}{\lceil s/\mu \rceil} \leq \frac{\mu K_j}{s}.$$  Therefore, multiplying both sides by $\1_{\{\tau_{j+1} < \zeta\}}$, taking expectations, and using Lemma \ref{Kjlem2}, we get
$$P\big(\{U_i(\tau_j) \neq j-1\} \cap \{U_i(\tau_{j+1}) = j\} \cap\{\tau_{j+1} < \zeta\} \big) \leq \frac{\mu}{s} E[K_j \1_{\{\tau_{j+1} < \zeta\}}] \leq 5 \bigg( \frac{s}{\mu} \bigg)^{-1/3k_N}.$$  Since the cardinality of $I$ is at most $3Tk_N$ by Lemma \ref{Ilem}, it follows that the sum of the probabilities on the left-hand side of (\ref{stsmut}) is at most $$15 T k_N \bigg( \frac{s}{\mu} \bigg)^{-1/3k_N}.$$
To check that this expression goes to zero as $N \rightarrow \infty$, we consider the logarithm.  Note that $\log (k_N(s/\mu)^{-1/3k_N}) = \log k_N - (1/3k_N) \log (s/\mu),$ which tends to $-\infty$ as $N \rightarrow \infty$ by assumption A2.  In view of the discussion before equation (\ref{stsmut}), the result of the lemma follows.
\end{proof}

\subsection{Coalescence between times $\tau_j$ and $\tau_{j+1}$}\label{coaljj}

We next consider the merging of ancestral lines between times $\tau_j$ and $\tau_{j+1}$.  It will suffice to consider the lineages labelled $1$ and $2$.  In view of Lemma \ref{taujmut}, we may also assume these lineages have type $j$ at time $\tau_{j+1}$ and type $j-1$ at time $\tau_j$, which will occur with high probability.  Recall the definitions of $V_{i,j}$ and $T_{i,j}$ from (\ref{Vijdef}) and (\ref{Tijdef}).  Also, let $V_j = \min\{V_{1,j}, V_{2,j}\}$ and $V_j^* = \max\{V_{1,j}, V_{2,j}\}$.  Because only lineages of the same type can coalesce, there are only three ways that these lineages could coalesce between times $\tau_{j-1}$ and $\tau_j$:
\begin{enumerate}
\item Two lineages at time $\tau_{j+1}$ could be traced back to one individual that acquires its $j$th mutation between times $\xi_j$ and $\tau_{j+1}$.  That is, $\xi_j < V_{1,j} = V_{2,j} < T_{1,2} \leq \tau_{j+1}$.

\item Two lineages at time $\tau_{j+1}$ could be traced back to one individual that acquires its $j$th mutation before time $\xi_j$.  That is, $\tau_j < V_{1,j} = V_{2,j} < \xi_j$ and $V_{1,j} = V_{2,j} < T_{1,2} \leq \tau_{j+1}$.

\item Two lineages at time $\tau_{j+1}$ could be descended from different type $j$ mutations between times $\tau_j$ and $\tau_{j+1}$, but then the two type $j-1$ lineages could coalesce before time $\tau_j$.  That is, $\tau_j \leq T_{1,2} < V_j < V_j^* < \tau_{j+1}$.
\end{enumerate}
We will now show that only coalescence events of the second type need to be considered.  Lemma \ref{coalj1} rules out case 1 above, and Lemma \ref{coalj2} rules out case 3.

\begin{Lemma}\label{coalj1}
Define the event $$A_j' = \{U_1(\tau_{j+1}) = U_2(\tau_{j+1}) = j\} \cap \{U_1(\tau_j) = U_2(\tau_j) = j-1\} \cap \{ \xi_j < V_{1,j} = V_{2,j} < T_{1,2} < \tau_{j+1}\}.$$  For sufficiently large $N$, we have
$$P\bigg( \Lambda \cap \bigcup_{j \in I} A_j' \bigg) \leq C T e^{-b}.$$
\end{Lemma}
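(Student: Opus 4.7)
The plan is to prove, for each fixed $j \in I$, that $P(\Lambda \cap A_j') \leq C e^{-b}/k_N$, and then sum over $j \in I$ using $|I| \leq 3Tk_N$ from Lemma \ref{Ilem} to obtain the claimed $CT e^{-b}$.

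First I would enumerate the type-$j$ mutations occurring in $(\xi_j, \tau_{j+1}]$ as $i = 1, 2, \ldots$, and for each such mutation at time $U_i$ let $X_j^{(i)}(t)$ denote its type-$j$ descendants at time $t$ and $Y_i(t)$ its total descendants. Since the sample at time $a_N T$ is uniform without replacement, the event $A_j'$ forces the ancestors of lineages $1$ and $2$ at time $\tau_{j+1}$ to lie in a common late family, so (by the same reasoning used for (\ref{ptb1}) in Lemma \ref{noearlycoal})
\[
P(\Lambda \cap A_j') \leq E\bigg[\sum_i \frac{Y_i(a_N T)^2}{N^2} \1_\Lambda\bigg].
\]

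Applying Corollary \ref{ZmartCor3} with $\kappa = U_i$ to the singleton set consisting of the mutated individual gives $X_j^{(i)}(t) = e^{\int_{U_i}^t G_j(v) dv}(1 + Z^{(i)}(t))$ for a mean-zero martingale $Z^{(i)}$, and the $L^2$ maximum inequality combined with the variance formula in Corollary \ref{ZmartCor3} and the bound $B_j + D_j \leq 3$ from (\ref{BD3}) yields $E[X_j^{(i)}(\tau_{j+1})^2 \1_\Lambda \mid {\cal F}_{U_i}] \leq C e^{2\int_{U_i}^{\tau_{j+1}} G_j(v) dv}/(sq_j)$. I would then integrate this over $U_i$ with intensity $\mu X_{j-1}(u) du$, using Propositions \ref{mainjprop} and \ref{newGq} to control $X_{j-1}$ and $G_j$, and crucially using the defining property $e^{-sq_j(\xi_j - \tau_j)} \leq sq_j e^{-b}$ of $\xi_j$ from (\ref{xijdef}) to extract the factor $e^{-b}$. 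The telescoping that appears in the proof of Lemma \ref{Kjlem2} reduces the integrand to a multiple of $e^{-s(u - \tau_j)}$, and the remaining integration against the $e^{-b}$ factor produces the bound
\[
E\bigg[\sum_i X_j^{(i)}(\tau_{j+1})^2 \1_\Lambda\bigg] \leq C \bigg(\frac{s}{\mu}\bigg)^2 \frac{e^{-b}}{q_j}.
\]

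The main obstacle is transferring this bound from time $\tau_{j+1}$ to time $a_N T$. Since the descendants of a late mutation progress through types $j, j+1, \ldots$ before $a_N T$, I would argue, in the spirit of Lemma \ref{maxXj}, that the fraction $Y_i(a_N T)/N$ of the whole population descended from the $i$th late family is controlled in $L^2$ by the fraction $X_j^{(i)}(\tau_{j+1})/X_j(\tau_{j+1})$ at $\tau_{j+1}$; heuristically, once type $j$ becomes dominant and is subsequently replaced by types $j+1, j+2, \ldots$, the relative family sizes are approximately preserved, and this preservation can be made rigorous by iterating Corollary \ref{ZmartCor3} across successive type transitions from $j$ up to roughly $L + 9$. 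Combining such a comparison with the bound above, with $X_j(\tau_{j+1}) \geq (1-\delta)s/\mu$ and $q_j \geq (1-2\delta)k_N$ on $\Lambda$ from Propositions \ref{mainjprop} and \ref{newGq}, one obtains $P(\Lambda \cap A_j') \leq Ce^{-b}/k_N$, and summing over $j \in I$ completes the proof.
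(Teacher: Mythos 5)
There is a genuine gap, and it is in the reduction, not in the martingale estimate. Your opening inequality $P(\Lambda \cap A_j') \leq E\big[\sum_i Y_i(a_NT)^2/N^2 \, \1_{\Lambda}\big]$ is valid but far too lossy, and the transfer step you propose to close it is false. The right-hand side counts every pair of sampled individuals descended from a common late stage-$j$ family, which includes all pairs whose lineages coalesce at \emph{any later} stage $j' > j$: if lineages $1$ and $2$ merge between $\tau_{j'}$ and $\tau_{j'+1}$, then at stage $j$ they trace back to a single individual, hence lie in one late stage-$j$ family. Since two lineages coalesce somewhere in the window with probability bounded away from zero as $N \to \infty$ (this is the content of Theorem \ref{boszthm} itself), $E\big[\sum_i Y_i(a_NT)^2/N^2\,\1_{\Lambda}\big]$ is bounded below by a positive constant and can never be $O(e^{-b}/k_N)$. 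Equivalently, the assertion that relative family sizes are ``approximately preserved'' in $L^2$ from $\tau_{j+1}$ to $a_NT$ is exactly what fails: at each of the roughly $(t+1)k_N$ intervening stages, an early mutation creates a family of share at least $x$ with probability of order $1/(k_N x)$ (Lemma \ref{pmlem2}), and each such event amplifies one particular stage-$j$ family to a macroscopic share; over order $k_N$ stages these amplifications occur with probability of order one, and they are precisely the source of the Bolthausen--Sznitman mergers. The appeal to Lemma \ref{maxXj} does not rescue this: that lemma controls the \emph{within-type} ratios $X_j^i(t)/X_j(t)$, which are immune to early mutations to types $j+1, j+2, \dots$, whereas your transfer requires control of whole-population shares across many type transitions, which is a different (and false) statement.

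The paper's proof avoids forward tracking entirely, and this is where the hypothesis $T_{1,2} < \tau_{j+1}$ (which your bound discards) is used. On $A_j'$ the two sampled lineages have \emph{distinct} type-$j$ ancestors at time $\tau_{j+1}$, both descended from the same mutation in $(\xi_j, \tau_{j+1}]$; conditionally on ${\cal F}_{\tau_{j+1}}$, by exchangeability each of the $\lceil s/\mu \rceil$ type-$j$ individuals at time $\tau_{j+1}$ is equally likely to be the ancestor of a given sample, so $P(A_j' \,|\, {\cal F}_{\tau_{j+1}}) \leq \frac{\mu^2}{s^2}\sum_{i} X_{j,2,i}(\tau_{j+1})^2$, with family sizes evaluated at time $\tau_{j+1}$ — see (\ref{Aj1}). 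From that point on, your second paragraph is essentially the paper's computation (\ref{Aj2})--(\ref{Aj7}): the martingale representation from Corollary \ref{ZmartCor3}, the variance bound of order $1/(sk_N)$, integration against the mutation intensity $\mu X_{j-1}(u)$, and the extraction of $e^{-b}$ from $e^{-sq_j(\xi_j - \tau_j)} = sq_j e^{-b}$ yield $E\big[\sum_i X_{j,2,i}(\tau_{j+1})^2 \1_{\{\tau_{j+1} < \zeta\}}\big] \leq C s^2 e^{-b}/(\mu^2 k_N)$; multiplying by $\mu^2/s^2$ and summing over the at most $3Tk_N$ values of $j \in I$ gives $CTe^{-b}$. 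So the fix is to replace your time-$a_NT$ reduction by the conditional exchangeability argument at time $\tau_{j+1}$; with that single change, the rest of your outline goes through.
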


\begin{proof}
Fix $j \in I$.  Let $H_j$ be the number of type $j$ mutations between times $\xi_j$ and $\tau_{j+1}$.  Let $0 < \kappa_1 < \kappa_2 < \dots < \kappa_{H_j}$ denote the times at which these mutations occur.  Let $X_{j,2,i}(u)$ be the number of type $j$ individuals at time $u$ descended from the individual that acquires its $j$th mutation at time $\kappa_i$.  This means that $$X_{j,2}(\tau_{j+1}) = \sum_{i=1}^{H_j} X_{j,2,i}(\tau_{j+1}).$$  Conditional on $X_{j,2,1}(\tau_{j+1}), \dots, X_{j,2,H_j}(\tau_{j+1})$, the probability that two randomly chosen individuals at time $\tau_{j+1}$ are descended from the same individual that gets its $j$th mutation between times $\xi_j$ and $\tau_{j+1}$ is $$\frac{1}{\lceil s/\mu \rceil (\lceil s/\mu \rceil - 1)} \sum_{i=1}^{H_j} X_{j,2,i}(\tau_{j+1}) (X_{j,2,i}(\tau_{j+1}) - 1) \leq \frac{\mu^2}{s^2} \sum_{i=1}^{H_j} X_{j,2,i}(\tau_{j+1})^2.$$  Since, conditional on ${\cal F}_{\tau_{j+1}}$, each of the $\lceil s/\mu \rceil$ type $j$ individuals at time $\tau_{j+1}$ is equally likely to be the ancestor of an individual in our sample at time $a_N T$, it follows that on $\tau_{j+1} < \zeta$,
\begin{equation}\label{Aj1}
P(A_j'|{\cal F}_{\tau_{j+1}}) \leq \frac{\mu^2}{s^2} \sum_{i=1}^{H_j} X_{j,2,i}(\tau_{j+1})^2.
\end{equation}
Therefore, multiplying both sides by $\1_{\{\tau_{j+1} < \zeta\}}$ and taking expectations,
\begin{equation}\label{Ajnew}
P(A_j' \cap \{\tau_{j+1} < \zeta\}) \leq \frac{\mu^2}{s^2} E \bigg[ \bigg( \sum_{i=1}^{H_j} X_{j,2,i}(\tau_{j+1})^2 \bigg)\1_{\{\tau_{j+1} < \zeta\}} \bigg].
\end{equation}

We now bound the expectation on the right-hand side of (\ref{Ajnew}).
Write $\zeta_j = \zeta \wedge \tau_{j+1}$.  By Corollary \ref{ZmartCor3} applied with $\kappa_i$ playing the role of $\kappa$ and the single type $j$ individual that acquires its $j$th mutation at time $\kappa_i$ playing the role of $S$, we get
\begin{equation}\label{Aj2}
X_{j,2,i}(u \wedge \zeta_j) = e^{\int_{\kappa_i}^{u \wedge \zeta_j} G_j(v) \: dv} (1 + Z_{i,j}(u)),
\end{equation}
where $(Z_{i,j}(\kappa_i + u), u \geq 0)$ is a mean zero martingale.  Therefore, using part 4 of Proposition \ref{newGq}, we get that on $\{\kappa_i < \tau_{j+1} < \zeta\}$,
\begin{align}\label{Aj3}
X_{j,2,i}(\tau_{j+1}) &= e^{\int_{\tau_j}^{\tau_{j+1}} G_j(v) \: dv} e^{-\int_{\tau_j}^{\kappa_i} G_j(v) \: dv}(1 + Z_{i,j}(\tau_{j+1})) \nonumber \\
&\leq \frac{2s}{\mu} e^{-\int_{\tau_j}^{\kappa_i} G_j(v) \: dv} (1 + Z_{i,j}(\tau_{j+1})).
\end{align}
Corollary \ref{ZmartCor3} combined with (\ref{BD3}) and (\ref{Aj2}) gives that on $\{\kappa_i < \tau_{j+1}\}$,
\begin{align*}
\Var(Z_{i,j}(\tau_{j+1})|{\cal F}_{\kappa_i}) &\leq 3 E \bigg[ \int_{\kappa_i}^{\tau_{j+1} \wedge \zeta_j} e^{-2 \int_{\kappa_i}^u G_j(v) \: dv} X_{j,2,i}(u) \: du \bigg| {\cal F}_{\kappa_i} \bigg] \\
&= 3 E \bigg[ \int_{\kappa_i}^{\tau_{j+1} \wedge \zeta_j} e^{-\int_{\kappa_i}^u G_j(v) \: dv}(1 + Z_{i,j}(u)) \: du \bigg| {\cal F}_{\kappa_i} \bigg].
\end{align*}
Because $k^*+1 \leq j \leq J$ by Lemma \ref{Ilem}, it follows from part 2 of Proposition \ref{newGq} that for sufficiently large $N$, if $v \in [\tau_j, \tau_{j+1}]$ and $v < \zeta$, then $G_j(v) \geq (1 - 2 \delta) s k_N$. 
Therefore, on $\{\kappa_i < \tau_{j+1}\}$,
\begin{equation}\label{Aj4}
\Var(Z_{i,j}(\tau_{j+1})|{\cal F}_{\kappa_i}) \leq 3 E \bigg[ \int_{\kappa_i}^{\infty} e^{-sk_N(1 - 2 \delta)(u - \kappa_i)} (1 + Z_{i,j}(u)) \: du \bigg| {\cal F}_{\kappa_i} \bigg] = \frac{3}{sk_N(1 - 2 \delta)}.
\end{equation}
From (\ref{Aj3}) and (\ref{Aj4}), we get that on $\{\kappa_i < \tau_{j+1}\}$,
$$E\big[X_{j,2,i}^2(\tau_{j+1}) \1_{\{\tau_{j+1} < \zeta\}} \big| {\cal F}_{\kappa_i} \big] \leq \frac{4 s^2}{\mu^2}  e^{-2 \int_{\tau_j}^{\kappa_i} G_j(v) \: dv} \bigg(1 + \frac{3}{sk_N (1 - 2 \delta)} \bigg).$$
By assumption A3, the second term inside the parentheses dominates when $N$ is large. 
Also, by part 1 of Proposition \ref{newGq}, we have $s(q_j - C_3) \leq G_j(v) \leq s(q_j + C_3)$ if $\tau_j \leq v < \zeta_j$.  Therefore, for sufficiently large $N$, on $\{\kappa_i < \tau_{j+1}\}$,
\begin{equation}\label{Aj5}
E\big[X_{j,2,i}^2(\tau_{j+1}) \1_{\{\tau_{j+1} < \zeta\}} \big| {\cal F}_{\kappa_i} \big] \leq \frac{C s}{\mu^2 k_N} e^{-2 s(q_j - C_3)(\kappa_i - \tau_j)}.
\end{equation}

Next, we condition on ${\cal F}_{\tau_j}$.   Using (\ref{prop23}) followed by part 1 of Proposition \ref{newGq}, the rate at which type $j$ mutations are appearing at time $u$, provided that $\tau_j \leq u < \zeta_j$, is
\begin{equation}\label{Aj9}
\mu X_{j-1}(u) \leq (1 + \delta) s e^{\int_{\tau_j}^{u} G_{j-1}(v) \: dv} \leq (1 + \delta) s e^{s(q_j + C_3 - 1) (u - \tau_j)}.
\end{equation}
Therefore, using (\ref{Aj5}), (\ref{Aj9}), and (\ref{tauspacing}), we get that on $\{\tau_j < \zeta\}$,
\begin{align}\label{Aj6}
&E \bigg[ \bigg( \sum_{j=1}^{H_j} X_{j,2,i}(\tau_{j+1})^2 \bigg) \1_{\{\tau_{j+1} < \zeta\}} \bigg| {\cal F}_{\tau_j} \bigg] \nonumber \\
&\hspace{.5in}\leq \int_{\xi_j}^{(\tau_j + 2a_N/k_N) \wedge \zeta} (1 + \delta) s e^{s(q_j + C_3 - 1) (u - \tau_j)} \cdot \frac{Cs}{\mu^2 k_N} e^{-2s(q_j - C_3)(u - \tau_j)} \: du \nonumber \\
&\hspace{.5in}\leq \frac{C s^2}{\mu^2 k_N} \int_{\xi_j}^{(\tau_j + 2a_N/k_N) \wedge \zeta} e^{-s(q_j - 3C_3 + 1)(u - \tau_j)} \: du \nonumber \\
&\hspace{.5in}\leq \frac{C s}{\mu^2 k_N (q_j - 3C_3 + 1)} \cdot e^{-s(q_j - 3 C_3 + 1)(\xi_j - \tau_j)}.
\end{align}
Note that $e^{-s q_j(\xi_j - \tau_j)} = s q_j e^{-b}$ for sufficiently large $N$ by (\ref{xijdef}).  Also, $q_j \geq (1 - 2 \delta) k_N$ on $\{\tau_j < \zeta\}$ for sufficiently large $N$ by part 3 of Proposition \ref{newGq}, so
\begin{equation}\label{Aj8}
s(\xi_j - \tau_j) = \frac{1}{q_j} \bigg( \log \bigg( \frac{1}{s q_j} \bigg) + b \bigg) \leq \frac{1}{(1 - 2 \delta) k_N} \bigg( \log \bigg( \frac{1}{s} \bigg) + b \bigg) \rightarrow 0
\end{equation}
as $N \rightarrow \infty$ by assumption A1.  Therefore, $e^{(3C_3 - 1) s (\xi_j - \tau_j)} \rightarrow 1$ as $N \rightarrow \infty$.  Combining these observations with (\ref{Aj6}), and using that $\tau_{j+1} < \zeta$ implies $\tau_j < \zeta$ in view of (\ref{tauspacing}), we get that for sufficiently large $N$,
\begin{equation}\label{Aj7}
E \bigg[ \bigg( \sum_{j=1}^{H_j} X_{j,2,i}(\tau_{j+1})^2 \bigg) \1_{\{\tau_{j+1} < \zeta\}} \bigg| {\cal F}_{\tau_j} \bigg] \leq \frac{C s^2 e^{-b}}{\mu^2 k_N}.
\end{equation}
Finally, we can take expectations of both sides in (\ref{Aj7}) and combine the result with (\ref{Ajnew}) and the fact that the cardinality of $I$ is at most $3k_N T$ by Lemma \ref{Ilem} to obtain the result of the lemma.
\end{proof}

\begin{Lemma}\label{coalj2}
Recall that $V_j = \min\{V_{1,j}, V_{2,j}\}$.  Define the event
$$A_j^* = \{U_1(\tau_j) = U_2(\tau_j) = j-1 \} \cap \{\tau_j \leq T_{1,2} < V_j < \tau_{j+1}\}.$$  Then
$$\lim_{N \rightarrow \infty} P \bigg( \Lambda \cap \bigcup_{j \in I} A_j^* \bigg) = 0.$$
\end{Lemma}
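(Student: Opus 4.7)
The plan is to follow the strategy of Lemma \ref{coalj1}, but now tracking the type $j-1$ family structure at time $\tau_j$ instead of the late type $j$ mutations. For each $j \in I$, I will randomly label the type $j-1$ individuals alive at time $\tau_j$ by $k \in \{1, \dots, \lceil s/\mu \rceil\}$ and, for $t \geq \tau_j$, let $X_j^{(k)}(t)$ denote the number of type $j$ individuals at time $t$ descended from the $k$-th labeled individual. Set $N_k = X_j^{(k)}(\tau_{j+1})$. The event $A_j^*$ requires that the two sampled lineages trace back to type $j$ ancestors at $\tau_{j+1}$ that are distinct but lie in the same family $k$. Arguing exactly as in the derivation of (\ref{Aj1}), using that by exchangeability of the random labeling each type $j$ individual at $\tau_{j+1}$ is equally likely to be the ancestor of a sampled individual, I will obtain on $\{\tau_{j+1} < \zeta\}$ the bound
\[
P(A_j^* \mid \mathcal{F}_{\tau_{j+1}}) \leq \frac{\mu^2}{s^2}\sum_k N_k(N_k - 1).
\]

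The main task is then bounding $E\big[\sum_k N_k^2 \1_{\Lambda}\big]$. Using a martingale decomposition in the spirit of Corollaries \ref{ZmartCor4} and \ref{ZmartCor3}, adapted to track type $j$ descendants of a single type $j-1$ individual starting at time $\tau_j$, I will write
\[
N_k = e^{\int_{\tau_j}^{\tau_{j+1}} G_j(v)\,dv}\bigg(\int_{\tau_j}^{\tau_{j+1}} \mu\, X_{j-1}^{(k)}(w)\,e^{-\int_{\tau_j}^w G_j(v)\,dv}\,dw + \widetilde Z_k(\tau_{j+1})\bigg),
\]
where $X_{j-1}^{(k)}(w)$ is the number of type $j-1$ descendants of the $k$-th labeled individual at time $w$ and $\widetilde Z_k$ is a mean-zero martingale. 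The integral piece is controlled via the inequality $X_{j-1}^{(k)}(w) \leq R_{k,j-1}\,X_{j-1}(w)$, with $R_{k,j-1}$ as in Lemma \ref{maxXj} applied at the type $j-1$ level, combined with $G_{j-1}(v) - G_j(v) = -s$ and the estimate (\ref{prop23}) for $X_{j-1}$; this gives the integral is at most $(1+\delta)\,R_{k,j-1}$ on $\Lambda$. The martingale piece is handled using the variance formula from Corollary \ref{ZmartCor3}, the bound $B_j + D_j \leq 3$ from (\ref{BD3}), and the exponential estimates of Proposition \ref{newGq}.

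Putting these together and using $e^{\int_{\tau_j}^{\tau_{j+1}} G_j(v)\,dv} \leq 2s/\mu$ from part 4 of Proposition \ref{newGq}, I expect to obtain $E\big[\sum_k N_k^2 \1_{\Lambda}\big] \leq C/(\mu k_N)$, where the dominant contribution is controlled by $E[\sum_k R_{k,j-1}^2] \leq C\mu/(s^2 k_N)$ from Lemma \ref{maxXj}. This yields $P(A_j^* \cap \Lambda) \leq C\mu/(s^2 k_N)$ per $j$, and summing over $j \in I$ using $|I| \leq 3Tk_N$ from Lemma \ref{Ilem} gives $P\big(\Lambda \cap \bigcup_{j \in I} A_j^*\big) \leq C' \mu/s^2$, which tends to $0$ as $N \to \infty$ by (\ref{musN}). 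The main obstacle will be the second-moment control of the martingale piece: bounding $E[\sum_k \widetilde Z_k(\tau_{j+1})^2 \1_{\Lambda}]$ requires a simultaneous pathwise bound on $X_j^{(k)}$ throughout $[\tau_j, \tau_{j+1}]$, which in turn can be obtained by a bootstrap argument using the same martingale decomposition together with the $R_{k,j-1}$ control from Lemma \ref{maxXj}.
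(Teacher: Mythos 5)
There is a genuine gap, and it lies in your very first reduction, not in the second-moment estimates you flag as the ``main obstacle.'' Your master inequality $P(A_j^* \mid \mathcal{F}_{\tau_{j+1}}) \leq \frac{\mu^2}{s^2}\sum_k N_k(N_k-1)$ bounds $A_j^*$ by the event that the two lineages trace back to distinct type $j$ individuals at time $\tau_{j+1}$ lying in the same type $j-1$ family. But that event also contains every pair of type $j$ individuals descended from the \emph{same} type $j$ mutation, and such pairs are not rare: by Lemmas \ref{pmlem1}, \ref{pmlem2} and \ref{Ylem} (together with the coupling (\ref{bpcoupling})), with probability of order $1/k_N$ a single early type $j$ mutation founds a family comprising a positive fraction of the $\lceil s/\mu \rceil$ type $j$ individuals at time $\tau_{j+1}$ --- this is exactly the mechanism that produces the Bolthausen--Sznitman mergers. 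Consequently $E\big[\frac{\mu^2}{s^2}\sum_k N_k(N_k-1)\1_{\Lambda}\big]$ is bounded \emph{below} by $c/k_N$, so your anticipated estimate $E\big[\sum_k N_k^2 \1_{\Lambda}\big] \leq C/(\mu k_N)$ is false (it is off by a factor of order $s^2/\mu \rightarrow \infty$ by (\ref{musN})), and no refinement of the martingale or bootstrap argument can rescue the approach: summing a quantity of order $1/k_N$ over the $\asymp T k_N$ values $j \in I$ gives a bound of order $T$, not $o(1)$. The whole point of Lemma \ref{coalj2} is to isolate coalescence occurring at the type $j-1$ level ($T_{1,2} < V_j$, i.e.\ the two lineages come from \emph{distinct} mutations, or one has not yet mutated) from coalescence through a common mutation, which is treated elsewhere (Lemma \ref{coalj1} for late mutations, while the early-mutation mergers are the ones that survive in the limit); your bound erases precisely this distinction. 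A secondary defect: $A_j^*$ only requires $V_j = \min\{V_{1,j}, V_{2,j}\} < \tau_{j+1}$, not $V_j^* < \tau_{j+1}$, so on $A_j^*$ the second lineage may still have type $j-1$ at time $\tau_{j+1}$; the reduction to pairs of type $j$ ancestors at time $\tau_{j+1}$ therefore does not even cover all of $A_j^*$.

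The paper's proof avoids both problems by conditioning at time $V_j-$ rather than at time $\tau_{j+1}$. On $A_j^*$, just before the first of the two mutation times, both lineages are distinct type $j-1$ individuals, and $T_{1,2} \geq \tau_j$ forces them into the same type $j-1$ family, where the families are the sets of descendants of the $\lceil s/\mu \rceil$ type $j-1$ individuals labelled at time $\tau_j$. Conditioning on the $\sigma$-field $\mathcal{C}_j$ generated by $V_{1,j}$, $V_{2,j}$, the family sizes $X_{j-1}^1(V_j-), \dots, X_{j-1}^{\lceil s/\mu \rceil}(V_j-)$ and the event $\{V_j < \zeta\}$, exchangeability gives $P(A_j^* \1_{\{V_j < \zeta\}} \mid \mathcal{C}_j) \leq \sum_i R_{i,j-1}^2$, and Lemma \ref{maxXj} applied at level $j-1$ (legitimate since $j_1 - 1 \geq k^*+1+K$ by part 6 of Proposition \ref{mainjprop} and (\ref{jprime})) gives $E\big[\sum_i R_{i,j-1}^2\big] \leq C\mu/(s^2 k_N)$, which sums over $j \in I$ to $C\mu T/s^2 \rightarrow 0$ by (\ref{musN}). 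The reason this works where your version cannot is that each type $j-1$ family starts from a single individual out of $\lceil s/\mu \rceil$ at time $\tau_j$, so the squared family-fractions sum to the tiny quantity $C\mu/(s^2k_N)$; the type $j$ families at time $\tau_{j+1}$, being founded by single mutants at random times, instead have the fat-tailed ($x^{-1}$) size distribution whose squared fractions sum to order $1/k_N$.
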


\begin{proof}
Fix $j \in I$.  Randomly label the type $j - 1$ individuals at time $\tau_j$ by $1, 2, \dots, \lceil s/\mu \rceil$.  For $t \geq \tau_j$, let $X_{j-1}^i(t)$ denote the number of type $j-1$ individuals at time $t$ descended from the type $j-1$ individual labelled $i$ at time $\tau_j$.  

Let ${\cal C}_j$ be the $\sigma$-field generated by the random variables $V_{1,j}, V_{2,j}, X_{j-1}^1(V_j-), \dots, X_{j-1}^{\lceil s/\mu \rceil}(V_j-)$ and the event $\{V_j < \zeta\}$.  The only way that $A_j^*$ can occur is if the first two lineages get traced back to distinct type $j-1$ ancestors at time $V_j-$ and then merge between times $\tau_j$ and $V_j-$.  Conditional on ${\cal C}_j$, we know that one of the type $j-1$ individuals at time $V_j-$ will get a $j$th mutation at time $V_j$, but all of the type $j-1$ individuals at time $V_j-$ are equally likely to be ancestors of individuals in our sample at time $a_N T$.  Therefore, using the notation from (\ref{Mijdef}),
$$P(A_j^* \1_{\{V_j < \zeta\}}|{\cal C}_j) \leq \bigg( \sum_{i=1}^{\lceil s/\mu \rceil} \frac{X_{j-1}^i(V_j-) (X_{j-1}^i(V_j-) - 1)}{X_{j-1}(V_j-) (X_{j-1}(V_j-) - 1)} \bigg)  \1_{\{V_j < \zeta\}} \leq \sum_{i=1}^{\lceil s/\mu \rceil} R_{i,j-1}^2.$$
It follows from part 6 of Proposition \ref{mainjprop} that $\tau_{k^*+K+1} < (K+1)(2a_N/k_N) \leq a_N$ for sufficiently large $N$, and therefore by (\ref{jprime}), we have $j_1 - 1 \geq k^*+1+K$.  Thus,
summing over $j \in I$, taking expectations of both sides, and applying Lemma \ref{maxXj} and Lemma \ref{Ilem}, we get
\begin{equation}\label{Ajstar}
\sum_{j \in I} P(A_j^* \cap \{V_j < \zeta\}) \leq \sum_{j \in I} \frac{C \mu}{s^2 k_N} \leq \frac{C \mu T}{s^2}.
\end{equation}
The right-hand side of (\ref{Ajstar}) tends to zero as $N \rightarrow \infty$ by (\ref{musN}).  The result of the lemma follows because if $\Lambda \cap A_j^*$ occurs for some $j \in I$, then $V_j < \tau_{j+1} < \zeta$ by Lemma \ref{Ilem}.
\end{proof}

\section{Coupling with a branching process between times $\tau_j$ and $\tau_{j+1}$}

Recall from Lemmas \ref{coalj1} and \ref{coalj2} and the discussion before Lemma \ref{coalj1} that we have shown that all possible coalescence events have low probability, except for the possibility that type $j$ lineages at time $\tau_{j+1}$ could be descended from the same type $j$ mutation between times $\tau_j$ and $\xi_j$.  In this section, we study these early type $j$ mutations in depth.  The strategy here will 
to couple the descendants of these mutations with a supercritical branching process.

\subsection{Review of results on continuous-time branching processes}

Consider a continuous-time birth and death process $(Z(t), t \geq 0)$ in which each individual independently dies at rate $\nu > 0$ and gives birth to a new individual at rate $\lambda > \nu$.  Assume $Z(0) = 1$.  Using results in \cite{athney}, one can show that
\begin{equation}\label{survivetot}
P(Z(t) > 0) = \frac{\lambda - \nu}{\lambda - \nu e^{-(\lambda - \nu) t}},
\end{equation}
which is also stated as part of Lemma 8.16 of \cite{schI}.  Let $q$ denote the probability that the population goes extinct by time $t$.  By letting $t \rightarrow \infty$ in (\ref{survivetot}), we get
\begin{equation}\label{survivalprob}
1 - q = \frac{\lambda - \nu}{\lambda}.
\end{equation}

Let $W(t) = e^{-(\lambda - \nu)t} Z(t)$.  It is well-known (see, for example, section 7 of Chapter III in \cite{athney}) that $(W(t), t \geq 0)$ is a martingale, and there is a random variable $W$ such that
\begin{equation}\label{Wlim}
\lim_{t \rightarrow \infty} W(t) = W \hspace{.2in}\mbox{a.s.},
\end{equation}
where $W$ is zero on the event that the branching process goes extinct and is almost surely strictly positive on the event that the branching process survives forever.  In this instance, it is also known that the conditional distribution of $W$ given that the branching process survives forever is the exponential distribution with rate parameter $1 - q$, so that if $x \geq 0$, then
\begin{equation}\label{Wexp}
P(W > x) = (1 - q) e^{-(1 - q) x}.
\end{equation}
This can be derived from results in \cite{athney} and is also worked out, for example, in \cite{dm10}.  Recall that if $S$ has an exponential distribution with parameter $\lambda$, then $E[S] = 1/\lambda$ and $E[S^2] = 2/\lambda^2$.  Because $P(W > 0) = 1-q$, it follows that $E[W] = 1$ and $\Var(W) \leq E[W^2] = 2/(1-q)$.  We will need the following result concerning the rate of convergence of $W(t)$ to $W$.  

\begin{Lemma}\label{bplem2}
For all $\eta > 0$ and $t > 0$, we have $$P(|W(t) - W| > \eta) \leq \frac{2 e^{-(\lambda - \nu) t}}{\eta^2 (1 - q)}.$$
\end{Lemma}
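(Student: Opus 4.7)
The plan is to bound $P(|W(t)-W|>\eta)$ by Chebyshev's inequality and then estimate $E[(W(t)-W)^2]$ using the martingale structure. Since $(W(t),t\ge 0)$ is a mean-one martingale, I will first verify that the convergence $W(t)\to W$ in (\ref{Wlim}) actually holds in $L^2$. This can be read off from (\ref{Wexp}): conditioning on survival gives $E[W^2]=(1-q)\cdot 2/(1-q)^2 = 2/(1-q)<\infty$, and boundedness of $(W(t))$ in $L^2$ will follow from the variance computation below. Granted $L^2$-convergence, the orthogonality of martingale increments gives the Pythagorean identity
$$E[(W-W(t))^2] \;=\; \mathrm{Var}(W) - \mathrm{Var}(W(t)),$$
so everything is reduced to computing these two variances.

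For the variance of $W(t)$, I will set $v(t)=E[Z(t)^2]$ and derive the standard ODE
$$v'(t) \;=\; 2(\lambda-\nu)v(t) + (\lambda+\nu)E[Z(t)]$$
by inspecting the infinitesimal generator of the birth-death process (each jump $\pm 1$ contributes $(\pm 1)^2 = 1$ times its rate, and the drift term contributes $2Z(t)(\lambda-\nu)$ per unit time). Combined with $E[Z(t)]=e^{(\lambda-\nu)t}$ and $v(0)=1$, integration yields
$$\mathrm{Var}(Z(t)) \;=\; \frac{\lambda+\nu}{\lambda-\nu}\,e^{(\lambda-\nu)t}\bigl(e^{(\lambda-\nu)t}-1\bigr),$$
hence
$$\mathrm{Var}(W(t)) \;=\; e^{-2(\lambda-\nu)t}\,\mathrm{Var}(Z(t)) \;=\; \frac{\lambda+\nu}{\lambda-\nu}\bigl(1-e^{-(\lambda-\nu)t}\bigr).$$
Letting $t\to\infty$ (or using (\ref{Wexp}) with $E[W]=1$, $E[W^2]=2/(1-q)$) gives $\mathrm{Var}(W)=(\lambda+\nu)/(\lambda-\nu)$.

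Subtracting these,
$$E[(W-W(t))^2] \;=\; \frac{\lambda+\nu}{\lambda-\nu}\,e^{-(\lambda-\nu)t} \;\le\; \frac{2\lambda}{\lambda-\nu}\,e^{-(\lambda-\nu)t} \;=\; \frac{2}{1-q}\,e^{-(\lambda-\nu)t},$$
where in the last step I used (\ref{survivalprob}). Chebyshev's inequality then delivers the claimed bound. I expect no serious obstacle: the only substantive steps are the $L^2$-convergence (which is immediate from the ODE computation, since $\mathrm{Var}(W(t))$ is uniformly bounded by $(\lambda+\nu)/(\lambda-\nu)$) and the solution of the linear variance ODE, both of which are standard for continuous-time branching processes.
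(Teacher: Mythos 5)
Your proposal is correct, and it reaches the stated bound by a genuinely different route than the paper. The paper conditions on $Z(t)$ and invokes the branching decomposition $W = e^{-(\lambda-\nu)t}\sum_{i=1}^{Z(t)} W_i$, where the $W_i$ are i.i.d.\ copies of $W$; this gives $E[W|Z(t)] = W(t)$ and $\Var(W|Z(t)) = e^{-2(\lambda-\nu)t}Z(t)\Var(W) \leq 2e^{-(\lambda-\nu)t}W(t)/(1-q)$, using the second moment $E[W^2]=2/(1-q)$ read off from (\ref{Wexp}); a conditional Chebyshev inequality followed by taking expectations (with $E[W(t)]=1$) finishes the argument. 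You instead work unconditionally: you justify $L^2$-convergence of the martingale, use the Pythagorean identity $E[(W-W(t))^2]=\Var(W)-\Var(W(t))$, and compute $\Var(Z(t))$ explicitly by solving the moment ODE $v'(t)=2(\lambda-\nu)v(t)+(\lambda+\nu)E[Z(t)]$, arriving at the exact identity $E[(W-W(t))^2]=\frac{\lambda+\nu}{\lambda-\nu}e^{-(\lambda-\nu)t}$, which is then dominated by $\frac{2}{1-q}e^{-(\lambda-\nu)t}$ via (\ref{survivalprob}). The two computations are secretly close (taking expectations of the paper's conditional variance yields the same quantity $e^{-(\lambda-\nu)t}\Var(W)$), but the ingredients differ: the paper's argument is shorter given the branching property and the exponential law of $W$, while yours is more self-contained --- it never needs the decomposition of $W$ at time $t$, and strictly speaking does not even need (\ref{Wexp}), since $\Var(W)$ emerges as the limit of $\Var(W(t))$ --- at the cost of verifying $L^2$-closedness of the martingale (which you correctly note follows from the uniform bound $\Var(W(t))\leq\frac{\lambda+\nu}{\lambda-\nu}$) and solving the linear ODE. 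As a bonus, your method gives an equality for $E[(W-W(t))^2]$ rather than just an upper bound.
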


\begin{proof}
Conditional on $Z(t)$, we can consider separately the descendants of the $Z(t)$ individuals at time $t$ to see that $$W = e^{-(\lambda - \nu) t} \sum_{i=1}^{Z(t)} W_i,$$ where the random variables $W_1, \dots, W_{Z(t)}$ are independent and have the same distribution as $W$ (see section 10 in Chapter III of \cite{athney}).  It follows that $$E[W|Z(t)] = e^{-(\lambda - \nu) t} Z(t) E[W] = W(t)$$ and $$\Var(W|Z(t)) = e^{-2(\lambda - \nu) t} Z(t) \Var(W) \leq \frac{2 e^{-(\lambda - \nu)t } W(t)}{1 - q}.$$  Therefore, by Chebyshev's Inequality,
\begin{equation}\label{Wrate}
P(|W - W(t)| > \eta|Z(t)) \leq \frac{\Var(W|Z(t))}{\eta^2} \leq \frac{2 e^{-(\lambda - \nu) t} W(t)}{\eta^2(1 - q)}.
\end{equation}
Because $E[W(t)] = 1$, the result follows by taking expectations of both sides in (\ref{Wrate}).
\end{proof}

\subsection{A branching process coupling between times $\tau_j$ and $\tau_{j+1}$}\label{bpcouplesec}

We will assume now that $j \in I$, which by Lemma \ref{Ilem} ensures that $\tau_{j+1} < a_N T$ on $\Lambda$.
Recalling Corollary \ref{ZmartCor4}, we will let
$$X_j'(t) = X_j^{\tau_j, \xi_j}(t)$$ denote
the number of type $j$ individuals at time $t$ that are descended from individuals that acquired a $j$th mutation during the time interval $(\tau_j, \xi_j]$.  Note that $X_j'(t) = X_{j,1}(t)$, as long as there are no type $j$ mutations before time $\tau_j$.
We say there is a pure birth event at time $t$ if $X_j'(t) = X_j'(t-) + 1$ and a pure death event at time $t$ if $X_j'(t) = X_j'(t-) - 1$.  We say there is a birth and death event at time $t$ if one of the $X_j'(t-)$ individuals at time $t-$ gives birth and another dies, so that $X_j'(t) = X_j'(t-)$.  Let $B_j'(t)$ and $D_j'(t)$ denote the expressions in (\ref{Bjeq}) and (\ref{Djeq}) respectively  with $X_j'(t)$ in place of $X_j(t)$.  Recall from the discussion surrounding (\ref{Fjeq}), (\ref{Bjeq}), and (\ref{Djeq}) that if $t \in [\tau_j, \tau_{j+1} \wedge \zeta)$, then the rate at which a particular type $j$ individual gives birth as part of a pure birth event is
\begin{equation}\label{newB}
B_j'(t) = \bigg(1 - \frac{X_j'(t)}{N} \bigg)(1 + s(j - M(t))),
\end{equation}
while the rate at which a particular type $j$ individual is involved in a pure death event is
\begin{equation}\label{newD}
D_j'(t) = 1 + \mu - \frac{X_j'(t)}{N}(1 + s(j - M(t))).
\end{equation}
Also, the rate at which a particular type $j$ individual gives birth as part of a birth and death event and the rate at which a particular type $j$ individual dies as part of a birth and death event are both equal to
\begin{equation}\label{Odef}
O_j(t) = \frac{X_j'(t)}{N}(1 + s(j - M(t))).
\end{equation}
We write $B_j^*(t) = B_j'(t) + O_j(t) = 1 + s(j - M(t))$ and $D_j^*(t) = D_j'(t) + O_j(t) = 1 + \mu$ for the total birth and death rates respectively.  The following lemma gives upper and lower bounds on these birth and death rates.  The lemma also gives a bound on the rate of type $j$ mutations, which will correspond to immigration in our branching process.

\begin{Lemma}\label{birthdeathlem}
There is a positive constant $C_4$ such that for sufficiently large $N$,
if $X_j'(t) \leq s/2 \mu$ and $t \in [\tau_j, \tau_{j+1} \wedge \zeta)$, then the following hold:
\begin{equation}\label{deathbounds}
1 - s \leq D_j'(t) \leq D_j^*(t) = 1 + \mu,
\end{equation}
\begin{equation}\label{birthbounds}
1 - sq_j - C_4 s \leq B_j'(t) \leq B_j^*(t) \leq 1 + sq_j + C_4s,
\end{equation}
\begin{equation}\label{immbounds}
(1 - \delta) s e^{s(q_j - C_4)(t - \tau_j)} \leq \mu X_{j-1}(t) \leq (1 + \delta) s e^{s(q_j + C_4)(t - \tau_j)}.
\end{equation}
\end{Lemma}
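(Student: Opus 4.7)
The plan is to unpack the definitions (\ref{newB})--(\ref{Odef}) of the birth, death, and mutation rates and combine them with the two-sided control on $G_j(t) = s(j - M(t)) - \mu$ from part~1 of Proposition~\ref{newGq} and the two-sided control on $X_{j-1}(t)$ from part~3 of Proposition~\ref{mainjprop}. The constant $C_4$ will be taken at the end to be of the form $C_3 + 2$, with $C_3$ the constant supplied by Proposition~\ref{newGq}. The first step is to observe that since $j \in I$, Lemma~\ref{Ilem} and the monotonicity of $\tau_j$ (which follows from (\ref{tauspacing})) imply $\tau_j > a_N + 2a_N/k_N$, so part~1 of Proposition~\ref{newGq} yields $s(q_j - C_3) \le G_j(t) \le s(q_j + C_3)$ for $t \in [\tau_j, \tau_{j+1} \wedge a_N T]$. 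Since $\mu/s \to 0$ by (\ref{musN}), the identity $s(j - M(t)) = G_j(t) + \mu$ then places $s(j - M(t))$ inside $[s(q_j - C_3 - 1), s(q_j + C_3 + 1)]$ for large $N$. This two-sided bound on $s(j - M(t))$ drives everything that follows.

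For the death bounds, the equality $D_j^*(t) = 1 + \mu$ is read off from (\ref{newD}) and (\ref{Odef}), and $D_j'(t) \le D_j^*(t)$ holds because $O_j(t) \ge 0$. For the lower bound I would combine $X_j'(t)/N \le s/(2\mu N)$ with the uniform upper bound $1 + s(j - M(t)) \le 2$ (valid for large $N$ because $sq_j \le (e + 2\delta) sk_N \to 0$ by part~3 of Proposition~\ref{newGq} and assumption~A3); then (\ref{musN}) makes the correction $(X_j'(t)/N)(1 + s(j - M(t)))$ of order $s/(\mu N) = o(s)$, so $D_j'(t) \ge 1 + \mu - s \ge 1 - s$ for $N$ large. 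The birth bounds are essentially the same story: $B_j^*(t) = 1 + s(j - M(t)) \le 1 + sq_j + C_4 s$ gives the upper bound (and hence $B_j'(t) \le B_j^*(t)$), while the stated lower bound $1 - sq_j - C_4 s \le B_j'(t)$ is far from tight and is implied by the stronger estimate $B_j'(t) \ge B_j^*(t) - o(s) \ge 1 + sq_j - C_4 s$ obtained from $B_j'(t) = (1 - X_j'(t)/N) B_j^*(t)$.

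For the mutation-rate bounds I would apply part~3 of Proposition~\ref{mainjprop} with $j$ replaced by $j - 1$ to get, on $\{t < \zeta\}$,
\[
(1 - \delta) \frac{s}{\mu} \exp\bigg( \int_{\tau_j}^t G_{j-1}(v) \, dv \bigg) \le X_{j-1}(t) \le (1 + \delta) \frac{s}{\mu} \exp\bigg( \int_{\tau_j}^t G_{j-1}(v) \, dv \bigg),
\]
provided $t \le \gamma_{(j-1)+K}$, which is guaranteed for $N$ large by the spacing bound (\ref{tauspacing}) and $K = \lfloor k_N/4 \rfloor$ (since then $\tau_{j+1} - \tau_j \le 2a_N/k_N$ while $\tau_{j-1+K} - \tau_j \ge (K-1) a_N/(3k_N) \to \infty \cdot a_N/k_N$). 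Writing $G_{j-1}(v) = G_j(v) - s$ and inserting the bound on $G_j(v)$ from the first step places $\int_{\tau_j}^t G_{j-1}(v) \, dv$ inside $[s(q_j - C_3 - 1)(t - \tau_j), s(q_j + C_3 - 1)(t - \tau_j)]$; multiplying by $\mu$ and choosing $C_4 = C_3 + 2$ yields (\ref{immbounds}).

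The main obstacle, such as it is, is the bookkeeping needed to verify that the hypotheses of Propositions~\ref{mainjprop} and~\ref{newGq} apply on the intervals we care about: specifically, $\tau_j > a_N + 2a_N/k_N$ so that part~1 of Proposition~\ref{newGq} is available, and $\tau_{j+1} \le \gamma_{(j-1)+K}$ so that part~3 of Proposition~\ref{mainjprop} is available throughout $[\tau_j, \tau_{j+1}]$. Both are routine consequences of the spacing bound (\ref{tauspacing}) and Lemma~\ref{Ilem} once $N$ is sufficiently large. Beyond that, the proof is purely algebraic manipulation of the two resulting two-sided bounds, together with the smallness statements $sk_N \to 0$, $\mu/s \to 0$, and $1/(\mu N) \to 0$ supplied by A3 and~(\ref{musN}).
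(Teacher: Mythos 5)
Your proof is correct and takes essentially the same approach as the paper's: bound the correction term $O_j(t) \leq s$ using $X_j'(t) \leq s/2\mu$ together with (\ref{musN}) and A3, invoke part 1 of Proposition \ref{newGq} (available because $\tau_j > a_N + 2a_N/k_N$ by (\ref{jprime})) to control $G_j(t)$ and hence the birth and death rates, and obtain (\ref{immbounds}) from (\ref{prop23}) applied with index $j-1$ together with the identity $G_{j-1}(v) = G_j(v) - s$. The only cosmetic differences are your explicit choice $C_4 = C_3 + 2$ and your (correct) remark that the stated lower bound on $B_j'(t)$ is weaker than what the argument actually yields.
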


\begin{proof}
Suppose $X_j'(t) \leq s/2 \mu$ and $t \in [\tau_j, \tau_{j+1} \wedge \zeta)$.  By (\ref{musN}), assumption A3, and the fact that $j \leq J$ by Lemma \ref{Ilem}, for sufficiently large $N$ we have
\begin{equation}\label{BDXj}
O_j(t) \leq \bigg( \frac{s}{2 \mu N} \bigg)(1 + s(j - M(t))) \leq \bigg( \frac{s}{2 \mu N} \bigg)(1 + sJ) \leq s.
\end{equation}
The result (\ref{deathbounds}) follows immediately from equations (\ref{newD}), (\ref{Odef}), and (\ref{BDXj}).

To bound the birth rate, note that since $G_j(t) = s(j - M(t)) - \mu$, we have $$1 + \mu + G_j(t) - O_j(t) = B_j'(t) \leq B_j^*(t) = 1 + \mu + G_j(t).$$  Since $s(q_j - C_3) \leq G_j(t) \leq s(q_j + C_3)$ for sufficiently large $N$ by (\ref{jprime}) and part 1 of Proposition \ref{newGq}, the inequality (\ref{birthbounds}) now follows from (\ref{BDXj}) and (\ref{musN}).

Finally, if $t \in [\tau_j, \tau_{j+1} \wedge \zeta)$, then since $G_{j-1}(t) = G_j(t) - s$, part 1 of Proposition \ref{newGq} gives
$s(q_j - C_3 - 1) \leq G_{j-1}(t) \leq s(q_j + C_3 - 1)$.  Now (\ref{immbounds}) follows from this observation and (\ref{prop23}).
\end{proof}

We will use the bounds in Lemma \ref{birthdeathlem} to obtain a coupling in which $(X_j'(t), t \geq \tau_j)$ is bounded between two branching processes with immigration.  More specifically, we will construct processes $(X_j^+(t), t \geq 0)$ and $(X_j^-(t), t \geq 0)$ such that 
\begin{equation}\label{bpcoupling}
X_j^-(t) \leq X_j'(t + \tau_j) \leq X_j^+(t)
\end{equation}
for $t < \kappa_j$, where
\begin{equation}\label{kappajdef}
\kappa_j = \inf\bigg\{u: X_j^+(u) \geq \frac{s}{2 \mu} \bigg\} \wedge \big((\tau_{j+1} \wedge \zeta) - \tau_j\big).
\end{equation}
The processes $(X_j^+(t), t \geq 0)$ and $(X_j^-(t), t \geq 0)$ evolve according to the following rules.  First, $X_j^+(t)$ is the size at time $t$ of a population for which, at time $t$:
\begin{itemize}
\item New immigrants appear at rate $\phi_j^+(t) = (1 + \delta) s e^{s(q_j + C_4)t} \1_{\{t \leq \xi_j - \tau_j\}}$.

\item Each individual gives birth to a new individual at rate $\lambda_j^+ = 1 + s(q_j + C_4)$.

\item Each individual dies at rate $\nu_j^+ = 1 - s$.
\end{itemize}
Likewise, for the process $(X_j^-(t), t \geq 0)$, at time $t$:
\begin{itemize}
\item New immigrants appear at rate $\phi_j^-(t) = (1 - \delta) s e^{s(q_j - C_4)t} \1_{\{t \leq \xi_j - \tau_j\}}$.

\item Each individual gives birth to a new individual at rate $\lambda_j^- = 1 + s(q_j - C_4)$.

\item Each individual dies at rate $\nu_j^- = 1 + \mu$.
\end{itemize}

To establish that a coupling can be achieved so that (\ref{bpcoupling}) holds, we will give an explicit construction of the processes $(X_j^+(t), t \geq 0)$ and $(X_j^-(t), t \geq 0)$.  To do this, we will construct a population in which individuals are colored red, yellow, and blue.  We will let $X_j^+(t)$ be the total number of individuals at time $t$, and we will let $X_j^-(t)$ be the total number of red individuals at time $t$.  For $t < \kappa_j$, the number of individuals at time $t$ that are red or yellow will equal $X_j'(\tau_j + t)$, which we will refer to as the number of individuals in the ``original population".  We will number the individuals in our population by the order in which they were born.

The construction will require the original population process $({\bf X}(t), t \geq 0)$, as well as additional Poisson processes.  For each $i \in \N$, we will have Poisson processes $N_{b,i,j}$ and $N_{d,i,j}$ to help construct births and deaths and an additional Poisson processes $N_{m,j}$ to handle immigration.  These will be Poisson processes on $[0, \infty) \times [0, \infty)$ with Lebesgue intensity, which will be independent of one another and of the original population process.  We will also need a sequence $(\beta_{\ell, j})_{\ell=1}^{\infty}$ of independent random variables which are uniformly distributed on $(0,1)$ and are independent of $({\bf X}(t), t \geq 0)$ and the above Poisson processes.

We first construct our population up to time $\kappa_j$.  Observe, as we go through the construction, that the red population has immigration, birth, and death rates of $\phi_j^-(t)$, $\lambda_j^-$, and $\nu_j^-$ respectively, the total population has immigration, birth, and death rates of $\phi_j^+(t)$, $\lambda_j^+$, and $\nu_j^+$ respectively, and the red and yellow individuals stay in one-to-one correspondence with the original population.  This construction is well-defined because Lemma \ref{birthdeathlem} ensures that the rates described below are positive and the probabilities indicated below are between zero and one.
\begin{itemize}
\item If a type $j$ mutation occurs in the original population at time $\tau_j + t$, then an immigrant appears at time $t$.  This will be the $\ell$th change in the population for some positive integer $\ell$.  We color this immigrant red if $\beta_{\ell, j} \leq \phi_j^-(t-)/(\mu X_{j-1}(t-))$, and otherwise we color it yellow.  A blue immigrant appears at time $t$ if the Poisson process $N_{m,j}$ has a point $(t, x)$ with $x \leq \phi_j^+(t-) - \mu X_{j-1}(t-)$.

\item If the $i$th individual at time $t-$ is blue, then it gives birth to a blue individual at time $t$ if the Poisson process $N_{b,i,j}$ has a point $(t, x)$ with $x \leq \lambda_j^+$ and dies at time $t$ if there is a point $(t, x)$ in $N_{d,i,j}$ with $x \leq \nu_j^+$.

\item Suppose the $i$th individual at time $t-$ is red.  If the corresponding individual in the original population gives birth at time $\tau_j + t$ as part of a pure birth event, then the $i$th individual gives birth at time $t$.  This will be the $\ell$th change in the population for some $\ell$, and the new individual born will be red if $\beta_{\ell, j} \leq \lambda_j^-/B_j'(t-)$ and otherwise will be yellow.  If the corresponding individual in the original population gives birth at time $\tau_j + t$ as part of a birth and death event, then the $i$th individual gives birth to a yellow individual at time $t$.  The $i$th individual also gives birth to a blue individual at time $t$ if the Poisson process $N_{b,i,j}$ has a point at $(t, x)$ with $x \leq \lambda_j^+ - B_j^*(t-)$.

If the corresponding individual in the original population dies at time $\tau_j + t$ as part of a pure death event, then this will lead to the $\ell$th change in the population for some $\ell$, and the $i$th individual dies at time $t$ if $\beta_{\ell, j} \leq \nu_j^+/D_j'(t-)$ and otherwise turns blue.  If the corresponding individual in the original population dies at time $t$ as part of a birth and death event, then the $i$th individual turns blue at time $t$.  The $i$th individual also turns yellow at time $t$ if $N_{d,i,j}$ has a point at $(t, x)$ with $x \leq \nu_j^- - D_j^*(t-)$. 

\item Suppose the $i$th individual at time $t-$ is yellow.  If the corresponding individual in the original population gives birth at time $\tau_j + t$ as part of either a pure birth or a birth and death event, then a new yellow individual is born at time $t$.  The $i$th individual also gives birth to a blue individual at time $t$ if the Poisson process $N_{b,i,j}$ has a point at $(t, x)$ with $x \leq \lambda_j^+ - B_j^*(t-)$.

If the corresponding individual in the original population dies at time $t$ as part of a pure death event, then this will be the $\ell$th change in the population for some $\ell$, and the $i$th individual dies at time $t$ if $\beta_{\ell} \leq \nu_j^+/D_j'(t-)$ and otherwise turns blue.  If the corresponding individual dies at time $t$ as part of a birth and death event, then the $i$th individual turns blue at time $t$.
\end{itemize}
At time $\kappa_j$, the coupling with the original population is broken, and we make all yellow individuals blue.  After time $\kappa_j$, the process evolves as follows:
\begin{itemize}
\item If $\kappa_j < t \leq \xi_j$, then a red immigrant appears at time $t$ if there is a point $(t, x)$ of $N_{m,j}$ with $x \leq \phi_j^-(t-)$ and a blue immigrant appears at time $t$ if there is a point $(t, x)$ of $N_{m,j}$ with $\phi_j^-(t) < x \leq \phi_j^+(t)$.

\item If the $i$th individual is blue, it gives birth to a blue individual at time $t$ if $N_{b,i,j}$ has a point $(t, x)$ with $x \leq \lambda_j^+$ and dies at time $t$ if there is a point $(t, x)$ in $N_{d,i,j}$ with $x \leq \nu_j^+$.

\item Suppose the $i$th individual is red.  Then the $i$th individual gives birth to a red individual at time $t$ if the Poisson process $N_{b,i,j}$ has a point $(t, x)$ with $x \leq \lambda_j^-$ and to a blue individual at time $t$ if $N_{b,i,j}$ has a point $(t, x)$ with $\lambda_j^- < x \leq \lambda_j^+$.  Also, the $i$th individual dies at time $t$ if the Poisson process $N_{d,i,j}$ has a point $(t, x)$ with $x \leq \nu_j^+$ and turns blue at time $t$ if $N_{d,i,j}$ has a point $(t, x)$ with $\nu_j^+ < x \leq \nu_j^-$. 
\end{itemize}

For $j \in I$, let ${\cal H}_j$ be the $\sigma$-field generated by ${\cal F}_{\tau_j}$ along with the Poisson processes $N_{b, i, h}$, $N_{d,i,h}$, and $N_{m,h}$ and the random variables $\beta_{\ell, h}$ for $h < j$.  Because the immigration, birth, and death rates $\phi_j^+$, $\phi_j^-$, $\lambda_j^+$, $\lambda_j^-$, $\nu_j^+$, and $\nu_j^-$ are all ${\cal H}_j$-measurable, conditional on ${\cal H}_j$, the processes $(X_j^+(t), t \geq 0)$ and $(X_j^-(t), t \geq 0)$ are continuous-time branching processes with immigration, in which the immigration rate varies with time.

Let $$\tau_j' = \tau_j + \frac{3}{sq_j} \log \bigg( \frac{1}{sq_j} \bigg).$$  Note that $\tau_j < \xi_j < \tau_j'$ for sufficiently large $N$.  In view of (\ref{tauspacing}) and part 3 of Proposition \ref{newGq}, along with the fact that
$\log(s/\mu)/\log(1/sk_N) \rightarrow \infty$ as $N \rightarrow \infty$ by (\ref{musN}), we have $\tau_j' < \tau_{j+1}$ on $\{\zeta > \tau_j'\}$ if $N$ is sufficiently large.  Lemma \ref{kappabound} below helps to bound the probability that $\kappa_j < \tau_j'- \tau_j$ and therefore helps to ensure that with high probability, (\ref{bpcoupling}) holds up to time $\tau_j' - \tau_j$.  We will need the following bound on the mean of the branching process.

\begin{Lemma}\label{expbp}
For sufficiently large $N$, on $\{\tau_j < \zeta\}$, we have $$E[X_j^+(\tau_j' - \tau_j)|{\cal F}_{\tau_j}] \leq \frac{C}{s^3 k_N^4} \log \bigg( \frac{1}{sk_N} \bigg).$$
\end{Lemma}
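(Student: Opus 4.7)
The plan is to exploit the fact that, conditional on ${\cal F}_{\tau_j}$, the process $(X_j^+(t), t \geq 0)$ is a time-inhomogeneous Markov branching process with immigration: the parameters $\phi_j^+$, $\lambda_j^+$, $\nu_j^+$ are ${\cal F}_{\tau_j}$-measurable (they are built from $q_j$, $\xi_j$, and constants), and $X_j^+(0) = 0$ by construction. By either solving the first-moment ODE $m'(t) = \phi_j^+(t) + r\,m(t)$ with $m(0) = 0$, or by decomposing over immigration times and using that a single immigrant at time $u$ has mean $e^{r(t-u)}$ descendants at time $t$, we obtain
\[
E[X_j^+(t) \,|\, {\cal F}_{\tau_j}] \;=\; \int_0^t \phi_j^+(u)\, e^{r(t-u)}\, du,
\qquad r := \lambda_j^+ - \nu_j^+ = s(q_j + C_4 + 1).
\]

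For $N$ large, the remark made just before the lemma gives $\xi_j - \tau_j < \tau_j' - \tau_j$ on $\{\tau_j < \zeta\}$. Specializing at $t = \tau_j' - \tau_j$ and substituting $\phi_j^+(u) = (1+\delta) s e^{s(q_j + C_4) u} \1_{\{u \leq \xi_j - \tau_j\}}$, the integrand becomes $(1+\delta) s e^{r t} e^{-su}$, and an elementary computation gives
\[
E[X_j^+(\tau_j'-\tau_j) \,|\, {\cal F}_{\tau_j}] \;=\; (1+\delta)\, e^{r(\tau_j' - \tau_j)}\bigl(1 - e^{-s(\xi_j - \tau_j)}\bigr).
\]
I then plan to bound the two factors separately. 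Because $s(\xi_j - \tau_j) = q_j^{-1}(\log(1/sq_j) + b) \to 0$ and $q_j \geq (1 - 2\delta) k_N$ by part 3 of Proposition \ref{newGq}, the inequality $1 - e^{-x} \leq x$ yields $1 - e^{-s(\xi_j - \tau_j)} \leq C \log(1/sk_N) / k_N$ on $\{\tau_j < \zeta\}$. For the exponential factor, writing $r(\tau_j' - \tau_j) = (3(q_j + C_4 + 1)/q_j)\log(1/sq_j)$, I will split
\[
e^{r(\tau_j' - \tau_j)} \;=\; (sq_j)^{-3}\cdot (sq_j)^{-3(C_4+1)/q_j},
\]
and use the lower bound on $q_j$ once more to get $(sq_j)^{-3} \leq C/(sk_N)^3$. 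Multiplying the two estimates produces exactly the stated bound $C\log(1/sk_N)/(s^3 k_N^4)$.

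The only subtle point is controlling the correction factor $(sq_j)^{-3(C_4+1)/q_j}$: its logarithm equals $3(C_4+1)\log(1/sq_j)/q_j$, and since $\log(1/sq_j) \leq \log(1/s)$ for large $N$ while $q_j \geq (1-2\delta)k_N$, this quantity is bounded by $C\log(1/s)/k_N$, which tends to $0$ by assumption A1. Hence this factor is absorbed into the generic constant $C$, and the proof is complete. No part of the argument requires anything beyond the first-moment formula for a branching process with immigration and the asymptotics already established earlier in the paper, so I do not anticipate any serious obstacle.
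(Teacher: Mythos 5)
Your proof is correct and takes essentially the same route as the paper's: both start from the first-moment formula for a branching process with immigration, reduce the integral to $(1+\delta)\,e^{(\lambda_j^+-\nu_j^+)(\tau_j'-\tau_j)}\bigl(1-e^{-s(\xi_j-\tau_j)}\bigr)$, then use $e^{sq_j(\tau_j'-\tau_j)}=(sq_j)^{-3}$, the bound $q_j \geq (1-2\delta)k_N$ from part 3 of Proposition \ref{newGq}, and assumption A1 (the reasoning in (\ref{Aj8})) to absorb the correction factor $e^{s(C_4+1)(\tau_j'-\tau_j)}$ into the constant. The only cosmetic difference is that you integrate first and then apply $1-e^{-x}\leq x$, whereas the paper bounds $\int_0^{\xi_j-\tau_j}e^{-su}\,du$ by $\xi_j-\tau_j$ directly; these are the same estimate.
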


\begin{proof}
Standard calculations involving supercritical branching processes give
\begin{align*}
E[X_j^+(\tau_j' - \tau_j)|{\cal F}_{\tau_j}] &= \int_0^{\tau_j' - \tau_j} \phi_j^+(u) e^{(\lambda_j^+ - \nu_j^+)(\tau_j' - \tau_j - u)} \: du \\
&= (1 + \delta) s \int_0^{\xi_j - \tau_j} e^{s(q_j + C_4) u} e^{s(q_j + C_4 + 1)(\tau_j' - \tau_j - u)} \: du \\
&= (1 + \delta) s e^{s(q_j + C_4 + 1)(\tau_j' - \tau_j)} \int_0^{\xi_j - \tau_j} e^{-su} \: du.
\end{align*}
Now $s(C_4 + 1)(\tau_j' - \tau_j) \rightarrow 0$ as $N \rightarrow \infty$ by the reasoning in (\ref{Aj8}), and $e^{sq_j(\tau_j' - \tau_j)} = (sq_j)^{-3}$.  Also,
$$\int_0^{\xi_j - \tau_j} e^{-su} \: du = \frac{1 - e^{-s(\xi_j - \tau_j)}}{s} \leq \xi_j - \tau_j = \frac{1}{sq_j} \log \bigg( \frac{1}{sq_j} \bigg) + \frac{b}{sq_j}.$$
Since $q_j \geq (1 - 2 \delta) k_N$ on $\{\tau_j < \zeta\}$ by part 3 of Proposition \ref{newGq}, the result follows. 
\end{proof}

\begin{Lemma}\label{kappabound}
We have $$\lim_{N \rightarrow \infty} P \bigg( \Lambda \cap \bigcup_{j \in I} \{\kappa_j \leq \tau_j' - \tau_j\} \bigg) = 0.$$
\end{Lemma}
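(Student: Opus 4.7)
The plan is to bound $P(\Lambda \cap \{\kappa_j \leq \tau_j' - \tau_j\})$ for each fixed $j \in I$ by a Doob maximal inequality combined with Lemma~\ref{expbp}, and then sum over $j \in I$ using $|I| \leq 3Tk_N$ from Lemma~\ref{Ilem}.

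First I reduce. On $\Lambda$ we have $\zeta = \infty$, and as noted in the paragraph preceding Lemma~\ref{expbp}, $\tau_j' < \tau_{j+1}$ on $\{\zeta > \tau_j'\}$ for sufficiently large $N$. So on $\Lambda$ the second term in the minimum defining $\kappa_j$ exceeds $\tau_j' - \tau_j$, and therefore
$$\Lambda \cap \{\kappa_j \leq \tau_j' - \tau_j\} \subset \Lambda \cap \bigl\{\tilde \kappa_j \leq \tau_j' - \tau_j\bigr\}, \quad \tilde \kappa_j := \inf\{u : X_j^+(u) \geq s/(2\mu)\}.$$
As observed at the end of section~\ref{bpcouplesec}, conditional on $\mathcal{H}_j$ the process $(X_j^+(t), t \geq 0)$ is a continuous-time branching process with immigration, with birth rate $\lambda_j^+ > \nu_j^+$ and nonnegative immigration rate $\phi_j^+$. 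Hence its compensator $\int_0^t [\phi_j^+(u) + (\lambda_j^+-\nu_j^+) X_j^+(u)]\,du$ is nondecreasing, so $(X_j^+(t), t \geq 0)$ is a nonnegative submartingale (conditional on $\mathcal{H}_j$). Doob's $L^1$ maximal inequality then yields
$$P\bigl(\tilde \kappa_j \leq \tau_j' - \tau_j \,\bigm|\, \mathcal{H}_j\bigr) \leq P\Bigl(\sup_{0 \leq u \leq \tau_j' - \tau_j} X_j^+(u) \geq \tfrac{s}{2\mu} \,\Bigm|\, \mathcal{H}_j\Bigr) \leq \frac{2\mu}{s}\, E[X_j^+(\tau_j' - \tau_j)\mid \mathcal{H}_j].$$

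Since $\{\tau_j < \zeta\} \in \mathcal{F}_{\tau_j} \subset \mathcal{H}_j$ and $\Lambda \subset \{\tau_j < \zeta\}$, applying Lemma~\ref{expbp} and taking expectations gives, for sufficiently large $N$,
$$P\bigl(\Lambda \cap \{\kappa_j \leq \tau_j' - \tau_j\}\bigr) \leq \frac{2\mu}{s} \cdot \frac{C}{s^3 k_N^4}\log\!\Bigl(\frac{1}{sk_N}\Bigr) = \frac{C\mu \log(1/(sk_N))}{s^4 k_N^4}.$$
Summing over $j \in I$ and using $|I| \leq 3Tk_N$ from Lemma~\ref{Ilem},
$$P\Bigl(\Lambda \cap \bigcup_{j \in I} \{\kappa_j \leq \tau_j' - \tau_j\}\Bigr) \leq \frac{C T \mu \log(1/(sk_N))}{s^4 k_N^3}.$$
To show this tends to $0$, I invoke (\ref{musN}): for any $a > 0$, $\mu/s^a \to 0$. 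Taking $a = 5$, the factor $\mu/s^4$ decays at least like $s$, while $\log(1/(sk_N))$ grows only logarithmically (even using $sk_N \to 0$ from A3), so $\mu \log(1/(sk_N))/s^4 \to 0$; dividing by $k_N^3 \geq 1$ preserves this.

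The only delicate point is the reduction to a pure maximal-inequality problem for $X_j^+$: one must verify that the complicated colored-particle coupling in section~\ref{bpcouplesec} is consistent with $X_j^+$ genuinely being a branching process with immigration on the entire half-line (not merely up to $\kappa_j$), but this is explicitly asserted in the text. After that, the argument is routine: submartingale property, Doob, Lemma~\ref{expbp}, sum, and the fast decay of $\mu$ guaranteed by (\ref{musN}).
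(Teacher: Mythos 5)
Your proof is correct and follows essentially the same route as the paper's: reduce via Lemma \ref{Ilem} (so that on $\Lambda$ only the hitting time of $s/2\mu$ matters), apply Doob's maximal inequality to the nonnegative submartingale $(X_j^+(t), t \geq 0)$, bound the expectation by Lemma \ref{expbp}, sum over $j \in I$ using $|I| \leq 3Tk_N$, and conclude with (\ref{musN}). The only cosmetic difference is that you condition on ${\cal H}_j$ where the paper conditions on ${\cal F}_{\tau_j}$, which changes nothing.
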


\begin{proof}
In view of Lemma \ref{Ilem}, for $j \in I$ we have $\tau_j' < \tau_{j+1} < \zeta$ on $\Lambda$.  Therefore, for $j \in I$, on $\Lambda$ the only way to have $\kappa_j \leq \tau_j' - \tau_j$ would be to have $X_j^+(t) > s/2 \mu$ for some $t \leq \tau_j' - \tau_j$.  Because $(X_j^+(t), t \geq 0)$ is a submartingale, it follows from Doob's Maximal Inequality and Lemma \ref{expbp} that
\begin{align}\label{kbe1}
P(\Lambda \cap \{\kappa_j \leq \tau_j' - \tau_j\}|{\cal F}_{\tau_j}) &\leq P \bigg( \sup_{0 \leq t \leq \tau_j' - \tau_j} X_j^+(t) > \frac{s}{2 \mu} \bigg| {\cal F}_{\tau_j} \bigg) \1_{\{\zeta > \tau_j\}} \nonumber \\
&\leq \frac{2 \mu}{s} E[X_j^+(\tau_j' - \tau_j)|{\cal F}_{\tau_j}] \1_{\{\zeta > \tau_j\}} \nonumber \\
&\leq \frac{C \mu}{s^4 k_N^4} \log \bigg( \frac{1}{sk_N} \bigg).
\end{align}
Summing over $j \in I$, and then using (\ref{musN}) and the fact that the cardinality of $I$ is at most $3T k_N$ by Lemma \ref{Ilem}, we obtain the result.
\end{proof}

\subsection{The probability that a family survives}

Here we use the branching process coupling introduced in the previous subsection to obtain upper and lower bounds on the probability that an individual will acquire a $j$th mutation before time $\xi_j$ and have descendants surviving a long time into the future.  

\begin{Lemma}\label{pmlem1}
Suppose $j \in I$, where $j$ is possibly random, and $\tau_j$ is a stopping time.  Define ${\cal H}_j$ as in subsection \ref{bpcouplesec}.  On the event $\{\tau_j < \zeta\}$, we have for sufficiently large $N$,
\begin{equation}\label{bpineq}
\frac{(1 - 2 \delta) e^b}{q_j} \leq P(X_j^-(\tau_j' - \tau_j) > 0|{\cal H}_j) \leq P(X_j^+(\tau_j' - \tau_j) > 0|{\cal H}_j) \leq \frac{(1 + 2 \delta) e^b}{q_j}.
\end{equation}
Also, letting $L_j^-$ and $L_j^+$ denote the numbers of immigrants in $(X_j^-(t), t \geq 0)$ and $(X_j^+(t), t \geq 0)$ respectively that have descendants alive at time $\tau_j' - \tau_j$, for sufficiently large $N$ on $\{\tau_j < \zeta\}$ we have
\begin{equation}\label{2immfam}
P(L_j^+ \geq 2|{\cal H}_j) \leq \frac{2e^{2b}}{q_j^2}.
\end{equation}
\end{Lemma}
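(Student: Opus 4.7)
The plan is to exploit the observation that, conditional on ${\cal H}_j$, the processes $(X_j^+(t), t \geq 0)$ and $(X_j^-(t), t \geq 0)$ are branching processes with inhomogeneous Poisson immigration at rate $\phi_j^\pm$, and each immigrant's descendant family evolves as an independent birth-death process with constant rates $\lambda_j^\pm$ and $\nu_j^\pm$. Since immigration stops at time $\xi_j - \tau_j$ while $\tau_j' - \tau_j > \xi_j - \tau_j$, every individual alive at time $\tau_j' - \tau_j$ descends from some immigrant arriving during $(0, \xi_j - \tau_j]$, so $\{X_j^\pm(\tau_j' - \tau_j) > 0\} = \{L_j^\pm \geq 1\}$. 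By the Poisson marking theorem, conditional on ${\cal H}_j$ the variable $L_j^\pm$ is itself Poisson with mean
$$m_j^\pm = \int_0^{\xi_j - \tau_j} \phi_j^\pm(u) \, p_j^\pm(\tau_j' - \tau_j - u) \, du,$$
where $p_j^\pm(t) = (\lambda_j^\pm - \nu_j^\pm)/(\lambda_j^\pm - \nu_j^\pm e^{-(\lambda_j^\pm - \nu_j^\pm) t})$ is the single-ancestor survival probability at time $t$ from (\ref{survivetot}). Both assertions of the lemma therefore reduce to computing $m_j^\pm$ and applying standard Poisson tail estimates.

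The central computation shows that, on $\{\tau_j < \zeta\}$, one has $m_j^\pm = (1 \pm \delta)(1 + o(1)) e^b/q_j$ with a deterministic $o(1)$. Using part 3 of Proposition \ref{newGq}, $q_j \geq (1 - 2\delta) k_N$, and assumption A1 combined with the reasoning in (\ref{Aj8}) then gives $s(\xi_j - \tau_j) \to 0$ and $s C_4(\xi_j - \tau_j) \to 0$. Since $\lambda_j^\pm - \nu_j^\pm \sim s q_j$ (using $\mu = o(s)$ and $C_4 = O(1)$) while $(\lambda_j^\pm - \nu_j^\pm)(\tau_j' - \xi_j) \to \infty$, the survival factor satisfies $p_j^\pm(\tau_j' - \tau_j - u) = (1 + o(1))(\lambda_j^\pm - \nu_j^\pm)/\lambda_j^\pm$ uniformly for $u \in [0, \xi_j - \tau_j]$. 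Evaluating the resulting elementary exponential integral and using $e^{sq_j(\xi_j - \tau_j)} = e^b/(sq_j)$ from (\ref{xijdef}) gives the claim.

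The bounds (\ref{bpineq}) then follow from $P(L_j^\pm \geq 1|{\cal H}_j) = 1 - e^{-m_j^\pm}$ and the elementary estimates $m - m^2/2 \leq 1 - e^{-m} \leq m$, with the quadratic correction absorbed into the prefactor because $m_j^\pm \to 0$. For (\ref{2immfam}), the Poisson tail bound $P(\mathrm{Poisson}(m) \geq 2) \leq m^2/2$ combined with $m_j^+ \leq (1 + 2\delta) e^b/q_j$ yields $P(L_j^+ \geq 2|{\cal H}_j) \leq (1 + 2\delta)^2 e^{2b}/(2 q_j^2)$, which is at most $2 e^{2b}/q_j^2$ since $\delta \leq 1/100$ by (\ref{deldef}).

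The main obstacle is obtaining the uniform asymptotic $p_j^\pm(\tau_j' - \tau_j - u) \sim (\lambda_j^\pm - \nu_j^\pm)/\lambda_j^\pm$ across the whole range $u \in [0, \xi_j - \tau_j]$, particularly near the right endpoint where the residual time shrinks to $\tau_j' - \xi_j$. This is precisely why the factor $3$ (rather than $1$) appears in the definition of $\tau_j'$: it makes $\tau_j' - \xi_j$ grow like $2 \log(1/s q_j)/(s q_j)$, which is large enough relative to $1/(sq_j)$ that the exponential correction in (\ref{survivetot}) vanishes in the limit. Everything else is careful bookkeeping to confirm that the error terms of order $C_4/q_j$, $s C_4(\xi_j - \tau_j)$, and $m_j^\pm$ are absorbable into the slack between $(1 \pm \delta)$ and $(1 \pm 2\delta)$.
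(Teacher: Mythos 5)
Your proposal is correct and follows essentially the same route as the paper: both arguments exploit that, conditional on ${\cal H}_j$, the surviving immigrant count $L_j^\pm$ is Poisson with mean $\int_0^{\xi_j-\tau_j}\phi_j^\pm(u)\,p_j^\pm(\tau_j'-\tau_j-u)\,du$, evaluate this integral using (\ref{survivetot}), the identity $e^{sq_j(\xi_j-\tau_j)}=e^b/(sq_j)$, and the fact that $\tau_j'-\xi_j$ is long enough to make the finite-horizon survival probability match the eternal one, and then conclude via elementary Poisson estimates ($1-e^{-m}\leq m$, $1-e^{-m}\geq m-m^2$, $P(\mathrm{Poisson}(m)\geq 2)\lesssim m^2$). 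The only differences are cosmetic: you phrase the middle computation as a two-sided asymptotic $m_j^\pm=(1\pm\delta)(1+o(1))e^b/q_j$ where the paper derives the upper and lower bounds separately, and your tail constants ($m^2/2$ versus $m^2$) are slightly sharper than needed.
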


\begin{proof}
Throughout the proof, we work on the event $\{\tau_j < \zeta\}$.
Because $X_j^-(t) \leq X_j^+(t)$ for all $t \geq 0$, the second inequality in (\ref{bpineq}) is obvious.  We now prove the third inequality.  By (\ref{survivetot}), the probability that an immigrant in the branching process $(X_j^+(t), t \geq 0)$ at time $u$ has descendants that survive until time $\tau_j' - \tau_j$ is 
$$\frac{\lambda_j^+ - \nu_j^+}{\lambda_j^+ - \nu_j^+ e^{-(\lambda_j^+ - \nu_j^+)(\tau_j' - \tau_j - u)}}.$$
Now $\lambda_j^+ - \nu_j^+ = s(q_j + C_4 + 1)$.  Also, for sufficiently large $N$,
$$\tau_j' - \xi_j = \frac{2}{s q_j} \log \bigg(\frac{1}{s q_j} \bigg) - \frac{b}{sq_j} \geq \frac{3}{2sq_j} \log \bigg( \frac{1}{sq_j} \bigg).$$ Therefore, if $u < \xi_j - \tau_j$, then
$$\nu_j^+ e^{-(\lambda_j^+ - \nu_j^+)(\tau_j' - \tau_j - u)} \leq (1 - s) e^{-s(q_j + C_4 + 1)(\tau_j' - \xi_j)} \leq e^{-sq_j(\tau_j' - \xi_j)} \leq (sq_j)^{3/2},$$ which, in view of part 3 of Proposition \ref{newGq} and assumption A3, implies that for sufficiently large $N$,
$$\frac{\lambda_j^+ - \nu_j^+}{\lambda_j^+ - \nu_j^+ e^{-(\lambda_j^+ - \nu_j^+)(\tau_j' - \tau_j - u)}} \leq \frac{s(q_j + C_4 + 1)}{1 + s(q_j + C_4) - (sq_j)^{3/2}} \leq s(q_j + C_4 + 1).$$  Therefore,
\begin{align*}
E[L_j^+|{\cal H}_j] &= \int_0^{\xi_j - \tau_j} \phi_j^+(u) \cdot \frac{\lambda_j^+ - \nu_j^+}{\lambda_j^+ - \nu_j^+ e^{-(\lambda_j^+ - \nu_j^+)(\tau_j' - \tau_j - u)}} \: du \\
&\leq \int_0^{\xi_j - \tau_j} (1 + \delta) s e^{s(q_j + C_4) u} \cdot s(q_j + C_4 + 1) \: du \\
&= (1 + \delta) s^2 (q_j + C_4 + 1) \bigg( \frac{e^{s(q_j + C_4) (\xi_j - \tau_j)} - 1}{s(q_j + C_4)} \bigg) \\
&\leq (1 + \delta) \bigg( \frac{q_j + C_4 + 1}{q_j + C_4} \bigg) s e^{s(q_j + C_4)(\xi_j - \tau_j)}.
\end{align*}
Because $e^{sq_j(\xi_j - \tau_j)} = e^b/(sq_j)$ and $C_4 s (\xi_j - \tau_j) \rightarrow 0$ as $N \rightarrow \infty$ by (\ref{Aj8}), it follows that for sufficiently large $N$,
\begin{equation}\label{Ljexp}
E[L_j^+|{\cal H}_j] \leq \frac{(1 + 2 \delta) e^b}{q_j}.
\end{equation}
The conditional Markov's Inequality now gives the third inequality in (\ref{bpineq}).  Because the conditional distribution of $L_j^+$ given ${\cal H}_j$ is Poisson, we have $P(L_j^+ \geq 2|{\cal H}_j) \leq (E[L_j^+|{\cal H}_j])^2$.  Therefore, (\ref{2immfam}) also follows from (\ref{Ljexp}).

It remains to prove the first inequality in (\ref{bpineq}).  The argument is similar to that for the third inequality, but we will need a lower bound on the expectation.  For sufficiently large $N$,
\begin{align}\label{ELj-}
E[L_j^-|{\cal H}_j] &= \int_0^{\xi_j - \tau_j} \phi_j^-(u) \cdot \frac{\lambda_j^- - \nu_j^-}{\lambda_j^- - \nu_j^- e^{-(\lambda_j^- - \nu_j^-)(\tau_j' - \tau_j - u)}} \: du \nonumber \\
&\geq \int_0^{\xi_j - \tau_j} \phi_j^-(u) \cdot \frac{\lambda_j^- - \nu_j^-}{\lambda_j^-} \: du \nonumber \\
&= \int_0^{\xi_j - \tau_j} (1 - \delta) s e^{s(q_j - C_4)u} \cdot \frac{s(q_j - C_4) - \mu}{1 + s(q_j - C_4)} \: du \nonumber \\
&= \frac{(1 - \delta) s^2 (q_j - C_4 - \mu/s)}{1 + s(q_j - C_4)} \bigg( \frac{e^{s(q_j - C_4)(\xi_j - \tau_j)} - 1}{s(q_j - C_4)} \bigg) \nonumber \\
&\geq \frac{(1 - (3/2) \delta) e^b}{q_j}.
\end{align}
Because the conditional distribution of $L_j^-$ given ${\cal H}_j$ is Poisson, we have $$P(X_j^-(\tau_j' - \tau_j) > 0|{\cal H}_j) = P(L_j^- > 0|{\cal H}_j) = 1 - e^{-E[L_j^-|{\cal H}_j]} \geq E[L_j^-|{\cal H}_j] - (E[L_j^-|{\cal H}_j])^2.$$  The first inequality in (\ref{bpineq}) follows from this result and (\ref{ELj-}).
\end{proof}

\subsection{The size of a surviving family}

The lemma below bounds the probability that some individual will acquire a $j$th mutation before time $\xi_j$ and have at least $x e^{sq_j(\tau_j' - \tau_j)}$ descendants alive at time $\tau_j'$.  Recall from (\ref{prop22}) and part 1 of Proposition \ref{newGq} that $e^{sq_j(\tau_j' - \tau_j)}$ is approximately the number of type $j$ individuals that we would expect there to be in the population in the absence of such an early type $j$ mutation.  This result is the precise version of (\ref{earlymut}), which is the key to understanding why the Bolthausen-Sznitman coalescent describes the genealogy of the population.

\begin{Lemma}\label{pmlem2}
Fix $j \in I$, and recall the definition of ${\cal H}_j$ from subsection \ref{bpcouplesec}.
For sufficiently large $N$, on $\{\tau_j < \zeta\}$, we have for all $x \in [\delta/2, 2/\delta]$,
\begin{equation}\label{bptail1}
P(X_j^-(\tau_j' - \tau_j) > x e^{sq_j(\tau_j' - \tau_j)}|{\cal H}_j) \geq \frac{1 - 7 \delta}{q_j x}
\end{equation}
and for all $x \in [e^{-b}, 2/\delta]$,
\begin{equation}\label{bptail2}
P(X_j^+(\tau_j' - \tau_j) > x e^{sq_j(\tau_j' - \tau_j)}|{\cal H}_j) \leq \frac{1 + 7 \delta}{q_j x}.
\end{equation}
\end{Lemma}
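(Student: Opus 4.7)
The plan is to exploit the Poisson-immigration structure of $(X_j^\pm(t), t \geq 0)$. Set $\sigma = \tau_j' - \tau_j$ and $S = e^{sq_j \sigma}$. Conditional on ${\cal H}_j$, the immigration times form inhomogeneous Poisson processes with intensities $\phi_j^\pm(u)$, and each immigrant independently initiates a birth-death process $Z^\pm$ with parameters $(\lambda_j^\pm, \nu_j^\pm)$. Let $N_x^\pm$ be the number of immigrants whose family size at time $\sigma$ strictly exceeds $xS$; by standard thinning, $N_x^\pm$ is conditionally Poisson with mean
\[
\mu_x^\pm := \int_0^{\xi_j - \tau_j} \phi_j^\pm(u) \cdot P\bigl( Z^\pm(\sigma - u) > xS \bigr) \: du.
\]
For the upper bound, if $L_j^+ \leq 1$ then $X_j^+(\sigma)$ equals the sole surviving family size (or zero), so $\{X_j^+(\sigma) > xS\} \cap \{L_j^+ \leq 1\} \subseteq \{N_x^+ \geq 1\}$; combined with (\ref{2immfam}), this yields $P(X_j^+(\sigma) > xS | {\cal H}_j) \leq \mu_x^+ + 2 e^{2b}/q_j^2$. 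For the lower bound, $\{N_x^- \geq 1\} \subseteq \{X_j^-(\sigma) > xS\}$ since any single surviving family already forces the inequality, so $P(X_j^-(\sigma) > xS | {\cal H}_j) \geq 1 - e^{-\mu_x^-} \geq \mu_x^- - (\mu_x^-)^2/2$. For $x \leq 2/\delta$ and $q_j \geq (1 - 2\delta) k_N \to \infty$, the error $2e^{2b}/q_j^2$ is $o(1/(q_j x))$; the same holds for the quadratic term since $x \geq \delta/2$ (respectively $x \geq e^{-b}$) forces $q_j x \to \infty$.

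It remains to sandwich $\mu_x^\pm$ between $(1 - O(\delta))/(q_j x)$ and $(1 + O(\delta))/(q_j x)$. Using the facts from Section 7.1, let $\alpha^\pm = \lambda_j^\pm - \nu_j^\pm$ and $p^\pm = \alpha^\pm/\lambda_j^\pm$; then $W^\pm := \lim_{t \to \infty} e^{-\alpha^\pm t} Z^\pm(t)$ satisfies $P(W^\pm > z) = p^\pm e^{-p^\pm z}$ for $z > 0$. By Lemma \ref{bplem2} applied with a slowly vanishing $\eta$, we may replace $P(Z^\pm(\sigma - u) > xS)$ by $p^\pm \exp(-p^\pm xS e^{-\alpha^\pm(\sigma - u)})$ up to negligible error, since $\sigma - u \geq \tau_j' - \xi_j \geq (3/(2sq_j)) \log(1/sq_j)$ makes $e^{-\alpha^\pm(\sigma - u)}$ tiny. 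Substituting $r = p^\pm xS e^{-\alpha^\pm(\sigma - u)}$ and using that $p^\pm/(sq_j)$, $\phi_j^\pm(u)/((1 \pm \delta) s e^{sq_j u})$, and $e^{(sq_j - \alpha^\pm)\sigma}$ are all $1 + o(1)$ (the last by the reasoning of (\ref{Aj8})), the integral simplifies to
\[
\mu_x^\pm = \frac{1 \pm O(\delta)}{q_j x} \bigl( e^{-r_0^\pm} - e^{-r_1^\pm} \bigr),
\]
with $r_0^\pm \to sq_j x$ (vanishing) and $r_1^\pm \to x e^b$.

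For the upper bound, $e^{-r_0^+} - e^{-r_1^+} \leq 1$ trivially, yielding $\mu_x^+ \leq (1 + O(\delta))/(q_j x)$. For the lower bound, $r_0^- \to 0$ gives $e^{-r_0^-} \to 1$, and the hypothesis $x \geq \delta/2$ combined with (\ref{bdef}) gives $r_1^- \geq (\delta/2) e^b = 12000 T/(\delta \eps)$, so $e^{-r_1^-}$ is negligible; hence $\mu_x^- \geq (1 - O(\delta))/(q_j x)$. The main obstacle is the careful bookkeeping: several independent sources of $1 + O(\delta)$ or $1 + o(1)$ error — the approximations $p^\pm \approx sq_j$, $\phi_j^\pm(u)/(s e^{sq_j u}) \approx 1 \pm \delta$, $e^{(sq_j - \alpha^\pm)\sigma} \approx 1$, the Lemma \ref{bplem2} replacement of $W^\pm(\sigma - u)$ by $W^\pm$, and the inequality $1 - e^{-\mu_x^-} \geq \mu_x^- - (\mu_x^-)^2/2$ — must be tracked individually so that the cumulative multiplicative discrepancy between $\mu_x^\pm$ and the target stays within the $7\delta$ budget for sufficiently large $N$.
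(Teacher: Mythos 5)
Your proposal is correct and follows essentially the same route as the paper's proof: both reduce the event to ``at least two surviving immigrant families'' (controlled by (\ref{2immfam})) plus ``a single immigrant family exceeding $x e^{sq_j(\tau_j'-\tau_j)}$'', approximate each family's size by $W e^{(\lambda_j^\pm - \nu_j^\pm)t}$ via Lemma \ref{bplem2} and the exponential tail (\ref{Wexp}), and then integrate against the immigration intensity $\phi_j^\pm$ with the same exponential change of variables, arriving at the same $e^{-r_0} - e^{-r_1}$ expression with $r_1 \approx x e^b$ killed by the choice of $b$ in (\ref{bdef}). Your Poisson-thinning packaging of that integral, together with the explicit correction $1 - e^{-\mu_x^-} \geq \mu_x^- - (\mu_x^-)^2/2$ in the lower bound, is only an organizational variant (indeed a slightly cleaner one) of the paper's direct computation in (\ref{Weq4})--(\ref{Weq10}).
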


\begin{proof}
Throughout the proof, we work on the event $\{\tau_j < \zeta\}$.
We first prove (\ref{bptail2}).  Suppose $x \in [e^{-b}, 2/\delta]$.  If $X_j^+(\tau_j' - \tau_j) > x e^{sq_j(\tau_j' - \tau_j)}$, then either two immigrants in the population have descendants alive at time $\tau_j' - \tau_j$, an event whose probability has already been bounded above in (\ref{2immfam}), or else for some $u \in (0, \xi_j - \tau_j]$, an immigrant arrives at time $u$ and has more than $x e^{sq_j(\tau_j' - \tau_j)}$ descendants at time $\tau_j' - \tau_j$.  Note that
\begin{equation}\label{Weq0}
|(\lambda_j^+ - \nu_j^+) - sq_j|(\tau_j' - \tau_j) = s(C_4 + 1) \cdot \frac{3}{sq_j} \log \bigg( \frac{1}{sq_j} \bigg) \rightarrow 0
\end{equation}
as $N \rightarrow \infty$ by the reasoning in (\ref{Aj8}).  Therefore, for sufficiently large $N$, we have
\begin{equation}\label{Weq1}
x e^{sq_j(\tau_j' - \tau_j)} \geq (1 - \delta) x e^{(\lambda_j^+ - \nu_j^+)(\tau_j' - \tau_j)}.
\end{equation}
Suppose an immigrant arrives at time $u$, and let $X_{j,u}^+(t)$ be the number of descendants of this immigrant in the population at time $t$.  For $t \geq 0$, let
\begin{equation}\label{Weq2}
W_u^+(t) = e^{-(\lambda_j^+ - \nu_j^+)t} X_{j,u}^+(t + u),
\end{equation}
and let $W^+ = \lim_{t \rightarrow \infty} W_u^+(t)$, which exists by (\ref{Wlim}).  Equations (\ref{Weq1}) and (\ref{Weq2}) imply that for the immigrant to have more than $xe^{sq_j(\tau_j' - \tau_j)}$ descendants in the population at time $\tau_j' - \tau_j$, if $N$ is sufficiently large we must have
\begin{equation}\label{Weq3}
W_u^+(\tau_j' -\tau_j - u) \geq (1 - \delta) x e^{(\lambda_j^+ - \nu_j^+)u}.
\end{equation}
To estimate the probability that this occurs, observe that by Lemma \ref{bplem2} and (\ref{survivalprob})
\begin{align*}
P\big(|W^+ - W_u^+(\tau_j' - \tau_j - u)| > \delta x e^{(\lambda_j^+ - \nu_j^+)u} \big) &\leq \frac{2 \lambda_j^+ e^{-(\lambda_j^+ - \nu_j^+)(\tau_j' - \tau_j - u)}}{\delta^2 x^2 e^{2(\lambda_j^+ - \nu_j^+)u} (\lambda_j^+ - \nu_j^+)} \\
&\leq \frac{2(1 + s(q_j + C_4)) e^{-(\lambda_j^+ - \nu_j^+)(\tau_j' - \tau_j)}}{\delta^2 x^2 s(q_j + C_4 + 1)}.
\end{align*}
Since $e^{-(\lambda_j^+ - \nu_j^+)(\tau_j' - \tau_j)} \leq e^{-sq_j(\tau_j' - \tau_j)} = (sq_j)^3$, it follows that for sufficiently large $N$,
\begin{equation}\label{Werror}
P\big(|W^+ - W^+_u(\tau_j' - \tau_j - u)| > \delta x e^{(\lambda_j^+ - \nu_j^+)u} \big) \leq \frac{3 (sq_j)^2}{\delta^2 x^2}.
\end{equation}
Note that $\lambda_j^+ - \nu_j^+ \geq sq_j$, and $(1 - \delta/2)sq_j \leq (\lambda_j^+ - \nu_j^+)/\lambda_j^+ \leq (1 + \delta)sq_j$ for sufficiently large $N$.  Therefore, by (\ref{survivalprob}) and (\ref{Wexp}), for sufficiently large $N$,
\begin{align}\label{Wtailprob}
P \big( W^+ > (1 - 2 \delta) x e^{(\lambda_j^+ - \nu_j^+)u} \big) &= \bigg( \frac{\lambda_j^+ - \nu_j^+}{\lambda_j^+} \bigg) e^{-(1 - 2 \delta) x e^{(\lambda_j^+ - \nu_j^+)u} (\lambda_j^+ - \nu_j^+)/\lambda_j^+} \nonumber \\
&\leq (1 + \delta) sq_j e^{-(1 - 3 \delta) sq_j x e^{sq_j u}}.
\end{align}
The probability of the event in (\ref{Weq3}) is bounded above by the sum of the expressions in (\ref{Werror}) and (\ref{Wtailprob}).  Thus, combining this result with (\ref{2immfam}), we have
\begin{align}\label{Weq4}
&P(X_j^+(\tau_j' - \tau_j) > x e^{sq_j(\tau_j' - \tau_j)}|{\cal H}_j) \nonumber \\
&\hspace{.3in}\leq \frac{2e^{2b}}{q_j^2} + \int_0^{\xi_j - \tau_j} (1 + \delta) s e^{s(q_j + C_4)u} \bigg((1 + \delta) s q_j e^{-(1 - 3 \delta) sq_j x e^{sq_j u}} + \frac{3(sq_j)^2}{\delta^2 x^2} \bigg) \: du.
\end{align}
Using that $e^{sq_j(\xi_j - \tau_j)} = e^b/(sq_j)$ and that $s(\xi_j - \tau_j) \rightarrow 0$ as $N \rightarrow \infty$ by (\ref{Aj8}), we have, for sufficiently large $N$,
\begin{equation}\label{Weq5}
\int_0^{\xi_j - \tau_j} (1 + \delta) s e^{s(q_j + C_4)u} \cdot \frac{3(s q_j)^2}{\delta^2 x^2} \: du \leq \frac{4 e^b s}{\delta^2 x^2}.
\end{equation}
Also, making the substitution $y = (1 - 3 \delta) sq_j x e^{sq_j u}$, so that $dy/du = sq_j y$, and using again that $s(\xi_j - \tau_j) \rightarrow 0$ as $N \rightarrow \infty$, for sufficiently large $N$ we have
\begin{align}\label{Weq6}
&\int_0^{\xi_j - \tau_j} (1 + \delta) s e^{s(q_j + C_4)u} \cdot (1 + \delta) s q_j e^{-(1 - 3 \delta) sq_j x e^{sq_j u}} \: du \nonumber \\
&\hspace{.5in}\leq (1 + \delta)^2 s^2 q_j e^{C_4 s (\xi_j - \tau_j)} \int_0^{\xi_j - \tau_j} e^{sq_j u} e^{-(1 - 3 \delta) sq_j x e^{sq_j u}} \: du \nonumber \\
&\hspace{.5in}= (1 + \delta)^2 s^2 q_j e^{C_4 s (\xi_j - \tau_j)} \int_{(1 - 3 \delta) s q_j x}^{(1 - 3 \delta) sq_j x e^{sq_j(\xi_j - \tau_j)}} \frac{e^{-y}}{(1 - 3 \delta) s^2 q_j^2 x} \: dy \nonumber \\
&\hspace{.5in}\leq \frac{1 + 6 \delta}{q_j x}.
\end{align}
From (\ref{Weq4}), (\ref{Weq5}), and (\ref{Weq6}), we get
\begin{equation}\label{newWeq}
P(X_j^+(\tau_j' - \tau_j) > x e^{sq_j(\tau_j' - \tau_j)}|{\cal H}_j) \leq \frac{1}{q_j x} \bigg(1 + 6 \delta + \frac{4 e^b s q_j}{\delta^2 x} + \frac{2 e^{2b} x}{q_j} \bigg).
\end{equation}
Recall that $(1 - 2 \delta) k_N \leq q_j \leq (e + 2 \delta)k_N$ on $\{\tau_j < \zeta\}$ by part 3 of Proposition \ref{newGq}.  Since $k_N \rightarrow \infty$ and $sk_N \rightarrow 0$ as $N \rightarrow \infty$ by assumptions A1 and A3 respectively, the upper bound (\ref{bptail2}) follows from (\ref{newWeq}).

Next, we will suppose $x \in [\delta/2, 2/\delta]$ and show (\ref{bptail1}) by similar arguments.  We consider only the individuals colored red in the construction given above.  Suppose a red immigrant arrives at time $u$.  Then let $X_{j,u}^-(t)$ denote the number of red descendants of this immigrant at time $t$, and for $t \geq 0$, let $$W_u^-(t) = e^{-(\lambda_j^- - \nu_j^-)t} X_{j,u}^-(t + u).$$  Let $W^- = \lim_{t \rightarrow \infty} W_u^-(t)$.  Because $|(\lambda_j^- - \nu_j^-) - sq_j| \rightarrow 0$ as $N \rightarrow \infty$ by the reasoning in (\ref{Weq0}), the reasoning that led to (\ref{Weq3}) implies that if
\begin{equation}\label{Weq8}
W_u^-(\tau_j' - \tau_j - u) \geq (1 + \delta) x e^{(\lambda_j^- - \nu_j^-)u}
\end{equation}
and $N$ is large enough, then we must have $X_j^-(\tau_j' - \tau_j) > x e^{sq_j(\tau_j' - \tau_j)}$.  Because $s(\tau_j' - \tau_j) \rightarrow 0$ as $N \rightarrow \infty$ by the reasoning in (\ref{Aj8}), we have $$e^{-(\lambda_j^- - \nu_j^-)(\tau_j' - \tau_j)} \leq (1 + \delta) (sq_j)^3$$ for sufficiently large $N$.  Therefore, by the reasoning leading to (\ref{Werror}), for sufficiently large $N$ we have
\begin{equation}\label{Werror2}
P\big(|W^- - W^-_u(\tau_j' - \tau_j - u)| > \delta x e^{(\lambda_j^- - \nu_j^-)u} \big) \leq \frac{3 (sq_j)^2}{\delta^2 x^2}.
\end{equation}
Note that $\lambda_j^- - \nu_j^- \leq sq_j$, and $(1 - \delta)s q_j \leq (\lambda_j^- - \nu_j^-)/\lambda_j^- \leq (1 + \delta/2) sq_j$ for sufficiently large $N$.  Therefore, by (\ref{Wexp})
\begin{align}\label{Weq7}
P\big( W^- > (1 + 2 \delta) x e^{(\lambda_j^- - \nu_j^-)u} \big) &= \bigg( \frac{\lambda_j^- - \nu_j^-}{\lambda_j^-} \bigg) e^{-(1 + 2 \delta) x e^{(\lambda_j^- - \nu_j^-) u} (\lambda_j^- - \nu_j^-)/\lambda_j^-} \nonumber \\
&\geq (1 - \delta) e^{-(1 + 3 \delta) sq_j x e^{sq_j u}}. 
\end{align}
By using (\ref{Werror2}) and (\ref{Weq7}) to bound from below the probability in (\ref{Weq8}), we get that for sufficiently large $N$,
\begin{align}\label{Weq9}
&P(X_j^-(\tau_j' - \tau_j) > x e^{sq_j(\tau_j' - \tau_j)}|{\cal H}_j) \nonumber \\
&\hspace{.5in}\geq \int_0^{\xi_j - \tau_j} (1 - \delta) s e^{s(q_j - C_4)} \bigg( (1 - \delta) e^{-(1 + 3 \delta) sq_j x e^{sq_j u}} - \frac{3(sq_j)^2}{\delta^2 x^2} \bigg) \: du.
\end{align}
Following the reasoning in (\ref{Weq6}), this time using the substitution $y = (1 + 3 \delta) sq_j x e^{sq_j u}$, we get
\begin{align}\label{Weq10}
&\int_0^{\xi_j - \tau_j} (1 - \delta) s e^{s(q_j - C_4)} \cdot (1 - \delta) e^{-(1 + 3 \delta) sq_j x e^{sq_j u}} \: du \nonumber \\
&\hspace{1.2in} \geq \frac{1 - 6 \delta}{q_j x} \int_{(1 + 3 \delta) sq_j x}^{(1 + 3 \delta) sq_j x e^{sq_j (\xi_j - \tau_j)}} e^{-y} \: dy  \nonumber \\
&\hspace{1.2in} = \frac{1 - 6 \delta}{q_j x} \big( e^{-(1 + 3 \delta) sq_j x} - e^{-(1 + 3 \delta) e^b x} \big) \nonumber \\
&\hspace{1.2in} \geq \frac{1 - 6 \delta}{q_j x} \big(1 - (1 + 3 \delta) sq_j x - e^{-e^b x} \big).
\end{align}
On $\{\tau_j < \zeta\}$, by part 3 of Proposition \ref{newGq}, we have $sq_j x \leq 2 (e + 2\delta) s k_N/\delta \rightarrow 0$ as $N \rightarrow \infty$. Also, using the definition of $b$ from (\ref{bdef}), we have $e^{-e^b x} \leq e^{-12000T/(\delta \eps)}$.  Therefore, using (\ref{Weq10}) to bound the first term in (\ref{Weq9}), and using the reasoning of (\ref{Weq5}) to bound the second term, we obtain (\ref{bptail1}).
\end{proof}

In view of (\ref{bpcoupling}), Lemmas \ref{pmlem1} and \ref{pmlem2} show that the number of early type $j$ individuals is well-approximated up to time $\tau_j'$ by a continuous-time branching process.  The result below tells us that the number of early type $j$ individuals at time $\tau_{j+1}$ is usually determined, to within a small error, by the number of such individuals at time $\tau_j'$.

\begin{Lemma}\label{bptail3}
For $j \in I$, define the event
\begin{equation}\label{Ajprime}
A_j = \big\{ \big| e^{-sq_j(\tau_j' - \tau_j)} X'_j(\tau_j') - e^{-\int_{\tau_j}^{\tau_{j+1}} G_j(v) \: dv} X_j'(\tau_{j+1}) \big| > e^{-b} \big\}.
\end{equation}
Then
$$\lim_{N \rightarrow \infty} P \bigg( \Lambda \cap \bigcup_{j\in I} A_j \bigg) = 0.$$
\end{Lemma}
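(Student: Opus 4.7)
The plan is to recognize that, up to a small deterministic correction, the quantity inside $A_j$ is a single increment of the process
\[
M(t) := e^{-\int_{\tau_j}^t G_j(v)\,dv}\, X_j'(t),
\]
which by Corollary~\ref{ZmartCor4} (applied with $\kappa=\tau_j$ and $\gamma=\xi_j$) is a martingale for $t \geq \xi_j$ on $\Lambda$, since after time $\xi_j$ no further immigration contributes to $X_j^{\tau_j,\xi_j}$. Because both $\tau_j'$ and $\tau_{j+1}$ lie in $(\xi_j,\infty)$ on $\Lambda$, I would control $|M(\tau_{j+1})-M(\tau_j')|$ by Chebyshev using the variance formula of that corollary, and then union over the at most $3Tk_N$ values of $j \in I$ supplied by Lemma~\ref{Ilem}.

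First I would split the quantity inside $A_j$ as the sum of a \emph{deterministic correction} $\bigl(e^{-sq_j(\tau_j'-\tau_j)} - e^{-\int_{\tau_j}^{\tau_j'} G_j(v)\,dv}\bigr)X_j'(\tau_j')$ and a \emph{martingale increment} $M(\tau_{j+1})-M(\tau_j')$. For the correction, part~1 of Proposition~\ref{newGq} (available on $\Lambda$ for every $j \in I$ because Lemma~\ref{Ilem} gives $\tau_{j_1}>a_N+2a_N/k_N$) yields $|G_j(v)-sq_j|\leq sC_3$ on $[\tau_j,\tau_{j+1}]$, so the two exponents differ by at most $\Delta_j \leq sC_3(\tau_j'-\tau_j) = 3C_3\log(1/(sq_j))/q_j$, which is $o(1)$ uniformly in $j \in I$ by assumption~A1 and part~3 of Proposition~\ref{newGq}. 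Combining this with the envelope $X_j'(\tau_j') \leq C_1 e^{\int_{\tau_j}^{\tau_j'} G_j(v)\,dv}$ from (\ref{prop21}), the correction is at most $C_1 e^{\Delta_j}\Delta_j$, which is less than $e^{-b}/2$ on $\Lambda$ for $N$ sufficiently large.

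For the martingale increment I would apply Corollary~\ref{ZmartCor4}'s variance formula with stopping cutoff at $\tau_{j+1}\wedge\zeta$ so that the $\Lambda$-bounds (in particular (\ref{BD3}) giving $B_j'+D_j'\leq 3$, and $G_j \geq s(q_j-C_3)$) are available inside the integral. Factoring $e^{-2\int_{\tau_j}^w G_j\,dv} = c_j^2\,e^{-2\int_{\tau_j'}^w G_j\,dv}$ with $c_j := e^{-\int_{\tau_j}^{\tau_j'} G_j\,dv} \in \mathcal{F}_{\tau_j'}$, invoking the martingale property of $w \mapsto e^{-\int_{\tau_j'}^w G_j\,dv}X_j'(w)$ to pass $E[\cdot\,|\,\mathcal{F}_{\tau_j'}]$ through, and then integrating the tail in $w$ using the lower bound on $G_j$, the second moment collapses to at most $3c_j M(\tau_j')/s(q_j-C_3)$. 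Since $sq_j(\tau_j'-\tau_j)=3\log(1/(sq_j))$ forces $c_j\leq 2(sq_j)^3$, this is bounded by $Cs^2 k_N^2\cdot M(\tau_j')$ on $\Lambda$.

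The main obstacle is the last factor: the deterministic bound $M(\tau_j') \leq C_1$ gives only $O(s^2 k_N^3)$ after summing over $j \in I$, which need not vanish under A1--A3 alone (e.g.\ $s \sim (\log N)^{-1/2}$, $k_N \sim (\log N)^{1/3}$ satisfies all three assumptions yet makes $s^2 k_N^3$ bounded away from zero). The fix is to pass from the pathwise envelope to the \emph{expected} size of $M(\tau_j')$: applying optional stopping to $Z_j^{\tau_j,\xi_j}$ at $\tau_j'\wedge\zeta$, noting that the immigration integral runs only up to $\xi_j$, and using (\ref{prop23}) together with $G_{j-1}=G_j-s$ gives $E[M(\tau_j')\mathds{1}_\Lambda] \leq (1+\delta)s(\xi_j-\tau_j) \leq C\log(1/(sk_N))/k_N$. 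Hence $E[(M(\tau_{j+1})-M(\tau_j'))^2\mathds{1}_\Lambda] \leq Cs^2 k_N\log(1/(sk_N))$, and Chebyshev at level $e^{-b}/2$ followed by a union bound over $j \in I$ produces a total probability of order $Te^{2b}(sk_N)^2\log(1/(sk_N))$, which tends to zero because $x^2\log(1/x)\to 0$ as $x\to 0$, applied to $x=sk_N\to 0$ by~A3.
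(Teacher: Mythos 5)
Your martingale estimate (the bulk of the proposal) is correct, and it takes a genuinely different route from the paper for the key input: the paper controls $E[X_j'(\tau_j')]$ through the branching-process coupling, i.e.\ $X_j'(\tau_j')\le X_j^+(\tau_j'-\tau_j)$ on $\{\kappa_j>\tau_j'-\tau_j\}$ together with Lemma \ref{expbp}, and must invoke Lemma \ref{kappabound} to dispose of the event that the coupling breaks; you instead bound $E\big[e^{-\int_{\tau_j}^{\tau_j'}G_j(v)\,dv}X_j'(\tau_j')\big]$ directly from the immigration integral of Corollary \ref{ZmartCor4}, using (\ref{prop23}) and $G_{j-1}=G_j-s$, which avoids the coupling machinery entirely and arrives at the same final bound $CTe^{2b}(sk_N)^2\log(1/(sk_N))$ as the paper. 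Your observation that the crude pathwise bound would only yield $s^2k_N^3$, which need not vanish under A1--A3, is also correct and identifies exactly why a bound on the \emph{expected} size is required.

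There is, however, a genuine gap in your treatment of the deterministic correction: you bound it by $C_1e^{\Delta_j}\Delta_j$ using ``the envelope $X_j'(\tau_j')\le C_1e^{\int_{\tau_j}^{\tau_j'}G_j(v)\,dv}$ from (\ref{prop21}),'' but part 1 of Proposition \ref{mainjprop} supplies that exponential envelope only for $t\ge \tau_j+a_N/(4Tk_N)$; before that time it gives only $X_{j,1}(t)\le s/2\mu$. Under A1--A3 the time $\tau_j'$ always falls \emph{before} $\tau_j+a_N/(4Tk_N)$ for large $N$: on $\Lambda$ one has $s(\tau_j'-\tau_j)=3\log(1/(sq_j))/q_j\le C\log(1/(sk_N))/k_N$, while $s\cdot a_N/(4Tk_N)=\log(s/\mu)/(4Tk_N)$, and the ratio of these is at most $CT\big(\log(1/s)/k_N\big)\big(k_N/\log(s/\mu)\big)\to 0$ by A1 and A2. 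So the envelope is simply not available at $\tau_j'$; the fallback $X_j'(\tau_j')\le s/2\mu$ gives a correction of order $(sk_N)^3s/\mu$, which blows up by (\ref{musN}), and patching with your own estimate $E[M(\tau_j')\1_{\Lambda}]\le C\log(1/(sk_N))/k_N$ via Markov leaves, after the union over the $3Tk_N$ indices, a term of order $Te^b(\log(1/(sk_N)))^2/k_N$, which also need not vanish under A1--A3 (take $\log(1/s)\sim k_N^{3/4}$, which is consistent with all three assumptions). The repair is the paper's arrangement of the algebra: writing $e^{-sq_j(\tau_j'-\tau_j)}X_j'(\tau_j')=e^{\Delta_j'}M(\tau_j')$ with $\Delta_j'=\int_{\tau_j}^{\tau_j'}(G_j(v)-sq_j)\,dv$, the quantity in $A_j$ equals $e^{\Delta_j'}\big(M(\tau_j')-M(\tau_{j+1})\big)+(e^{\Delta_j'}-1)M(\tau_{j+1})$, so the small factor $e^{\Delta_j'}-1$ multiplies $M(\tau_{j+1})=e^{-\int_{\tau_j}^{\tau_{j+1}}G_j(v)\,dv}X_j'(\tau_{j+1})\le C_1$, where (\ref{prop21}) \emph{is} legitimately available since $\tau_{j+1}-\tau_j\ge a_N/3k_N$ by (\ref{tauspacing}); with that rearrangement your martingale bound, applied at level $e^{-b}/4$, closes the proof.
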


\begin{proof}
Let $S$ be the set of individuals at time $\tau_j'$ descended from individuals that acquired their $j$th mutation during the time interval $(\tau_j, \xi_j]$, which means there are $X_j'(\tau_j')$ individuals in the set $S$.  Then, using the notation of Corollary \ref{ZmartCor3} with $\tau_j'$ in place of $\kappa$, we get that for $t \geq \tau_j'$,
\begin{equation}\label{vtail1}
e^{-\int_{\tau_j'}^{t \wedge \tau_{j+1} \wedge \zeta} G_j(v) \: dv} X_j'(t \wedge \tau_{j+1} \wedge \zeta) = X_j'(\tau_j') + Z_j^{S}(t),
\end{equation}
where $(Z_j^{S}(\tau_j' + t), t \geq 0)$ is a mean zero martingale.  Therefore, on $\{\tau_{j+1} < \zeta\}$, we have
\begin{align}\label{vtail3}
e^{-sq_j (\tau_j' - \tau_j)} X_j'(\tau_j') &= e^{-sq_j(\tau_j' - \tau_j) - \int_{\tau_j'}^{\tau_{j+1}} G_j(v) \: dv} X_j'(\tau_{j+1}) - e^{-sq_j(\tau_j' - \tau_j)} Z_j^{S}(\tau_{j+1}) \nonumber \\
&= e^{\int_{\tau_j}^{\tau_j'} (G_j(v) - sq_j) \: dv} e^{-\int_{\tau_j}^{\tau_{j+1}} G_j(v) \: dv} X_j'(\tau_{j+1}) - e^{-sq_j(\tau_j' - \tau_j)} Z_j^{S}(\tau_{j+1}).
\end{align}
By (\ref{prop21}), on $\{\tau_{j+1} < \zeta\}$, we have $e^{-\int_{\tau_j}^{\tau_{j+1}} G_j(v) \: dv} X_j'(\tau_{j+1}) \leq C_1$.  Also, by part 1 of Proposition \ref{newGq}, on $\{\tau_{j+1} < \zeta\}$, we have
\begin{equation}\label{vtail4}
\int_{\tau_j}^{\tau_j'} |G_j(v) - sq_j| \: dv \leq C_3 s (\tau_j' - \tau_j),
\end{equation}
which tends to zero as $N \rightarrow \infty$ by the argument in (\ref{Aj8}).  Thus, (\ref{vtail3}) implies that for sufficiently large $N$, on $\{\tau_{j+1} < \zeta\}$, we have
\begin{equation}\label{vtail8}
\big| e^{-sq_j(\tau_j' - \tau_j)} X'_j(\tau_j') - e^{-\int_{\tau_j}^{\tau_{j+1}} G_j(v) \: dv} X_j'(\tau_{j+1}) \big| \leq \frac{e^{-b}}{2} + e^{-sq_j(\tau_j' - \tau_j)}|Z_j^{S}(\tau_{j+1})|.
\end{equation}

It remains to bound $|Z_j^{S}(\tau_{j+1})|$.  By Corollary \ref{ZmartCor3} and the argument leading to (\ref{BD3}),
\begin{equation}\label{vtail2}
\Var(Z_j^{S}(\tau_j' + t)|{\cal F}_{\tau_j'}) \leq 3 E \bigg[ \int_{\tau_j'}^{(\tau_j' + t) \wedge \tau_{j+1} \wedge \zeta} e^{-2 \int_{\tau_j'}^u G_j(v) \: dv} X_j'(u) \: du \bigg| {\cal F}_{\tau_j'} \bigg].
\end{equation}
Because $G_j(v) \geq s(q_j - C_3)$ for $v \in [\tau_j, \tau_{j+1} \wedge \zeta)$ by part 1 of Proposition \ref{newGq}, it follows from equations (\ref{vtail1}) and (\ref{vtail2}), Fubini's Theorem, and the fact that $(Z_j^{S}(\tau_j' + t), t \geq 0)$ is a mean zero martingale that for sufficiently large $N$,
\begin{align*}
\Var(Z_j^{S}(\tau_j' + t)|{\cal F}_{\tau_j'}) &\leq 3 E \bigg[ \int_{\tau_j'}^{(\tau_j' + t) \wedge \tau_{j+1} \wedge \zeta} e^{-\int_{\tau_j'}^u G_j(v) \: dv} (X'_j(\tau_j') + Z_j^{S}(t)) \: du \bigg| {\cal F}_{\tau_j'} \bigg] \\
&\leq 3 E \bigg[ \int_{\tau_j'}^{(\tau_j' + t) \wedge \tau_{j+1} \wedge \zeta} e^{-s(q_j - C_3)(u - \tau_j')}(X'_j(\tau_j') + Z_j^{S}(t)) \: du \bigg| {\cal F}_{\tau_j'} \bigg] \\
&\leq 3 X'_j(\tau_j') \int_{\tau_j'}^{\infty} e^{-s(q_j - C_3)(u - \tau_j')} \: du \\
&\leq \frac{4 X'_j(\tau_j')}{s q_j}.
\end{align*}
Therefore, by the $L^2$ Maximum Inequality for martingales,
\begin{equation}\label{vtail5}
P \bigg( \big|Z_j^{S}(\tau_{j+1})\big| > \frac{e^{-b}}{2} e^{sq_j (\tau_j' - \tau_j)} \bigg| {\cal F}_{\tau_j'} \bigg) \leq \frac{C X'_j(\tau_j')}{sq_j e^{-2b}} e^{-2sq_j(\tau_j' - \tau_j)} = C X_j'(\tau_j') (sq_j)^5 e^{2b}.
\end{equation}
On $\{\kappa_j > \tau_j' - \tau_j\}$, we have $X_j'(\tau_j') \leq X_j^+(\tau_j' - \tau_j)$ by (\ref{bpcoupling}).  Let ${\cal F}_{\tau_j'}^*$ be the $\sigma$-field generated by ${\cal F}_{\tau_j'}$ and the event $\{\kappa_j > \tau_j' - \tau_j\}$.  Since the additional Poisson processes $N_{b,i,j}$, $N_{d,i,j}$, and $N_{m,j}$ and random variables $\beta_{\ell,j}$ are independent of the population process $({\bf X}(t), t \geq 0)$, we have on $\{\kappa_j > \tau_j' - \tau_j\}$,
\begin{align*}
P \bigg( |Z_j^S(\tau_{j+1})| > \frac{e^{-b}}{2} e^{sq_j(\tau_j' - \tau_j)} \bigg| {\cal F}_{\tau_j'}^* \bigg) \leq C X_j^+(\tau_j' - \tau_j) (sk_N)^5 e^{2b}.
\end{align*}
Therefore, taking conditional expectations of both sides of (\ref{vtail5}) with respect to ${\cal F}_{\tau_j}$ and then using 
Lemma \ref{expbp} and part 3 of Proposition \ref{newGq}, we get
\begin{align}\label{vtail9}
P \bigg( \{\kappa_j > \tau_j' - \tau_j\} \cap \bigg\{ \big|Z_j^{S}(\tau_{j+1})\big| > \frac{e^{-b}}{2} e^{sq_j (\tau_j' - \tau_j)} \bigg\} \bigg| {\cal F}_{\tau_j} \bigg) &\leq C E[X_j^+(\tau_j' - \tau_j)|{\cal F}_{\tau_j}] (sk_N)^5 e^{2b} \nonumber \\
&\leq \frac{C (sk_N)^2 e^{2b}}{k_N} \log \bigg( \frac{1}{sk_N} \bigg).
\end{align}
Using Boole's Inequality and summing over $j \in I$, we now deduce from equations (\ref{vtail8}) and (\ref{vtail9}) and Lemmas \ref{Ilem} and \ref{kappabound} that
$$P \bigg( \Lambda \cap \bigcup_{j \in I} A_j \bigg) \leq 3 T k_N \cdot \frac{C (sk_N)^2 e^{2b}}{k_N} \log \bigg( \frac{1}{sk_N} \bigg),$$
which tends to zero as $N \rightarrow \infty$ by assumption A3.
\end{proof}

\subsection{The fraction of individuals descended from an early mutation}

To determine the genealogy of the population, it will be important to consider the fraction of type $j$ individuals in the population descended from an early type $j$ mutation, as this is an estimate of the fraction of lineages that will coalesce near the time of this mutation.  To this end, we let
\begin{equation}\label{Yjdef}
Y_j = \frac{X_j'(\tau_{j+1})}{\lceil s/\mu \rceil},
\end{equation}
which is the fraction of type $j$ individuals at time $\tau_{j+1}$ that are descended from a type $j$ mutation that occurred between times $\tau_j$ and $\xi_j$.  Also, define
$$Y_j^- = \frac{(e^{-sq_j(\tau_j' - \tau_j)} X_j^-(\tau_j' - \tau_j) - e^{-b}) \vee 0}{((e^{-sq_j(\tau_j' - \tau_j)} X_j^-(\tau_j' - \tau_j) - e^{-b}) \vee 0) + 1 + 4 \delta}$$ and
\begin{equation}\label{Yjplus}
Y_j^+ = \frac{e^{-sq_j(\tau_j' - \tau_j)} X_j^+(\tau_j' - \tau_j) + e^{-b}}{e^{-sq_j(\tau_j' - \tau_j)} X_j^+(\tau_j' - \tau_j) + e^{-b} + 1 - 4 \delta}.
\end{equation}

\begin{Lemma}\label{Ylem}
Suppose $j \in I$.  For sufficiently large $N$, on $\{\tau_j < \zeta\}$, we have, for all $y \in [\delta, 1 - \delta]$,
\begin{equation}\label{Ylem2}
\frac{(1 - y)(1 - 13 \delta)}{q_j y} \leq P(Y_j^- \geq y|{\cal H}_j) \leq P(Y_j^+ \geq y|{\cal H}_j) \leq \frac{(1 - y)(1 + 13 \delta)}{q_j y}.
\end{equation}
Also, on $A_j^c \cap \{\tau_{j+1} < \zeta\} \cap \{\kappa_j > \tau_j' - \tau_j\}$, we have
\begin{equation}\label{Ylem1}
Y_j^- \leq Y_j \leq Y_j^+.
\end{equation}
\end{Lemma}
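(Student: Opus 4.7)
The plan is to handle (\ref{Ylem1}) first, as a direct algebraic consequence of bounds already in hand, and then derive (\ref{Ylem2}) by inverting the definitions of $Y_j^\pm$ and appealing to Lemma \ref{pmlem2}.

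For the sandwich (\ref{Ylem1}), I first use the last statement of part 1 of Proposition \ref{mainjprop}, together with the fact that $\tau_{j+1} - \tau_j \geq a_N/(3k_N) > \tau_j^* - \tau_j$ on $\Lambda$ by (\ref{tauspacing}), to identify $X_{j,1}(\tau_{j+1}) = X_j'(\tau_{j+1})$: every early type $j$ individual alive at time $\tau_{j+1}$ must descend from a $j$th mutation in $(\tau_j, \xi_j]$. By the definition of $\tau_{j+1}$ we have $X_j(\tau_{j+1}) = \lceil s/\mu \rceil$, so $Y_j = X_j'(\tau_{j+1})/(X_j'(\tau_{j+1}) + X_{j,2}(\tau_{j+1}))$. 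Writing $\Xi = \exp(\int_{\tau_j}^{\tau_{j+1}} G_j(v)\, dv)$, (\ref{prop22}) gives $(1-4\delta)\Xi \leq X_{j,2}(\tau_{j+1}) \leq (1+4\delta)\Xi$, while on $A_j^c \cap \{\kappa_j > \tau_j' - \tau_j\}$ the coupling (\ref{bpcoupling}) combined with Lemma \ref{bptail3} yields
\[
\Xi\bigl(e^{-sq_j(\tau_j'-\tau_j)} X_j^-(\tau_j'-\tau_j) - e^{-b}\bigr) \leq X_j'(\tau_{j+1}) \leq \Xi\bigl(e^{-sq_j(\tau_j'-\tau_j)} X_j^+(\tau_j'-\tau_j) + e^{-b}\bigr).
\]
Dividing numerator and denominator of $Y_j$ by $\Xi$ and invoking monotonicity of $a \mapsto a/(a+c)$ (increasing in $a \geq 0$, decreasing in $c > 0$) then yields $Y_j^- \leq Y_j \leq Y_j^+$; the $\vee 0$ in the definition of $Y_j^-$ automatically handles the case in which the lower bound on $X_j'(\tau_{j+1})/\Xi$ comes out negative, since then $Y_j \geq 0 = Y_j^-$ trivially.

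For (\ref{Ylem2}), a direct rearrangement of (\ref{Yjplus}) and the analogous expression for $Y_j^-$ shows that $\{Y_j^+ \geq y\} = \{X_j^+(\tau_j'-\tau_j) \geq x_+ e^{sq_j(\tau_j'-\tau_j)}\}$ with $x_+ = y(1-4\delta)/(1-y) - e^{-b}$, and $\{Y_j^- \geq y\} = \{X_j^-(\tau_j'-\tau_j) \geq x_- e^{sq_j(\tau_j'-\tau_j)}\}$ with $x_- = y(1+4\delta)/(1-y) + e^{-b}$. For $y \in [\delta, 1-\delta]$, I will check using (\ref{bdef}) that $e^{-b}$ is small enough relative to $\delta$ that $x_+ \in [e^{-b}, 2/\delta]$ and $x_- \in [\delta/2, 2/\delta]$. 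Lemma \ref{pmlem2} then gives $P(Y_j^+ \geq y \mid {\cal H}_j) \leq (1+7\delta)/(q_j x_+)$ and $P(Y_j^- \geq y \mid {\cal H}_j) \geq (1-7\delta)/(q_j x_-)$, so (\ref{Ylem2}) reduces to the elementary inequalities
\[
\frac{1+7\delta}{x_+} \leq \frac{(1-y)(1+13\delta)}{y}, \qquad \frac{1-7\delta}{x_-} \geq \frac{(1-y)(1-13\delta)}{y}.
\]

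After cross-multiplying by $y$ and rearranging, both inequalities reduce to controlling the quantities $(1+13\delta)(1-4\delta)-(1+7\delta) = 2\delta(1-26\delta)$ and $(1-7\delta) - (1-13\delta)(1+4\delta) = 2\delta(1+26\delta)$, respectively, against the error $(1\pm 13\delta)e^{-b}(1-y)/y$ introduced by the $e^{-b}$ shifts in $x_\pm$. For $\delta < 1/100$ and $y \geq \delta$, the leading term on each side is bounded below by a constant multiple of $\delta$, while by (\ref{bdef}) one has $e^{-b} = \delta^2 \eps/(24000 T)$, so the error term is dominated by $y \delta \geq \delta^2$ with ample room. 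This is the one place where the specific logarithmic choice of $b$ in (\ref{bdef}) is used in an essential way, and the careful bookkeeping needed to make the $4\delta$, $7\delta$, and $13\delta$ slack factors close up with the correct signs is where I expect the bulk of the (otherwise routine) work to lie.
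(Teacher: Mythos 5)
Your proposal is correct and follows essentially the same route as the paper's proof: both parts rest on the identification $X_j'(\tau_{j+1}) = X_{j,1}(\tau_{j+1})$ via part 1 of Proposition \ref{mainjprop} and (\ref{tauspacing}), the bound (\ref{prop22}) on $X_{j,2}$, the coupling (\ref{bpcoupling}) together with the event $A_j$ for the sandwich (\ref{Ylem1}), and an inversion of the definitions of $Y_j^{\pm}$ followed by Lemma \ref{pmlem2} for (\ref{Ylem2}). The only cosmetic difference is in the bookkeeping for (\ref{Ylem2}): the paper first absorbs the $e^{-b}$ shift into the $\delta$ slack (replacing $x_{\pm}$ by $(1\mp 5\delta)y/(1-y)$ using $e^{-b} \leq \delta y$) before applying Lemma \ref{pmlem2}, whereas you apply the lemma at the exact thresholds $x_{\pm}$ and verify the resulting inequalities by cross-multiplication, which is equivalent.
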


\begin{proof}
We first prove (\ref{Ylem2}).  Suppose $y \in [\delta, 1 - \delta]$.  The middle inequality in (\ref{Ylem2}) is immediate.  To prove the third inequality in (\ref{Ylem2}), note that $Y_j^+ \geq y$ if and only if
\begin{equation}\label{ym3}
e^{-sq_j(\tau_j' - \tau_j)} X_j^+(\tau_j' - \tau_j) \geq \frac{(1 + e^{-b} - 4 \delta)y - e^{-b}}{1 - y}.
\end{equation}
Since $e^{-b}/y \leq \delta$ by (\ref{bdef}), we see that (\ref{ym3}) implies $e^{-sq_j(\tau_j' - \tau_j)} X_j^+(\tau_j' - \tau_j) \geq (1 - 5 \delta)y/(1-y)$.  Thus, by Lemma \ref{pmlem2}, for sufficiently large $N$, on the event $\{\tau_j < \zeta\}$, we have for all $y \in [\delta, 1 - \delta]$,
$$P(Y_j^+ \geq y|{\cal H}_j) \leq P \bigg( e^{-sq_j(\tau_j' - \tau_j)} X_j^+(\tau_j' - \tau_j) \geq \frac{(1 - 5 \delta)y}{1-y} \bigg| {\cal H}_j \bigg) \leq \frac{(1 + 7 \delta)(1 - y)}{(1 - 5 \delta) q_j y},$$ which leads to the third inequality in (\ref{Ylem2}).  Likewise, note that $Y_j^- \geq y$ if and only if $$e^{-sq_j(\tau_j' - \tau_j)} X_j^-(\tau_j' - \tau_j) \geq \frac{(1 - e^{-b} + 4 \delta)y + e^{-b}}{1 - y},$$ which, since $e^{-b}/y \leq \delta$, will always hold if $e^{-sq_j(\tau_j' - \tau_j)} X_j^+(\tau_j' - \tau_j) \geq (1 + 5 \delta)y/(1-y)$.  Therefore, by Lemma \ref{pmlem2},
$$P(Y_j^- \geq y|{\cal H}_j) \geq P \bigg( e^{-sq_j(\tau_j' - \tau_j)} X_j^+(\tau_j' - \tau_j) \geq \frac{(1 + 5 \delta)y}{1-y} \bigg| {\cal H}_j \bigg) \geq \frac{(1 - 7 \delta)(1 - y)}{(1 + 5 \delta) q_j y},$$
which implies the first inequality in (\ref{Ylem2}).  It remains to prove (\ref{Ylem1}).

The last statement of part 1 of Proposition \ref{mainjprop}, combined with (\ref{tauspacing}), implies that on the event $\{\tau_{j+1} < \zeta\}$, no individual that gets a $j$th mutation at or before time $\tau_j$ has a descendant alive at time $\tau_{j+1}$.  In particular, we have $X_j'(\tau_{j+1}) = X_{j,1}(\tau_{j+1})$.  Therefore, using also that $X_{j,1}(\tau_{j+1}) + X_{j,2}(\tau_{j+1}) = X_j(\tau_{j+1}) = \lceil s/\mu \rceil$, we get, on $\{\tau_{j+1} < \zeta\}$,
\begin{equation}\label{ym1}
Y_j = \frac{X_{j,1}(\tau_{j+1})}{X_{j,1}(\tau_{j+1}) + X_{j,2}(\tau_{j+2})} = \frac{e^{-\int_{\tau_j}^{\tau_{j+1}} G_j(v) \: dv} X_{j,1}(\tau_{j+1})}{e^{-\int_{\tau_j}^{\tau_{j+1}} G_j(v) \: dv} X_{j,1}(\tau_{j+1}) + e^{-\int_{\tau_j}^{\tau_{j+1}} G_j(v) \: dv} X_{j,2}(\tau_{j+2})}.
\end{equation}
By (\ref{prop22}), on $\{\tau_{j+1} < \zeta\}$,
\begin{equation}\label{ym2}
1 - 4 \delta \leq e^{-\int_{\tau_j}^{\tau_{j+1}} G_j(v) \: dv} X_{j,2}(\tau_{j+1}) \leq 1 + 4 \delta.
\end{equation}
Combining (\ref{ym1}), (\ref{ym2}), and the definition of $A_j$, we get that on $A_j^c \cap \{\tau_{j+1} < \zeta\}$,
$$\frac{e^{-sq_j(\tau_j' - \tau_j)} X'_j(\tau_j') - e^{-b}}{e^{-sq_j(\tau_j' - \tau_j)} X'_j(\tau_j') - e^{-b} + 1 + 4 \delta} \leq Y_j \leq \frac{e^{-sq_j(\tau_j' - \tau_j)} X'_j(\tau_j') + e^{-b}}{e^{-sq_j(\tau_j' - \tau_j)} X'_j(\tau_j') + e^{-b} + 1 - 4 \delta}.$$
Combining this observation with (\ref{bpcoupling}) and noting that $Y_j \geq 0$, we conclude that (\ref{Ylem1}) holds on $A_j^c \cap \{\tau_{j+1} < \zeta\} \cap \{\kappa_j > \tau_j' - \tau_j\}$.
\end{proof}

\section{Coupling with the Bolthausen-Sznitman coalescent}

In this section, we prove Theorem \ref{boszthm} by establishing a coupling between the coalescent process $(\Pi_N(u), 0 \leq u \leq t+1)$ and the Bolthausen-Sznitman coalescent.  Our strategy will involve examining the process at the times $\tau_j$.  A very similar idea was used in \cite{dwf13} by Desai, Walczak, and Fisher. 

\subsection{No coalescence between times $\tau_L$ and $a_N T$} 

Recall from Remark \ref{LL9Rmk} that with probability tending to one as $N \rightarrow \infty$, no lineages coalesce as they are traced back from time $a_N T$ to time $\tau_{L+10}$.  The result below shows that the lineages are also unlikely to coalesce as they are traced back further from time $\tau_{L+10}$ to time $\tau_L$, which implies the statement (\ref{part1}) from Theorem \ref{boszthm}.  As with Lemmas \ref{noearlycoal} and \ref{coalj1}, it is sufficient to state the result for the first two lineages.

\begin{Lemma}\label{lemp1}
We have
\begin{equation}\label{tauL1}
\limsup_{N \rightarrow \infty} P \big( \Lambda \cap \{T_{1,2} \geq \tau_L\} \big) \leq C T e^{-b}.
\end{equation}
In particular, the statement (\ref{part1}) holds.  
\end{Lemma}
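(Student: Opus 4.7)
The plan is to show that if $T_{1,2} \geq \tau_L$ then with high probability coalescence of lineages 1 and 2 occurs in some interval $[\tau_j, \tau_{j+1})$ with $j \in \{L, L+1, \ldots, L+9\}$, and then to handle each such $j$ using the case analysis already in place in Section 5.3. First I will invoke Lemmas \ref{sameL}, \ref{noearlymut}, and \ref{taujmut}, together with Remark \ref{LL9Rmk}, to reduce, outside an event of vanishing probability, to the sub-event of $\Lambda$ on which $U_i(a_N T) \in \{L, \ldots, L+9\}$ for each $i$, $U_i(\tau_j) = j-1$ for every relevant $j \in I$, and no coalescence occurs in $[\tau_{L+10}, a_N T]$. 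Since lineages can only merge when they carry the same type, a short check using $U_i(\tau_j) = j - 1$ shows that on this reduced event $T_{1,2} \in [\tau_j, \tau_{j+1})$ forces the coalescence type to be either $j-1$ or $j$, which falls into one of the three cases listed just before Lemma \ref{coalj1} at index $j$.

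Next I will treat the three cases separately. Case 1 (same $j$th mutation with $V_{1,j} > \xi_j$) is handled directly by Lemma \ref{coalj1}, which already yields $P(\Lambda \cap \bigcup_{j \in I} A_j') \leq CTe^{-b}$, noting that $\{L, \ldots, L+9\} \subseteq I$ on $\Lambda$ by Lemma \ref{Ilem}. Case 3 (distinct $j$th mutations with the type $j-1$ lineages merging) is dispatched by Lemma \ref{coalj2}, whose total contribution tends to zero. For Case 2, the relevant event at level $j$ is
\[
A_j^{**} = \{U_1(\tau_{j+1}) = U_2(\tau_{j+1}) = j,\ U_1(\tau_j) = U_2(\tau_j) = j-1,\ \tau_j < V_{1,j} = V_{2,j} \leq \xi_j\}.
\]
If $A_j^{**}$ occurs, the two lineages descend from a single early type $j$ mutation whose family is still present at $\tau_j'$, so $X_j'(\tau_j') > 0$. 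The coupling (\ref{bpcoupling}) gives $X_j'(\tau_j') \leq X_j^+(\tau_j' - \tau_j)$ on $\{\kappa_j > \tau_j' - \tau_j\}$, and on this event Lemma \ref{pmlem1} bounds $P(X_j^+(\tau_j' - \tau_j) > 0 \mid {\cal H}_j)$ by $(1+2\delta)e^b/q_j$; the complementary event $\{\kappa_j \leq \tau_j' - \tau_j\}$ is handled by Lemma \ref{kappabound}. Using $q_j \geq (1 - 2\delta) k_N$ from part 3 of Proposition \ref{newGq}, summing over the at most ten indices $j \in \{L, \ldots, L+9\}$ gives a Case 2 contribution bounded by $C e^b/k_N + o(1) = o(1)$ by assumption A1.

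Adding the three cases gives $\limsup_N P(\Lambda \cap \{T_{1,2} \geq \tau_L\}) \leq CTe^{-b}$, which is (\ref{tauL1}). For (\ref{part1}), note that on $\Lambda$ we have $\tau_L \leq a_N(T-1) - a_N/k_N < a_N(T-1)$ for large $N$ by the definition of $L$ together with (\ref{tauspacing}), so $\{T_{i,j} \geq a_N(T-1)\} \cap \Lambda \subseteq \{T_{i,j} \geq \tau_L\} \cap \Lambda$ for every pair $(i,j)$. Combining (\ref{problam}), (\ref{tauL1}), and a union bound over the $\binom{n}{2}$ exchangeable pairs yields
\[
\limsup_{N \to \infty} P(\Pi_N(1) \neq \{\{1\}, \ldots, \{n\}\}) \leq \binom{n}{2}\bigl(2\eps + CTe^{-b}\bigr).
\]
Since $b = \log(24000T/(\delta^2 \eps))$ and (\ref{deldef}) permits $\delta$ to be as small as $\eps^3$, sending $\eps \to 0$ sends $e^{-b}$ to zero as well, so the right-hand side vanishes and (\ref{part1}) follows.

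The main obstacle will be the Case 2 estimate: the per-level bound $(1+2\delta)e^b/q_j$ from Lemma \ref{pmlem1} is small only because $k_N \to \infty$, so obtaining a genuinely vanishing total requires careful use of the branching-process coupling, Lemma \ref{kappabound}, and assumption A1.
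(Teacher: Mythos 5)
Your overall route is the same as the paper's: reduce via Lemmas \ref{sameL}, \ref{noearlymut}, \ref{taujmut}, then split coalescence on $[\tau_j,\tau_{j+1})$ into the three scenarios of subsection \ref{coaljj}, bound the late-mutation case by Lemma \ref{coalj1} (giving the $CTe^{-b}$ term), the pre-mutation case by Lemma \ref{coalj2}, and the early-mutation case by $X_j'(\tau_j')>0$ together with the coupling (\ref{bpcoupling}), Lemma \ref{pmlem1}, Lemma \ref{kappabound} and part 3 of Proposition \ref{newGq}, giving $Ce^b/k_N = o(1)$; the deduction of (\ref{part1}) at the end is also the paper's. However, there is a concrete hole in your case enumeration. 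The claimed ``short check'' --- that on the reduced event $T_{1,2}\in[\tau_j,\tau_{j+1})$ forces the coalescence to occur at type $j-1$ or $j$ --- is false. If both sampled individuals have the same type $\ell$ (the typical situation), then going backwards both lineages carry type $\ell$ on all of $[\tau_{\ell+1},a_NT]$, and they may coalesce at a time $T_{1,2}\in[\tau_{\ell+2},\tau_{L+10})$, i.e.\ in an interval $[\tau_j,\tau_{j+1})$ with $j\geq \ell+2$, at type $\ell\leq j-2$. This scenario is not removed by your reduction, because Remark \ref{LL9Rmk} only excludes coalescence on $[\tau_{L+10},a_NT]$, and it falls into none of your three cases: the events $A_j'$, $A_j^{**}$ and $A_j^*$ all involve the mutation times $V_{i,j}$, which do not even exist for $j>\ell$ since neither lineage ever acquires a $j$th mutation. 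Coalescence of two type-$\ell$ lineages during the growth phase of the type-$\ell$ subpopulation (between $\tau_{\ell+1}$ and the sampling time) is a genuine possibility whose exclusion requires the second-moment argument of Lemma \ref{maxXj}.

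The fix is immediate and is exactly what the paper does: invoke Lemma \ref{noearlycoal} directly (not merely its weakened consequence in Remark \ref{LL9Rmk}), which shows that $P(\Lambda\cap\{U_1(a_NT)=U_2(a_NT)=\ell \mbox{ and } T_{1,2}\geq\tau_{\ell+1} \mbox{ for some }\ell\})\to 0$. With this added to your reduction, any remaining coalescence with $\ell_1=\ell_2$ must satisfy $T_{1,2}<\tau_{\ell_1+1}$, so the interval index satisfies $j\leq\ell_1$ and your taxonomy does apply (the paper organizes this slightly differently, splitting on $\ell_1=\ell_2$ versus $\ell_1<\ell_2$ and handling the latter through $A_j^*$ with $j=\ell_1+1$, where only lineage $2$ has a $j$th mutation; note that $A_j^*$ as defined in Lemma \ref{coalj2} does not require both mutation times to be finite, which is what makes that case go through). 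With Lemma \ref{noearlycoal} inserted, the rest of your argument, including the treatment of the random indices $L,\dots,L+9$ via the stopping-time form of Lemma \ref{pmlem1}, matches the paper's proof and yields (\ref{tauL1}) and then (\ref{part1}).
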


\begin{proof}
Let $\ell_1 = U_1(a_N T)$ and $\ell_2 = U_2(a_N T)$.  Without loss of generality, suppose $\ell_1 \leq \ell_2$.  We know from the argument in Remark \ref{LL9Rmk} that
$$\lim_{N \rightarrow \infty} P \big( \Lambda \cap \{T_{1,2} \geq \tau_{L+10}\} \big) = 0,$$ so we only need to follow these two lineages between times $\tau_L$ and $\tau_{L+10}$.   By Lemmas \ref{noearlymut} and \ref{taujmut}, we know that, outside of an event $A$ such that $\lim_{N \rightarrow \infty} P(\Lambda \cap A) = 0$, for $i \in \{1, 2\}$ we have $U_i(\tau_{j+1}) = j$ for $j \in \{L - 1, L, \dots, \ell_i\}$ and $U_i(\tau_{j+1}) = \ell_i$ for $j \in \{\ell_i, \dots, L + 9\}$.  When this occurs, there are only three ways that these lineages could coalesce between times $\tau_L$ and $\tau_{L+10}$, in view of the fact that only lineages of the same type can coalesce:
\begin{enumerate}
\item We have $\ell_1 = \ell_2$ and $T_{1,2} \geq \tau_{\ell_1+1}$.

\item We have $\ell_1 < \ell_2$ and $\tau_{\ell_1+1} < T_{1,2} < V_{2,\ell_1+1} < \tau_{\ell_1+2}$.  That is, as we trace back the ancestral lines, the second lineage gets traced back to a type $\ell_1$ individual, then coalesces with the first lineage between times $\tau_{\ell_1+1}$ and $\tau_{\ell_1+2}$.

\item For some $j \in \{L - 1, L, \dots, \ell_1\}$, two type $j$ lineages at time $\tau_{j+1}$ are descended from the same type $j-1$ lineage at time $\tau_j$.
\end{enumerate}

Lemma \ref{noearlycoal} bounds the probability of the first possibility above, while Lemma \ref{coalj2} bounds the probability of the second possibility.  It remains only to consider the third possibility, in which the lineages coalesce between times $\tau_j$ and $\tau_{j+1}$ for $j \in \{L-1, L, \dots, \ell_i\}$.  As noted in the discussion in subsection \ref{coaljj}, Lemmas \ref{coalj1} and \ref{coalj2} establish that the probability that such a coalescence event occurs without the ancestor acquiring an early type $j$ mutation is bounded above by $C T e^{-b}$.  Also, because the result of Lemma \ref{pmlem1} holds even when $j$ is random provided that $\tau_j$ is a stopping time, we have $$P(\Lambda \cap \{X_j^+(\tau_j' - \tau_j) > 0 \mbox{ for some }j \in \{L-1, L, \dots L+9\}) \leq \frac{C e^b}{k_N},$$ where we have used also part 3 of Proposition \ref{newGq}.  In view of (\ref{bpcoupling}) and Lemma \ref{kappabound}, it follows that the probability that, for some $j \in \{L-1, L, \dots, \ell_i\}$, two type $j$ lineages at time $\tau_{j+1}$ are descended from an early type $j$ mutation tends to zero as $N \rightarrow \infty$.  The result (\ref{tauL1}) now follows from the bounds collected in this paragraph.

Finally, since $\tau_L < a_N(T - 1)$ on $\Lambda$ by (\ref{Ldef}) and (\ref{tauspacing}), the statement (\ref{part1}) follows from (\ref{tauL1}), (\ref{bdef}), and the fact that $\eps > 0$ and $\delta > 0$ are arbitrary.
\end{proof}

\subsection{Representing the early type $j$ mutations by a point process}

Fix $j \in I$.  Recall from the discussion before Lemma \ref{coalj1} that the individuals sampled at time $a_N T$ are typically descended from type $j$ individuals at time $\tau_{j+1}$, and these lineages will typically coalesce only if they are traced back to one individual that acquires its $j$th mutation before time $\xi_j$.  We construct in this subsection a point process that encodes these coalescence events.

Let $\Lambda_j$ be the event that $\Lambda$ occurs and that $U_i(\tau_j) = j+1$ for all $i \in \{1, \dots, n\}$.  Suppose we condition on the event $\Lambda_j$, the random variables $Y_{\ell} = X_{\ell}'(\tau_{\ell+1})/\lceil s/\mu \rceil$ and $\tau_{\ell}$ for $\ell \in I$, and the partitions $\Pi_N(T - \tau_{\ell}/a_N)$ for $\ell \in I$ with $\ell \geq j+1$.  
Denote the blocks of $\Pi_N(T - \tau_{\ell}/a_N)$ by $B_{\ell, 1}, \dots, B_{\ell, n_{\ell}}$, where we rank the blocks in order by their smallest element.  By the definition of $\Lambda_j$, the $n_{j+1}$ individuals in the population at time $\tau_{j+1}$ that are ancestors of individuals in the sample are all among the $\lceil s/\mu \rceil$ type $j$ individuals in the population at time $\tau_{j+1}$.  However, by the symmetry in the process, all $\lceil s/\mu \rceil (\lceil s/\mu \rceil - 1) \dots (\lceil s/\mu \rceil - n_{j+1} + 1)$ possible choices of $n_{j+1}$ individuals out of these $\lceil s/\mu \rceil$ are equally likely to be the ancestors of the individuals in the sample corresponding to the integers in the blocks $B_{j+1,1}, \dots, B_{j+1, n_{j+1}}$ respectively.  Also, $X_j'(\tau_{j+1})$ of the $\lceil s/\mu \rceil$ type $j$ individuals at time $\tau_{j+1}$ are descended from an individual that got an early type $j$ mutation between times $\tau_j$ and $\xi_j$.  We call these type $j$ individuals good.

We now construct some uniformly distributed random variables $Z_{i,j}$ for $i \in \{1, \dots, n\}$ and $j \in I$.  Begin by defining random variables $Z_{i,j}^*$ for $i \in \{1, \dots, n\}$ and $j \in I$ which are uniformly distributed on $[0, 1]$ and independent of the population process $({\bf X}(t), t \geq 0)$ and of one another.  If $j \geq L+1$, then let $Z_{i,j} = Z_{i,j}^*$.  Likewise, if either $\Lambda_j$ does not occur or $n_{j+1} < i \leq n$, then let $Z_{i,j} = Z_{i,j}^*$.  Now suppose $\Lambda_j$ occurs.  For $i \in \{1, \dots, n_{j+1}\}$, we call the $(i,j)$ ancestor the individual at time $\tau_{j+1}$ that is the ancestor of the individuals in the sample whose label is in the block $B_{j+1, i}$.  Let $K_0 = 0$, and for $i \in \{1, \dots, n_{j+1}-1\}$, let $K_i$ be the number of integers $h \in \{1, \dots, i\}$ such that the $(h, j)$ ancestor is good.  Then, conditioning on $K_{i-1}$ in addition to the event $\Lambda_j$, the random variables $Y_{\ell}$ and $\tau_{\ell}$ for $\ell \in I$, and the partitions $\Pi_N(T - \tau_{\ell}/a_N)$ for $\ell \in I$ with $\ell \geq j+1$, the probability that the $(i,j)$ ancestor is good is $$P_{i,j} = \frac{X_j'(\tau_{j+1}) - K_{i-1}}{\lceil s/\mu \rceil - (i-1)}.$$  Let $Z_{i,j} = Z_{i,j}^* P_{i,j}$ if the $(i,j)$ ancestor is good, and let $Z_{i,j} = P_{i,j} + Z_{i,j}^* (1 - P_{i,j})$ otherwise.  Note that $Z_{i,j}$ has a uniform distribution on $[0, 1]$, and the $(i,j)$ ancestor is good if and only if $Z_{i,j} \leq P_{i,j}$.  Also, the random variables $Z_{i,j}$ are jointly independent of the random variables $Y_{\ell}$ and the stopping times $\tau_{\ell}$ for $\ell \in I$.

Let $\Phi_N$ be the point process on $[0, t + 1] \times [0, 1]^{n+1}$ consisting of all of the points $$\bigg( T - \frac{\tau_j}{a_N}, Y_j, Z_{1,j}, \dots, Z_{n,j} \bigg)$$ such that $j \in I$, $j \leq L$, and $Y_j > 0$.
We use the point process $\Phi_N$ to construct a coalescent process $(\Pi^*_N(u), 0 \leq u \leq t+1)$ as follows.  Let $\Pi^*_N(0) = \{\{1\}, \dots, \{n\}\}$.  For $u \in (0, t+1]$, suppose $(u, y, z_1, \dots, z_n)$ is a point of $\Phi_N$ and $\Pi_N^*(u-) = \pi$, where $\pi$ is a partition of $\{1, \dots, n\}$ whose blocks, ordered by their smallest elements, are $B_1, \dots, B_{\ell}$.  Then $\Pi_N^*(u)$ is obtained from $\Pi_N^*(u-)$ by merging together all of the blocks $B_i$ for which $z_i \leq y$.  The result below relates the coalescent processes $(\Pi_N(u), 0 \leq t \leq t+1)$ and $(\Pi_N^*(u), 0 \leq u \leq t +1)$.

\begin{Lemma}\label{Pistar}
We have
\begin{equation}\label{Pistareq}
\liminf_{N \rightarrow \infty} P \bigg( \bigcap_{j \in I} \bigg\{ \Pi_N\bigg(T - \frac{\tau_j}{a_N} \bigg) = \Pi_N^* \bigg(T - \frac{\tau_j}{a_N} \bigg) \bigg\} \bigg) \geq 1 - C n^2 \eps.
\end{equation}
\end{Lemma}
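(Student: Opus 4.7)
The strategy is to introduce a good event $G$ on which the coalescence structure of $\Pi_N$ is determined by a simple combinatorial rule, and then to compare this rule with the one defining $\Pi^*_N$.

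First, I would take $G$ to be the intersection of $\Lambda$ with (i) the events from Lemmas \ref{noearlymut} and \ref{taujmut}; (ii) for each of the $\binom{n}{2}$ pairs of lineages, the absence of the coalescence events ruled out by Lemmas \ref{noearlycoal}, \ref{lemp1}, \ref{coalj1}, and \ref{coalj2}; and (iii) for each $j \in I$, the event that at most one early type $j$ mutation has descendants surviving to time $\tau_{j+1}$, which follows from the branching-process coupling $X_j'(t) \leq X_j^+(t - \tau_j)$ together with Lemmas \ref{kappabound} and \ref{pmlem1}. Summing the bounds from these lemmas, and using that $b$ was chosen in (\ref{bdef}) so that $Te^{-b} \leq \eps$, gives $P(G^c) \leq 2\eps + C\binom{n}{2}Te^{-b} + o(1) \leq Cn^2\eps$ for sufficiently large $N$; the $n^2$ factor in the statement of the lemma arises from union-bounding the pairwise estimates in Lemmas \ref{coalj1} and \ref{lemp1}.

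Second, I would verify that on $G$ the only admissible coalescence mechanism, among the three cases enumerated just before Lemma \ref{coalj1}, is case 2, in which two type $j$ lineages at time $\tau_{j+1}$ descend from a common ancestor that acquired an early type $j$ mutation in $(\tau_j, \xi_j]$. Combined with the single-family condition (iii), this yields an explicit description of $\Pi_N(T - \tau_j/a_N)$: for $j > L$ it equals $\{\{1\}, \dots, \{n\}\}$ by Lemma \ref{lemp1}, while for $j \in I$ with $j \leq L$ it is obtained from $\Pi_N(T - \tau_{j+1}/a_N)$ by merging into one block exactly those blocks $B_{j+1, i}$ whose $(i,j)$ ancestor descends from the unique surviving early type $j$ family. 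By the coupled construction of the $Z_{i,j}$ given just before the lemma, this merger rule is precisely $\{Z_{i,j} \leq P_{i,j}\}$. Since $\Pi^*_N$ instead applies the rule $\{Z_{i,j} \leq Y_j\}$, the two processes agree on $G$ iff the events $\{Z_{i,j} \leq P_{i,j}\}$ and $\{Z_{i,j} \leq Y_j\}$ coincide for every $j \in I$ with $j \leq L$ and every $i \leq n_{j+1}$.

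Third, because $Z_{i,j}$ is marginally uniform on $[0,1]$, the conditional probability of disagreement for a given $(i,j)$ given the prior history is at most $|P_{i,j} - Y_j|$, and the deterministic identity
$$P_{i,j} - Y_j = \frac{(i-1) X_j'(\tau_{j+1}) - \lceil s/\mu \rceil K_{i-1}}{\lceil s/\mu \rceil (\lceil s/\mu \rceil - (i-1))},$$
combined with $K_{i-1} \leq n$ and $X_j'(\tau_{j+1}) \leq \lceil s/\mu \rceil$, gives $|P_{i,j} - Y_j| \leq 2n\mu/s$ for large $N$. Summing over $i \leq n$ and $j \in I$, whose cardinality is at most $3Tk_N$ by Lemma \ref{Ilem}, the total discrepancy probability is $O(n^2 T k_N \mu/s)$, which tends to zero by (\ref{musN}). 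Combining the two bounds gives the asserted $\liminf$.

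The main obstacle is not any single hard estimate but the coupling bookkeeping: one must verify the single-family condition (iii) so that the rule ``merge everyone with $Z_{i,j} \leq P_{i,j}$ into one block'' is faithful to the true ancestry, since with several surviving early families the true partition would merge blocks family-by-family, a structure $\Pi^*_N$ cannot represent; and one must establish the deterministic cap $|P_{i,j} - Y_j| \leq 2n\mu/s$ so that no conditioning on the random $P_{i,j}$ is needed in the disagreement bound. Both points are consequences of the fact that $n$ is fixed while $s/\mu \to \infty$ faster than any power, encoded in (\ref{musN}).
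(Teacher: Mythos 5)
Your proposal is correct and follows essentially the same route as the paper's proof: the same decomposition of the failure event (no coalescence back to $\tau_L$ via Lemma \ref{lemp1}, the events $\Lambda_j$ via Lemma \ref{taujmut}, the bad coalescence events $A_j'$ and $A_j^*$ of Lemmas \ref{coalj1} and \ref{coalj2}, the single-surviving-family condition via (\ref{2immfam}), (\ref{bpcoupling}) and Lemma \ref{kappabound}), followed by the same comparison of the merger rules $\{Z_{i,j} \leq P_{i,j}\}$ and $\{Z_{i,j} \leq Y_j\}$ with the identical deterministic bound $|P_{i,j} - Y_j| \leq Cn\mu/s$ summed over the at most $3Tk_N$ indices in $I$. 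The only differences are cosmetic bookkeeping in how the bad events are grouped.
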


\begin{proof}
We claim that the event in (\ref{Pistareq}) could fail to hold in the following ways:
\begin{enumerate}
\item Either $\Pi_N(T - \tau_L/a_N) \neq \{\{1\}, \dots, \{n\}\}$ or $\Pi_N^*(T - \tau_L/a_N) \neq \{\{1\}, \dots, \{n\}\}$.

\item The event $\Lambda_j$ could fail to hold for some $j \in I$ with $j \leq L$.

\item For some $j \in I$, either the event $A_j'$ defined in the statement of Lemma \ref{coalj1} or the event $A_j^*$ defined in the statement of Lemma \ref{coalj2} occurs.

\item For some $j \in I$, two or more individuals at time $\tau_j$ have descendants that got a $j$th mutation before time $\xi_j$ and then have type $j$ descendants in the population at time $\tau_{j+1}$.

\item For some $j \in I$ with $j \leq L$ and $Y_j > 0$, and some $i \in \{1, \dots, n\}$, the random variable $Z_{i,j}$ is between $P_{i,j}$ and $Y_j$.
\end{enumerate}
To see that these are the only possibilities, recall from the discussion at the beginning of subsection \ref{coaljj} that if $\Lambda_{\ell}$ occurs for all ${\ell} \in I$ with $\ell \leq L$, then unless $A_j'$ or $A_j^*$ occurs, the only way that lineages can coalesce between times $\tau_j$ and $\tau_{j+1}$ is for two or more lineages at time $\tau_{j+1}$ to be traced back to one individual that acquires its $j$th mutation before time $\xi_j$.  Unless the fourth event listed above occurs, the only way this can happen is for a group of lineages at time $\tau_{j+1}$ to get traced back to the same individual that acquires its $j$th mutation before time $\xi_j$.  In this case, suppose $\Pi_N(T - \tau_{j+1}/a_N) = \Pi_N^*(T - \tau_{j+1}/a_N) = \pi_{j+1}$, and $B_{j+1, 1}, \dots, B_{j+1, n_{j+1}}$ are the blocks of $\pi_{j+1}$, ranked in order by their smallest elements.  By the construction described at the beginning of this subsection, we obtain $\Pi_N(T - \tau_j/a_N)$ by merging the blocks $B_{j+1,i}$ for which $Z_{i,j} \leq P_{i,j}$.  We obtain $\Pi_N^*(T - \tau_j/a_N)$ by merging the blocks $B_{j+1,i}$ for which $Z_{i,j} \leq Y_{i,j}$.  Therefore, we can only have $\Pi_N(T - \tau_j/a_N) \neq \Pi_N^*(T - \tau_j/a_N)$ if the fifth event listed above occurs.

We thus need to bound the probabilities of the five events listed above.  Recall that $P(\Lambda^c) < 2 \eps$ by (\ref{problam}).
By construction, $(T - \tau_j/a_N, Y_j, Z_{1,j}, \dots, Z_{n,j})$ will only be a point of $\Phi_N$ if $j \leq L$, and $\tau_L < a_N(T - 1)$ on $\Lambda$ by (\ref{Ldef}) and (\ref{tauspacing}).  It follows that $\Pi_N^*(T - \tau_L/a_N) = \{\{1\}, \dots, \{n\}\}$ on $\Lambda$.  Also, by Lemma \ref{lemp1}, the probability that $\Lambda$ occurs and $\Pi_N (T - \tau_L/a_N) \neq \{\{1\}, \dots, \{n\}\}$ is at most $C n^2 T e^{-b} \leq C n^2 \eps$ in view of (\ref{bdef}).  By Lemma \ref{taujmut}, the probability that $\Lambda$ occurs and the second event above occurs tends to zero as $N \rightarrow \infty$.  Lemmas \ref{coalj1} and \ref{coalj2} show that the probability that $\Lambda$ occurs and the third event above occurs is at most $C n^2 T e^{-b} \leq C n^2 \eps$.  The probability that $\Lambda$ occurs and the fourth event above occurs tends to zero as $N \rightarrow \infty$ by (\ref{2immfam}) along with (\ref{bpcoupling}), Lemma \ref{kappabound}, and part 3 of Proposition \ref{newGq}.

It remains to bound the probability of the fifth event above.  For sufficiently large $N$,
$$|P_{i,j} - Y_j| = \bigg| \frac{(i-1)X_j'(\tau_{j+1}) - K_{i-1} \lceil s/\mu \rceil}{\lceil s/\mu \rceil (\lceil s/\mu \rceil - (i-1))} \bigg| \leq \frac{n \lceil s/\mu \rceil}{\lceil s/\mu \rceil (\lceil s/\mu \rceil - (i-1))} \leq \frac{2n \mu}{s}.$$  Because $Z_{i,j}$ has a uniform distribution on $[0, 1]$ and is independent of $Y_j$, the probability that $Z_{i,j}$ is between $P_{i,j}$ and $Y_j$ is at most $2n \mu/s$.  Therefore, using Lemma \ref{Ilem}, the probability that this occurs for some $\ell \in \{1, \dots, n\}$ and $j \in I$ is at most $6n^2 T k_N \mu/s$, which tends to zero as $N \rightarrow \infty$ by (\ref{musN}) and assumption A2.  The lemma follows.
\end{proof}

\subsection{A Poisson point process derived from $\Phi_N$}\label{Poissec}

In this subsection, we modify the point process $\Phi_N$ to obtain a Poisson point process $\Phi$ from which we can construct a Bolthausen-Sznitman coalescent via the technique outlined in subsection \ref{boszsec}.  The random variables $Z_{j,1}, \dots, Z_{j,n}$ are already independent and uniformly distributed on $[0, 1]$, and they will remain unchanged.  However, we will define new random variables $Y_j^*$ that are coupled with the original random variables $Y_j$ as well as new times $T_j^*$.

For $j \in I$, let $Z_j$ be a random variable having the uniform distribution on $[0, 1]$ that is independent of the population process.  Recall the definition of the $\sigma$-field ${\cal H}_j$ from subsection \ref{bpcouplesec}.  Define the random function $$H_j(y,z) = P(Y_j^+ < y|{\cal H}_j) + z P(Y_j^+ = y|{\cal H}_j), \hspace{.1in}\mbox{ for all }y, z \in [0,1].$$  Also, let $F_j(y) = P(Y_j^+ \leq y|{\cal H}_j) = H_j(y,1)$, and for $x \in [0, 1]$, let $F_j^{-1}(x) = \sup\{y: F_j(y) \leq x\}$.  Then it is easy to see that almost surely
\begin{equation}\label{Yjcons1}
Y_j^+ = F_j^{-1}(H_j(Y_j^+, Z_j)).
\end{equation}
Note that if $0 < x < 1$, then there is a random integer $K(x)$ such that $$P \bigg( Y_j^+ \leq \frac{K(x)}{\lceil s/\mu \rceil} \bigg| {\cal H}_j \bigg) \leq x < P \bigg( Y_j^+ \leq \frac{K(x) + 1}{\lceil s/\mu \rceil} \bigg| {\cal H}_j \bigg).$$  Then
\begin{align}
P(H_j(Y_j^+, Z_j) \leq x|{\cal H}_j) &= P\bigg(Y_j^+ \leq \frac{K(x)}{\lceil s/\mu \rceil} \bigg| {\cal H}_j \bigg) + P \bigg(Y_j^+ = \frac{K(x) + 1}{\lceil s/\mu \rceil} \bigg| {\cal H}_j \bigg) \nonumber \\
&\hspace{.8in}\times P \bigg(Z_j \leq \frac{x - P(Y_j^+ \leq K(x)/\lceil s/\mu \rceil |{\cal H}_j)}{P(Y_j^+ = (K(x) + 1)/\lceil s/\mu \rceil | {\cal H}_j)}\bigg) \nonumber \\
&= x. \nonumber
\end{align}
Therefore, the conditional distribution of $H_j(Y_j^+, Z_j)$ given ${\cal H}_j$ is uniform on $[0, 1]$.  For $x \geq 0$, let
\begin{displaymath}
K_j(x) = \left\{
\begin{array}{ll} e^{-(\tau_{j+1}^* - \tau_j^*)(1-x)/a_N x} & \mbox{ if }\eps \leq x \leq 1 \\
e^{-(\tau_{j+1}^* - \tau_j^*)(1 - \eps)/a_N \eps} & \mbox{ if }0 \leq x < \eps \\
0 & \mbox{ if }x < 0
\end{array} \right.
\end{displaymath}
For $x \in [0, 1]$, let $K_j^{-1}(x) = \sup\{y: K_j(y) \leq x\}$.  Also, let
\begin{equation}\label{Yjcons2}
Y_j^* = K_j^{-1}(H_j(Y_j^+, Z_j)).
\end{equation}
Then for all $x \geq 0$, we have
\begin{equation}\label{Yjdist}
P(Y_j^* \leq x|{\cal H}_j) = K_j(x).
\end{equation}
Note that $Y_j^*$ never takes a value between $0$ and $\eps$, so if $Y_j^* > 0$, then $Y_j^* \geq \eps$.

We now continue with the construction of $\Phi$.  For all $j \in I$, independently of the population process $({\bf X}(t), t \geq 0)$ and of all other auxiliary random variables introduced up to this point, let $T_j^*$ be uniformly distributed on $[T - \tau_{j+1}^*/a_N, T - \tau_j^*/a_N]$, and let $\Phi_j'$ be a Poisson point process on $[T - \tau_{j+1}^*/a_N, T - \tau_j^*/a_N] \times [0, 1]^{n+1}$ with intensity $$du \times x^{-2} \: dx \times dz_1 \times \dots \times dz_n.$$  For all $j$ such that $T_j^* \in [1, t+1]$ and $Y_j^* > 0$, the point process $\Phi$ will include the point $(T_j^*, Y_j^*, Z_{j,1}, \dots, Z_{j,n})$.  Also, for all $j$ such that $Y_j^* > 0$, the point process $\Phi$ will include all points of $\Phi_j'$ whose first coordinate is in $[1, t+1]$ and whose second coordinate is in the interval $(\eps, Y_j^*)$.  Finally, $\Phi$ will include all points of $\Phi_j'$ whose first coordinate is in $[1, t+1]$ and whose second coordinate is less than $\eps$. 

\begin{Lemma}
The point process $\Phi$ defined above is a Poisson point process on $[1, t+1] \times [0, 1]^{n+1}$ with intensity
\begin{equation}\label{inten}
du \times x^{-2} \: dx \times dz_1 \times \dots \times dz_n.
\end{equation}
\end{Lemma}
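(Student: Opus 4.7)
The plan is to show that $\Phi$ is the restriction to $[1,t+1] \times [0,1]^{n+1}$ of a superposition of independent Poisson processes, one per deterministic strip $S_j := [T - \tau_{j+1}^*/a_N,\, T - \tau_j^*/a_N]$ for $j \in I$, each with intensity $du \times x^{-2}\,dx \times dz_1 \cdots dz_n$. Since the strips $S_j$ are essentially disjoint and their union covers $[1,t+1]$ by Lemma~\ref{Ilem}, this will give the claim via the standard fact that restricting a Poisson process to a measurable subset yields a Poisson process.

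Fix $j \in I$ and let $\Phi_j^{\mathrm{full}}$ consist of the points of $\Phi_j'$ with $x$-coordinate in $[0,\eps)$, the points of $\Phi_j'$ with $x$-coordinate in $(\eps, Y_j^*)$, and, when $Y_j^* > 0$, the single point $(T_j^*, Y_j^*, Z_{1,j}, \dots, Z_{n,j})$. The part with $x \in [0,\eps)$ is Poisson on $S_j \times [0,\eps) \times [0,1]^n$ with the stated intensity directly from the definition of $\Phi_j'$. For the part with $x \in (\eps,1]$, the key observation is that $K_j$ is exactly the distribution function of the maximum $x$-coordinate of a Poisson process on $S_j \times (\eps,1]$ of intensity $du \times x^{-2}\,dx$ (with maximum equal to $0$ on the empty event): the probability of no points with $x$-coordinate exceeding $y \in [\eps,1]$ equals $\exp(-|S_j|(1/y - 1))$, which is exactly $K_j(y)$ since $|S_j| = (\tau_{j+1}^* - \tau_j^*)/a_N$. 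By the standard maximum decomposition of such a Poisson process (conditional on the maximum being $y \geq \eps$, the maximizer is uniform on $S_j$, and the other points form an independent Poisson process on $S_j \times (\eps, y)$), combined with the independence of $Y_j^*$ from $T_j^*$, from $\Phi_j'$, and from the $Z_{i,j}$, the union of the latter two contributions is Poisson on $S_j \times (\eps,1] \times [0,1]^n$ with the required intensity.

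Next I would establish mutual independence of the $\Phi_j^{\mathrm{full}}$ across $j \in I$. The $T_j^*$ and $\Phi_j'$ are, by construction, independent of every previously introduced random object and hence independent across $j$ and of the $Y_h^*$ and $Z_{i,h}$; the $Z_{i,j}$ are uniform on $[0,1]$ and, as noted in subsection~7.2, jointly independent of the $\tau_\ell$ and $Y_\ell$. For $(Y_j^*)_{j \in I}$, introduce the enlarged filtration $\mathcal{G}_j = \mathcal{H}_j \vee \sigma(Z_h : h < j)$. Since $Y_j^+$ is $\mathcal{H}_{j+1}$-measurable, $Z_j$ is independent of $\mathcal{G}_j$, and \eqref{Yjdist} gives $P(Y_j^* \leq x \mid \mathcal{H}_j) = K_j(x)$, which is deterministic, a successive conditioning argument in increasing $j$ shows that $(Y_j^*)_{j \in I}$ are mutually independent with $Y_j^* \sim K_j$ and are jointly independent of the collection $\{T_j^*, \Phi_j', Z_{i,j}\}$.

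Combining these steps, the $\Phi_j^{\mathrm{full}}$ are independent Poisson processes on the essentially disjoint regions $S_j \times (0,1] \times [0,1]^n$, so their superposition is Poisson with intensity \eqref{inten} on $\bigcup_{j \in I} S_j \times (0,1] \times [0,1]^n$. Inspection of the construction shows that $\Phi$ is exactly the restriction of this superposition to $[1,t+1] \times [0,1]^{n+1}$, and $[1,t+1] \subseteq \bigcup_{j \in I} S_j$ by Lemma~\ref{Ilem}, finishing the proof. The main obstacle is the joint independence of $(Y_j^*)_{j \in I}$, which the filtration argument handles; everything else is bookkeeping around the maximum decomposition of a Poisson process on $(\eps,1]$.
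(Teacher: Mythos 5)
Your proposal is correct and takes essentially the same approach as the paper: you decompose $\Phi$ over the deterministic strips $[T - \tau_{j+1}^*/a_N,\, T - \tau_j^*/a_N]$, identify $K_j$ in (\ref{Yjdist}) as the distribution function of the maximal second coordinate of a Poisson process with intensity (\ref{inten}) on such a strip, recover the rest of each strip from the maximum decomposition together with $\Phi_j'$, obtain independence across $j$ from the fact that the conditional law of $Y_j^*$ given ${\cal H}_j$ is deterministic, and finish by superposition. The only difference is cosmetic: you make explicit, via an enlarged filtration, the successive-conditioning argument for the mutual independence of $(Y_j^*)_{j \in I}$, which the paper states in one sentence.
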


\begin{proof}
We separately consider, for each $j$, the restriction of $\Phi$ to points whose first coordinate is in the interval $[T - \tau_{j+1}^*/a_N, T - \tau_j^*/a_N]$.  Note that for a Poisson point process with intensity (\ref{inten}), the expected number of points in the region $[T - \tau_{j+1}^*/a_N, T - \tau_j^*/a_N] \times [x, 1] \times [0,1]^n$ is $$\bigg( \frac{\tau_{j+1}^* - \tau_j^*}{a_N} \bigg) \int_x^1 y^{-2} \: dy = \frac{(\tau_{j+1}^* - \tau_j^*)(1-x)}{a_N x}.$$  Therefore, from (\ref{Yjdist}), we see that if $x \geq \eps$, then $P(Y_j^* \geq x|{\cal H}_j)$ is the probability that there are no points in this region.  Using also that $T_j^*$ is uniformly distributed on $[T - \tau_{j+1}^*/a_N, T - \tau_j^*/a_N]$ and that the random variables $Z_{j,1}, \dots, Z_{j,n}$ are uniformly distributed on $[0,1]^n$, it follows that $$(T_j^*, Y_j^*, Z_{j,1}, \dots, Z_{j,n})$$ has the same distribution as the point whose second coordinate is the largest among points of a Poisson process with intensity (\ref{inten}) restricted to $[T - \tau_{j+1}^*/a_N, T - \tau_j^*/a_N] \times [\eps, 1] \times [0,1]^n$.  Furthermore, conditional on the event that such a Poisson process 
has a point whose second coordinate is $y$ and no point whose second coordinate is larger than $y$, the distribution of the restriction of the Poisson process to $[T - \tau_{j+1}^*/a_N, T - \tau_j^*/a_N] \times [\eps, y) \times [0,1]^n$ is that of a Poisson process with intensity (\ref{inten}).  It thus follows from the construction of $\Phi$ that the restriction of $\Phi$ to $[T - \tau_{j+1}^*/a_N, T - \tau_j^*/a_N]$ has intensity given by (\ref{inten}).

Finally, because of the conditioning on ${\cal H}_j$ in (\ref{Yjdist}), the random variables $Y_j^*$ for $j \in I$ are independent.  Because the Poisson processes $\Phi_j'$ are independent, it follows that the restrictions of $\Phi$ to the intervals $[T - \tau_{j+1}^*/a_N, T - \tau_j^*/a_N]$ are independent.  The lemma now follows from the superposition theorem for Poisson processes.
\end{proof}

We now use the Poisson point process $\Phi$ to construct a coalescent process $(\Pi(u), 0 \leq u \leq t+1)$.  Let $\Pi(u) = \{\{1\}, \dots, \{n\}\}$ for $u \in [0, 1]$.  For $u \in (1, t+1]$, suppose $(u, y, z_1, \dots, z_n)$ is a point of $\Phi$ and $\Pi(u-) = \pi$, where $\pi$ is a partition of $\{1, \dots, n\}$ into the blocks $B_1, \dots, B_{\ell}$, ordered by their smallest element.  Then $\Pi(u)$ is obtained from $\Pi(u-)$ by merging together all of the blocks $B_i$ for which $z_i \leq y$.  As discussed in subsection \ref{boszsec}, this construction is well-defined, and the process $(\Pi(1 + u), 0 \leq u \leq t)$ obeys the law of the Bolthausen-Sznitman coalescent.  

\subsection{Comparing $Y_j$ and $Y_j^*$}

The goal in this subsection is to prove two lemmas that establish that, with high probability, the random variables $Y_j$ and $Y_j^*$ are close.  Lemma \ref{YYstar} bounds the probability that either $Y_j$ or $Y_j^*$ is greater than $\eps$, but the other is not.  Lemma \ref{YdiffY} bounds the probability that the difference between $Y_j$ and $Y_j^*$ is more than $\eps^2$.  We will need a couple of preliminary estimates.

\begin{Lemma}\label{Ajpp}
For $j \in I$, let $$A_j'' = \bigg\{ \bigg| \frac{q_j}{k_N} - q \bigg( \frac{\tau_j}{a_N} \bigg) \bigg| > \delta \bigg\}.$$  Then $$\lim_{N \rightarrow \infty} P \bigg( \Lambda \cap \bigcup_{j \in I} A_j'' \bigg) = 0.$$
\end{Lemma}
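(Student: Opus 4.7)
The approach is to compare $q_j$ with $Q(\tau_j)$ and then invoke Proposition~\ref{Qthm} to relate $Q(\tau_j)/k_N$ to $q(\tau_j/a_N)$.

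On $\Lambda$, for $j \in I$, Lemma~\ref{Ilem} gives $\tau_j > a_N + 2a_N/k_N$, so the definition of $q_j^*$ reduces to $q_j^* = j - M(\tau_j)$; part~3 of Proposition~\ref{newGq} then yields $q_j = q_j^* \ge (1-2\delta)k_N > 1$, hence $q_j = j - M(\tau_j)$. Inspecting the proof of Lemma~\ref{Ilem} more carefully, one sees $\tau_{j_1} \ge a_N + a_N(T-(t+2))/2$ on $\Lambda$ for large $N$, so $\tau_j/a_N$ lies in the fixed compact set $S := [1 + (T-(t+2))/4, \; T] \subset (1, \infty)$ uniformly over $j \in I$. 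Applying Proposition~\ref{Qthm} on $S$ gives $\sup_{j \in I} |Q(\tau_j)/k_N - q(\tau_j/a_N)| < \delta/2$ with probability tending to $1$.

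Writing $j_{\max}(t) := \max\{i : X_i(t) > 0\}$, we have $Q(\tau_j) = j_{\max}(\tau_j) - M(\tau_j)$, so $q_j - Q(\tau_j) = j - j_{\max}(\tau_j)$. Since $X_{j-1}(\tau_j) \ge s/\mu > 0$, we get $j_{\max}(\tau_j) \ge j-1$, and hence $q_j/k_N - Q(\tau_j)/k_N \le 1/k_N \to 0$, which combined with the previous paragraph yields the upper bound $q_j/k_N \le q(\tau_j/a_N) + \delta$ uniformly in $j \in I$ for large $N$.

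The matching lower bound $q_j/k_N \ge q(\tau_j/a_N) - \delta$ is the main obstacle: it is equivalent to showing $j_{\max}(\tau_j) \le j + (\delta/2)k_N$ with probability tending to $1$, i.e., that no individual at time $\tau_j$ carries substantially more than $j$ mutations. I would handle this by an iterated first-moment argument. Part~1 of Proposition~\ref{mainjprop} gives $X_j(u) \le s/(2\mu)$ for $u \le \tau_j$, and by extrapolating the exponential envelope of parts~1--2 backwards from $\tau_j$ one obtains a sharper bound $X_j(u) \lesssim (s/\mu) e^{-sq_j(\tau_j - u)}$, which controls the rate $\mu X_j(u)$ of $(j+1)$-mutations. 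Coupling the descendants of each such mutation to the supercritical branching process of Section~7 and multiplying by the survival probability $\Theta(sq_j)$, one shows that the expected number of surviving type-$(j+1)$ lineages alive at $\tau_j$ is $o(1)$. Iterating this for types $j+k$ with $k \le \lceil \delta k_N/2 \rceil$ (losing a further factor $O(sq_j)$ at each step) and then taking a union bound over $j \in I$, whose cardinality is $O(k_N)$ by Lemma~\ref{Ilem}, completes the proof.
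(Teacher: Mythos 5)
Your first two paragraphs reproduce the paper's argument exactly: on $\Lambda$, for $j \in I$, Lemma~\ref{Ilem} gives $\tau_j > a_N + 2a_N/k_N$ and $\tau_j/a_N$ lying in a fixed compact subset of $(1,\infty)$, so $q_j = j - M(\tau_j)$, and Proposition~\ref{Qthm} controls $Q(\tau_j)/k_N - q(\tau_j/a_N)$; moreover $X_{j-1}(\tau_j) \geq s/\mu$ gives $\max\{i : X_i(\tau_j)>0\} \geq j-1$, hence $Q(\tau_j) \geq q_j - 1$. The gap is in what you call ``the main obstacle.'' It is not an obstacle at all: on $\Lambda$ the bound $\max\{i : X_i(\tau_j)>0\} \leq j$ holds \emph{deterministically}, with no probabilistic estimate needed. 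Indeed, by (\ref{xijdef}) we have $\xi_j \geq \tau_j$, so every type $j$ individual alive at a time $u \leq \tau_j$ got its $j$th mutation at or before $\xi_j$ and is therefore an early type $j$ individual; part 1 of Proposition~\ref{mainjprop} says no early type $j$ individual acquires a type $j+1$ mutation until after $\tau_{j+1} \wedge a_N T$, so no type $j+1$ individual exists at any time $\leq \tau_j$, and inductively no individual of type $j+2, j+3, \dots$ exists at time $\tau_j$ either (this is the same mechanism recorded in Remark~\ref{Jrem}). Thus $Q(\tau_j) \in \{q_j - 1, q_j\}$ on $\Lambda$, and the lemma follows immediately from Proposition~\ref{Qthm}. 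This one-line observation is precisely the paper's proof.

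Your proposed substitute for this step would not close as written, for two concrete reasons. First, the backward envelope $X_j(u) \lesssim (s/\mu)e^{-sq_j(\tau_j - u)}$ for $u \leq \tau_j$ is not provided by parts 1--2 of Proposition~\ref{mainjprop}: those control $X_{j,1}$ only on $[\tau_j + a_N/(4Tk_N), \tau_{j+1}]$ (with just the crude bound $s/(2\mu)$ before that time) and control $X_{j,2}$ only on $[\xi_j, \tau_{j+1}]$, i.e.\ after $\tau_j$. Second, and more seriously, your accounting multiplies the mutation rate by the survival probability $\Theta(sq_j)$, which counts only mutations whose family survives long-term; but a type $j+1$ family that is alive at time $\tau_j$ yet destined for extinction still makes $\max\{i : X_i(\tau_j)>0\} > j$ and hence $Q(\tau_j) > q_j$, which is exactly what you need to exclude. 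Mutations occurring within time $1/(sq_j)$ of $\tau_j$ have surviving-to-$\tau_j$ probability of order one, so with your envelope the expected number of such alive families is of order $s \cdot \frac{1}{sq_j} = \Theta(1/k_N)$ per $j$, and after the union bound over the $\Theta(Tk_N)$ indices in $I$ this is $\Theta(T)$, not $o(1)$. (With the correct order-one prefactor in the envelope the per-$j$ count would be $O(\mu/(sk_N))$ and the union bound would close, but obtaining that sharper envelope before time $\tau_j$ from the stated propositions is exactly the kind of work the event $\Lambda$ was designed to make unnecessary.)
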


\begin{proof}
Lemma \ref{Ilem} and part 1 of Proposition \ref{mainjprop} imply that on $\Lambda$, the fittest individual in the population at time $\tau_j$ must have either $j$ or $j-1$ mutations.  It therefore follows from (\ref{qjdef}) and (\ref{Qdef}), along with the fact that $\tau_j > a_N + 2a_N/k_N$ for all $j \in I$ by Lemma \ref{Ilem}, that $Q(\tau_j)$ must either equal $q_j$ or $q_j - 1$ on $\Lambda$ for all $j \in I$.  Let $S = [1 + (T - (t+2))/2, T]$, which is a compact subset of $(1, \infty)$.  It follows from Proposition \ref{Qthm} that $$\sup_{t \in S} \bigg| \frac{Q(a_N t)}{k_N} - q(t) \bigg| \rightarrow_p 0,$$ where $\rightarrow_p$ denotes convergence in probability as $N \rightarrow \infty$.  By (\ref{jp2}) and Lemma \ref{Ilem}, on $\Lambda$ we have $\tau_j/a_N \in S$ for all $j \in I$.  Therefore,
$$\sup_{j \in I} \bigg| \frac{q_j}{k_N} - q \bigg( \frac{\tau_j}{a_N} \bigg) \bigg| \1_{\Lambda} \rightarrow_p 0,$$
which implies the lemma.
\end{proof}

\begin{Lemma}\label{Kjlem}
There is a positive constant $C$ such that if $\eps \leq y \leq 1$ and $j \in I$, then on the event
$\{\tau_j < \zeta\} \cap (A_j'')^c \in {\cal H}_j$,
we have for sufficiently large $N$,
$$\frac{(1 - y)(1 - C \delta T)}{q_j y} \leq P(Y_j^* \geq y|{\cal H}_j) \leq \frac{(1 - y)(1 + C \delta T)}{q_j y}.$$
\end{Lemma}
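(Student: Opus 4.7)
The plan is to compute $P(Y_j^* \geq y \mid {\cal H}_j)$ directly from the construction in subsection \ref{Poissec} and then approximate. By (\ref{Yjdist}) and the definition of $K_j$, for $\eps \leq y \leq 1$ we have
$$P(Y_j^* \geq y \mid {\cal H}_j) = 1 - K_j(y) = 1 - \exp\!\left(-\frac{\tau_{j+1}^* - \tau_j^*}{a_N} \cdot \frac{1-y}{y}\right),$$
and substituting the definition (\ref{tjstardef}) of $\tau_{j+1}^*$ shows that the exponent equals $-(1-y)/(k_N q(\tau_j^*/a_N) y)$. Thus the whole task reduces to approximating $k_N q(\tau_j^*/a_N)$ by $q_j$ and then linearizing the exponential.

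The first step is to compare $q(\tau_j^*/a_N)$ with $q(\tau_j/a_N)$. Lemma \ref{taustar} gives $|\tau_j^* - \tau_j| \leq 10 \delta a_N T$ on $\{\tau_j < \zeta \wedge a_N T\}$, and I will use the Lipschitz estimate (\ref{qlip}) from Proposition \ref{Qlem} to conclude $|q(\tau_j^*/a_N) - q(\tau_j/a_N)| \leq 10 e \delta T$. To apply (\ref{qlip}) I need to check that the interval between $\tau_j^*/a_N$ and $\tau_j/a_N$ does not straddle the discontinuity of $q$ at $1$; this follows from (\ref{jp2}), (\ref{jstarprime}), and the choice of $\delta$ in (\ref{deldef}), which together guarantee both points lie strictly above $1$. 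Combining with $|q_j/k_N - q(\tau_j/a_N)| \leq \delta$ on $(A_j'')^c$ and multiplying by $k_N$, I obtain
$$|k_N q(\tau_j^*/a_N) - q_j| \leq (1 + 10 e T)\delta k_N.$$
Since $q_j \geq (1-2\delta) k_N$ by part 3 of Proposition \ref{newGq}, this can be rewritten as $k_N q(\tau_j^*/a_N) = q_j(1 + O(\delta T))$.

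The final step is to linearize. For $y \in [\eps, 1]$ the argument $(1-y)/(k_N q(\tau_j^*/a_N) y)$ is at most $1/(q_j \eps)$, and by part 3 of Proposition \ref{newGq} together with A1 this tends to $0$ uniformly in $y$ and $j$. Using $|1 - e^{-x} - x| \leq x^2$ for small $x \geq 0$, I get
$$P(Y_j^* \geq y \mid {\cal H}_j) = \frac{1-y}{k_N q(\tau_j^*/a_N) y}\bigl(1 + o(1)\bigr) = \frac{1-y}{q_j y}\bigl(1 + O(\delta T)\bigr)$$
for $N$ large, which yields both inequalities in the lemma.

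I do not expect any serious obstacle: the proof is a short chain of approximations. The only delicate point is verifying that $q$ is evaluated away from its discontinuity at $1$, which requires some bookkeeping with the inequalities from Lemma \ref{Ilem} and the choice of $\delta$ in (\ref{deldef}); once this is in place the remaining steps are elementary calculus on $1-e^{-x}$ and Lemma \ref{taustar}.
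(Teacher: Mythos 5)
Your proposal is correct and takes essentially the same approach as the paper: both reduce the claim via (\ref{Yjdist}) to estimating $1 - K_j(y)$, compare $k_N q(\tau_j^*/a_N)$ with $q_j$ by combining (\ref{tslem2}), the Lipschitz bound (\ref{qlip}), the definition of $A_j''$, and part 3 of Proposition \ref{newGq}, and then linearize $1 - e^{-x}$ with a quadratic error bound. The only cosmetic difference is that the paper runs the comparison additively, bounding $|(\tau_{j+1}^* - \tau_j^*)/a_N - 1/q_j|$ by $C \delta T / k_N$, whereas you phrase it multiplicatively; your explicit verification that $q$ is evaluated away from its discontinuity at $1$ is a point the paper uses implicitly.
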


\begin{proof}
By (\ref{tjstardef}),
\begin{equation}\label{K0}
\bigg| \frac{\tau_{j+1}^* - \tau_j^*}{a_N} - \frac{1}{q_j} \bigg| = \bigg| \frac{1}{k_N q(\tau_j^*/a_N)} - \frac{1}{q_j} \bigg| = \frac{1}{k_N} \bigg| \frac{1}{q(\tau_j^*/a_N)} - \frac{k_N}{q_j} \bigg|.
\end{equation}
Also, by (\ref{qlip}) and (\ref{tslem2}), we have on $\{\tau_j < \zeta\} \cap (A_j'')^c$,
\begin{equation}\label{K1}
\bigg| \frac{q_j}{k_N} - q \bigg( \frac{\tau_j^*}{a_N} \bigg) \bigg| \leq \delta + \bigg| q \bigg( \frac{\tau_j}{a_N} \bigg) - q \bigg( \frac{\tau_j^*}{a_N} \bigg) \bigg| \leq \delta + 10e \delta T.
\end{equation}
Therefore, using (\ref{K0}) and (\ref{K1}) along with the facts that $q(\tau_j^*/a_N) \geq 1$ by Proposition \ref{Qlem} and that $q_j/k_N \geq 1 - 2 \delta$ on $\{\tau_j < \zeta\}$ by part 3 of Proposition \ref{newGq}, we get that on $\{\tau_j < \zeta\} \cap (A_j'')^c$, $$\bigg| \frac{\tau_{j+1}^* - \tau_j^*}{a_N} - \frac{1}{q_j} \bigg| \leq \frac{C \delta T}{k_N}.$$  Because $|(1 - e^{-x}) - x| \leq x^2/2$ for $x \geq 0$ and (\ref{tjstardef}) holds, it follows that when $\eps \leq y \leq 1$, we have for sufficiently large $N$, on $\{\tau_j < \zeta\} \cap (A_j'')^c$,
$$\bigg|(1 - K_j(y)) - \frac{(1 - y)}{q_j y} \bigg| \leq \frac{1}{2} \bigg( \frac{(\tau_{j+1}^* - \tau_j^*) (1 - y)}{a_N y} \bigg)^2 + \frac{1 - y}{y} \bigg| \frac{\tau_{j+1}^* - \tau_j^*}{a_N} - \frac{1}{q_j} \bigg| \leq \frac{1 - y}{y} \cdot \frac{C \delta T}{k_N}.$$  Because $q_j \leq (e + 2 \delta) k_N$ on $\{\tau_j < \zeta\}$ by part 3 of Proposition \ref{newGq}, the result follows.
\end{proof}

\begin{Lemma}\label{YYstar}
Letting $\triangle$ denote the symmetric difference between two events, for sufficiently large $N$ we have
$$P\bigg( \Lambda \cap \bigcup_{j \in I} \big(\{Y_j \geq \eps\} \triangle \{Y_j^* \geq \eps\} \big) \bigg) \leq \frac{C \delta T^2}{\eps}.$$
\end{Lemma}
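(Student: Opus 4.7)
The plan is to sandwich $Y_j$ between $Y_j^-$ and $Y_j^+$ using Lemma~\ref{Ylem}, and then to exploit that by construction, both $\{Y_j^+\geq\eps\}$ and $\{Y_j^*\geq\eps\}$ can be expressed as threshold events in the ${\cal H}_j$-uniform random variable $H_j(Y_j^+,Z_j)$. This reduces the symmetric difference to a comparison of tail probabilities, each of which equals $(1-\eps)/(q_j\eps)$ up to a multiplicative factor $1+C\delta T$ by (\ref{Ylem2}) and Lemma~\ref{Kjlem}.

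First I would work on the event $\Lambda^* := \Lambda \cap \bigcap_{j\in I}\bigl(A_j^c\cap\{\kappa_j>\tau_j'-\tau_j\}\cap(A_j'')^c\bigr)$, where $A_j$ and $A_j''$ are defined in Lemmas~\ref{bptail3} and~\ref{Ajpp}. By Lemmas~\ref{kappabound}, \ref{bptail3}, and~\ref{Ajpp}, we have $P(\Lambda\setminus\Lambda^*)\to 0$, so for sufficiently large $N$ it suffices to bound the probability on $\Lambda^*$. On $\Lambda^*$, (\ref{Ylem1}) gives $Y_j^-\leq Y_j\leq Y_j^+$ for every $j\in I$, hence
$$\{Y_j\geq\eps\}\triangle\{Y_j^+\geq\eps\}\subset\{Y_j^+\geq\eps\}\setminus\{Y_j^-\geq\eps\},$$
whose conditional probability is $P(Y_j^+\geq\eps\mid{\cal H}_j)-P(Y_j^-\geq\eps\mid{\cal H}_j)\leq 26\delta(1-\eps)/(q_j\eps)$ by (\ref{Ylem2}).

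Next, by (\ref{Yjcons1})--(\ref{Yjcons2}), we have $Y_j^+=F_j^{-1}(H_j(Y_j^+,Z_j))$ and $Y_j^*=K_j^{-1}(H_j(Y_j^+,Z_j))$, where $H_j(Y_j^+,Z_j)$ is uniform on $[0,1]$ conditional on ${\cal H}_j$. Using the sup definitions of $F_j^{-1}$ and $K_j^{-1}$, together with the fact that $K_j$ is continuous at $\eps$, one checks modulo null sets that $\{Y_j^+\geq\eps\}=\{H_j(Y_j^+,Z_j)\geq F_j(\eps-)\}$ and $\{Y_j^*\geq\eps\}=\{H_j(Y_j^+,Z_j)\geq K_j(\eps)\}$. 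Consequently, by uniformity of $H_j(Y_j^+,Z_j)$,
$$P(\{Y_j^+\geq\eps\}\triangle\{Y_j^*\geq\eps\}\mid{\cal H}_j)=|F_j(\eps-)-K_j(\eps)|=\bigl|P(Y_j^+\geq\eps\mid{\cal H}_j)-P(Y_j^*\geq\eps\mid{\cal H}_j)\bigr|,$$
and combining (\ref{Ylem2}) with Lemma~\ref{Kjlem} bounds this by $C\delta T(1-\eps)/(q_j\eps)$ on $\Lambda^*$.

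Combining via the triangle inequality yields $P(\{Y_j\geq\eps\}\triangle\{Y_j^*\geq\eps\}\mid{\cal H}_j)\leq C\delta T/(q_j\eps)$ on $\Lambda^*$. Using $q_j\geq(1-2\delta)k_N\geq k_N/2$ from part~3 of Proposition~\ref{newGq} and $|I|\leq 3Tk_N$ from Lemma~\ref{Ilem}, a union bound and summation over $j\in I$ yields a contribution of order $\delta T^2/\eps$, which together with the negligible probability $P(\Lambda\setminus\Lambda^*)$ gives the claimed bound. The main obstacle I foresee is the careful boundary analysis required to identify $\{Y_j^+\geq\eps\}$ and $\{Y_j^*\geq\eps\}$ with the threshold events in $H_j(Y_j^+,Z_j)$: since $Y_j^+$ takes values in multiples of $1/\lceil s/\mu\rceil$, the distribution $F_j$ has atoms, and $K_j$ is flat on $[0,\eps)$, so one must verify that the atom at $\eps$ (if present) and the flat region of $K_j$ interact correctly and do not produce spurious additional contributions to the symmetric difference.
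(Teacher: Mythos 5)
Your proposal is correct and follows essentially the same route as the paper's proof: sandwiching $Y_j$ between $Y_j^-$ and $Y_j^+$ via (\ref{Ylem1}) on the good event, comparing $Y_j^+$ and $Y_j^*$ through the fact that both are monotone functions of the single uniform variable $H_j(Y_j^+,Z_j)$ so that the symmetric difference reduces to a difference of tail probabilities controlled by (\ref{Ylem2}) and Lemma \ref{Kjlem}, and then summing over $j\in I$ using $q_j\geq(1-2\delta)k_N$, $|I|\leq 3Tk_N$, and Lemmas \ref{kappabound}, \ref{bptail3}, and \ref{Ajpp} for the residual events. Your explicit identification of $\{Y_j^+\geq\eps\}$ and $\{Y_j^*\geq\eps\}$ with threshold events $\{H_j(Y_j^+,Z_j)\geq F_j(\eps-)\}$ and $\{H_j(Y_j^+,Z_j)\geq K_j(\eps)\}$ is simply the careful version of the step the paper states as "monotone functions of the same uniformly distributed random variable," and your boundary analysis (atoms of $F_j$, continuity of $K_j$ at $\eps$) checks out.
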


\begin{proof}
By Lemmas \ref{Ylem} and \ref{Kjlem} and part 3 of Proposition \ref{newGq}, $$\big| P(Y_j^+ \geq \eps|{\cal H}_j) - P \big(Y_j^* \geq \eps|{\cal H}_j)\big| \leq \frac{C \delta T}{\eps k_N}$$ for sufficiently large $N$ on $\{\tau_j < \zeta\} \cap (A_j'')^c$, and the same result holds with $Y_j^-$ in place of $Y_j^+$.  Because $Y_j^- \leq Y_j^+$, and the random variables $Y_j^+$ and $Y_j^*$ are monotone functions of the same uniformly distributed random variable by (\ref{Yjcons1}) and (\ref{Yjcons2}), it follows that $$P(\{Y_j^+ \geq \eps\} \triangle \{Y_j^* \geq \eps\}|{\cal H}_j) \leq \frac{C \delta T}{\eps k_N}$$ on $\{\tau_j < \zeta\} \cap (A_j'')^c$, and the same result holds with $Y_j^-$ in place of $Y_j^+$.  Let $\Psi_j = A_j^c \cap \{\tau_{j+1} < \zeta\} \cap \{\kappa_j > \tau_j' - \tau_j\}$.  By (\ref{Ylem1}), we have $$(\{Y_j^- \geq \eps\} \cap \Psi_j) \subset (\{Y_j \geq \eps\} \cap \Psi_j) \subset (\{Y_j^+ \geq \eps\} \cap \Psi_j).$$  It follows that on $\{\tau_j < \zeta\} \cap (A_j'')^c$, we have $$P \big( (\{Y_j \geq \eps\} \triangle \{Y_j^* \geq \eps\}) \cap \Psi_j \big| {\cal H}_j \big) \leq \frac{C \delta T}{\eps k_N}.$$  The result follows by taking expectations, summing over $j \in I$, and using Lemmas \ref{kappabound}, \ref{bptail3}, and \ref{Ajpp}, along with the fact that the cardinality of $I$ is at most $3T k_N$ by Lemma \ref{Ilem}.
\end{proof}

\begin{Lemma}\label{YdiffY}
There is a positive constant $C^*$, not depending on $\eps$, $\delta$, or $T$, such that
for sufficiently large $N$, we have
$$P\bigg( \Lambda \cap \bigcup_{j \in I} \big( \{|Y_j - Y_j^*| > C^* \eps^2\} \cap \{Y_j \geq \eps\} \cap \{Y_j^* \geq \eps\} \big) \bigg) \leq \frac{C \delta T \log(1/\eps)}{\eps^2}.$$
\end{Lemma}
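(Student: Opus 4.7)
The strategy is a sandwich-and-coupling argument. On the event $\Psi_j := A_j^c \cap \{\tau_{j+1} < \zeta\} \cap \{\kappa_j > \tau_j' - \tau_j\}$ from the proof of Lemma \ref{Ylem}, we have $Y_j^- \leq Y_j \leq Y_j^+$, so
$$|Y_j - Y_j^*| \;\leq\; (Y_j^+ - Y_j^-) + |Y_j^+ - Y_j^*|.$$
For the second summand, I would exploit the explicit coupling from subsection \ref{Poissec}: $V := H_j(Y_j^+, Z_j)$ is uniform on $[0,1]$ conditional on ${\cal H}_j$, with $Y_j^+ = F_j^{-1}(V)$ and $Y_j^* = K_j^{-1}(V)$. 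Combining the tail estimates in Lemma \ref{Ylem} (relative error $13\delta$) and Lemma \ref{Kjlem} (relative error $C\delta T$), on the ${\cal H}_j$-measurable good event $\{\tau_j < \zeta\} \cap (A_j'')^c$,
$$|F_j(y) - K_j(y)| \;\leq\; \frac{C\delta T(1-y)}{q_j y}, \qquad y \in [\eps, 1-\delta],$$
where $F_j, K_j$ are the CDFs of $Y_j^+, Y_j^*$ respectively.

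From the explicit formula $K_j(y) = \exp(-(\tau_{j+1}^* - \tau_j^*)(1-y)/(a_N y))$, writing $\beta := (\tau_{j+1}^* - \tau_j^*)/a_N$ (of order $1/q_j$ on $(A_j'')^c$), I would compute $K_j'(y) = \beta K_j(y)/y^2$, and use $K_j(y) \geq 1/2$ for $y \in [\eps, 1-\delta]$ and $N$ large to obtain $K_j'(y) \geq 1/(Cq_j y^2)$. Since $F_j(Y_j^+) = K_j(Y_j^*) = V$, applying the mean value theorem to $K_j$ between $Y_j^+$ and $Y_j^*$ yields the deterministic pointwise bound
$$|Y_j^+ - Y_j^*| \;\leq\; \frac{|F_j(Y_j^+) - K_j(Y_j^+)|}{K_j'(\xi)} \;\leq\; C\delta T\, Y_j^*(1-Y_j^*),$$
valid whenever both $Y_j^+$ and $Y_j^*$ lie in $[\eps, 1-\delta]$.

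Converting this into a probability estimate, the event $\{|Y_j^+ - Y_j^*| > C^*\eps^2/2\}$ forces $Y_j^*(1-Y_j^*) > c\eps^2/(\delta T)$, hence $Y_j^* \in [a, 1-a]$ with $a$ of order $\eps^2/(\delta T)$. For $N$ large enough that $a > 1/q_j$, the explicit form of $K_j$ gives $P(Y_j^* \in [a, 1-a] \mid {\cal H}_j) \leq C\delta T/(q_j\eps^2)$; summing over $j \in I$ using $|I| \leq 3Tk_N$ (Lemma \ref{Ilem}) and $q_j \geq (1-2\delta)k_N$ (Proposition \ref{newGq}), the total contribution is bounded by $C\delta T \log(1/\eps)/\eps^2$. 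The boundary event $\{Y_j^+ > 1-\delta\} \cup \{Y_j^* > 1-\delta\}$ has probability at most $C\delta/q_j$ by the tail bounds of Lemmas \ref{Ylem} and \ref{Kjlem}, contributing $O(\delta T)$ after summing, which is likewise dominated by the target. For the remaining gap $Y_j^+ - Y_j^-$, since Lemma \ref{Ylem} provides matching tails for $Y_j^\pm$ with relative error $13\delta$, one may either run a parallel quantile-comparison argument for the pair $(Y_j^-, Y_j^+)$ or directly estimate the expected number of extra ``blue or yellow'' individuals in the coupling of subsection \ref{bpcouplesec} via Corollary \ref{ZmartCor3}; either route gives $Y_j^+ - Y_j^- \leq C\delta\, Y_j^+(1-Y_j^+)$ outside an exceptional set of comparable probability. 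Finally, the probabilities of the exceptional events from Lemmas \ref{Ajpp}, \ref{kappabound}, and \ref{bptail3} are each $o(1)$ and absorbed into the bound.

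The main obstacle is producing the deterministic bound in the second step with the quadratic factor $Y_j^*(1-Y_j^*)$: without it, one gets only $|Y_j^+ - Y_j^*| \leq C\delta T$, which is not in general smaller than $C^*\eps^2$ for the admissible parameter ranges, so the exceptional-probability estimate would be vacuous. Obtaining the quadratic factor relies on both the sharp relative-error bound $|F_j - K_j|(y) = O(\delta T(1-y)/(q_j y))$ from Lemmas \ref{Ylem}--\ref{Kjlem} and the lower bound $K_j'(y) \geq 1/(Cq_j y^2)$ from the explicit exponential form of $K_j$. A secondary difficulty is that $Y_j^-$ is not directly coupled to $Y_j^*$ through the uniform $V$, so the sandwich gap $Y_j^+ - Y_j^-$ must be handled by a separate coupling argument rather than through the quantile transform used for $|Y_j^+ - Y_j^*|$.
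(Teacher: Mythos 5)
Your treatment of the pair $(Y_j^+, Y_j^*)$ is essentially the paper's own argument: the paper also works through the quantile coupling (\ref{Yjcons1})--(\ref{Yjcons2}), inverting the tail bounds of Lemmas \ref{Ylem} and \ref{Kjlem} to see that both quantile functions at $1-z$ equal $(1+O(\delta))/(q_j z + 1 + O(\delta))$, whence $|Y_j^+ - Y_j^*| \leq C\delta$ holds \emph{deterministically} on $\{\tau_j < \zeta\} \cap (A_j'')^c$ whenever both variables lie in $[\eps,1]$; since $\delta < \eps^2$ by (\ref{deldef}), this is already $\leq (C^*-1)\eps^2$, so no probabilistic estimate is needed for this piece. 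Your quadratic refinement $C\delta T\, Y_j^*(1-Y_j^*)$ is plausible but superfluous, and note that the probabilistic step you build on it, after summing over the $|I| \leq 3Tk_N$ values of $j$, yields $C\delta T^2/\eps^2$ rather than $C\delta T \log(1/\eps)/\eps^2$; since $C$ and $C^*$ may not depend on $T$, that is not the stated bound.

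The genuine gap is your handling of the sandwich term $Y_j^+ - Y_j^-$, which is in fact where the factor $\log(1/\eps)/\eps^2$ in the lemma originates. Your route (a), a ``parallel quantile-comparison for $(Y_j^-, Y_j^+)$,'' does not apply: unlike $Y_j^*$, the variable $Y_j^-$ is not defined as a quantile transform of a uniform variable shared with $Y_j^+$; the two are coupled only through the red/yellow/blue construction of subsection \ref{bpcouplesec}, and matching marginal tails under an arbitrary monotone coupling do not yield your claimed pathwise bound $Y_j^+ - Y_j^- \leq C\delta\, Y_j^+(1-Y_j^+)$ with small exceptional probability. (From the marginal information in Lemma \ref{Ylem} one can only get an $L^1$ bound, and Markov at the scale $c\delta\eps$ that your pathwise claim requires gives exceptional probability of order $\log(1/\eps)/(q_j\eps)$ per $j$, which after summing over $j\in I$ is of order $T\log(1/\eps)/\eps$ and is useless.) Your route (b), bounding the expected number of yellow and blue individuals, fails quantitatively: the immigration mismatch alone contributes an expected discrepancy of order $\delta s(\xi_j - \tau_j) e^{sq_j(\tau_j'-\tau_j)} = \delta\, q_j^{-1}\big(\log(1/(sq_j)) + b\big) e^{sq_j(\tau_j'-\tau_j)}$, because rare surviving discrepancy families dominate the mean; after Markov at scale $\eps^2$ and summation over $j \in I$ this gives $C\delta T(\log(1/(sk_N)) + b)/\eps^2$, which diverges as $N \rightarrow \infty$. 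What actually closes the argument --- and is the paper's key step --- is to use the monotonicity $Y_j^- \leq Y_j \leq Y_j^+$ on $\Psi_j$ together with the fact that the two tails in (\ref{Ylem2}) agree to relative error $13\delta$: writing $E\big[Y_j^+ \1_{\{Y_j^+ \geq \eps\}} - Y_j^- \1_{\{Y_j^- \geq \eps\}}\big|{\cal H}_j\big]$ as the integral in $y$ of the \emph{difference} of the two tail probabilities, the leading terms cancel and only $\int_{\eps}^{1-\delta} C\delta(1-y)/(q_j y)\,dy \leq C\delta\log(1/\eps)/q_j$ survives; Markov's inequality at scale $\eps^2$ then bounds the probability that $|Y_j \1_{\{Y_j \geq \eps\}} - Y_j^+ \1_{\{Y_j^+ \geq \eps\}}| > \eps^2$ on $\Psi_j$ by $C\delta\log(1/\eps)/(q_j\eps^2)$, which sums to the stated bound using $q_j \geq (1-2\delta)k_N$ and $|I| \leq 3Tk_N$. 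Without this difference-of-tails computation, neither of your routes produces the required per-$j$ factor $1/q_j$ together with only a $\log(1/\eps)$ loss, so the proposal as written does not prove the lemma.
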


\begin{proof}
We first compare $Y_j^*$ to $Y_j^+$.  In view of (\ref{Yjcons1}) and (\ref{Yjcons2}), we need to compare the functions $F_j^{-1}$ and $K_j^{-1}$.  Suppose $z \in (0, 1)$.  If $F_j^{-1}(1 - z) \in [\delta, 1 - \delta]$, then (\ref{Ylem2}) implies that on $\{\tau_j < \zeta\}$, we have $$\frac{1 - 13 \delta}{q_j z + 1 - 13 \delta} \leq F_j^{-1}(1 - z) \leq \frac{1 + 13 \delta}{q_j z + 1 + 13 \delta}.$$  Likewise, Lemma \ref{Kjlem} implies that if $K_j^{-1}(1 - z) \geq \eps$, then on $\{\tau_j < \zeta\} \cap (A_j'')^c$, we have
$$\frac{1 - C \delta}{q_j z + 1 - C \delta} \leq K_j^{-1}(1 - z) \leq \frac{1 + C \delta}{q_j z + 1 + C \delta}.$$  It follows that on the event $\{\tau_j < \zeta\} \cap (A_j'')^c$, if $F_j^{-1}(1 - z) \in [\delta, 1 - \delta]$ and $K_j^{-1}(1-z) \in [\eps, 1]$, then
\begin{equation}\label{FKinv}
|F_j^{-1}(1 - z) - K_j^{-1}(1 - z)| \leq C \delta.
\end{equation}
Because $F_j^{-1}$ and $K_j^{-1}$ are increasing functions taking their values in $[0, 1]$, and $\delta < \eps$ by (\ref{deldef}), we see that (\ref{FKinv}) holds on $\{\tau_j < \zeta\} \cap (A_j'')^c$ as long as
$F_j^{-1}(1 - z) \in [\eps, 1]$ and $K_j^{-1}(1-z) \in [\eps, 1]$.
Since $\delta < \eps^2$ by (\ref{deldef}), it follows that there is a positive constant $C^*$ such that on $\{\tau_j < \zeta\} \cap (A_j'')^c$, we have
\begin{equation}\label{YY1}
|Y_j^+ - Y_j^*| \1_{\{Y_j^* \geq \eps\}} \1_{\{Y_j^+ \geq \eps\}} \leq (C^* - 1) \eps^2.
\end{equation}

It remains to control the difference between $Y_j^+$ and $Y_j$.  By (\ref{Ylem2}), on $\{\tau_j < \zeta\}$,
\begin{align*}
E[Y_j^+ \1_{\{Y_j^+ \geq \eps\}} - Y_j^- \1_{\{Y_j^- \geq \eps\}}|{\cal H}_j] &= \int_0^1 \big( P(Y_j^+ \1_{\{Y_j^+ \geq \eps\}} \geq y|{\cal H}_j) - P(Y_j^- \1_{\{Y_j^- \geq \eps\}} \geq y|{\cal H}_j) \big) \: dy \\
&= \int_0^{\eps} \big(P(Y_j^+ \geq \eps|{\cal H}_j) - P(Y_j^- \geq \eps|{\cal H}_j) \big) \: dy \\
&\hspace{.5in}+ \int_{\eps}^1 \big(P(Y_j^+ \geq y|{\cal H}_j) - P(Y_j^- \geq y|{\cal H}_j) \big) \: dy \\
&\leq \eps \cdot \frac{(1 - \eps) C \delta}{q_j \eps} + \int_{\eps}^{1 - \delta} \frac{C \delta (1 - y)}{q_j y} \: dy + \delta \cdot \frac{\delta (1 + C \delta)}{q_j(1 - \delta)} \\
&\leq \frac{C \delta \log(1/\eps)}{q_j}.
\end{align*}
Let $\Psi_j = A_j^c \cap \{\tau_{j+1} < \zeta\} \cap \{\kappa_j > \tau_j' - \tau_j\}$.
Because $Y_j^- \leq Y_j \leq Y_j^+$ on $\Psi_j$, by (\ref{Ylem1}),
$$E\big[(Y_j^+ \1_{\{Y_j^+ \geq \eps\}} - Y_j \1_{\{Y_j \geq \eps\}}) \1_{\Psi_j}|{\cal H}_j\big] \leq \frac{C \delta \log(1/\eps)}{q_j}.$$
Now Markov's Inequality implies that
$$P\big(\{ |Y_j \1_{\{Y_j \geq \eps\}} - Y_j^+ \1_{\{Y_j^+ \geq \eps\}}| > \eps^2\} \cap \Psi_j \big|{\cal H}_j\big) \leq \frac{C \delta \log(1/\eps)}{q_j \eps^2}.$$
Combining this result with (\ref{YY1}) and part 3 of Lemma \ref{newGq} gives, for sufficiently large $N$, $$P \big( \{|Y_j - Y_j^*| > C^* \eps^2 \big\} \cap \{Y_j \geq \eps\} \cap \{Y_j^* \geq \eps\} \cap \Psi_j \cap (A_j'')^c \cap \Lambda \big) \leq \frac{C \delta \log(1/\eps)}{k_N \eps^2}.$$
The result follows by summing over $j$ and using Lemmas \ref{kappabound}, \ref{bptail3}, and \ref{Ajpp}.
\end{proof}

\subsection{Small coalescence events}

Lemma \ref{smallco} below shows that it is unlikely that lineages will coalesce between times $\tau_j$ and $\tau_{j+1}$ if $Y_j \leq \eps$.

\begin{Lemma}\label{smallco}
For sufficiently large $N$, we have $$P \bigg( \Lambda \cap \bigcup_{j \in I} \bigg( \bigg\{\Pi_N^* \bigg(T - \frac{\tau_j}{a_N} \bigg) \neq \Pi_N^* \bigg(T - \frac{\tau_{j+1}}{a_N}\bigg) \bigg\} \cap \{Y_j \leq \eps\} \bigg) \bigg) \leq C T n^2 \eps.$$
\end{Lemma}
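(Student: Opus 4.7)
The plan is as follows. By the construction of the coalescent $(\Pi_N^*(u), 0 \leq u \leq t+1)$ in Section 7.2, the process changes between times $T - \tau_{j+1}/a_N$ and $T - \tau_j/a_N$ only via the single point $(T - \tau_j/a_N, Y_j, Z_{1,j}, \dots, Z_{n,j})$ of $\Phi_N$ (and only if $j \leq L$ and $Y_j > 0$, but these restrictions can only decrease the probability). A nontrivial merger at time $T - \tau_j/a_N$ requires that at least two of the $Z_{i,j}$ corresponding to distinct blocks of $\Pi_N^*(T - \tau_{j+1}/a_N)$ be $\leq Y_j$. Since there are at most $n$ blocks and the $Z_{i,j}$ are i.i.d.\ uniform on $[0,1]$ and jointly independent of $Y_j$ and of $\Pi_N^*(T - \tau_{j+1}/a_N)$, conditioning gives
$$P\big(\Pi_N^*(T - \tau_j/a_N) \neq \Pi_N^*(T - \tau_{j+1}/a_N) \,\big|\, Y_j, \mathcal{F}_{\tau_{j+1}}\big) \leq \binom{n}{2} Y_j^2.$$
Hence it suffices to prove $\sum_{j \in I} E[Y_j^2 \1_{\{Y_j \leq \eps\}} \1_\Lambda] \leq CT\eps$.

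The key observation is $Y_j^2 \1_{\{Y_j \leq \eps\}} \leq \min(Y_j, \eps)^2$. On the event $\Psi_j := A_j^c \cap \{\tau_{j+1} < \zeta\} \cap \{\kappa_j > \tau_j' - \tau_j\}$, the comparison (\ref{Ylem1}) from Lemma \ref{Ylem} gives $Y_j \leq Y_j^+$, so $\min(Y_j, \eps)^2 \leq \min(Y_j^+, \eps)^2$. Using the identity $\min(Y, \eps)^2 = 2\int_0^\eps u \1_{\{u < Y\}} \, du$, together with the tail bound (\ref{Ylem2}) from Lemma \ref{Ylem} ($P(Y_j^+ \geq u | {\cal H}_j) \leq C/(q_j u)$ for $u \in [\delta, 1-\delta]$ on $\{\tau_j < \zeta\}$) and the trivial bound on $[0, \delta]$, one gets
$$E[\min(Y_j^+, \eps)^2 \,|\, {\cal H}_j] \leq \delta^2 + \frac{C\eps}{q_j} \leq \frac{C'\eps}{k_N}$$
on $\{\tau_j < \zeta\}$, using $\delta < \eps^3$ from (\ref{deldef}) and $q_j \geq (1-2\delta) k_N$ from part 3 of Proposition \ref{newGq}. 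Since $\Lambda \subseteq \{\tau_j < \zeta\}$ for $j \in I$, and $|I| \leq 3Tk_N$ on $\Lambda$ by Lemma \ref{Ilem} (and $I$ itself is deterministic), summing yields
$$\sum_{j \in I} E\big[\min(Y_j^+, \eps)^2 \1_{\{\tau_j < \zeta\}}\big] \leq 3CT\eps.$$

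On the complement $\Psi_j^c$ the comparison $Y_j \leq Y_j^+$ is lost, but the contribution is negligible: Lemma \ref{kappabound}, Lemma \ref{bptail3}, and the fact that $\tau_{j+1} < \zeta$ on $\Lambda$ for $j \in I$ (from Lemma \ref{Ilem}) together show that $P(\Lambda \cap \bigcup_{j \in I} \Psi_j^c) \to 0$, which can be absorbed into a single additive $\eps$ for $N$ large. Combining everything gives the claimed bound $C T n^2 \eps$.

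The main technical point to get right is the first step — justifying the conditional bound $\binom{n}{2} Y_j^2$ on the probability of a merger. This requires isolating the $\sigma$-field relative to which $Y_j$ and $\Pi_N^*(T - \tau_{j+1}/a_N)$ are measurable while the $Z_{i,j}$ remain i.i.d.\ uniform, which follows from the explicit independence built into the construction in Section 7.2 but must be stated carefully. Everything else is a routine application of the tail estimates already established in Section 6.
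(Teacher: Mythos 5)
Your overall strategy is the same as the paper's: bound the conditional probability of a merger by $\binom{n}{2}Y_j^2$, reduce to a second-moment estimate for $Y_j\1_{\{Y_j\le\eps\}}$ on the good event $\Psi_j$, control $Y_j$ by $Y_j^+$ via (\ref{Ylem1}), and dispose of $\Psi_j^c$ using Lemmas \ref{kappabound} and \ref{bptail3}. The first step and the treatment of $\Psi_j^c$ are fine. However, there is a genuine quantitative gap in your estimate of $E[\min(Y_j^+,\eps)^2\,|\,{\cal H}_j]$. You use the trivial bound $P(Y_j^+>u\,|\,{\cal H}_j)\le 1$ on $u\in[0,\delta]$, which contributes $\delta^2$ to the expectation for each $j$. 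Since $\delta$ is a fixed positive constant (chosen once at the start of Section 5, independent of $N$) while the cardinality of $I$ is of order $k_N\to\infty$, summing over $j\in I$ produces a term of order $Tk_N\delta^2$, which blows up. In particular the inequality $\delta^2+C\eps/q_j\le C'\eps/k_N$ that you assert is false for large $N$: the left side is bounded below by the fixed constant $\delta^2$ while the right side tends to zero. The condition $\delta<\eps^3$ from (\ref{deldef}) does not rescue this, since $\eps$ is also fixed.

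The repair is exactly what the paper does: on the small-$x$ region one cannot afford the trivial bound and must instead invoke the stronger estimates of Section 6. By the coupling (\ref{bpcoupling}) and Lemma \ref{pmlem1}, $P(Y_j\1_{\Psi_j}>0\,|\,{\cal H}_j)\le Ce^b/k_N$, which controls $\int_0^{3e^{-b}}2x\,P(Y_j\1_{\Psi_j}>x\,|\,{\cal H}_j)\,dx\le Ce^{-b}/k_N$; and Lemma \ref{pmlem2} gives $P(Y_j\1_{\Psi_j}>x\,|\,{\cal H}_j)\le C/(k_Nx)$ all the way down to $x=3e^{-b}$ (not merely down to $x=\delta$, which is all that (\ref{Ylem2}) provides), so that $\int_{3e^{-b}}^{\eps}2x\cdot C/(k_Nx)\,dx\le C\eps/k_N$. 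Together these yield the needed per-$j$ bound $C\eps/k_N$, which sums over $I$ to $CT\eps$. So your argument is correct in outline, but you must replace the crude bound on $[0,\delta]$ by an appeal to Lemmas \ref{pmlem1} and \ref{pmlem2}, with the splitting point taken at $3e^{-b}$ rather than at $\delta$.
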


\begin{proof}
Suppose $j \in I$.  Let $\Psi_j = A_j^c \cap \{\tau_{j+1} < \zeta \} \cap \{\kappa_j > \tau_j' - \tau_j\}$, where $A_j$ is the event defined in Lemma \ref{bptail3} and $\kappa_j$ is defined in (\ref{kappajdef}).  Define the $\sigma$-field ${\cal H}_j$ as in subsection \ref{bpcouplesec}.  Let ${\cal G}_j$ be the $\sigma$-field generated by the $\sigma$-field ${\cal H}_j$, the random variable $Y_j$ defined in (\ref{Yjdef}), and the event $\Psi_j$.  Conditional on ${\cal G}_j$, the probability that at least two of the random variables $Z_{1,j}, \dots, Z_{n,j}$ are less than or equal to $Y_j$ is at most $\binom{n}{2} Y_j^2$.  Therefore, on $\{\tau_j < \zeta\}$, we have
$$P \bigg(\bigg\{\Pi_N^*\bigg(T - \frac{\tau_j}{a_N} \bigg) \neq \Pi_N^* \bigg(T - \frac{\tau_{j+1}}{a_N}\bigg) \bigg\} \cap \{Y_j \leq \eps\} \cap \Psi_j \bigg| {\cal G}_j \bigg) \leq \binom{n}{2} Y_j^2 \1_{\{Y_j \leq \eps\}} \1_{\Psi_j}.$$
Now take conditional expectations of both sides with respect to ${\cal H}_j$ to get that on $\{\tau_j < \zeta\}$,
\begin{align}\label{sc0}
&P \bigg(\bigg\{\Pi_N^* \bigg(T - \frac{\tau_j}{a_N} \bigg) \neq \Pi_N^* \bigg(T - \frac{\tau_{j+1}}{a_N}\bigg) \bigg\} \cap \{Y_j \leq \eps\} \cap \Psi_j \bigg| {\cal H}_j \bigg) \leq \binom{n}{2} E \big[ Y_j^2 \1_{\{Y_j \leq \eps\}} \1_{\Psi_j} \big| {\cal H}_j \big].
\end{align}
Recall that for any nonnegative random variable $X$, we have $E[X^2] = \int_0^{\infty} 2x P(X \geq x) \: dx$.    Therefore, on $\{\tau_j < \zeta\}$,
\begin{align}\label{sc1}
E \big[ Y_j^2 \1_{\{Y_j \leq \eps\}} \1_{\Psi_j} \big| {\cal H}_j \big] &= \int_0^{\infty} 2x P \big( Y_j \1_{\{Y_j \leq \eps\}} \1_{\Psi_j}  > x \big| {\cal H}_j \big) \: dx \nonumber \\
&\leq \int_0^{\eps} 2x P\big(Y_j \1_{\Psi_j} > x \big| {\cal H}_j \big) \: dx.
\end{align}
Recall from (\ref{Ylem1}) that $Y_j \leq Y_j^+$ on $\Psi_j$.  Also, from (\ref{bpcoupling}), we see that on
$\Psi_j$, if $Y_j > 0$ then $X_j^+(\tau_j' - \tau_j) > 0$, and on $\{\tau_j < \zeta\}$, we have $q_j \geq (1 - 2 \delta) k_N$ by part 3 of Proposition \ref{newGq}. Therefore, by Lemma \ref{pmlem1}, 
\begin{equation}\label{sc2}
P(Y_j \1_{\Psi_j} > 0 | {\cal H}_j) \leq \frac{C e^b}{k_N}. 
\end{equation}
Also, on $\Psi_j$, if $Y_j > x$ and $3e^{-b} \leq x \leq \eps$, it follows from (\ref{Yjplus}) that if $\eps$ is sufficiently small, then
$$e^{-sq_j(\tau_j' - \tau_j)} X_j^+(\tau_j' - \tau_j) \geq \frac{(e^{-b} + 1 - 4 \delta)x - e^{-b}}{1 - x} \geq \frac{x}{2}.$$
Therefore, Lemma \ref{pmlem2} implies that if $\eps$ is sufficiently small and $N$ is sufficiently large, and if $3 e^{-b} \leq x \leq \eps$, then
\begin{equation}\label{sc3}
P(Y_j \1_{\Psi_j} > x | {\cal H}_j) \leq \frac{C}{k_N x}.
\end{equation}
Dividing the integral on the right-hand side of (\ref{sc1}) into two pieces and using (\ref{sc2}) to estimate the first piece and (\ref{sc3}) to estimate the second piece, we get
\begin{align} \label{sc4}
E \big[ Y_j^2 \1_{\{Y_j \leq \eps\}} \1_{\Psi_j} \big| {\cal H}_j \big] &\leq \int_0^{3 e^{-b}} 2x \cdot \frac{C e^b}{k_N} \: dx + \int_{3e^{-b}}^{\eps} 2x \cdot \frac{C}{k_N x} \: dx \nonumber \\
&\leq \frac{C e^{-b}}{k_N} + \frac{C \eps}{k_N} \nonumber \\
&\leq \frac{C \eps}{k_N}.
\end{align}
Using (\ref{sc4}) to bound the right-hand side of (\ref{sc0}) and then taking expectations, we get
\begin{equation}\label{sc5}
P \bigg(\bigg\{\Pi_N^* \bigg(T - \frac{\tau_j}{a_N} \bigg) \neq \Pi_N^* \bigg(T - \frac{\tau_{j+1}}{a_N}\bigg) \bigg\} \cap \{Y_j \leq \eps\} \cap \Psi_j \bigg) \leq \frac{C n^2 \eps}{k_N}.
\end{equation}
The result now follows by summing over $j$ and using Lemmas \ref{kappabound} and \ref{bptail3}.
\end{proof}

\subsection{Completion of the coupling argument}

Fix a positive integer $d$ and times $0 = t_0 < t_1 < \dots < t_d = t$.  Recall that equation (\ref{part1}) was established as part of Lemma \ref{lemp1}.  Therefore, to prove Theorem \ref{boszthm}, we need to show that the joint distribution of $(\Pi_N(1 + t_0), \dots, \Pi_N(1 + t_d))$ converges as $N \rightarrow \infty$ to the joint distribution of $(\Pi(1 + t_0), \dots, \Pi(1 + t_d))$, where $(\Pi(u), 0 \leq u \leq t+1)$ is the coalescent process derived from the Poisson point process $\Phi$ at the end of subsection \ref{Poissec}.

\begin{proof}[Proof of Theorem \ref{boszthm}]
The key to the proof will be to show that with high probability, we have
\begin{equation}\label{Picompare}
\Pi\bigg(T - \frac{\tau_j^*}{a_N} \bigg) = \Pi_N^*\bigg(T - \frac{\tau_j}{a_N} \bigg)\hspace{.1in}\mbox{ for all } j \in I \mbox{ with }j \leq L.
\end{equation}
Recall that the coalescent process $\Pi_N^*$ was constructed from the point process $\Phi_N$ in the same way that $\Pi$ was constructed from $\Phi$.  Therefore, we simply need to compare the two constructions.  If (\ref{Picompare}) fails to hold, then one of the following must occur:
\begin{enumerate}
\item Either $\Pi_N^*(T - \tau_L/a_N) \neq \{\{1\}, \dots, \{n\}\}$ or $\Pi(T - \tau_L^*/a_N) \neq \{\{1\}, \dots, \{n\}\}$.

\item For some $j \in I$, we have either $Y_j \geq \eps$ and $Y_j^* < \eps$, or $Y_j < \eps$ and $Y_j^* \geq \eps$.

\item For some $j \in I$, we have $\Pi_N^*(T - \tau_j/a_N) \neq \Pi_N^*(T - \tau_{j+1}/a_N)$ and $Y_j < \eps$.

\item For some $u \in [1, t+1]$, we have $\Pi(u) \neq \Pi(u-)$ but $u$ does not equal $T_j^*$ for any $j$.

\item For some $j \in I$ with $j \leq L$, we have $Y_j \geq \eps$, $Y_j^* \geq \eps$, and $\Pi_N^*((T - \tau_j/a_N)-) = \Pi(T_j^*-)$, but $\Pi_N^*(T - \tau_j/a_N) \neq \Pi(T_j^*)$.
\end{enumerate}

We now bound the probabilities of these five events.  As for the first event, note that (\ref{tauL1}) and (\ref{problam}) imply that $P(\Pi_N(T - \tau_L/a_N) \neq \{\{1\}, \dots, \{n\}\}) \leq C \eps + C T n^2 e^{-b}$.  Combining this result with Lemma \ref{Pistar} and (\ref{bdef}) gives $$P\bigg(\Pi_N^*\bigg(T - \frac{\tau_L}{a_N} \bigg) \neq \{\{1\}, \dots, \{n\}\}\bigg) \leq C n^2 \eps.$$  By (\ref{Ldef}), we have $T - \tau_L/a_N \leq 1 + 3/k_N$, so (\ref{tslem2}) implies $T - \tau_L^*/a_N \leq 1 + 3/k_N + 10 \delta T$.  Because each pair of lineages in the Bolthausen-Sznitman coalescent merges at rate $1$, it follows that for sufficiently large $N$,
$$P\bigg(\Pi \bigg(T - \frac{\tau_L^*}{a_N} \bigg) \neq \{\{1\}, \dots, \{n\}\}\bigg) \leq \binom{n}{2} \bigg( \frac{3}{k_N} + 10 \delta T \bigg) \leq C n^2 \delta T.$$

It follows from Lemma \ref{YYstar}, along with (\ref{problam}) and the fact that $\delta < \eps^2$ by (\ref{deldef}), that the probability that the second of the five events above occurs is at most $C \eps T^2$.  Likewise, it follows from Lemma \ref{smallco} and (\ref{problam}) that the probability that the third of the five events occurs is bounded above by $C T n^2 \eps$.

Consider next the fourth event listed above.  From the construction, this can only happen either if, for some $j \in I$, there are two points of $\Phi$ in $[T - \tau_{j+1}^*/a_N, T - \tau_j^*/a_N] \times [\eps, 1] \times [0, 1]^n$, or if there is some point $(u, y, z_1, \dots, z_n)$ in $\Phi$ in which $y \leq \eps$ but two of the points $z_1, \dots, z_n$ are less than or equal to $y$.  Recall that if $X$ has the Poisson distribution with mean $\lambda$, then $P(X \geq 2) \leq \lambda^2$.  Therefore, using also (\ref{tauspacing2}), the probability that, for some $j \in I$, there are two points of $\Phi$ in $[T - \tau_{j+1}^*/a_N, T - \tau_j^*/a_N] \times [\eps, 1] \times [0, 1]^n$ is bounded above by $$\sum_{j \in I} \bigg( \frac{\tau_{j+1}^* - \tau_j^*}{a_N} \cdot \frac{1 - \eps}{\eps} \bigg)^2 \leq \sum_{j \in I} \frac{1}{(\eps k_N)^2} \leq \frac{C T}{\eps^2 k_N},$$ which tends to zero as $N \rightarrow \infty$.  Note that if $y$ is the second coordinate of a point in $\Phi$, the probability that two of the points $z_1, \dots, z_n$ are less than or equal to $y$ is at most $\binom{n}{2} y^2$.  Therefore, the probability that there is a point $(u, y, z_1, \dots, z_n)$ in $\Phi$ in which $y \leq \eps$ but two of the points $z_1, \dots, z_n$ are less than or equal to $y$ is bounded above by $$t \int_0^{\eps} y^{-2} \cdot \binom{n}{2} y^2 \: dy = \binom{n}{2} t \eps \leq C T n^2 \eps.$$

Finally, consider the fifth of the possibilities above, which means that the coalescence at time $T - \tau_j/a_N$ in the process $\Pi_N^*$ does not match the coalescence that occurs at time $T_j^*$ in the process $\Pi$.  One way this could happen would be if the time interval $[T - \tau_{j+1}^*/a_N, T - \tau_j^*/a_N]$ is not entirely contained in the interval $[1, t+1]$.  By (\ref{tauspacing2}) and (\ref{jstarprime}), the number of $j \in I$ for which this interval is not contained in $[1, t+1]$ is at most $C \delta T k_N$.  By Lemmas \ref{kappabound}, \ref{bptail3}, and \ref{Ylem}, along with (\ref{problam}) and part 3 of Proposition \ref{newGq}, the probability that $Y_j > \eps$ for some such $j$ is at most
$$C \delta T k_N \cdot \frac{(1 - \eps)(1 + 13 \delta)}{(1 - 2 \delta) k_N \eps} + 2 \eps \leq \frac{C \delta T}{\eps} + C \eps.$$
The other way that the coalescence at time $T - \tau_j/a_N$ in the process $\Pi_N^*$ might not match the coalescence that occurs at time $T_j^*$ in the process $\Pi$ would be if one of the random variables $Z_{j,1}, \dots, Z_{j,n}$ is between $Y_j$ and $Y_j^*$.  By Lemma \ref{YdiffY}, the probability that this happens when $|Y_j - Y_j^*| > \eps^2$ is bounded above by $$\frac{C \delta T \log (1/\eps)}{\eps^2}.$$  Using Lemmas \ref{kappabound}, \ref{bptail3}, and \ref{Ylem}, we see that the probability that this happens when $|Y_j - Y_j^*| \leq \eps^2$ is at most $$\sum_{j \in I} \frac{C}{k_N \eps} \cdot n \eps^2 \leq C T n \eps.$$

Combining the bounds obtained above, we see that for sufficiently large $N$, the probability that (\ref{Picompare}) fails to hold is bounded above by
\begin{equation}\label{endub}
C T n^2 \eps + C n^2 \delta T + C \eps T^2 + \frac{C \delta T \log(1/\eps)}{\eps^2}.
\end{equation}
By Lemma \ref{Pistar}, we can replace $\Pi_N^*$ by $\Pi_N$ in (\ref{Picompare}) and conclude that the probability that
\begin{equation}\label{Picompare2}
\Pi\bigg(T - \frac{\tau_j^*}{a_N} \bigg) = \Pi_N\bigg(T - \frac{\tau_j}{a_N} \bigg)\hspace{.1in}\mbox{ for all } j \in I \mbox{ with }j \leq L
\end{equation}
fails to hold is also bounded above by the expression in (\ref{endub}) for sufficiently large $N$.

Now suppose that indeed (\ref{Picompare2}) holds and $\Lambda$ occurs.  Fix $i \in \{1, \dots, d\}$.  Then there exists $j \in I$ such that $T - \tau_{j+1}/a_N \leq t_i < T - \tau_j/a_N$.  By (\ref{tauspacing}) and (\ref{tslem2}), for sufficiently large $N$, $$T - \frac{\tau_j^*}{a_N} \leq t_i + \frac{2}{k_N} + 10 \delta T \leq t_i + 11 \delta T$$ and
$$T - \frac{\tau_{j+1}^*}{a_N} \geq t_i - \frac{2}{k_N} - 10 \delta T \geq t_i - 11 \delta T.$$  Thus, as long as $\Pi(t_i - 11 \delta T) = \Pi(t_i + 11 \delta T)$ and (\ref{Picompare2}) holds, we must have $\Pi(t_i) = \Pi_N(t_i)$.  However, because each pair of lineages in the Bolthausen-Sznitman coalescent merges at rate one, we have $$P\big(\Pi(t_i - 11 \delta T) \neq \Pi(t_i + 11 \delta T)\big) \leq \binom{n}{2} \cdot 22 \delta T.$$  Taking the union over $i \in \{1, \dots, d\}$ and using (\ref{endub}), it follows that for sufficiently large $N$, $$P(\Pi_N(t_i) \neq \Pi(t_i) \mbox{ for some }i \in \{1, \dots, d\}) \leq C T n^2 \eps + C d n^2 \delta T + C \eps T^2 + \frac{C \delta T \log(1/\eps)}{\eps^2}.$$  Since $\delta < \eps^3$ by (\ref{deldef}) and $\eps > 0$ can be chosen arbitrarily small for any fixed $T$, the theorem follows.
\end{proof}

\end{document}